\newtheorem{theorem}{Theorem}[section]
\newtheorem{lemma}[theorem]{Lemma}
\newtheorem{corollary}[theorem]{Corollary}
\newtheorem{example}[theorem]{Example}
\newtheorem{proposition}[theorem]{Proposition}
\newtheorem{remark}[theorem]{Remark}
\title{Finite and semisimple Boolean inverse monoids}
\author{Mark V. Lawson}
\address{Mark V. Lawson, Department of Mathematics
and the
Maxwell Institute for Mathematical Sciences,
Heriot-Watt University,
Riccarton,
Edinburgh EH14 4AS,
UNITED KINGDOM}
\email{m.v.lawson@hw.ac.uk}
\thanks{I would like to thank David Janin of the Universit\'e de Bordeaux for the opportunity to visit him in April 2018 where much of the work for this paper was carried out. }
\begin{document}

\begin{abstract}
We describe the structure of finite Boolean inverse monoids and apply our results to the representation theory of finite inverse semigroups.
We then generalize to semisimple Boolean inverse semigroups.
\end{abstract}
\maketitle

\section{Introduction}

The goal of this paper is to prove a number of theorems about finite and semisimple Boolean inverse semigroups.
Although semisimple Boolean inverse semigroups include finite Boolean inverse monoids as a special case,
we treat them separately since the finite case can be handled using only elementary means.

It turns out that the theory of finite Boolean inverse monoids, described in Section 4, is tightly intertwined with the theory of finite groupoids.
In fact, the theory of finite Boolean inverse monoids generalizes the theory of finite Boolean algebras
in exactly the same way as the theory of finite groupoids generalizes the theory of finite sets:
specifically, we prove in Theorem~\ref{them:main-finite} that every finite Boolean inverse monoid is isomorphic to the Boolean
inverse monoid of all local bisections of a finite discrete groupoid: namely,
the finite discrete groupoid of its atoms;
in Theorem~\ref{them:finite}, we prove that every finite Boolean inverse monoid
is isomorphic to a finite direct product of $0$-simplifying Boolean inverse monoids
each of which is isomorphic to all the finite square rook matrices over a finite group with a zero adjoined;
in Theorem~\ref{them:booleanization-finite}, we prove that the Booleanization of a finite inverse semigroup (without zero) $S$
is isomorphic to the Boolean inverse monoid of all local bisections of the set $S$ equipped with the restricted product.
We apply some of our results on the finite case to the study of the representation theory of finite inverse semigroups.

The theory of semisimple Boolean inverse semigroups, described in Section 5,
generalizes the theory of finite Boolean inverse monoids but we use non-commutative Stone duality to achieve our goals.
In Theorem~\ref{them:discrete-topology}, we prove that the semisimple Boolean inverse semigroups
are precisely those in which the associated groupoid of prime filters is endowed with the discrete topology;
this leads to our characterization of semisimple Boolean inverse semigroups in Corollary~\ref{cor:semisimple}
as being isomorphic to the Boolean inverse semigroup of all finite local bisections of a discrete groupoid;
in Theorem~\ref{them:struct-semisimple}, we prove that every semisimple Boolean inverse semigroup
is isomorphic to a {\em restricted} direct product of $0$-simplifying Boolean inverse semigroups each of which
is isomorphic to all the square rook matrices over a group with zero adjoined.
But it is in Theorem~\ref{them:dichotomy}, the Dichotomy Theorem, in which the r\^ole of semisimple Boolean inverse semigroups
within the wider structure theory of Boolean inverse semigroups is made clear:
we prove that every $0$-simplifying Boolean inverse semigroup is either semisimple or atomless.
A countable atomless Boolean inverse monoid is called a {\em Tarski monoid}.
The theory of such monoids is discussed in \cite{Lawson2016, Lawson2017}.
In Theorem~\ref{them:universal-groupoid}, we prove that an inverse semigroup is semisimple if and only if its universal groupoid is discrete. 

The type monoids of semisimple Boolean inverse semigroups are characterized in Section~6. 
Sections 2 and 3 contain background results we need.

\section{Background results}

\subsection{Posets}
Let $(X,\leq)$ be a poset.
If $A \subseteq X$ then define 
$$A^{\uparrow} = \{x \in X \colon a \leq x \text{ for some } a \in A\}$$
and 
$$A^{\downarrow} = \{x \in X \colon x \leq a \text{ for some } a \in A\}.$$
If $A = \{a\}$ then we write $a^{\uparrow} = \{a\}^{\uparrow}$
and  $a^{\downarrow} = \{a\}^{\downarrow}$.
If $A = A^{\downarrow}$ we say that $A$ is an {\em order-ideal}. 
If $A$ is a singleton set then $A^{\downarrow}$ is called a {\em principal order-ideal}.

\subsection{Groupoids}
See the book by Higgins \cite{Higgins} for general groupoid theory.
For us a {\em groupoid} is a small category in which every element is invertible.
We denote the unique inverse of the element $x$ by $x^{-1}$.
If $G$ is a groupoid we write $\mathbf{d}(x) = x^{-1}x$, for the {\em domain} of $x$, and $\mathbf{r}(x) = xx^{-1}$, for the {\em range} of $x$.
Observe that for us categories are `one-sorted structures' and so we identify the objects of the
category with the identities.
The set of identities of $G$ is denoted by $G_{o}$.
If $e$ is an identity, define $G_{e}$ to be all elements $g$ such that $g^{-1}g = e = gg^{-1}$.
Then $G_{e}$ is a group called the {\em local group at $e$}.
A groupoid is said to be {\em principal} if all local groups are trivial.
A principal groupoid is just an equivalence relation viewed as a groupoid.
If $e$ and $f$ are identities we write $e \,\mathscr{D} \, f$ if there is an element of the groupoid
whose domain is $e$ and whose range is $f$.
If $g$ and $h$ are elements of the groupoid we write $g \,\mathscr{D} \, h$ if $\mathbf{d}(g) \,\mathscr{D} \, \mathbf{d}(h)$.
The relation $\mathscr{D}$ is an equivalence relation on $G$ whose equivalence classes are called the {\em connected components} of $G$.
A groupoid is said to be {\em connected} if it has exactly one connected component.
Every groupoid is a disjoint union of its connected components each of which is a connected groupoid.
The proof of the following is well-known.

\begin{lemma}\label{lem:connected-groupoids}
Let $H$ be a group and $X$ a non-empty set.
Then $X \times H \times X$ is a connected groupoid when the partial multiplication is defined by $(x,g,y)(y,h,z) = (x,gh,z)$.
Here, $\mathbf{d}(x,g,y) = (y,1,y)$, $\mathbf{r}(x,g,y) = (x,1,x)$ and $(x,g,y) = (y,g^{-1},x)$.
The identities are the elements $(x,1,x)$.
Every connected groupoid is isomorphic to a groupoid of this form.
\end{lemma}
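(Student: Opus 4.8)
The plan is to establish the two assertions separately: first that the set $X \times H \times X$, under the stated operations, genuinely forms a connected groupoid, and then that an arbitrary connected groupoid can be coordinatized in this way.

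For the first part I would simply verify the groupoid axioms directly. The partial product $(x,g,y)(y',h,z)$ is declared to be defined precisely when $y = y'$, and one checks this coincides with the condition $\mathbf{d}(x,g,y) = \mathbf{r}(y',h,z)$ using the stated formulas for $\mathbf{d}$ and $\mathbf{r}$. Associativity reduces immediately to associativity in $H$, since the middle coordinates simply multiply while the outer coordinates are carried along. The computation $(x,g,y)^{-1}(x,g,y) = (y,g^{-1}g,y) = (y,1,y) = \mathbf{d}(x,g,y)$, and dually for $\mathbf{r}$, shows that $(y,g^{-1},x)$ is a genuine inverse and that the $\mathbf{d}$- and $\mathbf{r}$-images are the expected identities, so every element is invertible. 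Connectivity is immediate: for any two identities $(x,1,x)$ and $(x',1,x')$ the element $(x',1,x)$ has domain $(x,1,x)$ and range $(x',1,x')$, so $(x,1,x) \,\mathscr{D}\, (x',1,x')$ and there is a single connected component.

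For the second part, let $G$ be a connected groupoid; I would take $X = G_o$ and fix a base identity $e \in G_o$, setting $H = G_e$. Because $G$ is connected, for each identity $f$ there is an arrow from $e$ to $f$; I choose one such arrow $\tau_f$ with $\mathbf{d}(\tau_f) = e$ and $\mathbf{r}(\tau_f) = f$, taking $\tau_e = e$ (this selection uses the axiom of choice when $X$ is infinite). The candidate isomorphism $G \to X \times H \times X$ is then given by $g \mapsto (\mathbf{r}(g),\, \tau_{\mathbf{r}(g)}^{-1}\, g\, \tau_{\mathbf{d}(g)},\, \mathbf{d}(g))$; one first checks that the middle entry really lies in $H$, since conjugating $g \colon b \to a$ by the chosen arrows produces a loop at $e$.

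It remains to verify that this map is a functor and a bijection. Functoriality follows from the collapse $\tau_b \tau_b^{-1} = \mathbf{r}(\tau_b) = b$ in the middle coordinate when composing the images of $g \colon b \to a$ and $h \colon c \to b$, which turns $(\tau_a^{-1} g \tau_b)(\tau_b^{-1} h \tau_c)$ into $\tau_a^{-1}(gh)\tau_c$. Injectivity is seen by recovering $g = \tau_a (\tau_a^{-1} g \tau_b) \tau_b^{-1}$ from its image, and surjectivity by sending $(a,k,b)$ to $\tau_a k \tau_b^{-1}$. I expect no serious obstacle here; the only thing demanding care is the bookkeeping with the composition convention ($ab$ is defined exactly when $\mathbf{d}(a) = \mathbf{r}(b)$), so that the connecting arrows $\tau_f$ are composed on the correct side throughout.
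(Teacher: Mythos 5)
Your proof is correct and follows exactly the standard argument the paper has in mind when it declares the result ``well-known'' (it cites Higgins for general groupoid theory and gives no proof): direct verification of the axioms and connectivity for $X \times H \times X$, followed by coordinatization of a connected groupoid via a chosen family of arrows $\tau_f \colon e \to f$ from a fixed base identity $e$, with $H = G_e$. Your bookkeeping is consistent with the paper's convention that $ab$ is defined when $\mathbf{d}(a) = \mathbf{r}(b)$, and the key collapses $\tau_b \tau_b^{-1} = b$ and $\tau_a^{-1}\tau_a = e$ are applied correctly in the functoriality, injectivity and surjectivity checks.
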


Let $G$ be a groupoid.
A subset $A \subseteq G$ is called a {\em local bisection} if
$a,b \in A$ and $\mathbf{d}(a) = \mathbf{d}(b)$ (respectively, $\mathbf{r}(a) = \mathbf{r}(b)$) implies that $a = b$.
The set-theoretic product of two local bisections is a local bisection. 
Denote by $\mathsf{K}(G)$ the set of all local bisections of $G$.
Denote by $\mathsf{K}_{{\rm \tiny fin}} (G)$ the set of all finite local bisections of $G$.

\subsection{Inverse semigroups}
For background results in inverse semigroup theory, we shall refer to \cite{Lawson1998}.
If $S$ is an inverse semigroup, we denote its semilattice of idempotents by $\mathsf{E}(S)$.
If $X \subseteq S$ then define $\mathsf{E}(X) = X \cap \mathsf{E}(S)$.
If $I \subseteq S$ then $I$ is a {\em (semigroup) ideal} if $SI, IS \subseteq I$.
One issue that will haunt this paper is the presence or absence of a zero
since it affects morphisms.
Thus homomorphisms between inverse semigroups with zero are required to preserve zero.
If $S$ is an inverse semigroup with a zero then $S^{\ast} = S \setminus \{0\}$.
If $S$ is an inverse semigroup without a zero then we can adjoin one to get $S^{0}$,
an inverse semigroup with zero.

If $\theta$ is a homomorphism with domain $S$ then the corresponding congruence induced on $S$  is denoted by $\mbox{cong}(\theta)$.

An inverse semigroup is said to be  {\em fundamental} if the only elements of the semigroup
commuting with all idempotents are themselves idempotents.
Equivalently, an inverse semigroup is fundamental if the maximum idempotent-separating congruence $\mu$ is trivial  \cite[Pages 139, 140]{Lawson1998}.
Fundamental inverse semigroups are of signal importance in the general theory of inverse semigroups;
see \cite[Section~5.2]{Lawson1998}.

If $a$ is an element of an inverse semigroup  $S$ then define $\mathbf{d}(a) = a^{-1}a$ and $\mathbf{r}(a) = aa^{-1}$;
we also write $\mathbf{d}(a) \stackrel{a}{\longrightarrow} \mathbf{r}(a)$ and $a \, \mathscr{D} \, b$.
The {\em restricted product} $a \cdot b$ of the elements $a$ and $b$ in an inverse semigroup is defined precisely when $\mathbf{d}(a) = \mathbf{r}(b)$
in which case it is equal to $ab$.
With respect to the restricted product, $S$ becomes a groupoid which we denote by $S^{\cdot}$ \cite[Proposition 3.1.4]{Lawson1998}.
Observe that  if $a$ and $b$ are both nonzero and $\mathbf{d}(a) = \mathbf{r}(b)$ then $a \cdot b$ is also nonzero.
The following therefore makes sense:
define $\mathcal{G}(S)$ to be the groupoid $S^{\cdot}$ if $S$ does not have a zero and the groupoid 
with set $S^{\ast}$ equipped with the restricted product if $S$ does have a zero.
We call $\mathcal{G}(S)$ the {\em restricted groupoid} of $S$.

The {\em natural partial order} is defined by $a \leq b$ if and only if $a = ba^{-1}a$.
An inverse semigroup is partially ordered with respect to the natural partial order.
An inverse semigroup with all binary meets with respect to the natural partial order is called a
{\em meet-semigroup} or {\em $\wedge$-semigroup}.
The proof of the following is straightforward from the properties of the natural partial order.

\begin{lemma}\label{lem:l-and-r-order} Let $S$ be an inverse semigroup.
If $x,y \leq a$ and $\mathbf{d}(x) = \mathbf {d}(y)$ (respectively, $\mathbf{r}(x) = \mathbf{r}(y)$)
then $x = y$.
\end{lemma}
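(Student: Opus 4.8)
The plan is to read off both assertions directly from the defining property of the natural partial order, since the lemma is essentially immediate once that definition is unwound. For the domain version, I note that $x \leq a$ means precisely $x = a x^{-1} x = a\mathbf{d}(x)$, and likewise $y = a\mathbf{d}(y)$. Under the hypothesis $\mathbf{d}(x) = \mathbf{d}(y)$, substituting gives
\[
x = a\mathbf{d}(x) = a\mathbf{d}(y) = y,
\]
which is the claim; no genuine computation is required. The whole content of this half is the observation that an element lying below $a$ is completely determined by its domain idempotent, because right-multiplying $a$ by $\mathbf{d}(x)$ recovers $x$ exactly.

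For the range version I would avoid reproving a dual characterization by hand and instead exploit the compatibility of the natural partial order with inversion. Recall the standard fact (see \cite{Lawson1998}) that $x \leq a$ holds if and only if $x^{-1} \leq a^{-1}$, which is immediate from the description of the order via multiplication by idempotents. Since $\mathbf{r}(x) = \mathbf{d}(x^{-1})$ and $\mathbf{r}(y) = \mathbf{d}(y^{-1})$, the hypothesis $\mathbf{r}(x) = \mathbf{r}(y)$ translates into $\mathbf{d}(x^{-1}) = \mathbf{d}(y^{-1})$, with $x^{-1}, y^{-1} \leq a^{-1}$. The domain case just established then yields $x^{-1} = y^{-1}$, and applying inversion once more gives $x = y$.

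I do not expect a hard step here: the argument is a two-line unwinding of the definition in each case. The only point deserving a moment's care is the reduction in the range case, where I rely on inversion being an order-preserving involution that interchanges $\mathbf{d}$ and $\mathbf{r}$; rather than reestablish this, I would simply cite it, so that the range case is handled on exactly the same footing as the domain case. This is precisely the sense in which, as the text remarks, the result is straightforward from the properties of the natural partial order.
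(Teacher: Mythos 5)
Your proof is correct, and it is exactly the routine unwinding the paper has in mind: the author omits the proof entirely, remarking only that it is ``straightforward from the properties of the natural partial order,'' and your argument---$x=a\mathbf{d}(x)=a\mathbf{d}(y)=y$ for the domain case, with the range case reduced to it via the order-preserving involution $x\mapsto x^{-1}$ interchanging $\mathbf{d}$ and $\mathbf{r}$---is precisely that. No gaps; both the characterization $x\leq a \Leftrightarrow x=a\mathbf{d}(x)$ and the fact that inversion preserves the natural partial order are standard and correctly cited from \cite{Lawson1998}.
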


An element $a$ of an inverse semigroup is called an {\em atom} if $x \leq a$ implies $x = a$ or $x = 0$.
An inverse semigroup is said to be {\em atomless} if it has no atoms. 

\begin{lemma}\label{lem:atom-idempotent} Let $S$ be an inverse semigroup
in which $a \mathscr{D} b$.
Then $a$ is an atom if and only if $b$ is an atom.
\end{lemma}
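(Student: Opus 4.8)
The plan is to show that the atom property of an element depends only on its domain idempotent, and that this idempotent property is constant along $\mathscr{D}$-classes. Write $e = \mathbf{d}(a)$ and $f = \mathbf{d}(b)$; by definition $a \, \mathscr{D} \, b$ means exactly that $e \, \mathscr{D} \, f$ as idempotents. We may assume $a, b \neq 0$, since $a \, \mathscr{D} \, b$ forces $a = 0 \iff b = 0$ (if $e = \mathbf{d}(a) = 0$ then any linking element $t$ with $t^{-1}t = e$ satisfies $t = tt^{-1}t = 0$, whence $f = \mathbf{d}(b) = tt^{-1} = 0$), and in that degenerate case both are atoms trivially.

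First I would prove that $a$ is an atom if and only if $e = \mathbf{d}(a)$ is an atom of the semilattice $\mathsf{E}(S)$. The tool is the map $x \mapsto \mathbf{d}(x)$ from $a^{\downarrow}$ to $e^{\downarrow}$. It is order-preserving and lands in $\mathsf{E}(S)$, since every element beneath an idempotent is itself idempotent. Lemma~\ref{lem:l-and-r-order} gives injectivity, as two elements below $a$ sharing a common domain must coincide; and it is surjective onto $e^{\downarrow}$ because for an idempotent $i \leq e$ the element $ai$ satisfies $ai \leq a$ and $\mathbf{d}(ai) = i\,a^{-1}a\,i = i$, the assignment $i \mapsto ai$ being the inverse map. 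Thus $x \mapsto \mathbf{d}(x)$ is an order-isomorphism $a^{\downarrow} \to e^{\downarrow}$ sending $a$ to $e$ and $0$ to $0$. Hence the elements strictly beneath $a$ correspond bijectively to the idempotents strictly beneath $e$, and $a$ is an atom precisely when $e$ is.

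It then remains to show that if $e \, \mathscr{D} \, f$ then $e$ is an atom of $\mathsf{E}(S)$ if and only if $f$ is. Choose a linking element $t$ with $t^{-1}t = e$ and $tt^{-1} = f$. I would verify that conjugation $i \mapsto t i t^{-1}$ carries $e^{\downarrow}$ onto $f^{\downarrow}$, with inverse $j \mapsto t^{-1} j t$: for $i \leq e$ the element $t i t^{-1}$ is idempotent and satisfies $t i t^{-1} \leq f$ (using $ie = ei = i$ together with $t^{-1}t = e$), the two maps are mutually inverse, both preserve order, and $e \mapsto f$, $0 \mapsto 0$. An order-isomorphism of principal order-ideals preserves the atom property, so $e$ is an atom iff $f$ is. Chaining the two equivalences yields: $a$ is an atom iff $e$ is iff $f$ is iff $b$ is; and since $\mathscr{D}$ is an equivalence relation, no separate symmetry argument is needed.

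The only point requiring care is the bookkeeping for the conjugation map in the second step --- confirming that $i \mapsto t i t^{-1}$ really is a well-defined, order-preserving bijection of $e^{\downarrow}$ onto $f^{\downarrow}$. This is the classical fact that $\mathscr{D}$-related idempotents have isomorphic principal order-ideals in $\mathsf{E}(S)$, and the collapsing identities $ie = ei = i$ valid for $i \leq e$ make each verification routine; everything else follows directly from Lemma~\ref{lem:l-and-r-order} and elementary manipulation of the natural partial order.
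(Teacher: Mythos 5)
Your proof is correct, but it takes a genuinely different route from the paper's. The paper argues directly: it picks a linking element $x$ with $\mathbf{d}(b) \stackrel{x}{\longrightarrow} \mathbf{d}(a)$ and, for an arbitrary $c \leq b$, tests the single element $a\mathbf{r}(x\mathbf{d}(c)) \leq a$ against the atom property of $a$, extracting $c = 0$ in one case and $c = b$ in the other, and then invokes symmetry of $\mathscr{D}$. You instead factor the statement through two order-isomorphisms: first $x \mapsto \mathbf{d}(x) \colon a^{\downarrow} \rightarrow \mathbf{d}(a)^{\downarrow}$ with inverse $i \mapsto ai$ (injectivity being exactly Lemma~\ref{lem:l-and-r-order}, and your computations $\mathbf{d}(ai) = i$ and $a\mathbf{d}(x) = x$ are right), reducing the atom property to the domain idempotent; second, the classical conjugation isomorphism $i \mapsto tit^{-1} \colon e^{\downarrow} \rightarrow f^{\downarrow}$ for $\mathscr{D}$-related idempotents, whose verification via the collapsing identities $ie = ei = i$ is as routine as you say. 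Your handling of the degenerate case ($a = 0$ iff $b = 0$ along $\mathscr{D}$) is also sound. What each approach buys: the paper's argument is shorter and entirely self-contained, at the cost of an opaque test element and a separate appeal to symmetry; yours is longer but more structural, needs no symmetry step (order-isomorphisms are invertible), and proves strictly more --- namely that $a^{\downarrow}$ and $b^{\downarrow}$ are isomorphic as posets with zero. Notably, the first of your two isomorphisms, $a^{\downarrow} \cong \mathbf{d}(a)^{\downarrow}$, is exactly the fact the paper later uses without proof at the start of Section~5 to justify Lemma~\ref{lem:when-semisimple}, so your route would let the paper cite one lemma for both purposes.
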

\begin{proof} We prove that  $a$ an atom implies that $b$ is an atom.
The result then follows by symmetry.
Let $\mathbf{d}(b) \stackrel{x}{\rightarrow} \mathbf{d}(a)$.
Suppose that $c \leq b$.
Then $a\mathbf{r}(x\mathbf{d}(c)) \leq a$.
But $a$ is an atom.
It follows that 
$a\mathbf{r}(x\mathbf{d}(c)) = 0$
or
$a\mathbf{r}(x\mathbf{d}(c)) = a$.
If the former, then $\mathbf{r}(x\mathbf{d}(x)) = 0$ and so $x\mathbf{d}(c) = 0$ which yields
$\mathbf{d}(c) = 0$ and so $c = 0$.
If the latter then $\mathbf{d}(a) = \mathbf{r}(x\mathbf{d}(c))$.
It follows that $\mathbf{d}(c) = \mathbf{d}(x) = \mathbf{d}(b)$.
Thus $c = b$.
We have accordingly proved that $b$ is an atom.
\end{proof}

The {\em compatibility relation} is defined by $a \sim b$ if and only if $a^{-1}b$ and $ab^{-1}$ are both idempotents;
the {\em orthogonality relation} is defined by $a \perp b$ if and only if $a^{-1}b = 0$ and $ab^{-1} = 0$.
A necessary condition for two elements of an inverse semigroup to have a join is that they be compatible.
A non-empty subset of an inverse semigroup is said to be {\em compatible} if each pair of elements of the subset is compatible. 
The following is \cite[Lemma 1.4.11, Lemma 1.4.12]{Lawson1998}.

\begin{lemma}\label{lem:wedge} Let $S$ be an inverse semigroup.
\begin{enumerate}

\item $a \sim b$ if and only if $a \wedge b$ exists and 
$\mathbf{d}(a \wedge b) = \mathbf{d}(a)\mathbf{d}(b)$ 
and
$\mathbf{r}(a \wedge b) = \mathbf{r}(a)\mathbf{r}(b)$.

\item If $a \sim b$ then $a \wedge b = ab^{-1}b = bb^{-1}a = aa^{-1}b = ba^{-1}a$.

\end{enumerate}
\end{lemma}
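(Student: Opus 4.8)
\emph{Proof proposal.} The plan is to prove (2) together with the forward implication of (1), and then treat the converse of (1) separately. The engine throughout is the fact that an idempotent equals its own inverse. So, assuming $a \sim b$, I would first record the two identities $ab^{-1} = (ab^{-1})^{-1} = ba^{-1}$ and $a^{-1}b = (a^{-1}b)^{-1} = b^{-1}a$. Squaring the idempotent $a^{-1}b$ and substituting $ba^{-1} = ab^{-1}$ inside then collapses it, $a^{-1}b = (a^{-1}b)(a^{-1}b) = a^{-1}(ba^{-1})b = a^{-1}(ab^{-1})b = \mathbf{d}(a)\mathbf{d}(b)$, and dually $ab^{-1} = \mathbf{r}(a)\mathbf{r}(b)$. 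These two evaluations are what make the four expressions in (2) collapse onto a single element: writing $m = ab^{-1}b$, one has $aa^{-1}b = a(a^{-1}b) = a\,\mathbf{d}(a)\mathbf{d}(b) = a\,\mathbf{d}(b) = ab^{-1}b = m$, and the remaining two expressions are handled by the same substitutions together with the symmetry $a \leftrightarrow b$.

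Next I would show $m = a \wedge b$. That $m \leq a$ and $m \leq b$ is immediate, since $m = a\cdot\mathbf{d}(b) = (ab^{-1})\cdot b$ is $a$ (respectively $b$) multiplied by an idempotent. For the greatest-lower-bound property, given $c \leq a$ and $c \leq b$ I would use $c = a\,\mathbf{d}(c) = b\,\mathbf{d}(c)$ and $\mathbf{d}(c) \leq \mathbf{d}(a),\mathbf{d}(b)$ to compute $m\,\mathbf{d}(c) = ab^{-1}(b\,\mathbf{d}(c)) = ab^{-1}c$, and then reduce $ab^{-1}c$ to $c$ using $b^{-1}a = \mathbf{d}(a)\mathbf{d}(b)$ and $\mathbf{d}(b)\mathbf{d}(c) = \mathbf{d}(c)$; this gives $c = m\,\mathbf{d}(c)$, that is, $c \leq m$. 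The domain and range asserted in (1) then drop out by direct computation from the convenient forms of $m$: $\mathbf{d}(m) = (ab^{-1}b)^{-1}(ab^{-1}b) = \mathbf{d}(b)\mathbf{d}(a)\mathbf{d}(b) = \mathbf{d}(a)\mathbf{d}(b)$, and from $m = bb^{-1}a$, $\mathbf{r}(m) = \mathbf{r}(b)\mathbf{r}(a)\mathbf{r}(b) = \mathbf{r}(a)\mathbf{r}(b)$, the middle steps using only that idempotents commute and are idempotent.

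For the converse of (1) I would set $m = a \wedge b$ and extract two descriptions of $m$ from $m \leq a$, $m \leq b$ and the hypotheses: from $m \leq a$ with $\mathbf{d}(m) = \mathbf{d}(a)\mathbf{d}(b)$ the form $m = a\,\mathbf{d}(m) = a\,\mathbf{d}(a)\mathbf{d}(b) = ab^{-1}b$; and from $m \leq b$ with $\mathbf{r}(m) = \mathbf{r}(a)\mathbf{r}(b)$ the form $m = \mathbf{r}(m)\,b = \mathbf{r}(a)\mathbf{r}(b)\,b = aa^{-1}b$. Compatibility is then forced by cancellation applied to $ab^{-1}b = aa^{-1}b$: right-multiplying by $b^{-1}$ gives $ab^{-1} = aa^{-1}bb^{-1} = \mathbf{r}(a)\mathbf{r}(b)$, an idempotent, while left-multiplying by $a^{-1}$ gives $a^{-1}b = \mathbf{d}(a)\mathbf{d}(b)$, an idempotent; hence $a \sim b$. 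I expect this converse to be the only genuinely delicate part. The forward direction and (2) run almost mechanically on the two idempotent identities, whereas here the work is in recognizing that the domain and range hypotheses are precisely what is needed to rewrite $m$ in the two mixed forms $ab^{-1}b$ and $aa^{-1}b$, whose cancellation exposes $ab^{-1}$ and $a^{-1}b$ as idempotents.
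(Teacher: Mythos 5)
Your proof is correct. Note that the paper itself gives no argument for this lemma: it is quoted verbatim from \cite[Lemma 1.4.11, Lemma 1.4.12]{Lawson1998}, so there is no internal proof to compare against; your argument is essentially the standard textbook one. Every step checks out: the evaluations $a^{-1}b = \mathbf{d}(a)\mathbf{d}(b)$ and $ab^{-1} = \mathbf{r}(a)\mathbf{r}(b)$ obtained by squaring the self-inverse idempotents, the verification that $m = ab^{-1}b$ is a lower bound and dominates any common lower bound $c$ via $m\,\mathbf{d}(c) = c$, and the converse by cancelling in $ab^{-1}b = aa^{-1}b$. The only point worth flagging is that in the converse you silently use the dual characterization of the natural partial order, $m \leq b$ if and only if $m = \mathbf{r}(m)b$, whereas the paper defines $\leq$ only by $a \leq b \Leftrightarrow a = ba^{-1}a$; this equivalence is standard (it is \cite[Lemma 1.4.6]{Lawson1998}) but deserves an explicit citation or a one-line verification.
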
 

The following was proved as \cite[Proposition~1.4.19]{Lawson1998}.

\begin{lemma}\label{lem:fish} Let $S$ be an inverse semigroup, let $A$ be a non-empty subset and let $s \in S$.
\begin{enumerate}
\item If $\bigwedge A$ exists then $\bigwedge_{a \in A}sa$ exists and $s\left( \bigwedge A \right) = \bigwedge_{a \in A}sa$. 
\item If $\bigwedge A$ exists then $\bigwedge_{a \in A}as$ exists and $\left( \bigwedge A \right)s = \bigwedge_{a \in A}as$. 
\end{enumerate}
\end{lemma}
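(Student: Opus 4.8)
The two parts are interchanged by the involution $x \mapsto x^{-1}$, which is an order isomorphism of $S$ satisfying $(xy)^{-1} = y^{-1}x^{-1}$ and which therefore carries an existing meet $\bigwedge A$ to $\bigwedge_{a \in A} a^{-1}$. Applying part (1) to the set $\{a^{-1} \colon a \in A\}$ and the element $s^{-1}$, and then taking inverses (using $s^{-1}a^{-1} = (as)^{-1}$), yields part (2). So the plan is to prove part (1) and obtain part (2) by this duality.

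Write $m = \bigwedge A$. First I would check that $sm$ is a lower bound for $\{sa \colon a \in A\}$: since $m \leq a$ for every $a \in A$ and left multiplication by $s$ is order-preserving for the natural partial order, we get $sm \leq sa$ for all $a$. The substantial step is to show that $sm$ is the \emph{greatest} lower bound. Let $t$ satisfy $t \leq sa$ for every $a \in A$. The key idea is to pull $t$ back along $s$: left multiplication by $s^{-1}$ is also order-preserving, so $s^{-1}t \leq s^{-1}sa = \mathbf{d}(s)a \leq a$ for each $a$, the last inequality holding because $\mathbf{d}(s)$ is an idempotent. Thus $s^{-1}t$ is a lower bound for $A$, whence $s^{-1}t \leq m$, and multiplying on the left by $s$ gives $ss^{-1}t \leq sm$. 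Finally, fixing any $a \in A$ (this is where non-emptiness of $A$ enters), from $t \leq sa$ I get $\mathbf{r}(t) \leq \mathbf{r}(sa) = s(aa^{-1})s^{-1} \leq ss^{-1}$, so $ss^{-1}t = t$ and therefore $t \leq sm$, as required.

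The only delicate point is this last one, recovering $t$ from $s^{-1}t$: the identity $ss^{-1}t = t$ is false for a general $t$, so I expect the main obstacle to be justifying it, which is precisely where the hypothesis $t \leq sa$ (equivalently $\mathbf{r}(t) \leq ss^{-1}$) together with the non-emptiness of $A$ is genuinely used. Everything else reduces to the standard facts that multiplication is monotone for the natural partial order and that $ex \leq x$ and $xex^{-1} \leq xx^{-1}$ for idempotent $e$.
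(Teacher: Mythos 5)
Your proof is correct. The paper does not prove Lemma~\ref{lem:fish} itself but simply cites \cite[Proposition~1.4.19]{Lawson1998}, and your argument --- $sm$ is a lower bound by monotonicity, any lower bound $t$ satisfies $s^{-1}t \leq \bigwedge A$, and $t$ is recovered as $t = ss^{-1}t$ from $\mathbf{r}(t) \leq \mathbf{r}(sa) \leq ss^{-1}$ (which is exactly where non-emptiness of $A$ is used), with part (2) following from part (1) via the order-reversing-free involution $x \mapsto x^{-1}$ --- is essentially the standard proof given in that reference.
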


The proof of the following is straightforward.

\begin{lemma}\label{lem:oj} Let $S$ be an inverse semigroup.
\begin{enumerate}
\item $a \perp b$ implies that $ca \perp cb$ for any $c \in S$.
\item $a \perp b$ implies that $ac \perp bc$ for any $c \in S$.
\end{enumerate}
\end{lemma}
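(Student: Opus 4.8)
The plan is to unwind the definition of the orthogonality relation and verify its two defining equations in each case, using only that $0$ is an absorbing element and that idempotents commute among themselves. For part (1), writing out $a \perp b$ as $a^{-1}b = 0$ and $ab^{-1} = 0$, I must check that $(ca)^{-1}(cb) = 0$ and $(ca)(cb)^{-1} = 0$. The second of these is immediate, since $(ca)(cb)^{-1} = c(ab^{-1})c^{-1} = c \cdot 0 \cdot c^{-1} = 0$ because $0$ absorbs.

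The first equation is where the only real work lies. Here $(ca)^{-1}(cb) = a^{-1}(c^{-1}c)b$, and the obstacle is the extra idempotent $e = c^{-1}c$ sitting between $a^{-1}$ and $b$: it cannot simply be deleted, and idempotents need not commute with arbitrary elements. The key step is the standard sliding identity for inverse semigroups, namely that for any idempotent $e$ and any element $b$ one has $eb = b(b^{-1}eb)$ with $b^{-1}eb$ again idempotent; this follows at once from the commuting of the idempotents $e$ and $bb^{-1}$, since $b(b^{-1}eb) = (bb^{-1})eb = e(bb^{-1})b = eb$. Applying it gives $a^{-1}(c^{-1}c)b = (a^{-1}b)(b^{-1}c^{-1}cb) = 0$, as required.

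Part (2) is handled dually. Now it is $(ac)^{-1}(bc) = c^{-1}(a^{-1}b)c = 0$ that is the immediate equation, while the one needing the sliding trick is $(ac)(bc)^{-1} = a(cc^{-1})b^{-1}$. Here I would use the mirror-image identity $ae = (aea^{-1})a$ for an idempotent $e$, justified again by the commuting of $e$ with $a^{-1}a$, to collapse the product to $(acc^{-1}a^{-1})(ab^{-1}) = 0$.

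I expect no genuine difficulty here; the single point requiring care is the placement of the superfluous idempotent factor $c^{-1}c$ (respectively $cc^{-1}$), which is dispatched by the elementary idempotent-sliding identities above. Everything else is formal manipulation with the absorbing zero, consistent with the paper's remark that the proof is straightforward.
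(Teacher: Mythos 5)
Your proof is correct. The paper gives no proof at all, stating only that the result is straightforward, and your argument --- the absorbing zero disposing of one equation in each part, together with the idempotent-sliding identities $eb = b(b^{-1}eb)$ and $ae = (aea^{-1})a$ (both consequences of commuting idempotents) to eliminate the interposed factor $c^{-1}c$, respectively $cc^{-1}$ --- is precisely the standard verification the author intends the reader to supply.
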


\begin{lemma}\label{lem:buffs} In an inverse semigroup, let $a \sim b$.
Then the following are equivalent:
\begin{enumerate}
\item $a \perp b$.
\item $\mathbf{d}(a) \perp \mathbf{d}(b)$.
\item $\mathbf{r}(a) \perp \mathbf{r}(b)$.
\end{enumerate}
\end{lemma}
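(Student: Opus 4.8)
The plan is to funnel all three conditions through the single statement $a \wedge b = 0$, which makes sense here because $a \sim b$ guarantees via Lemma~\ref{lem:wedge}(1) that the meet $a \wedge b$ exists. Two elementary observations will carry most of the weight. First, in any inverse semigroup an element $s$ vanishes if and only if $\mathbf{d}(s) = 0$ if and only if $\mathbf{r}(s) = 0$, since $s = s\mathbf{d}(s) = \mathbf{r}(s)s$. Second, for idempotents $e,f$ we have $e^{-1} = e$ and $f^{-1} = f$, so the two defining conditions $e^{-1}f = 0$ and $ef^{-1} = 0$ both collapse to $ef = 0$; in particular $\mathbf{d}(a) \perp \mathbf{d}(b)$ is equivalent to $\mathbf{d}(a)\mathbf{d}(b) = 0$, and likewise $\mathbf{r}(a) \perp \mathbf{r}(b)$ is equivalent to $\mathbf{r}(a)\mathbf{r}(b) = 0$.

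The key step is to establish that, under the hypothesis $a \sim b$, the condition $a \perp b$ is equivalent to $a \wedge b = 0$. For the forward direction I would use the formula $a \wedge b = aa^{-1}b$ from Lemma~\ref{lem:wedge}(2): if $a^{-1}b = 0$ then $a \wedge b = a(a^{-1}b) = 0$. For the converse, suppose $a \wedge b = 0$; reading off both $a \wedge b = aa^{-1}b$ and $a \wedge b = ab^{-1}b$ gives $aa^{-1}b = 0$ and $ab^{-1}b = 0$, and then left-multiplying the first by $a^{-1}$ yields $a^{-1}b = a^{-1}(aa^{-1}b) = 0$, while right-multiplying the second by $b^{-1}$ yields $ab^{-1} = (ab^{-1}b)b^{-1} = 0$; hence $a \perp b$.

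It then remains only to transport $a \wedge b = 0$ across the domain and range maps. By Lemma~\ref{lem:wedge}(1) we have $\mathbf{d}(a \wedge b) = \mathbf{d}(a)\mathbf{d}(b)$ and $\mathbf{r}(a \wedge b) = \mathbf{r}(a)\mathbf{r}(b)$, so the first elementary observation gives the chain $a \wedge b = 0 \iff \mathbf{d}(a)\mathbf{d}(b) = 0 \iff \mathbf{r}(a)\mathbf{r}(b) = 0$. Combining this with the second observation and with the equivalence proved in the previous paragraph produces exactly $(1) \iff (2) \iff (3)$. I do not expect a genuine obstacle here: the only point requiring any care is the backward implication $a \wedge b = 0 \Rightarrow a \perp b$, where one must remember to recover $a^{-1}b$ and $ab^{-1}$ from the products $aa^{-1}b$ and $ab^{-1}b$ by multiplying by $a^{-1}$ and $b^{-1}$ respectively; everything else is a direct appeal to Lemma~\ref{lem:wedge} and the standard identities of the natural partial order.
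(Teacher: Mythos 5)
Your proposal is correct, but it takes a genuinely different route from the paper. The paper proves only the equivalence of (1) and (2) and appeals to symmetry for (3): the implication (1)$\Rightarrow$(2) is immediate, and for (2)$\Rightarrow$(1) it computes directly that $ab^{-1} = a(a^{-1}ab^{-1}b)b^{-1} = 0$, then uses the hypothesis $a \sim b$ only through the idempotency of $a^{-1}b$, which gives $aa^{-1}bb^{-1} \leq ab^{-1} = 0$ and hence $a^{-1}b = a^{-1}(aa^{-1}bb^{-1})b = 0$. You instead funnel all three conditions through the single statement $a \wedge b = 0$: compatibility guarantees via Lemma~\ref{lem:wedge}(1) that the meet exists with $\mathbf{d}(a \wedge b) = \mathbf{d}(a)\mathbf{d}(b)$ and $\mathbf{r}(a \wedge b) = \mathbf{r}(a)\mathbf{r}(b)$, and your recovery of $a^{-1}b$ and $ab^{-1}$ from $aa^{-1}b = 0$ and $ab^{-1}b = 0$ (multiplying by $a^{-1}$ on the left and $b^{-1}$ on the right, respectively) is exactly right, as is the elementary fact $s = 0 \iff \mathbf{d}(s) = 0 \iff \mathbf{r}(s) = 0$. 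What your version buys: it treats (2) and (3) on a perfectly equal footing, so you never need the paper's symmetry appeal (which, read literally, requires replacing $a,b$ by $a^{-1},b^{-1}$ and noting that $\sim$ and $\perp$ are inversion-invariant --- a small step the paper leaves silent), and it isolates a reusable intermediate fact, namely that for compatible elements orthogonality is equivalent to vanishing of the meet. What the paper's version buys is economy: it needs only bare multiplication identities and the order fact $xey \leq xy$ for $e$ idempotent, without invoking the full meet formulas of Lemma~\ref{lem:wedge}(2).
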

\begin{proof} By symmetry, it is enough to prove the equivalence of (1) and (2).
Clearly, (1) implies (2).
We prove that (2) implies (1).
Since $a \sim b$ we have that $a^{-1}b$ is an idempotent.
Hence $aa^{-1}bb^{-1} \leq ab^{-1}$.
But $ab^{-1} = a(a^{-1}ab^{-1}b)b^{-1} = 0$.
Thus  $aa^{-1}bb^{-1} \leq ab^{-1} = 0$.
We have proved that $\mathbf{r}(a) \perp \mathbf{r}(b)$ and so $a \perp b$.
\end{proof}

We shall need the following properties of the restricted product.

\begin{lemma}\label{lem:restricted-product} Let $S$ be an inverse semigroup.
\begin{enumerate}
\item Let $a,b \in S$. Then $ab = a' \cdot b'$ where $a' \leq a$ and $b' \leq b$.
\item Suppose that $ab$ is not a restricted product.
Then $ab = a_{1}b$, where $a_{1} < a$,
or $ab = ab_{1}$, where $b_{1} < b$.
\item Let $x \leq a \cdot b$.
Then $x = a' \cdot b'$ where $a' \leq a$ and $b' \leq b$.
\item Let $a,b \in S$. 
Then $(a^{\downarrow})(b^{\downarrow}) = (ab)^{\downarrow}$, where on the left we use products of subsets.
\end{enumerate}
\end{lemma}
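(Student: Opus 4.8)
The plan is to prove (1)--(3) by the standard device of cutting $a$ and $b$ down by the linking idempotents $\mathbf{d}(a)$ and $\mathbf{r}(b)$, and then to obtain (4) by combining (1) and (3). For (1), I would set $a' = a\mathbf{r}(b) = abb^{-1}$ and $b' = \mathbf{d}(a)b = a^{-1}ab$. Multiplying $a$ on the right by an idempotent and $b$ on the left by an idempotent immediately gives $a' \leq a$ and $b' \leq b$. The computation $a'b' = abb^{-1}a^{-1}ab = ab$ uses only that idempotents commute, and the same commutativity yields $\mathbf{d}(a') = a^{-1}abb^{-1} = \mathbf{r}(b')$, so that $a' \cdot b'$ is a genuine restricted product equal to $ab$.

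For (2), I would reuse the same elements $a_{1} = abb^{-1}$ and $b_{1} = a^{-1}ab$, noting that $a_{1}b = abb^{-1}b = ab$ and $ab_{1} = aa^{-1}ab = ab$. The key observation is that $a_{1} = a$ holds exactly when $\mathbf{d}(a) \leq \mathbf{r}(b)$, and $b_{1} = b$ holds exactly when $\mathbf{r}(b) \leq \mathbf{d}(a)$, each being the idempotent-order reformulation of the corresponding equality. Since $ab$ is assumed not to be a restricted product we have $\mathbf{d}(a) \neq \mathbf{r}(b)$, so the two comparisons cannot both hold; whichever one fails forces the corresponding factor to drop strictly, giving the desired $a_{1} < a$ or $b_{1} < b$.

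For (3), with $x \leq a \cdot b$ I would first absorb the domain idempotent of $x$ into $b$: set $b' = b\,x^{-1}x \leq b$, so that $x = ab\,x^{-1}x = ab'$ using $x = ab\,x^{-1}x$. This product need not be restricted, so I would then cut $a$ down by setting $a' = a\mathbf{r}(b') \leq a$; one checks $a'b' = ab'b'^{-1}b' = ab' = x$, and because $b' \leq b$ gives $\mathbf{r}(b') \leq \mathbf{r}(b) = \mathbf{d}(a)$, the domain of $a'$ collapses via $\mathbf{d}(a') = \mathbf{r}(b')\mathbf{d}(a)\mathbf{r}(b') = \mathbf{r}(b')$, so that $a' \cdot b'$ is restricted.

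Finally, for (4) the inclusion $(a^{\downarrow})(b^{\downarrow}) \subseteq (ab)^{\downarrow}$ is just monotonicity of multiplication with respect to the natural partial order; for the reverse inclusion I would write $ab = a' \cdot b'$ via (1) and then decompose any $x \leq ab = a' \cdot b'$ through (3), obtaining $x = a'' \cdot b''$ with $a'' \leq a' \leq a$ and $b'' \leq b' \leq b$, whence $x \in (a^{\downarrow})(b^{\downarrow})$. I expect the only real subtlety to lie in (3): verifying that the second cut produces an honest restricted product, that is, that $\mathbf{d}(a') = \mathbf{r}(b')$ rather than merely $\mathbf{d}(a') \leq \mathbf{r}(b')$. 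This is precisely where the inequality $\mathbf{r}(b') \leq \mathbf{d}(a)$ is essential, and it is the step most easily gotten wrong.
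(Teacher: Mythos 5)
Your proof is correct and follows essentially the same route as the paper: both use the cut-down elements $a' = abb^{-1}$ and $b' = a^{-1}ab$ for (1) and (2), and your part (3) simply unwinds the paper's one-line reduction $x = a(b\mathbf{d}(x))$ followed by an application of (1), with (4) obtained identically from (1) and (3). Your explicit verification that $\mathbf{d}(a') = \mathbf{r}(b')$ in (3), using $\mathbf{r}(b') \leq \mathbf{r}(b) = \mathbf{d}(a)$, is exactly the computation the paper leaves implicit.
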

\begin{proof} 
(1) Put $a' = abb^{-1}$ and $b' = a^{-1}ab$.
It is now routine to check that $\mathbf{d}(a') = \mathbf{r}(b)$
and that $ab = a' \cdot b'$.

(2) We are given that $a^{-1}a \neq bb^{-1}$.
Suppose that $abb^{-1} = a$ and $a^{-1}ab = b$.
Then $a^{-1}abb^{-1} = a^{-1}a$ and so $a^{-1}a \leq bb^{-1}$
and $a^{-1}abb^{-1} = bb^{-1}$ and so $bb^{-1} \leq a^{-1}a$.
From which it follows that $a^{-1}a = bb^{-1}$ which is a contradiction.
It follows that either $abb^{-1} \neq a$ or $a^{-1}ab \neq  b$.
The result is now clear since $abb^{-1} \leq a$ and $a^{-1}ab \leq b$.

(3) We have that $x = a(b\mathbf{d}(x))$.
Thus by (1) above, we have that $x = a' \cdot b'$ where
$a' \leq a$ and $b' \leq b\mathbf{d}(x) \leq b$.

(4) The fact that $(a^{\downarrow})(b^{\downarrow}) \subseteq (ab)^{\downarrow}$
is immediate from the fact that inverse semigroups are partially ordered with respect to the natural partial order.
The reverse inclusion follows by (3).
\end{proof}

\section{Boolean inverse semigroups}

In this section, we shall describe the properties of Boolean inverse semigroups we shall need.
We refer the reader to \cite{Wehrung} for the general theory of Boolean inverse semigroups.

\subsection{Lattices}
A {\em distributive lattice} has a bottom element but not necessarily a top element; if it does have a top, it is said to be {\em unital}.
For the theory of Boolean algebras, we refer to \cite{GH} but we highlight some non-standard terminology here.
We use the term {\em unital Boolean algebra} to mean what is usually defined to be simply a `Boolean algebra'
and  {\em Boolean algebra} to mean what is usually termed a `generalized Boolean algebra'.
Specifically, a Boolean algebra is a distributive lattice that is locally a unital Boolean algebra ---
this means that for each element $x$ in the lattice the subset $x^{\downarrow}$ of elements beneath $x$ is a unital Boolean algebra.
In a Boolean algebra, if $e \leq f$ then there is a unique element, denoted by $(f \setminus e)$, such that $f = e \vee (f \setminus e)$
and $e \wedge (f \setminus e) = 0$.
The element $(f \setminus e)$ is the {\em relative complement}.
Observe that $(f \setminus e)$ is the unique element $g$ such that  $g \leq f$ such that $g \wedge e = 0$ and $f = e \vee g$.
In a unital Boolean algebra, we denote the (absolute) complement of an element $e$ by $\bar{e}$;
as usual, we have that $1 = e \vee \bar{e}$ and $e \wedge \bar{e} = 0$.
Observe that
$e \leq f$ if and only if $e\bar{f} = 0$.
If $X$ is any set then $\mathsf{P}(X)$ denotes the {\em powerset} of $X$.
This is a unital Boolean algebra for subset inclusion.
If $X$ is an infinite set then $\mathsf{P}_{\rm \tiny fin}(X)$ denote the set of all finite
subsets of $X$.
This is a Boolean algebra.

\subsection{Definition of Boolean inverse semigroups}
An inverse semigroup is said to be {\em distributive} if compatible elements have joins and multiplication distributes over such joins.
A distributive inverse semigroup is said to be {\em Boolean} if the semilattice of idempotents forms a Boolean algebra.

\begin{remark}
{\em We shall assume throughout this paper that our Boolean inverse semigroups are not just the zero semigroup.}
\end{remark}

If $a \perp b$ then $a \vee b$ is often written as $a \oplus b$ and is called an {\em orthogonal join}.
The following class of examples of Boolean inverse semigroups is fundamental to this paper.

\begin{proposition}\label{prop:boolean-finite-local-bisections} Let $G$ be a discrete groupoid.
Then
$\mathsf{K}_{{\rm \tiny fin}} (G)$, the set of all finite local bisections of $G$,
is a Boolean inverse semigroup.
\end{proposition}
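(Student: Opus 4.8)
The plan is to equip $\mathsf{K}_{{\rm \tiny fin}}(G)$ with the set-theoretic product $AB$ and the elementwise inverse $A^{-1} = \{a^{-1} \colon a \in A\}$, with the empty set serving as the zero, and then to verify the defining axioms directly. First I would confirm closure: the product of two local bisections is again a local bisection (noted in the excerpt), it has at most $|A|\,|B|$ elements and so is finite, and $A^{-1}$ is visibly a finite local bisection. Associativity is inherited from the associativity of the groupoid multiplication, since both $(AB)C$ and $A(BC)$ describe the set of all composable triple products $abc$.

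Next I would establish the inverse semigroup structure. The key computation is that, because $A$ is a local bisection, $A^{-1}A = \{\mathbf{d}(a) \colon a \in A\}$ and $AA^{-1} = \{\mathbf{r}(a) \colon a \in A\}$, both subsets of the identities $G_{o}$; from this $AA^{-1}A = A$ and $A^{-1}AA^{-1} = A^{-1}$ follow at once, so every element is regular with $A^{-1}$ a semigroup inverse. For subsets $E,F \subseteq G_{o}$ the composable product $EF$ is simply $E \cap F$, so the idempotents commute and $\mathsf{K}_{{\rm \tiny fin}}(G)$ is an inverse semigroup. Using $E = EE^{-1}$ for any idempotent $E$, I would then observe that the idempotents are exactly the finite subsets of $G_{o}$, that is, $\mathsf{E}(\mathsf{K}_{{\rm \tiny fin}}(G)) = \mathsf{P}_{\rm \tiny fin}(G_{o})$, which is a Boolean algebra by the Lattices subsection (and the unital $\mathsf{P}(G_{o})$ when $G_{o}$ is finite).

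It remains to identify the order-theoretic data with set operations and to prove distributivity. A short calculation with $A^{-1}A$ shows that the natural partial order is subset inclusion, $A \leq B$ if and only if $A \subseteq B$. Computing $A^{-1}B$ and $AB^{-1}$ as above, I would show that $A \sim B$ holds precisely when $A \cup B$ is again a local bisection (equivalently, when $A$ and $B$ coincide wherever their domains or their ranges meet), and that in this case $A \vee B = A \cup B$, a finite local bisection lying in $\mathsf{K}_{{\rm \tiny fin}}(G)$. For distributivity I would note that, when $A \sim B$, the set $C(A \cup B) = CA \cup CB$ is a product of local bisections, hence itself a local bisection; therefore $CA \sim CB$ and $C(A \vee B) = CA \vee CB$, with the symmetric statement for right multiplication. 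This yields distributivity and completes the verification that $\mathsf{K}_{{\rm \tiny fin}}(G)$ is Boolean.

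The main obstacle I anticipate is the bookkeeping in this last step: one must confirm not merely that the product distributes over unions as sets, but that compatibility is genuinely preserved under multiplication, so that the resulting unions really are lattice joins. The characterization of $\sim$ by ``$A \cup B$ is a local bisection'' is exactly what makes this transparent, since closure of $\mathsf{K}_{{\rm \tiny fin}}(G)$ under products then automatically delivers the compatibility of $CA$ and $CB$; Lemma~\ref{lem:oj} provides the analogous control in the purely orthogonal case should a reduction be preferred.
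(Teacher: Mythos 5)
Your proof is correct, but it takes a genuinely more self-contained route than the paper's. The paper disposes of most of the work by citation: it invokes \cite[Proposition~2.1]{LL}, which says that the set of \emph{all} local bisections of $G$ forms a pseudogroup, and then merely observes that finite local bisections are closed under products and inversion, so that $\mathsf{K}_{\rm \tiny fin}(G)$ inherits the structure of a distributive inverse semigroup, with idempotents the finite subsets of $G_{o}$, which form a (generalized) Boolean algebra. You instead verify the inverse semigroup axioms and distributivity from first principles, and the payoff is the explicit order-theoretic dictionary that the paper leaves implicit: $A^{-1}A = \{\mathbf{d}(a) \colon a \in A\}$, the natural partial order is inclusion, $A \sim B$ holds exactly when $A \cup B$ is again a local bisection, joins are unions, and distributivity is the set identity $C(A \cup B) = CA \cup CB$ combined with closure under products; that characterization of compatibility is precisely what makes the existence of $CA \vee CB$ automatic, as you note. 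One point to tighten: your identification of the idempotents is stated in slightly circular order --- you justify commutativity of idempotents by assuming they lie in $G_{o}$, and the appeal to $E = EE^{-1}$ presupposes the inverse semigroup structure you are still building. The repair is a short direct computation: if $E^{2} = E$ and $x \in E$, write $x = ab$ with $a,b \in E$ composable; then $\mathbf{r}(x) = \mathbf{r}(a)$ forces $x = a$ by the local bisection property, whence $b = \mathbf{d}(a) = \mathbf{d}(x) \in E$, and comparing $x$ with $\mathbf{d}(x)$, which have equal domains, forces $x = \mathbf{d}(x) \in G_{o}$. With that inserted, your argument is complete, and it is arguably preferable for a reader who does not want to unpack the pseudogroup machinery of \cite{LL}, while the paper's version buys brevity at the cost of an external dependency.
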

\begin{proof} The set of all local bisections of $G$ is a pseudogroup \cite[Proposition~2.1]{LL}. 
The product of two finite local bisections is a finite local bisection.
Observe that the inverse of $A$ is $A^{-1}$.
It follows that $\mathsf{K}_{{\rm \tiny fin}} (G)$ is a distributive inverse semigroup.
The finite local bisections in the set of identities of $G$ are the finite subsets of the set of identities.
Thus the poset of idempotents of $\mathsf{K}_{{\rm \tiny fin}} (G)$ is order-isomorphic to the set
of all finite subsets of the set of identities of $G$.
Thus $\mathsf{K}_{{\rm \tiny fin}} (G)$ is a Boolean inverse semigroup.
\end{proof}

The following  two results were proved as \cite[Lemma~2.5(3)]{Lawson2016}
and \cite[Lemma~2.5(4)]{Lawson2016}, respectively.

\begin{lemma}\label{lem:eggs} Let $S$ be a distributive inverse semigroup.
\begin{enumerate}

\item Suppose that $\bigvee_{i=1}^{m}a_{i}$ and $c \wedge \left(  \bigvee_{i=1}^{m}a_{i}  \right)$ both exist.
Then all meets $c \wedge a_{i}$ exist as does the join  $\bigvee_{i=1}^{m}(a_{i} \wedge c)$ and we have that
$$c \wedge \left(  \bigvee_{i=1}^{m}a_{i}  \right)
=
\bigvee_{i=1}^{m}(a_{i} \wedge c).$$

\item Suppose that $a \in S$ and $b = \bigvee_{j=1}^{n} b_{j}$ are such that all meets $a \wedge b_{j}$ exist.
Then $a \wedge b$ exists and is equal to $\bigvee_{j=1}^{n} a \wedge b_{j}$.

\end{enumerate}
\end{lemma}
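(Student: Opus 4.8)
The plan is to derive both parts from four standard facts about a distributive inverse semigroup $S$: that $x \leq y$ is equivalent to $x = y\mathbf{d}(x)$; that any two elements sharing an upper bound are compatible, so that a finite pairwise-compatible family possessing a common upper bound has a join; that a meet is characterised as a greatest lower bound; and that multiplication distributes over joins of compatible families on both sides. In each part the displayed identity is the easy half — the real work is producing the auxiliary meets and joins and then pinning down one inequality by a single application of distributivity.

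For (1), write $b = \bigvee_{i=1}^{m} a_i$ and $d = c \wedge b$, both of which exist by hypothesis. First I would show that each $c \wedge a_i$ exists. Since $d \leq b$ and $a_i \leq b$, the elements $d$ and $a_i$ are compatible, so $d \wedge a_i$ exists; a routine greatest-lower-bound check (any common lower bound of $c$ and $a_i$ lies below $b$, hence below $d$, hence below $d \wedge a_i$) shows $d \wedge a_i = c \wedge a_i$. Now all the $c \wedge a_i$ lie below $d$, so they are pairwise compatible and their finite join $s := \bigvee_{i}(c \wedge a_i)$ exists, with $s \leq d$. For the reverse inequality, set $e = \mathbf{d}(d)$; then $d = b e$ because $d \leq b$, and distributivity gives $d = \left( \bigvee_i a_i \right) e = \bigvee_i a_i e$. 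Each $a_i e \leq a_i$ (as $e$ is idempotent) and $a_i e \leq b e = d \leq c$, so $a_i e \leq c \wedge a_i$; summing yields $d \leq s$, whence $d = s$.

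For (2), the meets $a \wedge b_j$ are given to exist and all lie below $a$, so they are pairwise compatible and $s := \bigvee_{j=1}^{n}(a \wedge b_j)$ exists. Since $s$ lies below $a$ and below $b$ (each $a \wedge b_j \leq b_j \leq b$), it is a lower bound of $a$ and $b$. To see it is the greatest, take any $z$ with $z \leq a$ and $z \leq b$; writing $z = b\,\mathbf{d}(z)$ and distributing gives $z = \bigvee_j b_j \mathbf{d}(z)$, and each $b_j\mathbf{d}(z) \leq z \leq a$ while $b_j\mathbf{d}(z) \leq b_j$, so $b_j\mathbf{d}(z) \leq a \wedge b_j$ and therefore $z \leq s$. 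Hence $a \wedge b$ exists and equals $s$.

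The point requiring care, and the step I expect to be the main obstacle, is the \emph{existence} — as opposed to the value — of the intermediate joins and meets: in (1) the meets $c \wedge a_i$ are not hypothesised and must be constructed via $d$, and in both parts one must check that a finite pairwise-compatible family genuinely admits a join, which holds because the successive partial joins remain below the common bound ($d$ in (1), $a$ in (2)) and so stay compatible with the terms still to be adjoined. Once existence is secured, the equalities fall out of the single distributivity computation $d = \bigvee_i a_i e$ in (1) and $z = \bigvee_j b_j\mathbf{d}(z)$ in (2).
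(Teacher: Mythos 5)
Your proof is correct: both parts check out, including the two points of genuine care you flag (constructing $c \wedge a_i$ as $d \wedge a_i$ via the greatest-lower-bound check, and verifying that the iterated binary joins of a pairwise-compatible family below a common bound $d$, respectively $a$, remain below that bound and hence keep existing). The paper does not prove this lemma itself but quotes it from Lemma~2.5(3),(4) of the cited reference \cite{Lawson2016}, and your argument --- reducing everything to $x \leq y \Leftrightarrow x = y\mathbf{d}(x)$ plus one application of distributivity ($d = \bigvee_i a_i e$, respectively $z = \bigvee_j b_j \mathbf{d}(z)$) --- is essentially the standard proof given there.
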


If $y \leq x$ then $\mathbf{d}(y) \leq \mathbf{d}(x)$ and so the following definition makes sense:
$$(x \setminus y) = x(\mathbf{d}(x) \setminus \mathbf{d}(y)).$$

\begin{lemma}\label{lem:chicken} Let $S$ be a Boolean inverse semigroup.
\begin{enumerate}
\item $\mathbf{d}(x \setminus y) = (\mathbf{d}(x) \setminus \mathbf{d}(y))$.
\item If $y \leq x$ then $y \perp (x \setminus y)$ and $x = y \vee (x \setminus y)$.
\item Suppose that $a \leq x$ is such that $a \perp y$ and $x = y \vee a$ then
$a = (x \setminus y)$. 
\item $\mathbf{r}(x \setminus y) = (\mathbf{r}(x) \setminus \mathbf{r}(y))$.
\end{enumerate}
\end{lemma}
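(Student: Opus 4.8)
The plan is to fix notation once and for all: write $e = \mathbf{d}(x)$, $f = \mathbf{d}(y)$, and $g = (\mathbf{d}(x) \setminus \mathbf{d}(y))$, so that $f \leq e$ and $g$ is, by definition, the relative complement of $f$ in $e^{\downarrow}$, characterised (by the observation following the definition of relative complement) as the unique idempotent with $g \leq e$, $fg = 0$ and $f \vee g = e$. With this notation $(x \setminus y) = xg$, and since $y \leq x$ we have $y = x\mathbf{d}(y) = xf$. For part (1) I would simply compute $\mathbf{d}(xg) = (xg)^{-1}(xg) = g x^{-1} x g = geg = g$, using $g \leq e$ at the last step. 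This records the identity $\mathbf{d}(xh) = h$ valid for any idempotent $h \leq e$, which I will use repeatedly below.

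For part (2) I would first check orthogonality by direct calculation: $y^{-1}(xg) = f x^{-1} x g = feg = fg = 0$ and $y(xg)^{-1} = xfgx^{-1} = x(fg)x^{-1} = 0$, so $y \perp (x \setminus y)$. For the join I invoke distributivity of multiplication over joins (a Boolean inverse semigroup is distributive); since $f \vee g = e$ is a join of orthogonal, hence compatible, idempotents,
$$y \vee (x \setminus y) = xf \vee xg = x(f \vee g) = xe = x.$$

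Part (3) is a uniqueness statement, and the strategy is to push everything down to the Boolean algebra of idempotents, where relative complements are unique. Given $a \leq x$ with $a \perp y$ and $x = y \vee a$, write $a = x\mathbf{d}(a)$ with $\mathbf{d}(a) \leq e$. From $a \perp y$ I obtain $a^{-1}y = \mathbf{d}(a)x^{-1}xf = \mathbf{d}(a)ef = \mathbf{d}(a)f = 0$, so $\mathbf{d}(a) \wedge f = 0$. From $x = y \vee a = xf \vee x\mathbf{d}(a) = x(f \vee \mathbf{d}(a))$ (again by distributivity) I apply $\mathbf{d}$ and use $\mathbf{d}(xh) = h$ to get $e = f \vee \mathbf{d}(a)$. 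Thus $\mathbf{d}(a)$ is an idempotent below $e$ with $\mathbf{d}(a) \wedge f = 0$ and $f \vee \mathbf{d}(a) = e$, so by uniqueness of the relative complement $\mathbf{d}(a) = g$, whence $a = x\mathbf{d}(a) = xg = (x \setminus y)$.

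Part (4) I would deduce from part (2) by taking ranges of the orthogonal decomposition $x = y \vee (x \setminus y)$: this gives $\mathbf{r}(x) = \mathbf{r}(y) \vee \mathbf{r}(x \setminus y)$, while $(x\setminus y) \leq x$ gives $\mathbf{r}(x \setminus y) \leq \mathbf{r}(x)$ and $y \perp (x \setminus y)$ gives $\mathbf{r}(y) \perp \mathbf{r}(x \setminus y)$ by Lemma~\ref{lem:buffs}, so $\mathbf{r}(y) \wedge \mathbf{r}(x \setminus y) = 0$; uniqueness of the relative complement in $\mathbf{r}(x)^{\downarrow}$ then yields $\mathbf{r}(x \setminus y) = (\mathbf{r}(x) \setminus \mathbf{r}(y))$. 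The one point needing care — and the step I expect to be the main obstacle — is that $\mathbf{r}$ carries compatible joins to joins, which is not isolated in the excerpt. If it cannot simply be quoted I would argue directly by expanding $\mathbf{r}(x) = (y \vee xg)(y^{-1} \vee gx^{-1})$ via distributivity and checking that the two cross terms $ygx^{-1} = x(fg)x^{-1}$ and $xgy^{-1} = x(gf)x^{-1}$ vanish, leaving $\mathbf{r}(x) = \mathbf{r}(y) \vee \mathbf{r}(xg)$, with an analogous computation showing $\mathbf{r}(y)\mathbf{r}(xg) = xf(x^{-1}x)gx^{-1} = x(fg)x^{-1} = 0$.
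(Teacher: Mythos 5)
Your proof is correct and follows essentially the same route as the paper: translate everything into the Boolean algebra of idempotents via $(x \setminus y) = x(\mathbf{d}(x)\setminus\mathbf{d}(y))$ and let uniqueness of relative complements do the work, with orthogonality checks that amount to inlining Lemma~\ref{lem:buffs}. If anything you supply details the paper leaves implicit: its proof of (3) as written only derives $a \leq (x \setminus y)$, where you pin down $\mathbf{d}(a) = \mathbf{d}(x)\setminus\mathbf{d}(y)$ exactly, and its part (4) is dispatched by citing (3), where you explicitly verify the one delicate step, namely that $\mathbf{r}$ carries the orthogonal decomposition $x = y \vee (x\setminus y)$ to $\mathbf{r}(x) = \mathbf{r}(y)\vee\mathbf{r}(x\setminus y)$ with orthogonal terms.
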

\begin{proof} (1) Straightforward.

(2) Observe that $(x \setminus y) \leq x$
and that
$\mathbf{d}(x \setminus y) = \mathbf{d}(x) \setminus \mathbf{d}(y)$.
It follows that $x \sim (x \setminus y)$ and so, by Lemma~\ref{lem:buffs},
we have that $y \perp (x \setminus y)$.
Clearly, $y \vee (x \setminus y) \leq x$.
But $\mathbf{d}(y \vee (x \setminus y)) = \mathbf{d}(y) \vee (\mathbf{d}(x) \setminus \mathbf{d}(y)) = \mathbf{d}(x)$.
It follows that $x = y \vee (x \setminus y) \leq x$.

(3) Observe that $\mathbf{d}(a) \leq \mathbf{d}(x)$, $\mathbf{d}(a) \perp \mathbf{d}(y)$
and $\mathbf{d}(x) = \mathbf{d}(y) \vee \mathbf{d}(a)$.
Thus in the Boolean algebra, we have that $\mathbf{d}(a) \leq \mathbf{d}(x) \setminus \mathbf{d}(y)$.
It follows that $a \leq (x \setminus y)$.

(4) This follows by part (3) above.
\end{proof}

The following was proved as \cite[Lemma 2.2]{Lawson2020} and \cite[Lemma~3.1.12]{Wehrung}.

\begin{lemma}\label{lem:properties-of-setminus} Let $S$ be a Boolean inverse semigroup.
\begin{enumerate}

\item $(s \setminus t) = s^{-1} \setminus t^{-1}$.

\item $a(s \setminus t) = as \setminus at$.
and
$(s \setminus t)a = sa \setminus ta$.

\item $s(u \vee v) = (s \setminus u)s^{-1}(s \setminus v)$.

\item $(s \setminus t)(u \setminus v) = su \setminus (sv \vee tu)$.

\item $a \leq b \leq c$ implies that $c \setminus c \leq c \setminus a$. 

\end{enumerate}
\end{lemma}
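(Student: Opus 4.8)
The plan is to derive all five identities directly from the defining formula $(x\setminus y) = x(\mathbf{d}(x)\setminus\mathbf{d}(y))$ together with three workhorses from Lemma~\ref{lem:chicken}: the orthogonal decomposition $x = y \oplus (x\setminus y)$ with $y\perp(x\setminus y)$; the formulas $\mathbf{d}(x\setminus y) = \mathbf{d}(x)\setminus\mathbf{d}(y)$ and $\mathbf{r}(x\setminus y) = \mathbf{r}(x)\setminus\mathbf{r}(y)$; and Lemma~\ref{lem:l-and-r-order}, which upgrades an equality of domains (or of ranges) to an equality of elements once both elements are known to lie beneath a common element. Throughout I assume the order relations that make each difference meaningful ($t\leq s$, $v\leq u$, $u,v\leq s$, $a\leq b\leq c$, as appropriate); a preliminary observation is that these relations propagate correctly, so that $t\leq s$ gives $at\leq as$ and $ta\leq sa$ and every difference appearing on a right-hand side is genuinely defined.

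For part (1) the cleanest route is the uniqueness statement Lemma~\ref{lem:chicken}(3). Starting from $s = t\oplus(s\setminus t)$ and applying the inverse map, which is an order isomorphism carrying orthogonal joins to orthogonal joins, yields $s^{-1} = t^{-1}\oplus(s\setminus t)^{-1}$ with $(s\setminus t)^{-1}\leq s^{-1}$ and $(s\setminus t)^{-1}\perp t^{-1}$; the uniqueness in Lemma~\ref{lem:chicken}(3) then forces $(s\setminus t)^{-1} = s^{-1}\setminus t^{-1}$, which is the content of (1). Part (2) runs identically: multiplying $s = t\oplus(s\setminus t)$ on the left by $a$ and using distributivity gives $as = at\vee a(s\setminus t)$, while Lemma~\ref{lem:oj}(1) turns $t\perp(s\setminus t)$ into $at\perp a(s\setminus t)$; since $a(s\setminus t)\leq as$, Lemma~\ref{lem:chicken}(3) identifies $a(s\setminus t)$ as $as\setminus at$. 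The right-handed identity $(s\setminus t)a = sa\setminus ta$ is the mirror image, using Lemma~\ref{lem:oj}(2).

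Part (3), which I read as the de~Morgan law $s\setminus(u\vee v) = (s\setminus u)s^{-1}(s\setminus v)$, I would handle on the range side. First one checks $(s\setminus u)s^{-1} = \mathbf{r}(s)\setminus\mathbf{r}(u)$, an idempotent, from the definition and Lemma~\ref{lem:chicken}(4); hence the right-hand side equals $(\mathbf{r}(s)\setminus\mathbf{r}(u))(s\setminus v)$, which lies beneath $s$. Its range is $(\mathbf{r}(s)\setminus\mathbf{r}(u))\wedge\mathbf{r}(s\setminus v) = (\mathbf{r}(s)\setminus\mathbf{r}(u))\wedge(\mathbf{r}(s)\setminus\mathbf{r}(v))$, which by de~Morgan in the Boolean algebra $\mathsf{E}(S)$ equals $\mathbf{r}(s)\setminus(\mathbf{r}(u)\vee\mathbf{r}(v)) = \mathbf{r}(s\setminus(u\vee v))$. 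As both sides lie beneath $s$ and share this range, Lemma~\ref{lem:l-and-r-order} closes the case. Part (5), which I read as $a\leq b\leq c \Rightarrow c\setminus b\leq c\setminus a$, is quickest: $a\leq b$ gives $\mathbf{d}(a)\leq\mathbf{d}(b)$, hence $\mathbf{d}(c)\setminus\mathbf{d}(b)\leq\mathbf{d}(c)\setminus\mathbf{d}(a)$ by the order-reversal of relative complementation, and left multiplication by $c$ preserves this order.

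Part (4) is where I expect the real work. Rather than expand both differences at once, I would peel them off one at a time using part (2): first $(s\setminus t)(u\setminus v) = (s\setminus t)u \setminus (s\setminus t)v$, and then $(s\setminus t)u = su\setminus tu$ and $(s\setminus t)v = sv\setminus tv$, reducing the claim to the nested-difference identity $(su\setminus tu)\setminus(sv\setminus tv) = su\setminus(sv\vee tu)$. Both sides lie beneath $su$, so by Lemma~\ref{lem:l-and-r-order} it suffices to match their domains; pushing $\mathbf{d}$ through every difference by Lemma~\ref{lem:chicken}(1) turns this into the purely Boolean-algebra statement that, inside $\mathbf{d}(su)$, the element $(\mathbf{d}(su)\setminus\mathbf{d}(tu))\setminus(\mathbf{d}(sv)\setminus\mathbf{d}(tv))$ equals $\mathbf{d}(su)\setminus(\mathbf{d}(sv)\vee\mathbf{d}(tu))$. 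The decisive simplification is $\mathbf{d}(tv)\leq\mathbf{d}(tu)$ (from $v\leq u$, so $tv\leq tu$), which makes the $\mathbf{d}(tv)$ term vanish. The hard part will be the bookkeeping: checking at each stage that the relevant differences are defined (that $tu\leq su$, $sv\leq su$, $tv\leq sv\wedge tu$, and that the joins exist because their summands are pairwise compatible). Once these are secured the domain identity is a routine relative-complement computation, and Lemma~\ref{lem:l-and-r-order} converts it back to the desired equality of elements.
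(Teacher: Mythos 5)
Your proof is correct, but there is nothing in the paper to compare it with line by line: the paper does not prove this lemma, it simply quotes it with a citation to \cite[Lemma 2.2]{Lawson2020} and \cite[Lemma 3.1.12]{Wehrung}. Your readings of the three typographical slips in the statement are the right ones: (1) should assert $(s\setminus t)^{-1} = s^{-1}\setminus t^{-1}$, (3) should begin $s\setminus (u\vee v)$, and (5) should conclude $c\setminus b \leq c\setminus a$; in each case your corrected version is what the cited sources establish. Methodologically your route differs from Wehrung's, who derives these identities equationally in his variety-theoretic framework from the axioms governing the skew difference $\ominus$; you instead exploit the order structure already set up in this paper --- the uniqueness clause of Lemma~\ref{lem:chicken}(3) for parts (1) and (2), and for parts (3) and (4) the device of showing both sides lie beneath a common element and then matching domains or ranges via Lemma~\ref{lem:l-and-r-order}. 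That device is exactly suited to the paper's toolkit and makes (4) nearly mechanical once you reduce, by two applications of (2), to the nested-difference identity $(su\setminus tu)\setminus(sv\setminus tv) = su\setminus(sv\vee tu)$; the Boolean computation there does close, since $\mathbf{d}(tv)\leq\mathbf{d}(tu)$ kills the extra term, and the side conditions you flag all hold (in particular $sv\setminus tv \leq su\setminus tu$ because $(s\setminus t)v \leq (s\setminus t)u$). Two micro-points worth writing out in a full version: in (1), that inversion carries orthogonal joins to orthogonal joins (immediate, since $a\perp b$ if and only if $a^{-1}\perp b^{-1}$ and inversion is an order isomorphism preserving joins), and in (3), the identity $(s\setminus u)s^{-1} = \mathbf{r}(s)\setminus\mathbf{r}(u)$, which follows from the standard fact that $x\leq s$ implies $xs^{-1} = \mathbf{r}(x)$ (write $x = \mathbf{r}(x)s$) together with Lemma~\ref{lem:chicken}(4). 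With those details supplied, your argument is a complete and self-contained proof that could replace the citation.
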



\begin{lemma}\label{lem:pork} Let $S$ be a Boolean inverse semigroup.
Then every binary join is equal to an orthogonal join.
\end{lemma}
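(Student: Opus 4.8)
The plan is to exhibit an explicit orthogonal decomposition of a given binary join. Suppose $a \vee b$ exists; then $a \sim b$, so by Lemma~\ref{lem:wedge} the meet $m = a \wedge b$ also exists. The natural candidate is to peel off from $b$ the part it shares with $a$: set $d = (b \setminus m)$. By Lemma~\ref{lem:chicken}(2) we have $b = m \vee d$ as an orthogonal join, and since $m = a \wedge b \leq a$ we obtain
$$a \vee b = a \vee m \vee d = a \vee d.$$
Thus it suffices to prove that $a \perp d$, for then $a \vee b = a \oplus d$ is the desired orthogonal join.

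First I would establish $a \sim d$. Both $a$ and $d$ lie beneath the common upper bound $a \vee b$ (indeed $d \leq b \leq a \vee b$), and any two elements of an inverse semigroup bounded above by a common element are compatible; this follows directly from the description of the natural partial order, since $x \leq z$ and $y \leq z$ give $x^{-1}y = \mathbf{d}(x)\mathbf{d}(y)$ and dually. Granted $a \sim d$, I can invoke Lemma~\ref{lem:buffs} to reduce orthogonality of $a$ and $d$ to orthogonality of their domain idempotents, i.e.\ it is enough to show $\mathbf{d}(a) \perp \mathbf{d}(d)$, which for idempotents just means $\mathbf{d}(a)\mathbf{d}(d) = 0$.

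The computation of $\mathbf{d}(a)\mathbf{d}(d)$ is where the work concentrates, but it takes place entirely inside the Boolean algebra $\mathsf{E}(S)$. Writing $e = \mathbf{d}(a)$ and $f = \mathbf{d}(b)$, Lemma~\ref{lem:wedge}(1) gives $\mathbf{d}(m) = ef$, and Lemma~\ref{lem:chicken}(1) gives $\mathbf{d}(d) = \mathbf{d}(b \setminus m) = f \setminus (ef)$. So I must check that $e\bigl(f \setminus (ef)\bigr) = 0$. Since $(f \setminus ef) \leq f$, multiplying by $e$ yields $e(f \setminus ef) = (ef)(f \setminus ef)$, and $ef$ is orthogonal to its relative complement $f \setminus (ef)$ in $f^{\downarrow}$ by definition of the relative complement; hence the product is $0$. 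This gives $\mathbf{d}(a) \perp \mathbf{d}(d)$, hence $a \perp d$, and therefore $a \vee b = a \oplus (b \setminus (a \wedge b))$ is an orthogonal join.

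The only genuine obstacle is confirming the compatibility $a \sim d$ so that Lemma~\ref{lem:buffs} applies; once orthogonality is reduced to the idempotents, the remaining steps are routine Boolean-algebra manipulations supported by Lemmas~\ref{lem:wedge} and~\ref{lem:chicken}. I expect no difficulty beyond bookkeeping, and the argument is uniform whether or not $S$ has further structure.
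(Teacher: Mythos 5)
Your proof is correct and follows essentially the same route as the paper: write $a \vee b = a \vee (b \setminus (a \wedge b))$ and use Lemma~\ref{lem:buffs} to upgrade compatibility to orthogonality via the domain idempotents (the paper peels the common part off the other factor, which is the same argument by symmetry). Your proposal is in fact slightly more careful, since you justify the compatibility $a \sim (b \setminus (a \wedge b))$ by the common upper bound $a \vee b$ and carry out the Boolean-algebra computation $\mathbf{d}(a)\,\mathbf{d}(b \setminus (a\wedge b)) = 0$ explicitly, where the paper asserts both as clear.
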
 
\begin{proof} Let $x \sim y$ so that $x \vee y$ is defined.
By Lemma~\ref{lem:wedge}, the meet $x \wedge y$ exists 
and
$\mathbf{d}(x \wedge y) = \mathbf{d}(x)\mathbf{d}(y)$ 
and
$\mathbf{r}(x \wedge y) = \mathbf{r}(x)\mathbf{r}(y)$.
Since $x \wedge y \leq x$ we can construct the element $x \setminus (x \wedge y)$.
Thus $x \vee y = (x \wedge y) \vee (x \setminus (x \wedge y)) \vee y$.
But $x \wedge y \leq y$.
It follows that 
$x \vee y = (x \setminus (x \wedge y)) \vee y$.
Clearly, $y \sim (x \setminus (x \wedge y))$ and $\mathbf{d}(y) \perp \mathbf{d}(x \setminus (x \wedge y))$.
It follows by Lemma~\ref{lem:buffs} that $y \perp x \setminus (x \wedge y)$.
\end{proof}

Although we have only talked above about compatible {\em binary} joins and orthogonal {\em binary} joins,
our results extend to $n$-ary joins of either type where $n$ is any non-zero natural number.

\begin{lemma}\label{lem:orthogonal} Let $s = \bigvee_{i=1}^{m} s_{i}$ where the $s_{i}$ are distinct and non-zero.
Then there is an orthogonal set of elements $\{t_{1}, \ldots, t_{m}\}$ such that
\begin{enumerate}
\item $s = \bigvee_{i=1}^{m} t_{i}$.
\item For each $i$, we have that $t_{i} \leq s_{i}$.
\end{enumerate}
\end{lemma}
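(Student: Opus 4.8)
The plan is to orthogonalize the join one term at a time, by induction on $m$, using Lemma~\ref{lem:pork} as the single-step engine. First I would record a preliminary observation that will be used repeatedly: since $s = \bigvee_{i=1}^{m} s_{i}$ exists, every $s_{i} \leq s$, and elements lying beneath a common element are always compatible (if $a,b \leq c$ then $a^{-1}b$ and $ab^{-1}$ are products of idempotents). Hence the $s_{i}$ are pairwise compatible, and consequently every sub-join $\bigvee_{i \in I} s_{i}$ with $I \subseteq \{1,\ldots ,m\}$ exists. The base case $m=1$ is immediate: take $t_{1}=s_{1}$, noting that a one-element set is vacuously orthogonal.

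For the inductive step I would set $u = \bigvee_{i=1}^{m-1} s_{i}$, which exists by the remark above, so that $s = u \vee s_{m}$. Applying the induction hypothesis to $u = \bigvee_{i=1}^{m-1} s_{i}$ produces an orthogonal family $\{t_{1},\ldots ,t_{m-1}\}$ with $t_{i} \leq s_{i}$ for each $i$ and $\bigvee_{i=1}^{m-1} t_{i} = u$. It then remains to manufacture a single element $t_{m} \leq s_{m}$ that is orthogonal to all of $t_{1},\ldots ,t_{m-1}$ and satisfies $u \vee t_{m} = s$, and I claim $t_{m} = s_{m} \setminus (s_{m} \wedge u)$ does the job. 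To justify that this is even well defined I must check $s_{m} \sim u$: since $s_{m} \sim s_{i}$ for each $i<m$, distributivity of multiplication over joins gives $s_{m}^{-1}u = \bigvee_{i<m} s_{m}^{-1}s_{i}$ and $s_{m}u^{-1} = \bigvee_{i<m} s_{m}s_{i}^{-1}$, each a join of idempotents and hence idempotent, so $s_{m}$ and $u$ are compatible. Then $s_{m}\wedge u$ exists by Lemma~\ref{lem:wedge} and the relative complement $t_{m}$ is defined; Lemma~\ref{lem:pork} applied with $x=s_{m}$, $y=u$ yields both $u \perp t_{m}$ and $s = s_{m}\vee u = t_{m}\vee u$, while $t_{m} \leq s_{m}$ is clear from the definition of $\setminus$.

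The final task is to upgrade $u \perp t_{m}$ to $t_{i} \perp t_{m}$ for every $i<m$, and here monotonicity of multiplication for the natural partial order does the work: from $t_{i} \leq u$ one gets $t_{m}^{-1}t_{i} \leq t_{m}^{-1}u = 0$ and $t_{m}t_{i}^{-1} \leq t_{m}u^{-1} = 0$, so $t_{m}^{-1}t_{i} = t_{m}t_{i}^{-1} = 0$, i.e. $t_{m} \perp t_{i}$. Thus $\{t_{1},\ldots ,t_{m}\}$ is orthogonal, and since $u = \bigvee_{i<m} t_{i}$ we conclude $s = u \vee t_{m} = \bigvee_{i=1}^{m} t_{i}$ with $t_{i} \leq s_{i}$ throughout, closing the induction. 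The two genuinely delicate points, where I expect the real content to sit, are verifying $s_{m} \sim u$ so that Lemma~\ref{lem:pork} is applicable, and the passage from orthogonality with the aggregate $u$ to orthogonality with each individual $t_{i}$; the remaining steps are routine bookkeeping about joins and the relative complement.
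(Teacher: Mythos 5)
Your proof is correct and is essentially the paper's own argument: unwinding your induction, your $t_m = s_m \setminus (s_m \wedge (s_1 \vee \ldots \vee s_{m-1}))$ is exactly the paper's explicit definition $t_i = s_i \setminus ((s_1 \vee \ldots \vee s_{i-1}) \wedge s_i)$, with Lemma~\ref{lem:pork} packaging the same use of relative complements and of Lemma~\ref{lem:buffs} that the paper makes directly. The only difference is presentational (an inductive construction rather than defining all the $t_i$ at once), and your two ``delicate points'' --- compatibility of $s_m$ with the aggregate join, and descending orthogonality from $u$ to each $t_i$ --- are handled soundly.
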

\begin{proof} The set $\{s_{1}, \ldots, s_{m}\}$ is a compatible one.
Thus all the joins $s_{1}$, $s_{1} \vee s_{2}$, $s_{1} \vee s_{2} \vee s_{3}$, \ldots exist.
In addition, by Lemma~\ref{lem:wedge}, all the meets 
$s_{1} \wedge s_{2}$, $(s_{1} \vee s_{2}) \wedge s_{3}$, $(s_{1} \vee s_{2} \vee s_{3}) \wedge s_{4}$, \ldots exist.
Define $t_{1} = s_{1}$ and
$$t_{i} = s_{i} \setminus ((s_{1} \vee \ldots s_{i-1}) \wedge s_{i})$$
for each $i = 2, \ldots, m$.
Observe that $\{t_{1}, \ldots, t_{m}\}$ is also a compatible subset.
Now, 
$\mathbf{d}(t_{1}) = \mathbf{d}(s_{1})$ 
and for $i \geq 2$ we have that 
$\mathbf{d}(t_{i}) = \mathbf{d}(s_{i}) \setminus \mathbf{d}((s_{1} \vee \ldots \vee s_{i-1}) \wedge s_{i})$.
In particular, $\mathbf{d}(t_{i}) \leq \mathbf{d}(s_{i})$.
Using Lemma~\ref{lem:buffs},
we have that 
$\mathbf{d}(t_{i}) \perp \mathbf{d}(s_{1}), \ldots, \mathbf{d}(s_{i-1})$.
It is therefore clear that $t_{1}$ is orthogonal to $t_{i}$ when $i \geq 2$
and that $t_{i}$ is orthogonal to $t_{j}$ when $i < j$.
Observe that $s_{i} = t_{i} \oplus ((s_{1} \vee \ldots \vee s_{i-1}) \wedge s_{i})$
Clearly, $\bigvee_{i=1}^{m} t_{i} \leq s$.
Now observe that $t_{1} = s_{1}$,
$t_{1} \vee t_{2} = s_{1} \vee s_{2}$ by Lemma~\ref{lem:pork}.
Assume that $t_{1} \vee \ldots \vee t_{i-1} = s_{1} \vee \ldots \vee s_{i-1}$.
Then 
$$t_{1} \vee \ldots \vee t_{i} 
= (s_{1} \vee \ldots \vee s_{i-1}) \vee t_{i}
= (s_{1} \vee \ldots \vee s_{i-1}) \vee (s_{i} \setminus ((s_{1} \vee \ldots s_{i-1}) \wedge s_{i})))$$
which is equal to $s_{1} \vee \ldots \vee s_{i}$.
\end{proof}

The following result is frequently invoked in proofs. 

\begin{proposition}\label{prop:definition} The following are equivalent.
\begin{enumerate}
\item $S$ is a Boolean inverse semigroup.
\item $S$ has all binary orthogonal joins, multiplication distributes over binary orthogonal joins, 
and its semilattice of idempotents forms a Boolean algebra with respect to the natural partial order.
\end{enumerate}
\end{proposition}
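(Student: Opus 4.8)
The implication $(1)\Rightarrow(2)$ is immediate: orthogonal elements are in particular compatible, so orthogonal joins are a special case of compatible joins, distributivity over the latter gives distributivity over the former, and the idempotents form a Boolean algebra by the very definition of a Boolean inverse semigroup. All the content therefore lies in $(2)\Rightarrow(1)$, where from the existence of orthogonal joins I must manufacture \emph{all} compatible joins and check that multiplication distributes over them. The first thing I would record is that every preparatory lemma I intend to invoke is available under hypothesis (2): Lemma~\ref{lem:wedge}, Lemma~\ref{lem:oj} and Lemma~\ref{lem:l-and-r-order} are stated for arbitrary inverse semigroups, while the difference lemmas, notably Lemma~\ref{lem:buffs} and Lemma~\ref{lem:chicken}, although stated for Boolean inverse semigroups, are seen on inspection of their proofs to use only the Boolean algebra structure on $\mathsf{E}(S)$, the definition $(x\setminus y)=x(\mathbf{d}(x)\setminus\mathbf{d}(y))$, and the existence of the relevant \emph{orthogonal} joins. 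This bootstrapping observation is exactly what makes orthogonal joins suffice.

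Given $x\sim y$, I would construct their join precisely as in the proof of Lemma~\ref{lem:pork}. By Lemma~\ref{lem:wedge} the meet $x\wedge y$ exists, and since $x\wedge y\le x$ the element $x\setminus(x\wedge y)$ is defined. Computing domains, $\mathbf{d}(x\setminus(x\wedge y))=\mathbf{d}(x)\setminus\mathbf{d}(x)\mathbf{d}(y)$, which is orthogonal to $\mathbf{d}(y)$ in the Boolean algebra, so Lemma~\ref{lem:buffs} yields $y\perp x\setminus(x\wedge y)$ and hence the orthogonal join $z=y\oplus(x\setminus(x\wedge y))$ exists by (2). I then verify that $z$ is the least upper bound of $\{x,y\}$: clearly $z\ge y$; from $x=(x\wedge y)\oplus(x\setminus(x\wedge y))$ (Lemma~\ref{lem:chicken}(2)) together with $x\wedge y\le y\le z$ and $x\setminus(x\wedge y)\le z$, the least-upper-bound property of the orthogonal join forces $x\le z$; and any upper bound $w$ of $\{x,y\}$ satisfies $w\ge y$ and $w\ge x\setminus(x\wedge y)$, whence $w\ge z$. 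Thus $x\vee y=z$, and the passage to $n$-ary compatible joins follows as in the remark after Lemma~\ref{lem:pork}, via Lemma~\ref{lem:orthogonal}.

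For distributivity, fix $a\in S$ and a compatible pair $x\sim y$. Distributivity over orthogonal joins together with Lemma~\ref{lem:oj}(1) gives $a(x\vee y)=ay\oplus a(x\setminus(x\wedge y))$, and I would show directly that this element is the least upper bound of $\{ax,ay\}$. It dominates $ay$; applying $a$ to the orthogonal decomposition of $x$ yields $ax=a(x\wedge y)\oplus a(x\setminus(x\wedge y))$, and since $a(x\wedge y)\le ay$ by monotonicity of multiplication it follows that $ax\le a(x\vee y)$; finally any common upper bound of $ax$ and $ay$ dominates $ay$ and $a(x\setminus(x\wedge y))$, hence $a(x\vee y)$. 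Therefore $ax\vee ay$ exists and equals $a(x\vee y)$, the right-handed identity $(x\vee y)a=xa\vee ya$ being proved symmetrically using Lemma~\ref{lem:oj}(2). As $\mathsf{E}(S)$ is a Boolean algebra by hypothesis, $S$ is distributive with Boolean semilattice of idempotents, i.e.\ a Boolean inverse semigroup.

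The one genuinely delicate point, and the step I expect to be the main obstacle, is the potential circularity flagged in the first paragraph: because several of the $\setminus$-identities and Lemma~\ref{lem:chicken} are recorded in the text under the hypothesis that $S$ is \emph{already} a Boolean inverse semigroup, I must confirm that their proofs nowhere use compatible joins beyond the orthogonal ones guaranteed by (2) before I am entitled to deploy them here. Once that verification is in place, the remainder is least-upper-bound bookkeeping.
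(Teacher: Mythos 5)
Your proof is correct and follows essentially the same route as the paper: both directions manufacture the compatible join from the meet $x \wedge y$ (which exists by Lemma~\ref{lem:wedge}) and a relative complement, with Lemma~\ref{lem:buffs} supplying the orthogonality needed to invoke hypothesis (2). The only differences are minor: the paper uses the symmetric three-piece decomposition $a' \oplus (a \wedge b) \oplus b'$ and verifies distributivity by matching this decomposition against the corresponding one for $ca \vee cb$ via Lemma~\ref{lem:fish}, whereas you use the two-piece decomposition $y \oplus (x \setminus (x \wedge y))$ from Lemma~\ref{lem:pork} and check the least-upper-bound property directly (needing only monotonicity of multiplication), and your explicit verification that Lemmas~\ref{lem:buffs} and~\ref{lem:chicken} require only orthogonal joins makes precise a bootstrapping point the paper leaves implicit.
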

\begin{proof} It is enough to prove that (2)$\Rightarrow$(1).
We prove first that $S$ has all binary compatible joins.
Let $a,b \in S$ such that $a \sim b$.
Then, by Lemma~\ref{lem:wedge}, we know that $a \wedge b$ exists.
The idempotent $\mathbf{d}(a) \setminus \mathbf{d}(a)\mathbf{d}(b)$ is defined since $\mathsf{E}(S)$ is a Boolean algebra.
Define $a' = a(\mathbf{d}(a) \setminus \mathbf{d}(a)\mathbf{d}(b))$.
Then $a'$ and $a \wedge b$ are both less than $a$ and compatible.
In addition, $\mathbf{d}(a') \perp \mathbf{d}(a \wedge b)$.
Thus by Lemma~\ref{lem:buffs}, we have that $a' \oplus (a \wedge b)$ is defined.
It is clear that $a = a' \oplus (a \wedge b)$.
Similarly, $b ' = b(\mathbf{d}(b) \setminus \mathbf{d}(a) \mathbf{d}(b))$ is less than or equal to $b$
and $b = b' \oplus (a \wedge b)$.
The elements $a', (a \wedge b), b'$ are pairwise othogonal.
We have that $a \vee b = (a' \oplus (a \wedge b)) \oplus b'$.
Thus $a \vee b$ is defined.
Now let $c$ be any element.
We prove that $c(a \vee b) = ca \vee cb$.
We use Lemma~\ref{lem:oj}, so that if $a \perp b$ then $ca \perp cb$ and 
$c(a \oplus b) = ca \oplus cb$ by assumption.
Observe that
$c((a' \oplus (a \wedge b)) \oplus b') =  (ca' \oplus c(a \wedge b)) \oplus cb'$, by assumption.
But $c(a \wedge b) = ca \wedge cb$ by Lemma~\ref{lem:fish}.
Thus
$ca' = ca (\mathbf{d}(ca) \setminus \mathbf{d}(ca)\mathbf{d}(cb))$
and
$cb' =  cb(\mathbf{d}(cb) \setminus \mathbf{d}(ca) \mathbf{d}(cb))$ 
where we use the fact that multiplication distributes over orthogonal joins.
The result now follows.
\end{proof}

An {\em additive homomorphism} $\theta \colon S \rightarrow T$ between Boolean inverse semigroups is a semigroup homomorphism
that maps zero to zero and preserves binary compatible joins.
Observe that an additive homomorphism $\theta$ induces a map from $\mathsf{E}(S)$ to $\mathsf{E}(T)$
that preserves meets and joins; in addition, if $f \leq e$ then $\theta (e \setminus f) = \theta (e) \setminus \theta (f)$.

If $S$ and $T$ are also both monoids then we say that $\theta$ is {\em unital} if it also maps the identity to the identity.
In what follows, we shall usually just write {\em morphism} rather than {\em additive homomorphism} for brevity.

\subsection{Additive ideals}
Let $S$ be a Boolean inverse semigroup.
If $A \subseteq S$ define $A^{\vee}$ to be the set of all binary compatible joins of elements of $A$.
An {\em additive ideal} $I$ in a Boolean inverse semigroup is a semigroup ideal $I$ such that if $a,b \in I$ and
$a \sim b$ then $a \vee b \in I$.

\begin{lemma}\label{lem:smallest} Let $S$ be a Boolean inverse semigroup.
Let $A \subseteq S$ be a non-empty subset.
Then the smallest additive ideal containing $A$ is $I = (SAS)^{\vee}$.
\end{lemma}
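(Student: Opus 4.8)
The plan is to verify that $I = (SAS)^{\vee}$ satisfies the two defining requirements of being the \emph{smallest} additive ideal containing $A$: first that $I$ is itself an additive ideal with $A \subseteq I$, and second that $I$ is contained in every additive ideal containing $A$. I read $A^{\vee}$ as the set of all joins of finite compatible subsets of $A$ (these exist by the $n$-ary extension of the binary results noted after Lemma~\ref{lem:pork}), so a typical element of $I$ has the form $\bigvee_{i=1}^{m} p_{i}$ with $p_{1}, \ldots, p_{m} \in SAS$ pairwise compatible. First I would record that $A \subseteq SAS \subseteq I$: for $a \in A$ we have $a = (aa^{-1})\,a\,(a^{-1}a) \in SAS$, and every element of $SAS$ is a one-term compatible join; I would also note that $SAS$ is a semigroup ideal, since $S(SAS), (SAS)S \subseteq SAS$.

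The heart of the argument is showing that $I$ is an additive ideal, which splits into closure under multiplication by $S$ and closure under binary compatible joins. For the ideal property, take $x = \bigvee_{i=1}^{m} p_{i} \in I$ and $s \in S$. By distributivity (Proposition~\ref{prop:definition}) $sx = \bigvee_{i=1}^{m} sp_{i}$, where each $sp_{i} \in SAS$ because $SAS$ is a semigroup ideal; since this join exists, the $sp_{i}$ are pairwise compatible (compatibility being necessary for a join to exist), so $sx \in I$, and symmetrically $xs \in I$. For closure under joins, take $x = \bigvee_{i=1}^{m} p_{i}$ and $y = \bigvee_{j=1}^{n} q_{j}$ in $I$ with $x \sim y$. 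The crucial sublemma is that compatibility passes downwards: if $x \sim y$, $p \leq x$ and $q \leq y$, then $p \sim q$, because $p^{-1}q \leq x^{-1}y$ and $pq^{-1} \leq xy^{-1}$ then lie below idempotents and hence are idempotent. Applying this to the pairs $p_{i} \leq x$ and $q_{j} \leq y$ shows that the whole family $\{p_{1},\ldots,p_{m},q_{1},\ldots,q_{n}\}$ is pairwise compatible, so its join exists, equals $x \vee y$ by associativity of joins, and exhibits $x \vee y$ as an element of $(SAS)^{\vee} = I$.

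Finally, minimality is routine: if $J$ is any additive ideal with $A \subseteq J$, then $SAS \subseteq SJS \subseteq J$ since $J$ is a semigroup ideal, and $J$ is closed under finite compatible joins, so every element $\bigvee_{i} p_{i}$ of $I$ (with $p_{i} \in SAS \subseteq J$) already lies in $J$; hence $I \subseteq J$. I expect the main obstacle to be the join-closure step: one must check carefully that the downward-closure of compatibility genuinely upgrades the two separately-compatible families into a single pairwise-compatible family, and that distributivity is invoked so that the relevant joins are guaranteed to exist rather than merely postulated.
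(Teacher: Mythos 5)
Your proof is correct and follows essentially the same route as the paper: verify that $(SAS)^{\vee}$ is an additive ideal containing $A$ (semigroup-ideal closure via distributivity, join closure via associativity of joins) and then deduce minimality from the fact that any additive ideal containing $A$ must contain $SAS$ and all its compatible joins. Your explicit sublemma that compatibility passes downwards (so the combined family $\{p_{1},\ldots,p_{m},q_{1},\ldots,q_{n}\}$ is pairwise compatible) fills in a step the paper's proof leaves implicit, but it is the same argument in slightly greater detail.
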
 
\begin{proof} Clearly, $A \subseteq I$.
We check first that $I$ is an additive ideal.
Let $a \in I$ and $s \in S$.
Then $a = \bigvee_{i=1}^{m} a_{i}$ where $a_{i} \in SAS$.
We have that $sa = \bigvee_{i=1}^{m} sa_{i}$.
But, clearly, $sa_{i} \in SAS$.
It follows that $sa \in I$.
A symmetric argument shows that $sa \in I$.
We have therefore proved that $I$ is a (semigroup) ideal.
Let $\{a_{1}, \ldots, a_{m}\}$ be a compatible subset of $I$.
Then, each $a_{i}$ is a join of elements of $SAS$.
Thus $\bigvee_{i=1}^{m} a_{i}$ is a join of elements of $SAS$.
It follows that $I$ is an additive ideal.
Now let $J$ be any additive ideal containing $A$.
Then since $A \subseteq J$ we have that $SAS \subseteq J$.
But $J$ is additive and so $(SAS)^{\vee} \subseteq J$.
We have therefore proved that $I \subseteq J$.
\end{proof}

Both $\{0\}$ and $S$ are additive ideals.
If these are the only additive ideals and $S \neq \{0\}$ then we say that $S$ is {\em $0$-simplifying}.
The following relation was introduced in \cite{Lenz}.
Let $e$ and $f$ be two non-zero idempotents in $S$.
Define $e \preceq f$ if and only if there exists a set of  elements $X = \{x_{1}, \ldots, x_{m}\}$ such that
$e = \bigvee_{i=1}^{m}  \mathbf{d}(x_{i})$
and 
$\mathbf{r}(x_{i}) \leq f$ for $1 \leq i \leq m$. 
We can write this informally as $e = \bigvee \mathbf{d}(X)$ and $\bigvee \mathbf{r}(X) \leq f$.
We say that $X$ is a {\em pencil} from $e$ to $f$.
The relation $\preceq$ is a preorder on the set of idempotents.

\begin{lemma}\label{lem:additive-ideals} Let $S$ be a Boolean inverse semigroup.
Then $f \preceq e$ if and only if whenever $e \in I$, an additive ideal,
then $f \in I$.
\end{lemma}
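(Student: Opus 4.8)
The plan is to prove the two implications separately, using Lemma~\ref{lem:smallest} (the description of the smallest additive ideal containing a single element) as the bridge for the harder direction.

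For the forward implication I would assume $f \preceq e$, witnessed by a pencil $\{x_{1}, \ldots, x_{m}\}$ with $f = \bigvee_{i=1}^{m} \mathbf{d}(x_{i})$ and $\mathbf{r}(x_{i}) \leq e$ for each $i$, and take an arbitrary additive ideal $I$ with $e \in I$. The goal is to trace $e$ down to $f$ inside $I$. Since $\mathbf{r}(x_{i}) \leq e$ we have $\mathbf{r}(x_{i}) = \mathbf{r}(x_{i})e$, which lies in $I$ because $I$ is a semigroup ideal; then $\mathbf{d}(x_{i}) = x_{i}^{-1}\mathbf{r}(x_{i})x_{i}$ also lies in $I$, again by the ideal property. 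Finally, any two idempotents of $S$ are compatible, so $f = \bigvee_{i=1}^{m} \mathbf{d}(x_{i})$ is a finite compatible join of elements of $I$, and $f \in I$ since $I$ is additive. The only points needing care are the identity $\mathbf{d}(x_{i}) = x_{i}^{-1}\mathbf{r}(x_{i})x_{i}$ (immediate) and the closure of additive ideals under finitary, rather than merely binary, compatible joins, which follows by induction using the fact that a partial join of idempotents is again an idempotent.

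For the converse I would instantiate the hypothesis at one optimally chosen ideal. By Lemma~\ref{lem:smallest} applied to $A = \{e\}$, the set $I = (SeS)^{\vee}$ is the smallest additive ideal containing $e$; since $e \in I$, the hypothesis forces $f \in I$. Hence $f = \bigvee_{i=1}^{m} a_{i}$ with each $a_{i} \in SeS$, say $a_{i} = s_{i}et_{i}$, and the task is to manufacture a pencil from this representation. The key observation is that each $a_{i} \leq f$ with $f$ idempotent, which forces $a_{i}$ to be idempotent; thus $a_{i} = \mathbf{d}(a_{i})$ and $\bigvee_{i=1}^{m}\mathbf{d}(a_{i}) = \bigvee_{i=1}^{m} a_{i} = f$ holds automatically. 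Setting $y_{i} = et_{i}a_{i}$, a short computation gives $\mathbf{r}(y_{i}) = e(t_{i}a_{i}t_{i}^{-1})e \leq e$ and, using $a_{i} = \mathbf{d}(a_{i}) \leq \mathbf{d}(et_{i}) = t_{i}^{-1}et_{i}$, gives $\mathbf{d}(y_{i}) = a_{i}$. Then $\{y_{1}, \ldots, y_{m}\}$ is a pencil from $f$ to $e$, so $f \preceq e$.

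The main obstacle is exactly this pencil-extraction step in the converse: knowing only that $f$ is a join of elements of $SeS$ does not by itself furnish arrows whose domains reassemble precisely to $f$ and whose ranges sit below $e$. The two facts that dissolve the difficulty are that elements beneath the idempotent $f$ are themselves idempotent (so the $a_{i}$ equal their own domains and their join is visibly $f$), and that right-multiplying $et_{i}$ by the idempotent $a_{i}$ cuts its domain down to exactly $a_{i}$ while leaving its range below $e$. Once these are in place, the remaining verifications are routine.
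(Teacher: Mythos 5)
Your proof is correct and follows essentially the same route as the paper: the forward direction traces the pencil through the ideal exactly as the paper does (your version even silently corrects the paper's index/letter slips), and the converse invokes Lemma~\ref{lem:smallest} to reduce to $f \in (SeS)^{\vee}$. Your element $y_{i} = et_{i}a_{i} \in eSa_{i}$ with $\mathbf{d}(y_{i}) = a_{i}$ and $\mathbf{r}(y_{i}) \leq e$ is precisely the construction the paper compresses into ``it is easy to prove that $e_{i} = x_{i}^{-1}x_{i}$, where $x_{i} \in eSe_{i}$,'' so you have simply supplied the details it omits.
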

\begin{proof} Suppose that $f \preceq e$ and $e \in I$, an additive ideal.
Then there is a pencil $X$ from $f$ to $e$.
Thus $e = \bigvee_{x \in X}  \mathbf{d}(x)$
and 
$\mathbf{r}(x) \leq f$ for $1 \leq i \leq m$. 
Now $f \in I$ implies that $\mathbf{r}(x) \in I$ since $I$ is an order-ideal.
It follows that $\mathbf{d}(x) \in I$ since $I$ is a semigroup ideal.
But $I$ is closed under compatible joins and so $f \in I$, as required.

We now prove the converse.
The smallest additive ideal containing $e$ is $(SeS)^{\vee}$ by Lemma~\ref{lem:smallest}.
By assumption $f \in (SeS)^{\vee}$.
Thus $f = \bigvee_{i=1}^{m} e_{i}$ where $e_{1}, \ldots , e_{m} \in SeS$.
But it is easy to prove that $e_{i} = x_{i}^{-1}x_{i}$, where $x_{i} \in eSe_{i}$ 
Thus $x_{i}x_{i}^{-1} \leq e$.
It follows that $X = \{x_{1},\ldots,x_{m}\}$ is a pencil from $f$ to $e$
and so $f \preceq e$.
\end{proof}

Define the equivalence relation $e \equiv f$ if and only if $e \preceq f$ and $f \preceq e$.
The following was proved as part of \cite[Lemma~7.8]{Lenz} but is also immediate by Lemma~\ref{lem:additive-ideals}.

\begin{lemma}\label{lem:toby} Let $S$ be a Boolean inverse semigroup.
Then $\equiv$ is the universal relation on the set of non-zero idempotents if and only if $S$ is $0$-simplifying.
\end{lemma}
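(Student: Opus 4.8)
The plan is to route everything through Lemma~\ref{lem:additive-ideals}, which reinterprets the preorder $\preceq$ ideal-theoretically: $f \preceq e$ holds exactly when every additive ideal containing $e$ also contains $f$. Combined with Lemma~\ref{lem:smallest}, which identifies the smallest additive ideal containing a subset $A$ as $(SAS)^{\vee}$, this converts both implications into elementary statements about which idempotents lie in which additive ideals. Accordingly, I would prove the two directions separately, and in each case reduce claims about arbitrary elements to claims about their domains $\mathbf{d}(a)$.

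For the implication that $0$-simplifying forces $\equiv$ to be universal, I would fix two non-zero idempotents $e$ and $f$ and argue that $f \preceq e$. Any additive ideal $I$ containing $e$ is non-zero, since $e \neq 0$; as $S$ is $0$-simplifying its only additive ideals are $\{0\}$ and $S$, so necessarily $I = S$ and hence $f \in I$. By Lemma~\ref{lem:additive-ideals} this gives $f \preceq e$, and interchanging the roles of $e$ and $f$ yields $e \preceq f$. Therefore $e \equiv f$, and since $e,f$ were arbitrary non-zero idempotents, $\equiv$ is universal.

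For the converse, suppose $\equiv$ is universal and let $I$ be a non-zero additive ideal; the goal is $I = S$. Choosing a non-zero $a \in I$, the idempotent $\mathbf{d}(a) = a^{-1}a$ lies in $I$ because $I$ is a semigroup ideal, and it is non-zero. Now for an arbitrary non-zero idempotent $f$, universality of $\equiv$ gives $f \equiv \mathbf{d}(a)$, in particular $f \preceq \mathbf{d}(a)$; since $\mathbf{d}(a) \in I$, Lemma~\ref{lem:additive-ideals} places $f \in I$. Thus $I$ contains every non-zero idempotent. Finally, for any non-zero $s \in S$ I would use $s = s\,\mathbf{d}(s)$ with $\mathbf{d}(s) \in I$ to conclude $s \in SI \subseteq I$; together with $0 \in I$ this shows $I = S$, and so $S$ is $0$-simplifying. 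The standing assumption that $S$ is not the zero semigroup guarantees that non-zero idempotents exist, so the hypothesis on $\equiv$ is not vacuous.

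The argument presents no real obstacle once Lemma~\ref{lem:additive-ideals} is in hand; indeed the statement is essentially immediate from it. The only points requiring a little care are the passage from an arbitrary member of a non-zero ideal to a non-zero idempotent via the domain map, and remembering that $0$-simplicity carries the side condition $S \neq \{0\}$, which is supplied by the paper's standing convention.
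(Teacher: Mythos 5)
Your proof is correct and follows exactly the route the paper intends: the paper notes that Lemma~\ref{lem:toby} is immediate from Lemma~\ref{lem:additive-ideals}, and your argument is precisely the careful fleshing-out of that remark in both directions. The small points you flag --- passing from a non-zero element of an ideal to the non-zero idempotent $\mathbf{d}(a)$, and the standing convention $S \neq \{0\}$ --- are handled properly.
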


The {\em kernel} of a morphism $\theta \colon S \rightarrow T$ between two Boolean inverse semigroups, 
denoted by $\mbox{ker}(\theta)$, is the inverse image under $\theta$ of the zero of $T$.

\begin{remark}
{\em Readers familiar with semigroup theory are warned that our definition of `kernel' is not the one usual in semigroup theory.} 
\end{remark}

The proof of the following is straightforward.

\begin{lemma}\label{lem:additive-ideals-new} The kernels of morphisms
between Boolean inverse semigroups are additive ideals.
\end{lemma}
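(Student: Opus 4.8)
The plan is to set $K = \mbox{ker}(\theta) = \theta^{-1}(0)$ and to verify directly the two defining properties of an additive ideal from the definition given above: that $K$ is a semigroup ideal, and that $K$ is closed under binary compatible joins. Both checks are short computations using only that $\theta$ is a semigroup homomorphism preserving $0$ and compatible joins.

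First I would check the ideal property. Let $a \in K$ and $s \in S$ be arbitrary. Since $\theta$ is a semigroup homomorphism, $\theta(sa) = \theta(s)\theta(a)$. But $\theta(a) = 0$ by assumption, and since $T$ is an inverse semigroup with zero, $0$ is absorbing, so $\theta(s) \cdot 0 = 0$. Hence $\theta(sa) = 0$ and $sa \in K$. The symmetric computation $\theta(as) = \theta(a)\theta(s) = 0$ gives $as \in K$. Thus $SK, KS \subseteq K$ and $K$ is a semigroup ideal.

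Next I would check closure under compatible joins. Suppose $a, b \in K$ with $a \sim b$, so that $a \vee b$ exists in $S$. Because $\theta$ is an additive homomorphism it preserves binary compatible joins, so $\theta(a \vee b) = \theta(a) \vee \theta(b)$. Since $\theta(a) = \theta(b) = 0$, the right-hand side is $0 \vee 0 = 0$, whence $a \vee b \in K$.

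The only point requiring a little care — rather than a genuine obstacle — is the well-definedness of the join $\theta(a) \vee \theta(b)$ appearing on the right of the displayed equality: one must know that $\theta(a) \sim \theta(b)$ in $T$. This is automatic, since the compatibility relation is defined purely algebraically ($a \sim b$ if and only if $a^{-1}b$ and $ab^{-1}$ are idempotents) and a homomorphism preserves inverses and idempotents; in the present case it is in any event trivial, as $0 \sim 0$. With both conditions verified, $K$ is an additive ideal.
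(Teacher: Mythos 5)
Your proof is correct: the paper omits the argument entirely (it states only that ``the proof of the following is straightforward''), and your direct verification --- that the kernel absorbs products because $0$ is absorbing in $T$, and is closed under compatible joins because $\theta$ preserves them with $0 \vee 0 = 0$ --- is exactly the intended routine check of the two defining conditions of an additive ideal. Your remark on the well-definedness of $\theta(a) \vee \theta(b)$ is a sensible extra precaution, and correctly dismissed as trivial here.
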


Lemma~\ref{lem:additive-ideals-new} immediately raises the question of what can be said about morphisms whose kernels are trivial.

\begin{lemma}\label{lem:idept-sep-kernel} Let $\theta \colon S \rightarrow T$ be a morphism between Boolean inverse semigroups.
Then $\theta$ has a trivial kernel if and only if it is idempotent-separating.
\end{lemma}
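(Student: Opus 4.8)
The plan is to prove both implications separately, relying throughout on the fact recorded just before this lemma that an additive homomorphism induces a map on idempotents preserving meets, joins, and relative complements: if $f \leq e$ then $\theta(e \setminus f) = \theta(e) \setminus \theta(f)$.

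For the direction from idempotent-separation to trivial kernel, I would start from an element $a$ with $\theta(a) = 0$ and push down to domains. Since $\theta$ is a homomorphism and preserves zero, $\theta(\mathbf{d}(a)) = \theta(a)^{-1}\theta(a) = 0 = \theta(0)$, and both $\mathbf{d}(a)$ and $0$ are idempotents; idempotent-separation then forces $\mathbf{d}(a) = 0$, whence $a = a\mathbf{d}(a) = 0$. This direction is routine.

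The substantive direction is the converse: trivial kernel implies idempotent-separating. Suppose $e, f \in \mathsf{E}(S)$ satisfy $\theta(e) = \theta(f)$; I aim to show $e = f$ by establishing $e \leq f$ and, symmetrically, $f \leq e$. The key device is the relative complement $e \setminus ef$ formed inside the Boolean algebra $\mathsf{E}(S)$, which makes sense because $ef = e \wedge f \leq e$. Applying $\theta$ and using preservation of relative complements gives $\theta(e \setminus ef) = \theta(e) \setminus \theta(ef)$. Now $\theta(ef) = \theta(e)\theta(f) = \theta(e)\theta(e) = \theta(e)$, using $\theta(e) = \theta(f)$ and the idempotency of $\theta(e)$; hence $\theta(e \setminus ef) = \theta(e) \setminus \theta(e) = 0$. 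Triviality of the kernel then yields $e \setminus ef = 0$, i.e. $e = ef$, that is $e \leq f$. Swapping the roles of $e$ and $f$ gives $f \leq e$, and so $e = f$.

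The whole argument is short, and the only point requiring any care — the step I would treat as the crux — is justifying $\theta(e \setminus ef) = \theta(e) \setminus \theta(e)$, which hinges on the two facts that morphisms preserve relative complements of idempotents and that $\theta(ef) = \theta(e)$. Everything else (that $ef = e \wedge f$, that for idempotents $e \leq f$ is equivalent to $e = ef$, and that $a = a\mathbf{d}(a)$) is standard, so I do not anticipate any genuine obstacle beyond assembling these observations in the right order.
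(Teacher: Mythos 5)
Your proposal is correct and follows essentially the same route as the paper: the forward direction pushes $\theta(a)=0$ down to $\mathbf{d}(a)$, and the converse applies the relative complement $e \setminus ef$ (the paper writes $e \setminus (e \wedge f)$, the same element), uses $\theta(ef)=\theta(e)$ and preservation of relative complements to get $\theta(e \setminus ef)=0$, and concludes $e \leq f$ by triviality of the kernel, with symmetry finishing the argument. Your only divergence is expository --- you verify $\theta(ef)=\theta(e)\theta(f)=\theta(e)$ directly where the paper appeals to ``$\theta$ restricts to a morphism of Boolean algebras'' --- which is a matter of detail, not of substance.
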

\begin{proof} Suppose first that $\theta$ is idempotent-separating.
Let $\theta (a) = 0$.
Then $\theta (a^{-1}a) = 0$.
It follows by assumption that $a^{-1}a = 0$ and so $a = 0$
which implies that the $\theta$ is trivial.
Conversely, suppose that the kernel of $\theta$ is trivial.
We prove that $\theta$ is idempotent-separating.
Let $\theta (e) = \theta (f)$ where $e$ and $f$ are idempotents.
Then $\theta (e) = \theta (e \wedge f) = \theta (f)$
since $\theta$ restricts to a morphism of Boolean algebras.
But then $\theta (e \setminus (e \wedge f)) = 0$ implies that $e = e \wedge f$.
Similarly $f = e \wedge f$.
Thus $e = f$, as required.
\end{proof}

\subsection{Additive congruences}

The goal of this subsection is to define what we mean by a `simple' Boolean inverse semigroup.

We say that a congruence $\sigma$ on a Boolean inverse semigroup $S$ is {\em additive}
if $S/\sigma$ is a Boolean inverse semigroup and the natural map from $S$ to $S/\sigma$ is a morphism.
The treatment of additive congruences in \cite{Wehrung} cannot be bettered,
and I shall simply summarize the main definitions and results from there below.
It is easy to check that a congruence is additive precisely when it preserves the operations  $\ominus$ and $\triangledown$
introduced in \cite[Page 82]{Wehrung}.
The following is \cite[Proposition 3-4.1]{Wehrung}.

\begin{proposition}\label{prop:boolean-congruence} A congruence $\sigma$ on a Boolean inverse semigroup
$S$ is additive if and only if for all $a \in S$ and orthogonal idempotents $e$ and $f$ we have that
$ae \, \sigma \, e$ and $af \, \sigma \, f$ implies that $a(e \vee f) \, \sigma \, (e \vee f)$.
\end{proposition}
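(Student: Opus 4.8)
The statement is an equivalence; the implication (1)$\Rightarrow$(2) is routine, and the substance is (2)$\Rightarrow$(1), which I would organize around showing that the quotient map $\pi\colon S\rightarrow S/\sigma$ is a morphism onto a Boolean inverse semigroup and then invoking Proposition~\ref{prop:definition}. For the easy direction (1)$\Rightarrow$(2): if $\sigma$ is additive then $\pi$ preserves products, zero and orthogonal joins. Given $a$ and orthogonal idempotents $e,f$ with $ae\,\sigma\,e$ and $af\,\sigma\,f$, note that $ae\perp af$ by Lemma~\ref{lem:oj} and that $a(e\vee f)=ae\oplus af$ by distributivity over the orthogonal join $e\oplus f$; applying $\pi$ and using $\pi(ae)=\pi(e)$, $\pi(af)=\pi(f)$ gives $\pi(a(e\vee f))=\pi(e)\oplus\pi(f)=\pi(e\vee f)$, that is, $a(e\vee f)\,\sigma\,(e\vee f)$.

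For (2)$\Rightarrow$(1) I would first analyse $\sigma$ on idempotents. Taking $a=0$ in the hypothesis shows that $I_{0}=\{e\in\mathsf{E}(S)\colon e\,\sigma\,0\}$ is closed under orthogonal joins, and it is plainly downward closed, so it is an ideal of the Boolean algebra $\mathsf{E}(S)$. Next I would show that $\sigma$ restricted to $\mathsf{E}(S)$ respects joins: reducing via $e\,\sigma\,e\wedge e'$ to the comparable case $e'\leq e$ with $e\,\sigma\,e'$, multiplying this relation by $e\setminus e'$ gives $e\setminus e'\in I_{0}$; writing $e\vee f=(e'\vee f)\oplus g$ with $g\leq e\setminus e'$ and using the instance ``$g\,\sigma\,0$ and $g\perp h$ imply $h\vee g\,\sigma\,h$'' of the hypothesis (take $a=h$) one obtains $e\vee f\,\sigma\,e'\vee f$. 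Since $\sigma$ also respects meets on $\mathsf{E}(S)$ (being a semigroup congruence) it is a lattice congruence, and lattice congruences of Boolean algebras automatically respect relative complements; hence $\mathsf{E}(S/\sigma)=\mathsf{E}(S)/\sigma$ is a Boolean algebra and $\pi$ restricts to a morphism of Boolean algebras.

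The key step is then to upgrade the idempotent instance to arbitrary elements: if $g\perp h$ and $g\,\sigma\,0$ then $h\oplus g\,\sigma\,h$. I would prove this by passing to domains --- $\mathbf{d}(g)\,\sigma\,0$ and $\mathbf{d}(g)\perp\mathbf{d}(h)$, so the idempotent instance gives $\mathbf{d}(h\oplus g)=\mathbf{d}(h)\oplus\mathbf{d}(g)\,\sigma\,\mathbf{d}(h)$ --- and then left-multiplying by $h\oplus g$ to obtain $h\oplus g\,\sigma\,h$. With this lemma one can freely discard $\sigma$-null orthogonal summands. Granting that the orthogonal join descends to $S/\sigma$ (so that $\pi$ preserves orthogonal joins and $S/\sigma$ has all binary orthogonal joins), distributivity in $S/\sigma$ is inherited from $S$ via Lemma~\ref{lem:oj}, Proposition~\ref{prop:definition} shows $S/\sigma$ is Boolean, and $\pi$ preserves all binary compatible joins by Lemma~\ref{lem:pork}, so $\sigma$ is additive.

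The main obstacle is exactly this last descent of the orthogonal join. The difficulty is that $\pi$ does not preserve orthogonality, so an orthogonal pair in $S/\sigma$ cannot simply be lifted to an orthogonal pair in $S$. To realize a join $\pi(a)\oplus\pi(b)$ with $\pi(a)\perp\pi(b)$, I would observe that the overlap idempotents $\mathbf{d}(a)\wedge\mathbf{d}(b)$ and $\mathbf{r}(a)\wedge\mathbf{r}(b)$ lie in $I_{0}$ (multiply $a^{-1}b\,\sigma\,0$ and $ab^{-1}\,\sigma\,0$ appropriately), trim $b$ to an element $b'\perp a$ by removing these overlaps, and use the lemma to conclude $b'\,\sigma\,b$; then $\pi(a\oplus b')$ is the required join. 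Verifying that this join is well defined on $\sigma$-classes --- equivalently, that $\sigma$ is compatible with the total operations $\ominus$ and $\triangledown$ of \cite{Wehrung} discussed before the proposition --- is where the hypothesis is used decisively and is the technical heart of the argument; the idempotent analysis and the lemma above are precisely the inputs that make this verification go through.
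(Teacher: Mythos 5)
A preliminary remark: the paper contains no proof of this proposition --- it is quoted from \cite[Proposition 3-4.1]{Wehrung} --- so your attempt can only be assessed on its own merits. Your forward direction is correct, and the preparatory analysis for the converse checks out in every detail I verified: taking $a=0$ shows $I_{0}=\{e \in \mathsf{E}(S) \colon e \, \sigma \, 0\}$ is an ideal of $\mathsf{E}(S)$; your reduction to the comparable case shows $\sigma$ restricts to a lattice congruence of $\mathsf{E}(S)$, so $\mathsf{E}(S)/\sigma$ is a Boolean algebra; the discard lemma ($g \, \sigma \, 0$ and $g \perp h$ imply $h \oplus g \, \sigma \, h$) is proved correctly by passing to domains and left-multiplying by $h \oplus g$; and the trimming observation is right, since $ab^{-1} \, \sigma \, 0$ gives $\mathbf{d}(a)\mathbf{d}(b) = a^{-1}(ab^{-1})b \, \sigma \, 0$, and dually for ranges.

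However, there is a genuine gap, and it sits exactly where you flag the ``technical heart'': the well-definedness of $\oplus$ on $\sigma$-classes is asserted, never proved, and your closing claim that the idempotent analysis and the discard lemma ``are precisely the inputs that make this verification go through'' is unsupported. Every application of the hypothesis you actually carry out takes $a = 0$ or $a$ an idempotent; but for an idempotent-separating congruence all of those consequences hold vacuously ($g \, \sigma \, 0$ forces $g = 0$, and $\sigma$ is trivial on $\mathsf{E}(S)$), so they cannot by themselves settle additivity --- this is exactly the subtlety signalled by the paper's remark, following the proposition, on \cite[Example 3-4.2]{Wehrung}. What is missing is a replacement lemma requiring the hypothesis with a \emph{non-idempotent} $a$: if $c \, \sigma \, c'$ with $c \perp x$ and $c' \perp x$, then $x \oplus c \, \sigma \, x \oplus c'$. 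To prove it, apply the hypothesis with $a = \mathbf{r}(x) \oplus c'c^{-1}$ (a legitimate orthogonal join, since $\mathbf{r}(x)c = \mathbf{r}(x)c' = 0$), $e = \mathbf{r}(x)$ and $f = \mathbf{r}(c)$: then $ae = \mathbf{r}(x) \, \sigma \, e$ because $c^{-1}\mathbf{r}(x) = 0$, and $af = c'c^{-1} \, \sigma \, cc^{-1} = f$, so the hypothesis yields $a = a(e \vee f) \, \sigma \, e \vee f = \mathbf{r}(x \oplus c)$; right-multiplying by $x \oplus c$ and using $\mathbf{r}(x)c = 0 = c^{-1}x$ gives $x \oplus c'\mathbf{d}(c) \, \sigma \, x \oplus c$, and since $c' \setminus c'\mathbf{d}(c) \in I_{0}$ (multiply $c'\mathbf{d}(c) \, \sigma \, c'$ by the idempotent $\mathbf{d}(c') \setminus \mathbf{d}(c')\mathbf{d}(c)$), your discard lemma gives $x \oplus c' \, \sigma \, x \oplus c'\mathbf{d}(c) \, \sigma \, x \oplus c$. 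With this lemma, full well-definedness follows from your trimming by the chain $a \oplus b \, \sigma \, a \oplus b_{1}' \, \sigma \, a_{1} \oplus b_{1}' \, \sigma \, a_{1} \oplus b_{1}$, where $b_{1}'$ is $b_{1}$ trimmed against $a$, and the remainder of your outline (leastness of the realized join, distributivity, Proposition~\ref{prop:definition}, Lemma~\ref{lem:pork}) then assembles into a complete proof. So: right architecture and sound lemmas, but the decisive computation --- the only place the full strength of the hypothesis is used --- is absent, and until it is supplied the converse is not proved.
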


Observe that \cite[Example~3-4.2]{Wehrung} shows that not all idempotent-separating homomorphisms between Boolean inverse semigroups need be morphisms.
The following was proved as \cite[Proposition 3-4.5]{Wehrung}.

\begin{proposition}\label{lem:idem-sep} Let $S$ be a Boolean inverse semigroup.
Then $S/\mu$ is a Boolean inverse semigroup and the natural map $S \rightarrow S/\mu$ is a morphism of Boolean inverse semigroups.
\end{proposition}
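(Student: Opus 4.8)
The plan is to invoke the criterion of Proposition~\ref{prop:boolean-congruence}. The relation $\mu$, the maximum idempotent-separating congruence, is already a congruence on the underlying inverse semigroup by the classical theory of inverse semigroups \cite{Lawson1998}; so all that remains is to verify that $\mu$ is \emph{additive}, after which the stated conclusion---that $S/\mu$ is a Boolean inverse semigroup and that the natural map is a morphism---is exactly the definition of an additive congruence. Thus I would reduce the whole proposition to checking the single implication of Proposition~\ref{prop:boolean-congruence}: for every $a \in S$ and every pair of orthogonal idempotents $e \perp f$, if $ae \, \mu \, e$ and $af \, \mu \, f$ then $a(e \vee f) \, \mu \, (e \vee f)$.

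To set up, I would recall the standard description of $\mu$: $x \, \mu \, y$ iff $\mathbf{d}(x) = \mathbf{d}(y)$, $\mathbf{r}(x) = \mathbf{r}(y)$, and $x^{-1}hx = y^{-1}hy$ for every idempotent $h$ (and it suffices to check this for $h \le \mathbf{d}(x)$). Since $\mu \subseteq \mathscr{H}$, the hypothesis $ae \, \mu \, e$ forces $ae$ into the maximal subgroup at $e$: concretely $\mathbf{d}(ae) = e = \mathbf{r}(ae)$, which unwinds to $e \le \mathbf{d}(a)$ and $aea^{-1} = e$, together with the conjugation identity $ea^{-1}hae = h \wedge e$ for every idempotent $h$. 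The analogous facts hold for $f$.

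Writing $g = e \vee f = e \oplus f$, I would first compute the domain and range of $ag$. Distributivity of multiplication over orthogonal joins, together with Lemma~\ref{lem:oj}, gives $ag = ae \oplus af$ and hence $\mathbf{r}(ag) = aga^{-1} = aea^{-1} \oplus afa^{-1} = e \oplus f = g$; similarly $\mathbf{d}(ag) = (a^{-1}a)g = g$ since $e, f \le \mathbf{d}(a)$ forces $g \le \mathbf{d}(a)$. It then remains to verify $(ag)^{-1}h(ag) = h$ for each idempotent $h \le g$. Here I would decompose $h = he \oplus hf$ with $he \le e$ and $hf \le f$, expand $(ag)^{-1}h(ag) = (ea^{-1} \oplus fa^{-1})\,h\,(ae \oplus af)$ by distributivity into four terms, and observe that the two cross terms vanish: for instance $(ae)^{-1}(he)(af) = 0$ because $(he)(af) = (he)f(af)$ with $(he)f \le ef = 0$, and symmetrically for the other cross term. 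The two diagonal terms are $(ae)^{-1}h(ae) = h \wedge e = he$ and $(af)^{-1}h(af) = h \wedge f = hf$ by the conjugation identities recorded above, so the sum collapses to $he \oplus hf = h$. This establishes $ag \, \mu \, g$.

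The main obstacle, and the only place where real care is needed, is this last step: keeping the bookkeeping of left- and right-orthogonality straight when repeatedly distributing a triple product over orthogonal joins, and confirming that the cross terms are genuinely zero (which is precisely where the orthogonality $e \perp f$ is used). Once the criterion is verified, additivity of $\mu$ follows from Proposition~\ref{prop:boolean-congruence}, and the proposition is immediate.
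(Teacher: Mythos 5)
Your proof is correct, but it is worth noting that the paper itself offers no argument at all for this statement: it simply cites \cite[Proposition 3-4.5]{Wehrung}. What you have done is reconstruct a self-contained proof from the criterion the paper does record, namely Proposition~\ref{prop:boolean-congruence}, together with the classical description of $\mu$ from \cite{Lawson1998}; this is exactly the right reduction, and your verification goes through. All the key computations check out: $ae \, \mu \, e$ does force $e \leq \mathbf{d}(a)$, $aea^{-1} = e$ and $ea^{-1}hae = h \wedge e$; distributivity over the orthogonal join $g = e \oplus f$ gives $\mathbf{d}(ag) = g = \mathbf{r}(ag)$; and the cross terms vanish for precisely the reason you give, since $af = f(af)$ and $(he)f \leq ef = 0$. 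Two small remarks. First, your parenthetical ``it suffices to check this for $h \leq \mathbf{d}(x)$'' should read $h \leq \mathbf{r}(x)$ for the expression $x^{-1}hx$; this is harmless here because $\mathbf{d}(ag) = \mathbf{r}(ag) = g$, so restricting to $h \leq g$ is legitimate either way. Second, your own hypotheses yield a shortcut that flattens the bookkeeping you worry about: from $e \leq \mathbf{d}(a)$ and $aea^{-1} = e$ one gets $ea = (aea^{-1})a = ae\mathbf{d}(a) = ae$, so $a$ commutes with $e$ (and likewise with $f$), and also $a^{-1}fa = f$; the cross term then collapses in one line, $ea^{-1}haf = e\,a^{-1}(hf)a \leq e \cdot a^{-1}fa = ef = 0$, with no need to track left- versus right-orthogonality through the eightfold expansion. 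With either bookkeeping, the criterion of Proposition~\ref{prop:boolean-congruence} is verified and additivity of $\mu$ follows, which is, as you say, exactly the content of the proposition.
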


The following is \cite[Proposition 3-4.6]{Wehrung}.

\begin{proposition}\label{prop:noise} Let $S$ be a Boolean inverse semigroup.
Let $I$ be an additive ideal of $S$.
\begin{enumerate}
\item Define $(a,b) \in \varepsilon_{I}$ if and only if there exists $c \leq a,b$ such that $a \setminus c, b \setminus c \in I$.
Then $\varepsilon_{I}$ is an additive congruence with kernel $I$.
\item If $\sigma$ is any additive congruence with kernel $I$ then $ \varepsilon_{I} \subseteq \sigma$.
\end{enumerate}
\end{proposition}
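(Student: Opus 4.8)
The plan is to establish part (1) by checking in turn that $\varepsilon_{I}$ is an equivalence relation, that it is a congruence, that it is additive via the criterion of Proposition~\ref{prop:boolean-congruence}, and finally that its kernel is $I$; part (2) will then follow quickly from the fact that additive congruences correspond to morphisms preserving relative complements. For the equivalence relation, reflexivity holds by taking $c = a$, so that $a \setminus a = a(\mathbf{d}(a) \setminus \mathbf{d}(a)) = 0 \in I$, and symmetry is immediate from the symmetric form of the definition. Transitivity is where the real work lies. Given $c_{1} \leq a,b$ witnessing $(a,b) \in \varepsilon_{I}$ and $c_{2} \leq b,d$ witnessing $(b,d) \in \varepsilon_{I}$, I would set $c = c_{1} \wedge c_{2}$, which exists by Lemma~\ref{lem:wedge} since $c_{1},c_{2} \leq b$, and note $c \leq a$ and $c \leq d$. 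Writing $a = c \oplus (c_{1} \setminus c) \oplus (a \setminus c_{1})$ from the two decompositions $a = c_{1} \oplus (a \setminus c_{1})$ and $c_{1} = c \oplus (c_{1} \setminus c)$, and using uniqueness of relative complements (Lemma~\ref{lem:chicken}), I obtain $a \setminus c = (c_{1} \setminus c) \oplus (a \setminus c_{1})$. The second summand lies in $I$ by hypothesis, so it remains to show $c_{1} \setminus (c_{1} \wedge c_{2}) \in I$.

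The crux is the inequality $c_{1} \setminus (c_{1} \wedge c_{2}) \leq b \setminus c_{2}$, after which membership in $I$ follows because $b \setminus c_{2} \in I$ and $I$, being a semigroup ideal, is an order-ideal (if $x \leq y \in I$ then $x = y\mathbf{d}(x) \in I$). To prove the inequality I would compute in the local unital Boolean algebra $\mathbf{d}(b)^{\downarrow}$: since $c_{i} = b\mathbf{d}(c_{i})$ and $\mathbf{d}(c_{1} \wedge c_{2}) = \mathbf{d}(c_{1})\mathbf{d}(c_{2})$ by Lemma~\ref{lem:wedge}, one gets $c_{1} \setminus (c_{1} \wedge c_{2}) = b(\mathbf{d}(c_{1}) \setminus \mathbf{d}(c_{1})\mathbf{d}(c_{2}))$, while $\mathbf{d}(c_{1}) \setminus \mathbf{d}(c_{1})\mathbf{d}(c_{2}) \leq \mathbf{d}(b) \setminus \mathbf{d}(c_{2})$ in $\mathbf{d}(b)^{\downarrow}$, whence $c_{1} \setminus (c_{1} \wedge c_{2}) \leq b(\mathbf{d}(b) \setminus \mathbf{d}(c_{2})) = b \setminus c_{2}$. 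The symmetric argument gives $d \setminus c \in I$. This delicate Boolean computation is the step I expect to be the main obstacle.

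For the congruence property, given $(a,b) \in \varepsilon_{I}$ with witness $c$ and any $s \in S$, I would take $sc \leq sa,sb$ and use $s(a \setminus c) = sa \setminus sc$ and $(a \setminus c)s = as \setminus cs$ from Lemma~\ref{lem:properties-of-setminus} together with the ideal property of $I$ to witness $(sa,sb),(as,bs) \in \varepsilon_{I}$. For additivity I would verify the criterion of Proposition~\ref{prop:boolean-congruence}: given orthogonal idempotents $e \perp f$ with $ae \, \varepsilon_{I} \, e$ (witness $c_{1} \leq ae,e$) and $af \, \varepsilon_{I} \, f$ (witness $c_{2} \leq af,f$), the element $c = c_{1} \oplus c_{2}$, orthogonal by Lemma~\ref{lem:buffs}, lies below both $a(e \vee f) = ae \oplus af$ and $e \vee f = e \oplus f$, where $ae \perp af$ by Lemma~\ref{lem:oj}. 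The orthogonal decompositions then give $a(e \vee f) \setminus c = (ae \setminus c_{1}) \oplus (af \setminus c_{2}) \in I$ and likewise $(e \vee f) \setminus c = (e \setminus c_{1}) \oplus (f \setminus c_{2}) \in I$, so $a(e \vee f) \, \varepsilon_{I} \, (e \vee f)$. Finally, the kernel computation is direct: $(a,0) \in \varepsilon_{I}$ forces the witness $c = 0$, so that $a = a \setminus 0 \in I$, and conversely every $a \in I$ satisfies $(a,0) \in \varepsilon_{I}$; hence $\mbox{ker}(\varepsilon_{I}) = I$.

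For part (2), let $\sigma$ be any additive congruence with kernel $I$ and let $(a,b) \in \varepsilon_{I}$ with witness $c$. Writing $q \colon S \rightarrow S/\sigma$ for the quotient morphism, since $a \setminus c \in I = \mbox{ker}(\sigma)$ we have $q(a \setminus c) = 0$; as $q$ preserves relative complements this gives $q(a) \setminus q(c) = 0$ with $q(c) \leq q(a)$, so $q(a) = q(c) \vee (q(a) \setminus q(c)) = q(c)$, that is $(a,c) \in \sigma$. Symmetrically $(b,c) \in \sigma$, and transitivity of $\sigma$ yields $(a,b) \in \sigma$. Thus $\varepsilon_{I} \subseteq \sigma$, as required.
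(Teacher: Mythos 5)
Your proof is correct, and there is in fact no in-paper argument to compare it with: the paper states this result only as a citation of \cite[Proposition 3-4.6]{Wehrung}, whose treatment runs through the operations $\ominus$ and $\triangledown$, so your write-up is a genuinely self-contained alternative assembled entirely from the paper's own toolkit. The two delicate steps both check out. For transitivity, the decomposition $a \setminus c = (c_{1} \setminus c) \oplus (a \setminus c_{1})$ is legitimate by the uniqueness clause of part (3) of Lemma~\ref{lem:chicken}: both summands are orthogonal to $c$ (the first by part (2) of that lemma, the second because it is orthogonal to $c_{1} \geq c$), and their join with $c$ recovers $a$. Your key computation
$c_{1} \setminus (c_{1} \wedge c_{2}) = b\left(\mathbf{d}(c_{1}) \setminus \mathbf{d}(c_{1})\mathbf{d}(c_{2})\right) \leq b\left(\mathbf{d}(b) \setminus \mathbf{d}(c_{2})\right) = b \setminus c_{2}$
is exactly right, using $c_{i} = b\,\mathbf{d}(c_{i})$ and Lemma~\ref{lem:wedge}; combined with the observations that semigroup ideals are order-ideals and that additive ideals are closed under compatible joins, this closes the only genuinely nontrivial gap in showing $\varepsilon_{I}$ is well behaved. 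In the additivity step your appeal to Lemma~\ref{lem:buffs} for $c_{1} \perp c_{2}$ works, though it can be seen even more directly: elements beneath idempotents are idempotents, so $c_{1}c_{2} \leq ef = 0$. Finally, in part (2) you use that the quotient morphism preserves relative complements of arbitrary (not just idempotent) elements; this does follow, as needed, from $x \setminus y = x(\mathbf{d}(x) \setminus \mathbf{d}(y))$ together with the idempotent-level statement recorded in the paper after the definition of additive homomorphism, so the argument is complete.
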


An additive congruence is {\em ideal-induced} if it equals $\varepsilon_{I}$ for some additive ideal $I$.
Let $S$ and $T$ be Boolean inverse semigroups.
A morphism $\theta \colon S \rightarrow T$ is said to be {\em weakly-meet-preserving} if
for any $a,b  \in S$ and any $t \in T$  if $t \leq \theta (a), \theta (b)$ then there exists $c \leq a,b$ such that $t \leq \theta (c)$.
Such morphisms were introduced in \cite{LL}. 
The following result is due to Ganna Kudryavtseva (private communication).

\begin{proposition}\label{prop:anja} A morphism of Boolean inverse semigroups is weakly-meet-pre\-serving if
and only if its associated congruence is ideal-induced.
\end{proposition}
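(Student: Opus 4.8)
The plan is to reduce the statement to a single inclusion of congruences and then match each implication to one half of the weakly-meet-preserving condition. Write $\sigma = \mbox{cong}(\theta)$ and $I = \mbox{ker}(\theta)$. By Lemma~\ref{lem:additive-ideals-new}, $I$ is an additive ideal, and since $\theta$ is a morphism, $\sigma$ is an additive congruence with kernel $I$. By Proposition~\ref{prop:noise}, $\varepsilon_{I}$ is an additive congruence with kernel $I$ and $\varepsilon_{I} \subseteq \sigma$. Because the kernel of $\varepsilon_{J}$ is $J$, the congruence $\sigma$ is ideal-induced exactly when $\sigma = \varepsilon_{I}$, which, given the inclusion just noted, amounts to $\sigma \subseteq \varepsilon_{I}$. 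I will also use repeatedly that for $y \leq x$ one has $\theta(x \setminus y) = \theta(x) \setminus \theta(y)$: this follows from $(x \setminus y) = x(\mathbf{d}(x) \setminus \mathbf{d}(y))$ together with the fact that $\theta$ is a homomorphism preserving relative complements of idempotents.

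For the forward direction I would assume $\theta$ is weakly-meet-preserving and prove $\sigma \subseteq \varepsilon_{I}$. Suppose $(a,b) \in \sigma$, so $\theta(a) = \theta(b) =: t$. Then $t \leq \theta(a)$ and $t \leq \theta(b)$, so there is $c \leq a, b$ with $t \leq \theta(c)$. Since $c \leq a$ gives $\theta(c) \leq \theta(a) = t$, we get $\theta(c) = t = \theta(a) = \theta(b)$. Hence $\theta(a \setminus c) = \theta(a) \setminus \theta(c) = 0$ and likewise $\theta(b \setminus c) = 0$, so $a \setminus c, b \setminus c \in I$ and $(a,b) \in \varepsilon_{I}$. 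This yields $\sigma \subseteq \varepsilon_{I}$, so $\sigma$ is ideal-induced.

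For the converse I would assume $\sigma = \varepsilon_{I}$ and verify the weakly-meet-preserving condition for a lower bound $t$ of $\theta(a)$ and $\theta(b)$; the decisive step is to arrange a common value of $\theta$ on restrictions of $a$ and $b$, so that the hypothesis $\sigma = \varepsilon_{I}$ can be applied. Writing $t = \theta(w)$, set $a_{1} = a\mathbf{d}(w) \leq a$ and $b_{1} = b\mathbf{d}(w) \leq b$. Since $t \leq \theta(a)$ we have $t = \theta(a)\mathbf{d}(t)$, and $\mathbf{d}(t) = \theta(\mathbf{d}(w))$, so $\theta(a_{1}) = \theta(a)\mathbf{d}(t) = t$; symmetrically $\theta(b_{1}) = t$. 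Thus $(a_{1}, b_{1}) \in \sigma = \varepsilon_{I}$, and so there is $c \leq a_{1}, b_{1}$ with $a_{1} \setminus c, b_{1} \setminus c \in I$. Then $c \leq a_{1} \leq a$ and $c \leq b_{1} \leq b$, and since $a_{1} = c \oplus (a_{1} \setminus c)$ by Lemma~\ref{lem:chicken} with $\theta(a_{1} \setminus c) = 0$, additivity of $\theta$ gives $\theta(c) = \theta(a_{1}) = t$. Hence $t \leq \theta(c)$, as required.

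The part demanding the most care is the converse, and within it the reduction just described. The computation $\theta(a_{1}) = \theta(b_{1}) = t$ is what converts an order relation between $t$ and the two images into an equality $\theta(a_{1}) = \theta(b_{1})$ to which $\sigma = \varepsilon_{I}$ applies; the point to watch is that it requires the domain idempotent $\mathbf{d}(t)$ of the lower bound to be realised inside $S$ (here as $\theta(\mathbf{d}(w))$ with $w \in S$), which is precisely the situation in which the condition is invoked. The remaining verifications—that $\mbox{cong}(\theta)$ is additive with kernel $I$, that $\theta$ preserves the relative complements used above, and that the witnesses produced by $\varepsilon_{I}$ lie beneath $a$ and $b$—are routine given the earlier results, principally Proposition~\ref{prop:noise} and Lemma~\ref{lem:chicken}.
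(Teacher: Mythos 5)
Your forward direction (weakly-meet-preserving implies ideal-induced) is correct and is essentially the paper's argument: given $\theta(a)=\theta(b)=t$, apply the hypothesis to $t$ itself, obtain $c \leq a,b$ with $\theta(c)=t$, and conclude $a\setminus c,\, b\setminus c \in \mbox{ker}(\theta)$. One small repair there: to get $\varepsilon_{I} \subseteq \mbox{cong}(\theta)$ you invoke Proposition~\ref{prop:noise}(2), which presupposes that $\mbox{cong}(\theta)$ is an \emph{additive} congruence with kernel $I$ --- something neither you nor the paper establishes for an arbitrary morphism. The safer citation is the direct computation at the start of Proposition~\ref{prop:factorization}: if $(a,b)\in\varepsilon_{I}$ with witness $c$, then $\theta(a)=\theta((a\setminus c)\vee c)=\theta(c)=\theta(b)$, which uses only additivity of $\theta$.

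The genuine gap is in your converse, at the words ``Writing $t=\theta(w)$''. The weakly-meet-preserving condition quantifies over \emph{all} $t \in T$, not just those in the image of $\theta$, and for a non-surjective morphism the implication is actually false as literally stated. Example: let $S = \mathbb{Z}_{2}^{0} = \{0,1,s\}$ and let $\theta \colon S \rightarrow \mathscr{I}_{3}$ send $1$ to the identity on $\{1,2,3\}$ and $s$ to the permutation exchanging $1$ and $2$ and fixing $3$ (and $0 \mapsto 0$). This is a (non-unital) morphism whose congruence is equality, that is $\varepsilon_{\{0\}}$, hence ideal-induced; but $t$ equal to the identity on $\{3\}$ satisfies $t \leq \theta(1), \theta(s)$, while the only common lower bound of $1$ and $s$ in $S$ is $0$, so no $c$ exists. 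The paper sidesteps exactly this issue by proving the converse direction only for the canonical surjection $\nu \colon S \rightarrow S/\varepsilon_{I}$, where every element of the codomain is some $[w]$ and your step becomes legitimate. Once you add surjectivity (or restrict to $\nu$), your argument does go through, and it is then a genuinely different mechanism from the paper's: the paper extracts witnesses $u \leq t,\, a\mathbf{d}(t)$ and $v \leq t,\, b\mathbf{d}(t)$ directly from the definition of $\varepsilon_{I}$ and takes the meet $u \wedge v$ (via Lemma~\ref{lem:wedge}) as the required $c$, whereas you cut $a$ and $b$ down by $\mathbf{d}(w)$ to force equal images and apply the congruence once, avoiding meets altogether. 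But as submitted, the unjustified assumption $t \in \theta(S)$ is a real gap in the claimed generality; you flag the point yourself, but a flag is not a proof.
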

\begin{proof} Let $I$ be an additive ideal of $S$ and let $\varepsilon_{I}$ be its associated additive congruence on $S$.
Denote by $\nu \colon S \rightarrow S/\varepsilon_{I}$ is associated natural morphism.
We prove that $\nu$ is weakly meet preserving.
Denote the $\varepsilon_{I}$-class containing $s$ by $[s]$.
Let $[t] \leq [a], [b]$.
Then $[t] = [at^{-1}t]$ and $[t] = [bt^{-1}t]$.
By definition there exist $u,v \in S$ such that
$u \leq t,at^{-1}t$ and $v \leq t,bt^{-1}t$ such that
$t \setminus u, at^{-1}t \setminus u, t \setminus v, bt^{-1}t \setminus v \in I$.
Now $[t] = [u] = [at^{-1}t]$ and $[t] = [v] = [bt^{-1}t]$.
Since $u,v \leq t$ it follows that $u \sim v$ and so $u \wedge v$ exists by Lemma~\ref{lem:wedge}.
Clearly, $u \wedge v \leq a,b$.
In addition $[t] = [u \wedge v]$.
We have proved that $\nu$ is weakly-meet-preserving.

Conversely, let $\theta \colon S \rightarrow T$ be weakly-meet-preserving.
We prove that it is determined by its kernel $I$.
By part (2) of Proposition~\ref{prop:noise}, it is enough to prove that if $\theta (a) = \theta (b)$ then
we can find $c \leq a,b$ such that $a \setminus c, b \setminus c \in I$.
Put $t = \theta (a) = \theta (b)$.
Then there exists $c \leq a,b$ such that $t \leq \theta (c)$.
It is easy to check that $\theta (a \setminus c) = 0 = \theta (b \setminus c)$.
We have therefore proved that $a \setminus c, b \setminus c \in I$ and so $(a,b) \in \varepsilon_{I}$.
\end{proof}

Most of the following is \cite[Proposition 3-4.9]{Wehrung}.

\begin{proposition}[Factorization of Boolean morphisms]\label{prop:factorization} Let $\theta \colon S \rightarrow T$ be a morphism of Boolean inverse semigroups.
Let the kernel of $\theta$ be $I$.
Put $\varepsilon = \varepsilon_{I}$ and let $\nu \colon S \rightarrow S/\varepsilon$ be the natural map.
Then there is a unique morphism $\phi \colon S/\varepsilon \rightarrow T$ such that
$\phi \nu = \theta$ where $\phi$ is idempotent-separating.
\end{proposition}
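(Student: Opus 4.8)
The plan is to obtain $\phi$ as the canonical factorization of $\theta$ through the quotient $S/\varepsilon$, with the work dividing into three parts: showing that $\theta$ really does factor (i.e.\ that $\varepsilon \subseteq \mbox{cong}(\theta)$), checking that the induced map is a morphism, and checking that it is idempotent-separating.

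First I would record that $\sigma := \mbox{cong}(\theta)$ is itself an additive congruence whose kernel (its $\sigma$-class of $0$) is $\theta^{-1}(0) = I$. To see that $\sigma$ is additive, note that the image $\theta (S)$ is a Boolean inverse subsemigroup of $T$: it is closed under products, inverses and compatible joins because $\theta$ preserves all of these, and its idempotents are exactly $\theta (\mathsf{E}(S))$ --- indeed any idempotent of the form $\theta (a)$ equals $\theta (aa^{-1})$ --- which is a Boolean subalgebra of $\mathsf{E}(T)$ since $\theta$ restricts to a morphism of Boolean algebras. As $S/\sigma \cong \theta (S)$ with corresponding natural map $\theta$ (a morphism), $\sigma$ is additive. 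Since $\varepsilon = \varepsilon_{I}$ is the least additive congruence with kernel $I$, part (2) of Proposition~\ref{prop:noise} gives $\varepsilon \subseteq \sigma$. Consequently $\phi (\nu (a)) := \theta (a)$ is well defined and satisfies $\phi \nu = \theta$; uniqueness is immediate because $\nu$ is surjective.

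Next I would verify that $\phi$ is a morphism. That it is a zero-preserving semigroup homomorphism is routine from $\phi \nu = \theta$. The one substantial point --- and the step I expect to be the main obstacle --- is that $\phi$ preserves binary compatible joins, the difficulty being that a compatible pair in $S/\varepsilon$ need not lift to a compatible pair in $S$. I would circumvent this by lifting joins rather than elements: given $p \sim q$ in $S/\varepsilon$ with $r = p \vee q$, use surjectivity of $\nu$ to choose $c$ with $\nu (c) = r$ together with arbitrary lifts $a,b$ of $p,q$, and set $\tilde{a} = c\mathbf{d}(a) \leq c$ and $\tilde{b} = c\mathbf{d}(b) \leq c$. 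Then $\nu (\tilde{a}) = r\mathbf{d}(p) = p$ and $\nu (\tilde{b}) = q$, while $\tilde{a}, \tilde{b} \leq c$ forces $\tilde{a} \sim \tilde{b}$, so that $\tilde{a} \vee \tilde{b}$ exists in $S$ with $\nu (\tilde{a} \vee \tilde{b}) = p \vee q = r$. Applying $\theta$ and using that $\theta$ is a morphism yields $\phi (r) = \theta (\tilde{a} \vee \tilde{b}) = \theta (\tilde{a}) \vee \theta (\tilde{b}) = \phi (p) \vee \phi (q)$, as required.

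Finally, to see that $\phi$ is idempotent-separating it suffices by Lemma~\ref{lem:idept-sep-kernel} to show that its kernel is trivial. If $\phi (\nu (a)) = 0$ then $\theta (a) = 0$, so $a \in I$; but $I$ is exactly the kernel of $\varepsilon_{I}$ by Proposition~\ref{prop:noise}(1), so $\nu (a) = 0$ in $S/\varepsilon$. Hence $\mbox{ker}(\phi)$ is trivial and $\phi$ is idempotent-separating, which completes the factorization.
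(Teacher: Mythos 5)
Your overall architecture is the canonical one and matches the paper's: factor $\theta$ through $S/\varepsilon$, check the induced map is a morphism, and check it is idempotent-separating. Your third step (trivial kernel via Proposition~\ref{prop:noise}(1) plus Lemma~\ref{lem:idept-sep-kernel}) is correct and is a clean alternative to the paper's direct computation with $e \setminus ef$ and $f \setminus ef$, and your verification that $\phi$ preserves compatible joins --- lifting the \emph{join} $r = p \vee q$ to $c$ and replacing the lifts of $p,q$ by $c\mathbf{d}(a)$ and $c\mathbf{d}(b)$ --- is sound and in fact more careful than the paper, which passes over this point. However, there is a genuine gap in your first step. You deduce $\varepsilon \subseteq \mbox{cong}(\theta)$ from the minimality clause of Proposition~\ref{prop:noise}(2), which requires knowing that $\sigma = \mbox{cong}(\theta)$ is an \emph{additive} congruence, and for this you assert that $\theta(S)$ is ``closed under compatible joins because $\theta$ preserves all of these.'' That justification does not work: if $\theta(a) \sim \theta(b)$ in $T$, there is no reason for $a \sim b$ to hold in $S$, so preservation of compatible joins by $\theta$ says nothing about whether $\theta(a) \vee \theta(b)$ lies in the image. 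This is exactly the non-lifting phenomenon you yourself flag in your second paragraph, and the fix you use there (lifting the join through a surjection) is unavailable here, since the candidate join $\theta(a) \vee \theta(b)$ is not yet known to be in $\theta(S)$.

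The claim is true but needs an argument. One repair: reduce to orthogonal joins via Lemma~\ref{lem:pork}, and given $\theta(a) \perp \theta(b)$ observe that $\mathbf{d}(a)\mathbf{d}(b), \mathbf{r}(a)\mathbf{r}(b) \in I$; replacing $a$ by $a_{0} = (\mathbf{r}(a) \setminus \mathbf{r}(a)\mathbf{r}(b))\, a \,(\mathbf{d}(a) \setminus \mathbf{d}(a)\mathbf{d}(b))$ leaves $\theta(a_{0}) = \theta(a)$ while forcing $a_{0} \perp b$ in $S$, whence $\theta(a) \vee \theta(b) = \theta(a_{0} \vee b) \in \theta(S)$. (Alternatively one can invoke the fact, alluded to in the paper via the operations $\ominus$ and $\triangledown$ of \cite{Wehrung}, that additive homomorphisms are exactly the homomorphisms for Wehrung's bias structure, so images are subalgebras.) But note that the paper avoids this machinery entirely by verifying $\varepsilon \subseteq \mbox{cong}(\theta)$ directly from the definition of $\varepsilon_{I}$: if $(a,b) \in \varepsilon$, choose $c \leq a,b$ with $a \setminus c,\, b \setminus c \in I$; then $\theta(a) = \theta((a \setminus c) \vee c) = \theta(c) = \theta(b)$. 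This two-line computation does the work of your entire first paragraph without raising the additivity of $\mbox{cong}(\theta)$ at all, and is the route you should take.
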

\begin{proof} Observe first that $\varepsilon \subseteq \mbox{\rm cong}(\theta)$;
suppose that $(a,b) \in \varepsilon$.
Then there exists $c \leq a,b$ such that $a \setminus c, b \setminus c \in I$.
We have that, $\theta (a) = \theta ((a\setminus c) \vee c) = \theta (c)$ and, by symmetry, $\theta (b) = \theta (c)$.
It follows that $(a,b) \in \mbox{\rm cong}(\theta)$.
The function $\phi \colon S/\varepsilon \rightarrow T$ defined by $\phi ([a]) = \theta (a)$,
where $[a]$ denotes the $\varepsilon$-class of $a$, is a well-defined homomorphism.
We prove that $\phi$ is idempotent-separating.
Let $\phi ([e]) = \phi ([f])$ where $e$ and $f$ are idempotents.
Then $\theta (e) = \theta (f)$.
Thus $\theta (e \setminus ef) = 0$ and $\theta (f \setminus ef) = 0$.
We have shown that $(e,f) \in \varepsilon$
and so $[e] = [f]$.
\end{proof}

Proposition~\ref{prop:factorization} is included for interest.

A Boolean inverse semigroup is {\em simple} if it has no non-trivial additive congruences.

\begin{proposition}\label{prop:boolean-congruence-free} 
A Boolean inverse semigroup is simple if and only if
it is $0$-simplifying and fundamental.
\end{proposition}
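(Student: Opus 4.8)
The plan is to prove both implications directly from the machinery assembled above, exploiting two facts: that the maximum idempotent-separating congruence $\mu$ is additive (Proposition~\ref{lem:idem-sep}), and that every additive ideal $I$ arises as the kernel of the additive congruence $\varepsilon_{I}$ (Proposition~\ref{prop:noise}). Here \emph{simple} means that the only additive congruences on $S$ are the equality relation $\Delta$ and the universal relation $\nabla$.

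First I would prove that a simple $S$ is both fundamental and $0$-simplifying. For fundamentality, I observe that since $S \neq \{0\}$ there is a nonzero idempotent $e$, so $S$ has at least the two distinct idempotents $0$ and $e$; as $\mu$ is idempotent-separating it does not identify them, whence $\mu \neq \nabla$. Since $\mu$ is additive and $S$ is simple, this forces $\mu = \Delta$, that is, $S$ is fundamental. For $0$-simplifying, let $I$ be any additive ideal. By Proposition~\ref{prop:noise} the congruence $\varepsilon_{I}$ is additive with kernel $I$, so simplicity gives $\varepsilon_{I} \in \{\Delta, \nabla\}$. If $\varepsilon_{I} = \Delta$ then $I$, being the $\varepsilon_{I}$-class of $0$, equals $\{0\}$; if $\varepsilon_{I} = \nabla$ then this class is all of $S$ and $I = S$. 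Thus the only additive ideals are $\{0\}$ and $S$.

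For the converse I would assume $S$ is $0$-simplifying and fundamental and take an arbitrary additive congruence $\sigma$, writing $I$ for its kernel. By Lemma~\ref{lem:additive-ideals-new}, $I$ is an additive ideal, so $0$-simplifying forces $I = \{0\}$ or $I = S$. In the second case every element is $\sigma$-related to $0$, hence to every other element, so $\sigma = \nabla$. In the first case the natural morphism $S \rightarrow S/\sigma$ has trivial kernel, so by Lemma~\ref{lem:idept-sep-kernel} it is idempotent-separating; therefore $\sigma$ is an idempotent-separating congruence and so $\sigma \subseteq \mu$. Fundamentality gives $\mu = \Delta$, whence $\sigma = \Delta$. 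Thus $\sigma$ is one of the two trivial congruences and $S$ is simple.

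The argument is essentially a bookkeeping exercise once the two key inputs are in place, so there is no serious obstacle; the only point demanding care is the converse's trivial-kernel case, where one must pass from ``$\sigma$ has trivial kernel'' to ``$\sigma$ is idempotent-separating'' (via Lemma~\ref{lem:idept-sep-kernel}) and then use that $\mu$ is the \emph{maximum} idempotent-separating congruence to conclude $\sigma \subseteq \mu$ and hence $\sigma = \Delta$.
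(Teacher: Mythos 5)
Your proof is correct and follows essentially the same route as the paper's: both directions rest on the additivity of $\mu$ (Proposition~\ref{lem:idem-sep}), the ideal-induced congruences $\varepsilon_{I}$ of Proposition~\ref{prop:noise}, and Lemma~\ref{lem:idept-sep-kernel} to pass from trivial kernel to idempotent-separating, then $\sigma \subseteq \mu = \Delta$. If anything, you are slightly more careful than the paper in noting explicitly why $\mu \neq \nabla$ (namely that $S \neq \{0\}$ supplies two distinct idempotents), a point the paper leaves implicit.
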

\begin{proof} Let $S$ be a $0$-simplifying fundamental Boolean inverse semigroup.
Let $\sigma$ be any Boolean congruence on $S$.
If $\sigma (0) = S$ then $\sigma$ is the universal congruence.
If $\sigma \neq S$ then, since $S$ is $0$-simplifying,  $\sigma (0) = \{0\}$.
Thus by Lemma~\ref{lem:idept-sep-kernel}, the congruence $\sigma$ is idempotent-separating.
It follows that $\sigma \subseteq \mu$.
But $S$ is fundamental and so $\mu$ is the equality congruence.
It follows that $\sigma$ is equality.
We have proved that $S$ is additive congruence-free.
Conversely, suppose now that $S$ is additive congruence-free.
Let $I$ be a $\vee$-ideal such that $I \neq \{0\}$.  
Then by Proposition~\ref{prop:noise}, we have that $\varepsilon_{I}$ is an additive congruence with kernel $I$.
It follows that  $\varepsilon_{I}$ is the universal congruence and so $I = S$.
We have proved that $S$ is $0$-simplifying.
By Lemma~\ref{lem:idem-sep}, the congruence $\mu$ is Boolean.
Thus by assumption $\mu$ is equality and so $S$ is fundamental.
\end{proof}

Proposition~\ref{prop:boolean-congruence-free} explains why in what follows
we are so interested in determining when a Boolean inverse semigroup is $0$-simplifying or fundamental.

\subsection{Generalized rook matrices}
Boolean inverse semigroups are more `ring-like' than arbitrary inverse semigroups.
One way this is demonstrated is that we may define matrices over such semigroups.
The following construction was first described in \cite{Hines} and then generalized in \cite{KLLR2016} and \cite{Wallis}.
Let $S$ be an arbitrary Boolean inverse semigroup and let $X$ be a non-empty set.
An {\em $|X| \times |X|$ generalized rook matrix over $S$} is an $|X| \times |X|$ matrix with entries from $S$ that satisfies the following three conditions:
\begin{description}

\item[{\rm (RM1)}] If $a$ and $b$ are in distinct columns and lie in the same row of $A$ then $a^{-1}b = 0$. That is $\mathbf{r}(a) \perp \mathbf{r}(b)$.

\item[{\rm (RM2)}] If $a$ and $b$ are in distinct rows and lie in the same column of $A$ then $ab^{-1} = 0$. That is $\mathbf{d}(a) \perp \mathbf{d}(b)$.

\item[{\rm (RM3)}] In the case where $|X|$ is infinite we also require that only a finite number of entries in the matrix are non-zero.

\end{description}
We shall usually just say {\em rook matrix} instead of {\em generalized rook matrix}.
When $n$ is finite, we use the notation $I_{n}$ to mean the $n \times n$ identity matrix.
Let $A$ and $B$ be rook matrices of the same size.
Then the matrix $AB$ is defined as follows:
$$(AB)_{ij} = \bigoplus_{k} a_{ik}b_{kj}.$$
The following is \cite[Proposition~3.3]{KLLR2016}.

\begin{proposition}\label{prop:ale} Let $S$ be a Boolean inverse semigroup. 
\begin{enumerate}

\item If $A$ and $B$ are rook matrices such that $AB$ is defined then $AB$ is well-defined and is a rook matrix.

\item Multiplication is associative when defined.

\item The matrices $I_{n}$ are identities when multiplication is defined.

\item Let $A = (a_{ij})$ be a rook matrix.
Define $A^{\ast} = (a_{ji}^{-1})$.
Then $A^{\ast}$ is a rook matrix and $A = AA^{\ast}A$ and $A^{\ast} = A^{\ast}AA^{\ast}$.

\item The idempotents are those square rook matrices which are diagonal and whose diagonal entries are themselves idempotents.

\item If $A$ and $B$ are two rook matrices of the same size then $A \leq B$ if and only if $a_{ij} \leq b_{ij}$ for all $i$ and $j$.

\item If $A$ and $B$ are again of the same size then $A \perp B$ if and only if $a_{ij} \perp b_{ij}$;
if this is the case then $A \vee B$ exists and its elements are $a_{ij} \vee b_{ij}$.

\end{enumerate}
\end{proposition}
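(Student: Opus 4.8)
The plan is to verify the seven assertions of Proposition~\ref{prop:ale} by unwinding the rook-matrix definitions (RM1)--(RM3) and reducing each claim to the orthogonality and join properties of the underlying Boolean inverse semigroup $S$ that were established above. The unifying theme is that each product entry $(AB)_{ij} = \bigoplus_k a_{ik}b_{kj}$ is an orthogonal join, so I would first check that this join is legitimate and then push each structural claim through it.

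First I would address well-definedness and the rook-matrix axioms together (parts (1) and (4)). For a fixed pair $(i,j)$, the terms $a_{ik}b_{kj}$ over varying $k$ must be pairwise orthogonal so that $\bigoplus_k a_{ik}b_{kj}$ makes sense: if $k \neq l$, then $a_{ik}$ and $a_{il}$ lie in the same row $i$ of $A$, whence by (RM1) $\mathbf{r}(a_{il}) \perp \mathbf{r}(a_{ik})$, i.e.\ $\mathbf{d}(a_{ik}^{-1}) \perp \mathbf{d}(a_{il}^{-1})$; applying Lemma~\ref{lem:oj} and Lemma~\ref{lem:buffs} I can propagate this to orthogonality of $a_{ik}b_{kj}$ and $a_{il}b_{lj}$. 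One then checks that the resulting matrix $AB$ again satisfies (RM1) and (RM2) by computing $\mathbf{r}$ and $\mathbf{d}$ of the entries and invoking orthogonality; (RM3) is immediate since a product of matrices with finitely many nonzero entries has finitely many nonzero entries. For part (4), verifying $A = AA^{\ast}A$ and $A^{\ast} = A^{\ast}AA^{\ast}$ reduces, after expanding the triple products as orthogonal joins, to the fact that in an inverse semigroup a single entry $a$ satisfies $a = aa^{-1}a$, with all cross terms vanishing by the orthogonality just established.

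Parts (2), (3), (5), (6) and (7) are then largely bookkeeping. For associativity (2), I would expand both $(AB)C$ and $A(BC)$ into double orthogonal joins $\bigoplus_{k,l} a_{ik}b_{kl}c_{lj}$ and appeal to the distributivity of multiplication over orthogonal joins (Proposition~\ref{prop:definition}) together with associativity in $S$; the two iterated joins coincide because orthogonal joins may be reindexed. Part (3), that $I_n$ acts as identity, is a direct computation since only the diagonal term survives. Parts (5), (6), (7) are entrywise: the idempotent characterization follows from (RM2) forcing off-diagonal cancellation so that $A^2 = A$ holds iff each diagonal entry is idempotent and the off-diagonal entries vanish; the order characterization $A \leq B \iff a_{ij} \leq b_{ij}$ follows from $A = BA^{\ast}A$ computed entrywise; and the orthogonality/join statement in (7) follows from Proposition~\ref{prop:ale}(6) applied componentwise together with Lemma~\ref{lem:oj}.

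The main obstacle I anticipate is the orthogonality verification underpinning part (1): one must confirm that the cross terms $a_{ik}b_{kj}$ and $a_{il}b_{lj}$ really are orthogonal rather than merely compatible, and this requires carefully combining the row condition (RM1) on the $a$'s with Lemma~\ref{lem:oj} and Lemma~\ref{lem:buffs} in the correct direction. Once orthogonality of all summands is secured, the distributivity laws guaranteed by the Boolean structure make the remaining manipulations routine, so the genuine content of the proposition is concentrated in this single orthogonality argument and its reuse in establishing associativity.
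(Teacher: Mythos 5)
The paper offers no proof of this proposition at all---it is quoted verbatim from \cite{KLLR2016}---so your outline can only be measured against the standard entrywise computation, whose overall architecture you reproduce correctly: expand $(AB)_{ij}$ as an orthogonal join and reduce everything to identities in $S$. However, the step you yourself single out as the crux is wrong as you describe it. From (RM1) applied to row $i$ of $A$ you obtain only $a_{ik}^{-1}a_{il}=0$, that is $\mathbf{r}(a_{ik}) \perp \mathbf{r}(a_{il})$; this is \emph{not} the full relation $a_{ik} \perp a_{il}$ (two entries in the same row need not have orthogonal domains), so Lemma~\ref{lem:oj} cannot be applied to the $a$'s, and Lemma~\ref{lem:buffs} is also unavailable because it presupposes the compatibility $a_{ik}b_{kj} \sim a_{il}b_{lj}$, which at this point is precisely what is in doubt. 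No propagation from (RM1) alone can succeed: if $x$ and $y$ are partial bijections with the same domain $\{p\}$ and distinct one-point ranges, then $x^{-1}y = 0$ while $xy^{-1} \neq 0$, so range-orthogonality of the $a$-parts can never deliver the second half of the orthogonality of the summands. The correct argument uses both axioms, one on each factor: writing $x = a_{ik}b_{kj}$ and $y = a_{il}b_{lj}$ with $k \neq l$, one has $x^{-1}y = b_{kj}^{-1}(a_{ik}^{-1}a_{il})b_{lj} = 0$ by (RM1) on row $i$ of $A$, and $xy^{-1} = a_{ik}(b_{kj}b_{lj}^{-1})a_{il}^{-1} = 0$ by (RM2) on column $j$ of $B$. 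The column condition on $B$ is therefore indispensable, and the same dual bookkeeping is needed to verify that $AB$ again satisfies (RM1) and (RM2), where the surviving $k = l$ terms are killed by estimates such as $b_{kj}^{-1}\mathbf{d}(a_{ik})b_{kj'} \leq b_{kj}^{-1}b_{kj'} = 0$.

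With that repaired, your treatment of (2)--(6) goes through essentially as sketched. But part (7) cannot be disposed of ``componentwise together with Lemma~\ref{lem:oj}'': positionwise orthogonality does not by itself give $A \perp B$. Over $\mathbf{2}$, let $A$ have its unique nonzero entry $1$ in position $(1,1)$ and $B$ its unique nonzero entry $1$ in position $(1,2)$; then $a_{ij} \perp b_{ij}$ for every $(i,j)$, since some entry is always $0$, yet $(A^{\ast}B)_{12} = 1 \neq 0$, so $A \not\perp B$. The real content of (7) is in the cross-position products: $A \perp B$ demands $a_{ki}^{-1}b_{kj} = 0$ and $a_{ik}b_{jk}^{-1} = 0$ for \emph{all} $i,j,k$, which is exactly the condition that the entrywise join $(a_{ij} \vee b_{ij})$ is again a rook matrix, i.e.\ itself satisfies (RM1) and (RM2). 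Your reduction proves only the easy direction and would, read literally, ``establish'' an implication that the example above refutes; so here too the genuine work is the two-sided (RM1)/(RM2) verification, not a componentwise appeal to (6).
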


Proposition~\ref{prop:ale} tells us, in particular, how to manipulate rook matrices over Boolean inverse semigroups.
The set of all $|X| \times |X|$ rook matrices over $S$ is denoted by $M_{|X|}(S)$.
If $X$ is finite then $|X| = n$, 
and we denote the corresponding set of rook matrices by  $M_{n}(S)$,
and if $X$ is countably infinite then  $|X| = \omega$,
and we denote the corresponding set of rook matrices by  $M_{\omega}(S)$.
The set $M_{|X|}(S)$ is a Boolean inverse semigroup.

\begin{example}\label{ex:solomon}
{\em Let $\mathbf{2} = \{0,1\}$, the two-element unital Boolean algebra.
Then $M_{n}(\mathbf{2})$ is the Boolean inverse monoid of $n \times n$ rook matrices in the sense of Solomon \cite{Solomon}, 
and so is isomorphic to the finite symmetric inverse monoid on $n$ letters $\mathscr{I}_{n}$.}
\end{example}

If $G$ is a group, then $G^{0}$ is the group with a zero adjoined.
This is a Boolean inverse monoid we call a {\em $0$-group.}
We proved in Proposition~\ref{prop:boolean-finite-local-bisections} that $\mathsf{K}_{\rm \tiny fin}(G)$ is a Boolean inverse semigroup 
for any discrete groupoid $G$.
In the case where the groupoid $G$ is connected, we can say more.

\begin{proposition}\label{prop:local-bisections-rook} Let $\mathscr{G}$ be a connected groupoid.
Then $\mathscr{G} = X \times G \times X$,
where $G$ is a group and $X$ is a non-empty set,
and the Boolean inverse semigroup $\mathsf{K}_{{\rm \tiny fin}} (\mathscr{G})$ is isomorphic to $M_{|X|}(G^{0})$.
\end{proposition}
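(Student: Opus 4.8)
The plan is to exhibit an explicit isomorphism $\Phi \colon \mathsf{K}_{\rm \tiny fin}(\mathscr{G}) \rightarrow M_{|X|}(G^{0})$ and verify it respects the structure. By Lemma~\ref{lem:connected-groupoids}, a connected groupoid is (isomorphic to) $X \times G \times X$ with multiplication $(x,g,y)(y,h,z) = (x,gh,z)$, so I may assume $\mathscr{G}$ is literally of this form. The identities of $\mathscr{G}$ are the elements $(x,1,x)$, which I identify with the index set $X$. The key observation is that an element of $\mathscr{G}$ with domain $(y,1,y)$ and range $(x,1,x)$ is exactly a triple $(x,g,y)$, so such an element is naturally recorded in the $(x,y)$-entry of an $X \times X$ matrix, with the group element $g$ (or $0$ if absent) as the value.

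First I would define the map. Given a finite local bisection $A \in \mathsf{K}_{\rm \tiny fin}(\mathscr{G})$, define the matrix $\Phi(A)$ by setting its $(x,y)$-entry equal to $g$ if $(x,g,y) \in A$, and equal to $0$ otherwise. The local bisection condition is precisely what guarantees this is well-defined and yields a rook matrix: if $(x,g,y), (x,g',y) \in A$ share domain and range then $g = g'$, so each entry is unambiguous; if two nonzero entries lie in the same row (same range $(x,1,x)$) they have orthogonal domains by (RM1), which holds because distinct $(x,g,y), (x,g',y')$ in $A$ with $y \neq y'$ cannot share a domain, matching the definition of local bisection, and symmetrically for (RM2); finiteness of $A$ gives (RM3). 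Conversely, a rook matrix determines a unique finite local bisection by reading off its nonzero entries as triples, so $\Phi$ is a bijection.

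Next I would check $\Phi$ is a homomorphism of inverse semigroups. For the product, $(AB)$ as a set-theoretic product of local bisections consists of all $(x,g,y)(y,h,z) = (x,gh,z)$ with the factors in $A$ and $B$ respectively; the $(x,z)$-entry of $\Phi(A)\Phi(B)$ is $\bigoplus_{y} a_{xy}b_{yz}$, and since for each fixed $x,z$ at most one intermediate $y$ contributes a nonzero product (again by the local bisection property ensuring orthogonality of the relevant domains/ranges, cf.\ Proposition~\ref{prop:ale}(1)), this orthogonal join reduces to the single term $gh$. That matches the entry prescribed by the set-product, so $\Phi(AB) = \Phi(A)\Phi(B)$. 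Inverses correspond because $A^{-1} = \{(y,g^{-1},x) \colon (x,g,y) \in A\}$ transposes-and-inverts entries, which is exactly $\Phi(A)^{\ast}$ by Proposition~\ref{prop:ale}(4). Finally, by Proposition~\ref{prop:ale}(6),(7) the order and orthogonal joins on rook matrices are computed entrywise, so $\Phi$ preserves $\leq$, hence preserves compatible joins (each of which is an orthogonal join by Lemma~\ref{lem:pork}); thus $\Phi$ is an additive isomorphism.

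The main obstacle is bookkeeping rather than conceptual: one must carefully match the four possible failure modes of the local-bisection condition (shared domain versus shared range) against the two orthogonality conditions (RM1) and (RM2), paying attention to which coordinate indexes rows and which indexes columns. A secondary subtlety is confirming that the single-term collapse of the matrix product $\bigoplus_{y} a_{xy}b_{yz}$ is legitimate — that the summands really are pairwise orthogonal and at most one is nonzero — which is where the defining property of local bisections is used most essentially, and it is worth isolating this as the crux of the homomorphism check.
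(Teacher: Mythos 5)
Your proposal is correct and takes essentially the same route as the paper: both reduce to $\mathscr{G} = X \times G \times X$ via Lemma~\ref{lem:connected-groupoids} and define the same entrywise bijection sending a finite local bisection $A$ to the matrix with $(x,y)$-entry $g$ exactly when $(x,g,y) \in A$. You merely spell out the multiplicativity check (the collapse of $\bigoplus_{y} a_{xy}b_{yz}$ to a single term) that the paper calls routine; the one harmless slip is your appeal to ``orthogonal domains'' in verifying (RM1)--(RM2), since over $G^{0}$ distinct nonzero group elements are never orthogonal, so those conditions simply force at most one nonzero entry per row and per column, which is exactly the local bisection condition.
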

\begin{proof} We first apply the structure theorem for connected groupoids Lemma~\ref{lem:connected-groupoids}.
We define an isomorphism 
$$\theta \colon \mathsf{K}_{{\rm \tiny fin}} (\mathscr{G}) \rightarrow M_{|X|}(G^{0}).$$
Let $A \in \mathsf{K}_{{\rm \tiny fin}} (\mathscr{G})$.
If $A$ is the empty set then we map it to the $|X| \times |X|$ zero matrix.
If $A$ is non-empty then define $\theta (A)$ to be the $|X| \times |X|$ matrix with entries from $G^{0}$ where
$(x,g,y) \in A$ if and only if the $(x,y)$-element of $\theta (A)$ is $g$;
all other entries are zero.
Clearly, $\theta (A)$ has only a finite number of non-zero entries.
Because $A$ is a local bisection, each row of $\theta (A)$ contains at most one non-zero element;
likewise, each column of $\theta (A)$ contains at most one non-zero element.
This defines $\theta$ which is clearly an injection.
We prove that it is a bijection.
Let $A' \in M_{|X|}(G^{0})$.
If it is the zero matrix then we define $A$ to be the empty set.
Then we define $A$ as follows: $(x,g,y) \in A$ precisely when the $(x,y)$-element of $\theta (A)$ is $g$.
Because $A'$ is a rook matrix, it follows that $A$ is a finite local bisection.
We have therefore proved that $\theta$ is a bijection.
It remains to show that it is a homomorphism.
Let $A,B \in \mathsf{K}_{{\rm \tiny fin}} (\mathscr{G})$.
Then it is routine to check that $\theta (AB) = \theta (A)\theta (B)$.
\end{proof}

\begin{center}
{\bf Notation}
\end{center}

\begin{itemize}

\item If $S$ is an inverse semigroup, then $\mathsf{E}(S)$ denotes the semilattice of idempotents of $S$.
If $X \subseteq S$ then $\mathsf{E}(X) = \mathsf{E}(S) \cap X$.

\item If $S$ is an inverse semigroup, then $\mathsf{A}(S)$ denotes the set of atoms of $S$.

\item If $X$ is a finite set, then $\mathscr{I}(X)$ is the symmetric inverse monoid on $X$.
If $X$ has $n$ elements then the symmetric inverse monoid on $X$ is also denoted by $\mathscr{I}_{n}$.
These semigroups are finite Boolean inverse monoids.

\item If $X$ is an infinite, set then $\mathscr{I}_{\rm \tiny fin}(X)$ denotes the Boolean inverse semigroup
of all partial bijections of $X$ with finite domains.

\item If $S$ is an inverse semigroup then $\mathcal{G}(S)$ is called the {\em restricted groupoid} of $S$.
It is equal to the set $S$ with the restricted product if $S$ does not have a zero and the set $S^{\ast}$ with respect
to the restricted product if $S$ does have a zero.

\item $\mathsf{K}(G)$ is the Boolean inverse monoid of all local bisections of the finite discrete groupoid $G$.

\item $\mathsf{K}_{\rm \tiny fin}(G)$ is the Boolean inverse semigroup of all finite local bisections of the discrete groupoid $G$.

\item $\mathsf{B}(S)$ is the Booleanization of the inverse semigroup $S$.

\item $\mathsf{G}_{u}(S)$ is Paterson's universal groupoid of the inverse semigroup $S$.

\item $\mathsf{G}(S)$ is the \'etale groupoid of all prime filters of the Boolean inverse semigroup $S$.

\item $\mathsf{KB}(G)$ is the Boolean inverse semigroup of compact-open local bisections of the Boolean groupoid $G$.

\end{itemize}

\section{Finite Boolean inverse monoids}

\subsection{The structure of finite Boolean inverse monoids}
Finite Boolean inverse {\em semigroups} are, of course, finite Boolean inverse {\em monoids}.
The following is a corollary of 
Proposition~\ref{prop:boolean-finite-local-bisections} and will motivate the whole of this section;
in fact, we shall prove that every finite Boolean inverse monoid
is of this form.
Observe that the natural partial order in $\mathsf{K}(G)$ is subset inclusion and so the atoms
are the local bisections which are singleton sets.

\begin{proposition}\label{prop:groupoids} 
Let $G$ be a finite (discrete) groupoid.
Then $\mathsf{K}(G)$, the set of all local bisections of $G$ under subset multiplication,
is a finite Boolean inverse monoid, the set of atoms of which forms a groupoid isomorphic to $G$.
\end{proposition}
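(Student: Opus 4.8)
The plan is to reduce the Boolean-inverse-semigroup content to Proposition~\ref{prop:boolean-finite-local-bisections} and then identify the groupoid of atoms by a direct computation on singletons. First I would observe that because $G$ is finite every subset of $G$ is finite, so $\mathsf{K}(G) = \mathsf{K}_{{\rm \tiny fin}}(G)$; by Proposition~\ref{prop:boolean-finite-local-bisections} this is a Boolean inverse semigroup, and it is finite since it is a subset of the finite powerset $\mathsf{P}(G)$. To see it is a monoid I would use that (as in the proof of Proposition~\ref{prop:boolean-finite-local-bisections}) its semilattice of idempotents is the powerset $\mathsf{P}(G_o)$ of the set of identities, whose top element is $G_o$. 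A maximum idempotent in an inverse semigroup is automatically a two-sided identity: for any local bisection $A$ and any $a \in A$ one has $\mathbf{r}(a), \mathbf{d}(a) \in G_o$ and $\mathbf{r}(a) \cdot a = a = a \cdot \mathbf{d}(a)$, whence $G_o A = A = A G_o$. Thus $\mathsf{K}(G)$ is a finite Boolean inverse monoid with identity $G_o$.

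Next I would identify the atoms. Since the natural partial order is subset inclusion with bottom element the empty set (the zero of $\mathsf{K}(G)$), the minimal nonzero local bisections are exactly the singletons $\{g\}$ with $g \in G$: a singleton has nothing strictly between it and $\emptyset$, while any local bisection of cardinality at least two properly contains a nonempty proper subset. Hence $\mathsf{A}(\mathsf{K}(G)) = \{\{g\} : g \in G\}$.

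I would then define $\phi \colon G \rightarrow \mathsf{A}(\mathsf{K}(G))$ by $\phi(g) = \{g\}$, which is visibly a bijection, and check that it is an isomorphism of groupoids, the groupoid structure on the atoms being the restricted product inherited from $\mathsf{K}(G)$. The key computations are $\{g\}^{-1} = \{g^{-1}\}$, $\mathbf{d}(\{g\}) = \{g^{-1}g\} = \{\mathbf{d}(g)\}$ and $\mathbf{r}(\{g\}) = \{\mathbf{r}(g)\}$. Consequently the restricted product $\{g\} \cdot \{h\}$ is defined precisely when $\{\mathbf{d}(g)\} = \{\mathbf{r}(h)\}$, that is when $\mathbf{d}(g) = \mathbf{r}(h)$, which is exactly the condition for $gh$ to be defined in $G$; and in that case the set product gives $\{g\}\{h\} = \{gh\} = \phi(gh)$. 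Thus $\phi$ transports the partial multiplication, inverses, domains and ranges of $G$ onto those of the groupoid of atoms, so it is a groupoid isomorphism.

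I do not expect a serious obstacle, since the statement is essentially a specialization of Proposition~\ref{prop:boolean-finite-local-bisections} together with transparent bookkeeping of singletons. The only points needing a little care are verifying that $G_o$ is genuinely a two-sided identity (so that we really have a monoid and not merely a semigroup) and matching the domain of definition of the restricted product on atoms with composability in $G$, so that the empty set product $\{g\}\{h\} = \emptyset$ corresponds correctly to $gh$ being undefined in $G$. One could alternatively invoke Lemma~\ref{lem:atom-idempotent} to argue abstractly that the atoms are closed under the restricted product, but the direct singleton calculation makes this immediate.
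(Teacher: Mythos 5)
Your proposal is correct and follows essentially the same route as the paper, which states this result as a direct corollary of Proposition~\ref{prop:boolean-finite-local-bisections} together with the observation that the natural partial order on $\mathsf{K}(G)$ is subset inclusion, so the atoms are precisely the singleton local bisections. Your additional details --- that $\mathsf{K}(G) = \mathsf{K}_{{\rm \tiny fin}}(G)$ by finiteness, that $G_o$ is the two-sided identity, and the explicit check that $g \mapsto \{g\}$ matches the restricted product on atoms with composability in $G$ --- are exactly the bookkeeping the paper leaves implicit.
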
 

Let $S$ be a finite Boolean inverse monoid.
Our goal is to describe the structure of $S$ in terms of its sets of atoms, just as in the case of finite Boolean algebras.
Whereas in finite Boolean algebras the atoms simply form a set, in Boolean inverse monoids the set of atoms forms a groupoid.
Denote by $\mathsf{A}(S)$ the set of atoms of $S$.
The proof of the following is immediate by finiteness.

\begin{lemma}\label{lem:atoms} Let $S$ be a finite Boolean inverse monoid.
Then each non-zero element is above an atom.
\end{lemma}

The set of atoms is not merely just a set but in fact a groupoid.

\begin{lemma}\label{lem:atoms-groupoid} Let $S$ be an inverse semigroup with zero.
Then the set of atoms, if non-empty, forms a groupoid under the restricted product.
\end{lemma}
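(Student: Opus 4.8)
Let $S$ be an inverse semigroup with zero. Then the set of atoms, if non-empty, forms a groupoid under the restricted product.

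Let me plan a proof of this statement.

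The plan is to verify that the set of atoms $\mathsf{A}(S)$ is closed under the restricted product, and closed under inversion, and that it contains the appropriate identities — that is, it forms a subgroupoid of the restricted groupoid $\mathcal{G}(S)$. Recall that $\mathcal{G}(S)$ is already a groupoid under the restricted product (by \cite[Proposition 3.1.4]{Lawson1998}, as noted in the background), so the work reduces to showing that the atoms are closed under the restricted structure. First I would check closure under inversion: if $a$ is an atom and $x \leq a^{-1}$, then $x^{-1} \leq a$, so $x^{-1} = a$ or $x^{-1} = 0$, giving $x = a^{-1}$ or $x = 0$; hence $a^{-1}$ is an atom. This is immediate from the fact that inversion is an order-isomorphism reversing nothing on the natural partial order.

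The main step is closure under the restricted product. Suppose $a$ and $b$ are atoms with $\mathbf{d}(a) = \mathbf{r}(b)$, so that the restricted product $a \cdot b = ab$ is defined and nonzero (nonzero because both factors are nonzero atoms and, as observed in the excerpt, the restricted product of two nonzero elements with matching domain/range is nonzero). I must show $ab$ is an atom. So suppose $x \leq a \cdot b$ and $x \neq 0$; I want $x = a \cdot b$. By Lemma~\ref{lem:restricted-product}(3), any $x \leq a \cdot b$ factors as $x = a' \cdot b'$ with $a' \leq a$ and $b' \leq b$. Since $a$ and $b$ are atoms and $a', b'$ are below them, each of $a', b'$ is either $0$ or equals its parent. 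If either $a' = 0$ or $b' = 0$, then $x = a' \cdot b' = 0$, contradicting $x \neq 0$; here I use that the restricted product with a zero factor is zero. Hence $a' = a$ and $b' = b$, giving $x = a \cdot b$. This shows $a \cdot b$ is an atom.

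It remains to handle the identities. The identities of the groupoid $\mathcal{G}(S)$ are the nonzero idempotents, but $\mathsf{A}(S)$ need not contain all of them; what I actually need is that for each atom $a$, the domain $\mathbf{d}(a) = a^{-1}a$ and range $\mathbf{r}(a) = aa^{-1}$ are themselves atoms lying in $\mathsf{A}(S)$, so that $\mathsf{A}(S)$ is closed as a category-with-inverses. Since $a \,\mathscr{D}\, \mathbf{d}(a)$ and $a$ is an atom, Lemma~\ref{lem:atom-idempotent} gives that $\mathbf{d}(a)$ is an atom; likewise $\mathbf{r}(a)$ is an atom. Thus the identities required to serve as local units for the atoms are again atoms, and every atom has its source and target inside $\mathsf{A}(S)$. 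The hardest part of the argument is really the closure under product, and the only subtlety there is ensuring the product is nonzero and invoking the decomposition in Lemma~\ref{lem:restricted-product}(3); once that factorization is in hand, the atom property forces $x$ to equal the whole product, so no delicate estimates are needed.
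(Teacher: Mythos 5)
Your proof is correct and follows essentially the same route as the paper: the core step is identical, using Lemma~\ref{lem:restricted-product}(3) to factor any $x \leq a \cdot b$ as $a' \cdot b'$ with $a' \leq a$, $b' \leq b$ and letting the atom property force $x = a \cdot b$ or $x = 0$. The paper's proof records only this product-closure step, leaving implicit the routine closure under inversion and the fact (via Lemma~\ref{lem:atom-idempotent}) that $\mathbf{d}(a)$ and $\mathbf{r}(a)$ are atoms, which you spell out correctly.
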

\begin{proof} Let $a, b \in \mathsf{A}(S)$ such that $\mathbf{d}(a) = \mathbf{r}(b)$.
We prove that $a \cdot b$ is an atom.
Let $x \leq a \cdot b$.
Then $x = a' \cdot b'$ where $a' \leq a$ and $b' \leq b$
by Lemma~\ref{lem:restricted-product}.
Now, $a$ and $b$ are atoms.
Thus either $x = a \cdot b$ or $x = 0$.
It follows that $a \cdot b$ is an atom as well.
\end{proof}

We now connect elements with the atoms beneath them.
Let $a \in S$.
Define $\theta (a) = a^{\downarrow} \cap \mathsf{A}(S)$.

\begin{lemma}\label{lem:two} 
Let $S$ be a Boolean inverse semigroup.
For each $a \in S$, the set $\theta (a)$ is a local bisection of the groupoid $\mathsf{A}(S)$.
\end{lemma}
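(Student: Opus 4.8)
The plan is to unwind the definition of local bisection and recognize that the required separation property is precisely what Lemma~\ref{lem:l-and-r-order} supplies. Recall that a subset $A$ of a groupoid is a local bisection when $x, y \in A$ together with $\mathbf{d}(x) = \mathbf{d}(y)$ forces $x = y$, and likewise $\mathbf{r}(x) = \mathbf{r}(y)$ forces $x = y$. Here the ambient groupoid is $\mathsf{A}(S)$, the set of atoms of $S$ under the restricted product (a groupoid by Lemma~\ref{lem:atoms-groupoid}), and its $\mathbf{d}$ and $\mathbf{r}$ are inherited from $S$.

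First I would note that $\theta(a) = a^{\downarrow} \cap \mathsf{A}(S)$ is by construction a subset of the groupoid $\mathsf{A}(S)$, so it makes sense to ask whether it is a local bisection. The elements of $\theta(a)$ are exactly the atoms $x$ of $S$ satisfying $x \leq a$.

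Next I would verify the two separation conditions. Suppose $x, y \in \theta(a)$ with $\mathbf{d}(x) = \mathbf{d}(y)$. Since both $x \leq a$ and $y \leq a$, Lemma~\ref{lem:l-and-r-order} applies directly and yields $x = y$. The symmetric case, where instead $\mathbf{r}(x) = \mathbf{r}(y)$, is handled by the same lemma (in its $\mathbf{r}$-version) and again gives $x = y$. Both conditions being satisfied, $\theta(a)$ is a local bisection of $\mathsf{A}(S)$.

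I do not expect any genuine obstacle here: the content of the statement is entirely captured by Lemma~\ref{lem:l-and-r-order}, and the only thing to check beyond a direct citation is that the membership $x, y \in \theta(a)$ really does give $x, y \leq a$, which is immediate from the definition of $\theta$. The proof is therefore a short two-line deduction rather than a calculation.
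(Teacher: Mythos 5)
Your proof is correct and follows essentially the same route as the paper's own: both verify the two separation conditions in the definition of a local bisection by a direct appeal to Lemma~\ref{lem:l-and-r-order}, using that elements of $\theta(a)$ lie below the common element $a$. The paper additionally remarks on the cases $a = 0$ (where $\theta(a) = \varnothing$) and $a \neq 0$ (non-emptiness via Lemma~\ref{lem:atoms}), but this adds nothing to the local-bisection property itself, so your omission is harmless.
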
 
\begin{proof} If $a = 0$ then $\theta (a) = \varnothing$.
If $a \neq 0$ then it is above at least one atom by Lemma~\ref{lem:atoms} and so is non-empty.
Let $x,y \in \theta (a)$ such that $\mathbf{d}(x) = \mathbf{d}(y)$.
Since $x,y \leq a$ we deduce that $a = b$ by Lemma~\ref{lem:l-and-r-order}.
Dually, if $\mathbf{r}(x) = \mathbf{r}(y)$ then $x = y$.
\end{proof}

If $S$ is a finite Boolean inverse monoid, then we have defined a function $\theta \colon S \rightarrow \mathsf{K}(\mathsf{A}(S))$.
Observe that $\theta (0) = \varnothing$.
Our first main theorem is the following;
it shows that the finite Boolean inverse monoids described in Proposition~\ref{prop:groupoids} 
are typical.

\begin{theorem}\label{them:main-finite}Let $S$ be a finite Boolean inverse monoid.
Then $S$ is isomorphic to the Boolean inverse monoid $\mathsf{K}(\mathsf{A}(S))$.
\end{theorem}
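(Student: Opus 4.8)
The plan is to show that the map $\theta \colon S \rightarrow \mathsf{K}(\mathsf{A}(S))$ defined by $\theta(a) = a^{\downarrow} \cap \mathsf{A}(S)$ is an isomorphism of Boolean inverse monoids. By Lemma~\ref{lem:two} this is already a well-defined map into $\mathsf{K}(\mathsf{A}(S))$, so the work is to verify that it is a homomorphism that respects the inverse semigroup structure, and that it is bijective. First I would establish that $\theta$ is a homomorphism, i.e. $\theta(ab) = \theta(a)\theta(b)$, where the right-hand product is the set-theoretic product of local bisections. The key tool here is Lemma~\ref{lem:restricted-product}(4), which says $(a^{\downarrow})(b^{\downarrow}) = (ab)^{\downarrow}$; intersecting with the atoms and using Lemma~\ref{lem:atoms-groupoid} (the product of two atoms with matching $\mathbf{d},\mathbf{r}$ is again an atom) should yield that the atoms beneath $ab$ are exactly the restricted products of an atom beneath $a$ with an atom beneath $b$. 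I would also check that $\theta(a^{-1}) = \theta(a)^{-1}$, which follows since $x \leq a$ iff $x^{-1} \leq a^{-1}$ and inversion preserves atoms.

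Next I would prove injectivity. Suppose $\theta(a) = \theta(b)$, so $a$ and $b$ lie above exactly the same set of atoms. Since $S$ is finite, every nonzero element is a join of the atoms beneath it: indeed by Lemma~\ref{lem:atoms} every nonzero element dominates an atom, and by repeatedly splitting off atoms using the relative complement $(x \setminus y)$ from Lemma~\ref{lem:chicken} one sees that any $a$ equals the orthogonal join of the atoms it dominates. If $a$ and $b$ dominate the same atoms, they are therefore equal to the same join, so $a = b$. I would want to be slightly careful that the atoms beneath a fixed element are pairwise compatible (they are, being all below $a$) and that the join of all of them recovers $a$; the latter is the crucial claim and is where finiteness is essential.

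For surjectivity, given a local bisection $A = \{x_1, \ldots, x_m\} \in \mathsf{K}(\mathsf{A}(S))$, I would form $a = \bigvee_{i} x_i$ in $S$ and check that $\theta(a) = A$. The bisection condition says that distinct $x_i$ have distinct domains and distinct ranges; combined with the fact that they are atoms, Lemma~\ref{lem:buffs} (together with orthogonality of distinct atom-idempotents, which must be argued from atomicity) shows the $x_i$ are pairwise orthogonal, so the join exists. Then $\theta(a) = a^{\downarrow} \cap \mathsf{A}(S)$ must be shown to equal $A$: it clearly contains each $x_i$, and conversely any atom beneath $a = \bigoplus_i x_i$ must, by Lemma~\ref{lem:eggs}(1) applied to meet the atom against the orthogonal join, coincide with one of the $x_i$.

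The main obstacle I anticipate is the surjectivity/injectivity bookkeeping around the claim that \emph{every nonzero element is the orthogonal join of the atoms beneath it}, and dually that an atom beneath an orthogonal join of atoms is one of the summands. Establishing that distinct atoms in the same $\mathscr{D}$-class or same idempotent-structure are orthogonal requires showing that if $e$ and $f$ are distinct atom-idempotents then $ef = 0$: if $ef \neq 0$ then $ef \leq e$ forces $ef = e$ and symmetrically $ef = f$, so $e = f$. This orthogonality of distinct atoms, pushed through Lemma~\ref{lem:buffs} and Lemma~\ref{lem:oj}, is the linchpin that makes both the join in surjectivity well-defined and the atom-below-a-join argument in injectivity work, and I expect the bulk of the care to go there rather than into the homomorphism property, which is essentially handed to us by Lemma~\ref{lem:restricted-product}(4).
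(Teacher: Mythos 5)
Your proposal is correct and follows essentially the same route as the paper's own proof: the same map $\theta(a) = a^{\downarrow} \cap \mathsf{A}(S)$, the homomorphism property via Lemma~\ref{lem:restricted-product} and Lemma~\ref{lem:atom-idempotent}, injectivity via the identity $a = \bigvee \theta(a)$, and surjectivity by forming the join of the pairwise orthogonal elements of a local bisection and applying Lemma~\ref{lem:eggs}. The only differences are cosmetic: you establish $a = \bigvee \theta(a)$ by iteratively splitting off atoms where the paper uses a single relative-complement contradiction, and you spell out the orthogonality of distinct atoms, which the paper merely asserts.
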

\begin{proof} By Lemma~\ref{lem:atoms} and Lemma~\ref{lem:atoms-groupoid},
the set $\mathsf{A}(S)$ of atoms of $S$ forms a groupoid.
By Lemma~\ref{lem:two}, the function $\theta$ is well-defined.
We shall prove that $\theta$ is an isomorphism.
First, $\theta$ is a homomorphism.
Let $x$ be an atom such that $x \leq ab$.
Then $x = a' \cdot b'$ where $a' \leq a$ and $b' \leq b$ by Lemma~\ref{lem:restricted-product}.
It follows that $a'$ and $b'$ are atoms by Lemma~\ref{lem:atom-idempotent}.
We have therefore proved that $\theta (ab) \subseteq \theta (a)\theta (b)$.
Conversely, let $x \in \theta (a)$ and $y \in \theta (b)$ such that the restricted product $x \cdot y$ is defined.
Then $x \cdot y = xy \leq ab$ by Lemma~\ref{lem:restricted-product}.
It remains to prove that $\theta$ is a bijection.
We show first that $a = \bigvee \theta (a)$.
Put $b = \bigvee \theta (a)$.
Then, clearly, $b \leq a$.
Suppose that $b \neq a$.
Then $a \setminus b \neq 0$.
Thus $a \setminus b$ is above an atom $x$ by Lemma~\ref{lem:atoms}.
But then $x \leq a$.
It follows that $x \leq b$ which is a contradiction.
Thus $a = \bigvee \theta (a)$.
It follows that $\theta$ is an injection.
Now let $A \in \mathsf{K}(\mathsf{A}(S))$.
Then $A$ is a set of pairwise orthogonal elements.
Put $a = \bigvee A$.
Clearly, $A \subseteq \theta (a)$.
Let $x$ be an atom such that $x \leq a$.
Then $x = \bigvee_{a \in A} (x \wedge a)$ by Lemma~\ref{lem:eggs}.
It follows that $x = a$ for some $a \in A$.
We have therefore proved that $\theta (a) = A$.
\end{proof}

Let $G$ be a finite groupoid.
We now relate the structure of the Boolean inverse monoid
$\mathsf{K}(G)$ to the structure of the finite groupoid $G$.

\begin{lemma}\label{lem:bordeaux1} \mbox{}
\begin{enumerate}

\item A finite Boolean inverse monoid is fundamental if and only its groupoid of atoms is principal.

\item A finite Boolean inverse monoid is $0$-simplifying if and only if its groupoid of atoms is connected.

\end{enumerate}
\end{lemma}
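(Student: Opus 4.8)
The plan is to use Theorem~\ref{them:main-finite} to identify $S$ with $\mathsf{K}(G)$, where $G = \mathsf{A}(S)$ is the groupoid of atoms. Under this identification the atoms of $S$ are exactly the singleton local bisections, i.e.\ the individual arrows of $G$; the idempotent atoms are the identities (objects) of $G$; and for any atom $x$ both $\mathbf{d}(x)$ and $\mathbf{r}(x)$ are again atoms by Lemma~\ref{lem:atom-idempotent}. I shall also use freely that every nonzero idempotent is the join of the idempotent atoms beneath it (Lemma~\ref{lem:atoms}, together with the fact that an element below an idempotent is itself idempotent), so that an element is idempotent precisely when every atom beneath it is.

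For part (1), first suppose $G$ is not principal. Then some local group is nontrivial, giving a non-identity arrow with equal domain and range, hence a non-idempotent atom $a$ with $\mathbf{d}(a) = \mathbf{r}(a)$. For any idempotent $f$ we have $af = a(\mathbf{d}(a)\wedge f)$, which equals $a$ if $\mathbf{d}(a) \leq f$ and $0$ otherwise since $\mathbf{d}(a)$ is an atom; dually $fa = (\mathbf{r}(a)\wedge f)a$ equals $a$ if $\mathbf{r}(a) \leq f$ and $0$ otherwise. As $\mathbf{d}(a) = \mathbf{r}(a)$, we get $af = fa$ for every idempotent $f$, so $a$ is a non-idempotent commuting with all idempotents and $S$ is not fundamental. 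For the converse, assume $G$ is principal and let $a$ commute with all idempotents; if $a$ were not idempotent there would be a non-idempotent atom $x \leq a$, and principality forces $\mathbf{d}(x) \neq \mathbf{r}(x)$. Since $\mathbf{d}(x) \leq \mathbf{d}(a)$, Lemma~\ref{lem:l-and-r-order} identifies $a\mathbf{d}(x)$ as the unique element below $a$ with domain $\mathbf{d}(x)$, namely $x$; commutativity then gives $\mathbf{d}(x)a = x$, whence $\mathbf{r}(x) = \mathbf{d}(x)\,\mathbf{r}(a)\,\mathbf{d}(x) \leq \mathbf{d}(x)$. As $\mathbf{r}(x)$ and $\mathbf{d}(x)$ are atoms, this forces $\mathbf{r}(x) = \mathbf{d}(x)$, a contradiction. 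Hence $a$ is idempotent and $S$ is fundamental.

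For part (2) I shall work through Lemma~\ref{lem:toby}, which says that $S$ is $0$-simplifying exactly when $\equiv$ is the universal relation on nonzero idempotents. If $G$ is connected, take nonzero idempotents $e = \bigvee_i p_i$ (with $p_i$ the atoms beneath $e$) and $f$, and fix any atom $q \leq f$. Connectivity supplies, for each $i$, an arrow $g_i$ with $\mathbf{d}(g_i) = p_i$ and $\mathbf{r}(g_i) = q \leq f$; the set $\{g_i\}$ is then a pencil witnessing $e \preceq f$, and by symmetry $f \preceq e$, so $\equiv$ is universal and $S$ is $0$-simplifying. Conversely, if $G$ is disconnected, pick objects $p, q$ in distinct connected components. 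A pencil from $p$ to $q$ would, since $p$ is an atom, contain an element $x$ with $\mathbf{d}(x) = p$ and $\mathbf{r}(x) \leq q$; by Lemma~\ref{lem:atom-idempotent} such an $x$ is itself an atom, i.e.\ a single arrow $p \to q$, contradicting that $p$ and $q$ lie in different components. Thus $p \not\preceq q$, the relation $\equiv$ is not universal, and $S$ is not $0$-simplifying.

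The two direct computations with the restricted product (and Lemma~\ref{lem:restricted-product}) are routine. I expect the main obstacle to be the converse of part (1): one must pass from the hypothesis that a \emph{general} element $a$ commutes with all idempotents to a contradiction located at a single atom beneath it. The key device is to restrict $a$ to the domain of such an atom, so that Lemma~\ref{lem:l-and-r-order} pins $a\mathbf{d}(x)$ down to be exactly $x$; this is what makes the atomwise analysis go through, and principality is precisely the input that converts $\mathbf{d}(x) \neq \mathbf{r}(x)$ into the needed contradiction.
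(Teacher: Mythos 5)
Your proof is correct and takes essentially the same route as the paper's: both directions of (2) are the paper's pencil-between-atoms argument via Lemma~\ref{lem:toby}, and in (1) the observation that an atom $a$ with $\mathbf{d}(a)=\mathbf{r}(a)$ commutes with every idempotent is exactly the paper's computation, stated in contrapositive form. The only cosmetic difference is in the converse of (1), where you localize at a single non-idempotent atom $x\leq a$ and pin down $a\mathbf{d}(x)=x=\mathbf{d}(x)a$ via Lemma~\ref{lem:l-and-r-order}, whereas the paper decomposes $a$ as an orthogonal join of atoms and argues with $\mathbf{r}(a_{j})a=a\mathbf{r}(a_{j})$ --- the two reductions rest on the same facts about atoms and yield the same conclusion.
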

\begin{proof} 
(1) Let $S$ be fundamental. 
Suppose that 
$e \stackrel{a}{\longrightarrow} e$ 
where $e$ is an atomic idempotent.
Then $a$ is an atom by Lemma~\ref{lem:atom-idempotent} 
Let $f$ be any idempotent.
Then $fa \leq a$.
It follows that $fa = 0$ or $fa = a$.
Suppose that $fa = 0$.
Then $fe = 0$ and so $af = 0$.
Thus $fa = af$.
Suppose now that $fa = a$.
Then $fe = e = fe$ and so $af = a$.
It follows again that $fa = af$.
We have therefore proved that $a$ commutes with every idempotent.
and so, by assumption, $a = e$. 
Conversely, suppose that $e \stackrel{a}{\longrightarrow} e$, where $e$ is an atomic idempotent, implies that $a = e$.
Let $a$ commute with all idempotents.
We prove that $S$ is fundamental by showing that $a$ is an idempotent.
We can write $a = \bigvee_{i=1}^{m} a_{i}$ where the $a_{i}$ are atoms
and by Proposition~\ref{prop:definition} we can assume this is an orthogonal join.
We prove that $\mathbf{d}(a_{i}) = \mathbf{r}(a_{i})$ for all $i$ from which the result follows.
Since $\mathbf{r}(a_{j})a = a \mathbf{r}(a_{j})$ 
we have that
$a_{j} = \bigvee_{i=1}^{m}a_{i}\mathbf{r}(a_{j})$.
But $a_{i}\mathbf{r}(a_{j}) \leq a_{j}$.
Thus either $a_{i}\mathbf{r}(a_{j}) = 0$ or $a_{i}\mathbf{r}(a_{j}) = a_{j}$.
But $a_{i}\mathbf{r}(a_{j}) \leq a_{i}$.
It follows that $a_{i}\mathbf{r}(a_{j}) = a_{i}$ and so $a_{i} = a_{j}$.
Thus $a_{j}\mathbf{r}(a_{j}) = a_{j}$.
Hence $\mathbf{d}(a_{j}) \leq \mathbf{r}(a_{j})$ and so $\mathbf{d}(a_{j}) = \mathbf{r}(a_{j})$.
By assumption $a_{j}$ is an idempotent.
It follows that $a$ is an idempotent.

(2) Let $S$ be a finite Boolean inverse monoid.
Suppose first that it is $0$-simplifying and let $e$ and $f$ be any two atoms.
Then, by assumption and Lemma~\ref{lem:toby}, we have that $e \equiv f$.
In particular, $f \preceq e$.
There is therefore a pencil from $f$ to $e$.
But, since $f$ and $e$ are both atoms, it follows that $f \, \mathscr{D} \, e$.
Thus all atomic idempotents are $\mathscr{D}$-related.
Conversely, let $S$ be a finite Boolean inverse monoid in which all its atomic idempotents are $\mathscr{D}$-related.
Let $e$ and $f$ be any two non-zero idempotents of $S$.
Since $S$ is finite, the idempotent $e$ is the join of all the atoms that lie beneath it, as is $f$.
Let the atoms beneath $e$ be $e_{1}, \ldots, e_{n}$.
Then each $e_{i}$ is $\mathscr{D}$-related to an atom beneath $f$.
But this shows that $e \preceq f$.
By symmetry, we deduce that $e \equiv f$.
\end{proof}

The following combines \cite[Theorem~4.18]{Law5} 
and 
\cite{Malandro2010}.

\begin{theorem}\label{them:finite} Let $S$ be a finite Boolean inverse monoid.
Then there are finite groups $G_{1}, \ldots, G_{r}$ and natural numbers $n_{1}, \ldots, n_{r}$ such that
$$S \cong M_{n_{1}}(G_{1}^{0}) \times \ldots \times M_{n_{r}}(G_{r}^{0}).$$
\end{theorem}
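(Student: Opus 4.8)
The plan is to combine the structure theorem for finite Boolean inverse monoids, Theorem~\ref{them:main-finite}, with the decomposition of a finite groupoid into its connected components and the identification of the local bisections of a connected groupoid as rook matrices, Proposition~\ref{prop:local-bisections-rook}. First I would invoke Theorem~\ref{them:main-finite} to replace $S$ by $\mathsf{K}(G)$, where $G = \mathsf{A}(S)$ is the finite groupoid of atoms of $S$. It therefore suffices to establish the claimed product decomposition for $\mathsf{K}(G)$ of an arbitrary finite groupoid $G$.

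Next, decompose $G$ into its connected components. Since $G$ is finite there are only finitely many, say $G = C_{1} \sqcup \cdots \sqcup C_{r}$ with each $C_{i}$ a connected finite groupoid. The key structural step is to prove that
\[
\mathsf{K}(G) \cong \mathsf{K}(C_{1}) \times \cdots \times \mathsf{K}(C_{r}),
\]
the coordinatewise direct product of Boolean inverse monoids. The isomorphism sends a local bisection $A$ to the tuple $(A \cap C_{1}, \ldots, A \cap C_{r})$. This is a bijection because every subset of $G$ splits uniquely across the components, and because the local bisection condition can never be violated between two elements lying in distinct components: their domains, and their ranges, are identities of distinct components and so are automatically distinct. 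It is a homomorphism because a restricted product $ab$ requires $\mathbf{d}(a) = \mathbf{r}(b)$, which forces $a$ and $b$ into the same component; consequently $(AB) \cap C_{i} = (A \cap C_{i})(B \cap C_{i})$. Inverses, the empty-set zero, and orthogonal joins (which are simply disjoint unions of subsets) are likewise computed coordinatewise, so the map preserves the entire Boolean inverse monoid structure, including the identity $G_{o} = \bigsqcup_{i}(C_{i})_{o}$.

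Finally I would analyze each factor. Each $C_{i}$ is connected and finite, so by Lemma~\ref{lem:connected-groupoids} we may write $C_{i} = X_{i} \times G_{i} \times X_{i}$ with $G_{i}$ a (finite) local group of $C_{i}$ and $X_{i}$ a finite non-empty set; set $n_{i} = |X_{i}|$. Since $C_{i}$ is finite, every local bisection of $C_{i}$ is finite, so $\mathsf{K}(C_{i}) = \mathsf{K}_{\rm \tiny fin}(C_{i})$, and Proposition~\ref{prop:local-bisections-rook} gives $\mathsf{K}(C_{i}) \cong M_{n_{i}}(G_{i}^{0})$. Combining this with the previous step yields $S \cong M_{n_{1}}(G_{1}^{0}) \times \cdots \times M_{n_{r}}(G_{r}^{0})$, as required.

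The one step that requires genuine care is the verification that $\mathsf{K}$ of a disjoint union of groupoids is the direct product of the $\mathsf{K}$ of the pieces; everything there hinges on the single observation that composability in a groupoid never crosses connected components, so that both the multiplication and all the Boolean operations decouple coordinatewise. The remaining ingredients are immediate appeals to results already established, namely Theorem~\ref{them:main-finite}, Lemma~\ref{lem:connected-groupoids}, and Proposition~\ref{prop:local-bisections-rook}.
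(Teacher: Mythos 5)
Your proposal is correct and follows essentially the same route as the paper: reduce to $\mathsf{K}(\mathsf{A}(S))$ via Theorem~\ref{them:main-finite}, split along connected components using the map $A \mapsto (A \cap C_{1}, \ldots, A \cap C_{r})$, and apply Proposition~\ref{prop:local-bisections-rook} to each factor. You supply more detail than the paper does on why the component decomposition is an isomorphism of Boolean inverse monoids (including the observation that $\mathsf{K}(C_{i}) = \mathsf{K}_{\rm \tiny fin}(C_{i})$ for finite groupoids, which the paper leaves implicit), but the argument is the same.
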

\begin{proof} Let $S$ be a finite Boolean inverse monoid with groupoid of atoms $G$.
Then $S \cong \mathsf{K}(G)$ by Theorem~\ref{them:main-finite}.
Let $G_{1}, \ldots, G_{r}$ be the finite set of connected components of $G$.
Then
$S \cong \mathsf{K}(G_{1}) \times \ldots \times \mathsf{K}(G_{r})$ since the intersection of 
a local bisection with each connected component $G_{i}$ is a local bisection of $G_{i}$.  
Each groupoid $G_{i}$ is connnected.
The result now follows by Proposition~\ref{prop:local-bisections-rook}.
\end{proof}

If we combine the above theorem with Lemma~\ref{lem:bordeaux1} and Example~\ref{ex:solomon},
we obtain the following.
Parts (2) and (3) were first proved in \cite{Law5}.

\begin{corollary}\label{cor:finite-stuff}\mbox{}
\begin{enumerate}

\item The finite $0$-simplifying Boolean inverse monoids are isomorphic to $M_{n}(G^{0})$ where $G$ is a finite group.

\item The finite fundamental Boolean inverse monoids are isomorphic to the finite direct products $\mathscr{I}_{n_{1}} \times \ldots \times \mathscr{I}_{n_{r}}$.

\item The finite simple Boolean inverse monoids are isomorphic to $\mathscr{I}_{n}$.

\end{enumerate}
\end{corollary}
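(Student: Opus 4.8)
The plan is to read all three classifications directly off the direct-product decomposition supplied by Theorem~\ref{them:finite}, using Lemma~\ref{lem:bordeaux1} to convert the semigroup-theoretic hypotheses (fundamental, $0$-simplifying, simple) into structural properties of the groupoid of atoms $\mathsf{A}(S)$, and using Example~\ref{ex:solomon} to identify rook matrix monoids over the trivial $0$-group with symmetric inverse monoids. The one fact I must keep in view throughout is the precise meaning of the factors in Theorem~\ref{them:finite}: as its proof shows, the factors $M_{n_i}(G_i^{0})$ are in bijection with the connected components of $\mathsf{A}(S)$, the group $G_i$ being the local group of the $i$-th component and $n_i$ the number of its identities. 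Everything else is bookkeeping built on top of this dictionary.

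For part (1), I would argue as follows. If $S$ is $0$-simplifying, then by Lemma~\ref{lem:bordeaux1}(2) its groupoid of atoms is connected, hence has exactly one connected component, so the product in Theorem~\ref{them:finite} collapses to a single factor and $S \cong M_{n}(G^{0})$ for a finite group $G$. For the converse, each such $M_{n}(G^{0})$ is isomorphic to $\mathsf{K}(X \times G \times X)$ for the connected groupoid $X \times G \times X$ by Proposition~\ref{prop:local-bisections-rook}, and connectedness yields $0$-simplicity again via Lemma~\ref{lem:bordeaux1}(2). For part (2), Lemma~\ref{lem:bordeaux1}(1) says $S$ is fundamental exactly when $\mathsf{A}(S)$ is principal, i.e.\ when every local group is trivial; in the decomposition of Theorem~\ref{them:finite} this forces each $G_i$ to be trivial, so that $G_i^{0}$ is the two-element Boolean algebra $\mathbf{2}$. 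Example~\ref{ex:solomon} then gives $M_{n_i}(\mathbf{2}) \cong \mathscr{I}_{n_i}$, and hence $S \cong \mathscr{I}_{n_1} \times \ldots \times \mathscr{I}_{n_r}$.

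Part (3) I would obtain by combining the two previous parts through Proposition~\ref{prop:boolean-congruence-free}, which characterizes simplicity as the conjunction of $0$-simplifying and fundamental. Applying the translations of parts (1) and (2) at once, the product reduces to a single factor (connectedness) whose group is trivial (principality), so $S \cong M_{n}(\mathbf{2}) \cong \mathscr{I}_{n}$ by Example~\ref{ex:solomon}.

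None of the steps involves a serious difficulty, since the structural work has already been done in Theorem~\ref{them:finite} and Lemma~\ref{lem:bordeaux1}; the only point requiring genuine care is the alignment step flagged in the first paragraph, namely that the abstract isomorphism of Theorem~\ref{them:finite} really does match the number of matrix factors with the number of connected components of $\mathsf{A}(S)$ and the triviality of each $G_i$ with the principality of $\mathsf{A}(S)$. This correspondence is not asserted in the statement of Theorem~\ref{them:finite} itself but is visible in its proof, and I would make it explicit before invoking it, so that the two translations of Lemma~\ref{lem:bordeaux1} can be applied factor-by-factor without ambiguity.
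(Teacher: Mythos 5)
Your proposal is correct and follows essentially the same route as the paper, which derives the corollary by combining Theorem~\ref{them:finite} with Lemma~\ref{lem:bordeaux1} and Example~\ref{ex:solomon} (with Proposition~\ref{prop:boolean-congruence-free} implicitly supplying the equivalence ``simple $=$ $0$-simplifying and fundamental'' for part (3), which you rightly make explicit). Your flagged alignment step --- that the factors of Theorem~\ref{them:finite} correspond to the connected components of $\mathsf{A}(S)$, with $G_i$ the local group --- is exactly the content visible in that theorem's proof, so no gap remains.
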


\subsection{Booleanizations of finite inverse semigroups}

We shall now relate arbitrary finite inverse semigroups
to finite Boolean inverse monoids.
Let $S$ be an inverse semigroup with zero.
Then there is a universal homomorphism $\beta \colon S \rightarrow \mathsf{B}(S)$ to the category
of Boolean inverse semigroups by \cite{Lawson2020}.
The semigroup $\mathsf{B}(S)$ is called the {\em Booleanization} of $S$. 
If $S$ does not have a zero then adjoin one to get $S^{0}$ and then the Booleanization of $S$ is 
the same as the Booleanization of $S^{0}$.

We now compute the Booleanization of a finite inverse semigroup.
We shall need the following notation.
Let $S$ be any inverse semigroup
and
let $a \in S$.
Define $\hat{a} = a^{\downarrow} \setminus \{a \}$; this is just the set of all elements below $a$ but excluding $a$

\begin{lemma}\label{lem:homo} Let $S$ be a finite inverse semigroup and let $\alpha \colon S \rightarrow T$
be a homomorphism to a Boolean inverse semigroup $T$.
Let $a,b \in S$ be any elements such that 
$\mathbf{d}(a) \neq \mathbf{d}(b)$ and
$\mathbf{r}(a) \neq \mathbf{r}(b)$;
this means precisely that neither $a^{-1}b$ nor $ab^{-1}$ are restricted products.
Let 
$\hat{a} = \{a_{1}, \ldots, a_{m} \}$
and 
$\hat{b} = \{b_{1}, \ldots, b_{n} \}$.
Then the elements
$x = \alpha (a) \setminus (\alpha (a_{1}) \vee \ldots \vee \alpha (a_{m}))$
and
$y =\alpha (b) \setminus (\alpha (b_{1}) \vee \ldots \vee \alpha (b_{n}))$
are orthogonal.
\end{lemma}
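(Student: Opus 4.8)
The plan is to establish the two defining orthogonality conditions $x^{-1}y=0$ and $xy^{-1}=0$ separately, obtaining the first from the hypothesis $\mathbf{r}(a)\neq\mathbf{r}(b)$ and deducing the second from it by an inversion symmetry. First I would record two preliminaries. Since $\alpha$ is a homomorphism it preserves the natural partial order, so every $\alpha(a_{i})$ lies below $\alpha(a)$; hence the $\alpha(a_{i})$ are mutually compatible and their join exists below $\alpha(a)$, making $x$ (and likewise $y$) well defined. Applying Lemma~\ref{lem:chicken}(2) to $\alpha(a)=\left(\bigvee_{i}\alpha(a_{i})\right)\vee x$ gives $x\perp\bigvee_{i}\alpha(a_{i})$, whence $x\perp\alpha(a_{i})$ for every $i$; symmetrically $y\perp\alpha(b_{j})$ for every $j$. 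These orthogonalities, in the explicit forms $x\,\alpha(a_{i})^{-1}=0$ and $\alpha(b_{j})\,y^{-1}=0$, are the engine of the argument.

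To prove $x^{-1}y=0$, note that $x^{-1}y\leq\alpha(a)^{-1}\alpha(b)=\alpha(a^{-1}b)$. Because $\mathbf{r}(a)\neq\mathbf{r}(b)$, the product $a^{-1}b$ is not restricted, so by Lemma~\ref{lem:restricted-product} I can factor $a^{-1}b=a'\cdot b'$ with $a'=a^{-1}\mathbf{r}(b)\leq a^{-1}$ and $b'=\mathbf{r}(a)b\leq b$, and a short idempotent calculation shows that $a'=a^{-1}$ and $b'=b$ cannot hold simultaneously, since together they force $\mathbf{r}(a)=\mathbf{r}(b)$. Thus at least one factor is strict. If $a'<a^{-1}$, then $(a')^{-1}\in\hat{a}$, say $a'=a_{j}^{-1}$; multiplying $x^{-1}y\leq\alpha(a_{j})^{-1}\alpha(b')$ on the left by $x$ collapses it via $x\,\alpha(a_{j})^{-1}=0$ to $xx^{-1}y=0$, whence $x^{-1}y=x^{-1}(xx^{-1}y)=0$. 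If instead $b'<b$, then $b'\in\hat{b}$, say $b'=b_{k}$; multiplying $x^{-1}y\leq\alpha(a')\alpha(b_{k})$ on the right by $y^{-1}$ collapses it via $\alpha(b_{k})\,y^{-1}=0$ to $x^{-1}yy^{-1}=0$, whence $x^{-1}y=0$ once more.

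Finally I would obtain $xy^{-1}=0$ by symmetry rather than by repeating the computation. Using Lemma~\ref{lem:properties-of-setminus}(1) together with the fact that inversion is an order isomorphism that distributes over joins, one checks that $x^{-1}=\alpha(a^{-1})\setminus\bigvee_{i}\alpha(a_{i}^{-1})$, which is precisely the element built from $a^{-1}$ in the same way that $x$ is built from $a$ (and similarly $y^{-1}$ is built from $b^{-1}$, since $\widehat{a^{-1}}=(\hat{a})^{-1}$). The hypothesis $\mathbf{d}(a)\neq\mathbf{d}(b)$ now reads $\mathbf{r}(a^{-1})\neq\mathbf{r}(b^{-1})$, so the case already proved, applied to $a^{-1}$ and $b^{-1}$, yields $(x^{-1})^{-1}(y^{-1})=xy^{-1}=0$. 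Combining the two conclusions gives $x\perp y$.

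I expect the main obstacle to lie in the middle paragraph: factoring the non-restricted product $a^{-1}b$ correctly and verifying that the strictly smaller factor genuinely lands in $\hat{a}$ or in $\hat{b}$, so that the orthogonality relations from the first paragraph can be brought to bear. Everything else is routine order-preserving manipulation in the Boolean inverse semigroup $T$.
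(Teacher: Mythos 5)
Your proof is correct, but it reaches the conclusion by a different mechanism than the paper. Both arguments pivot on the same key fact: since $a^{-1}b$ is not a restricted product, Lemma~\ref{lem:restricted-product} forces $a^{-1}b = a_{i}^{-1}b'$ with $a_{i} \in \hat{a}$ or $a^{-1}b = a'b_{j}$ with $b_{j} \in \hat{b}$ (the paper keeps the other factor whole, writing $a^{-1}b = a_{i}^{-1}b$ or $a^{-1}b = a^{-1}b_{j}$, but this is immaterial). Where you diverge is in how this is converted into $x^{-1}y = 0$. The paper computes $x^{-1}y$ \emph{exactly}, as the single relative complement
$$x^{-1}y = \alpha (a^{-1}b) \setminus \left(\alpha(a^{-1}b_{1}) \vee \ldots \vee \alpha (a^{-1}b_{n}) \vee \alpha (a_{1}^{-1}b) \vee \ldots \vee \alpha (a_{m}^{-1}b)\right),$$
using the product formula $(s \setminus t)(u \setminus v) = su \setminus (sv \vee tu)$ of Lemma~\ref{lem:properties-of-setminus} (tacitly extended to finite joins), and then observes that the leading term $\alpha(a^{-1}b)$ reappears among the subtracted terms, killing the complement. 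You instead work only with inequalities: from Lemma~\ref{lem:chicken}(2) you extract the orthogonality relations $x \perp \alpha(a_{i})$ and $y \perp \alpha(b_{j})$, bound $x^{-1}y \leq \alpha(a^{-1}b)$, and then use the absorption identities $x^{-1}y = x^{-1}(xx^{-1}y)$ and $x^{-1}y = (x^{-1}yy^{-1})y$ to collapse everything to zero. Your route is slightly more elementary, since it avoids the $n$-ary version of the product formula for relative complements (which the paper never states explicitly); the paper's route buys a cleaner, one-line exact identity. Your closing step is also a genuine improvement in explicitness: where the paper dismisses $xy^{-1}=0$ with ``by symmetry,'' you verify via Lemma~\ref{lem:properties-of-setminus}(1) that $x^{-1}$ is precisely the element built from $a^{-1}$ and that the hypothesis $\mathbf{d}(a) \neq \mathbf{d}(b)$ becomes $\mathbf{r}(a^{-1}) \neq \mathbf{r}(b^{-1})$, so the proved case applies verbatim to the pair $(a^{-1},b^{-1})$.
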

\begin{proof} We shall prove that $x^{-1}y = 0$;
the proof that $xy^{-1} = 0$ follows by symmetry.
We have that
$$x^{-1}y = \alpha (a^{-1}b) \setminus (\alpha(a^{-1}b_{1}) \vee \ldots \vee \alpha (a^{-1}b_{n}) \vee \alpha (a_{1}^{-1}b) \vee \ldots \vee \alpha (a_{m}^{-1}b)).$$
We know that $a^{-1}b$ cannot be a restricted product.
It follows by Lemma~\ref{lem:restricted-product} that
either $a^{-1}b = a^{-1}b_{j}$, for some $j$, or $a^{-1}b = a_{i}^{-1}b$, for some $i$.
In both cases, it is immediate that $x^{-1}y = 0$.
\end{proof}

\begin{lemma}\label{lem:important} Let $S$ be a Boolean inverse semigroup.
Let $a \in S$ and $a_{1}, \ldots, a_{m} < a$.
Put $X = \{a, a_{1},\ldots, a_{m}\}$.
We regard $X$ as a poset with respect to the natural partial order.
For each $x \in X$ put $\tilde{x} = (x^{\downarrow} \cap X) \setminus \{x\}$.
If $x$ is minimal in $X$ then $\tilde{x}$ is empty.
Thus $\tilde{x}$ is the set of all elements in $X$ below $x$ excluding $x$ itself.
Then $a = \bigvee_{x \in X} (x \setminus \bigvee \tilde{x})$.
\end{lemma}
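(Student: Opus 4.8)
The plan is to reduce the statement to a single telescoping induction carried out along a linear extension of the poset $X$. First I would dispose of the well-definedness issues. Since every $a_{i} < a$, the element $a$ is the greatest element of $X$, and any two elements of $X$, being bounded above by $a$, are pairwise compatible; hence every subset of the finite set $X$ has a join (finite compatible joins exist in a Boolean inverse semigroup by the remark following Lemma~\ref{lem:pork}), and in particular $\bigvee \tilde{x}$ exists for each $x \in X$. As each member of $\tilde{x}$ lies below $x$, we have $\bigvee \tilde{x} \leq x$, so the relative complement $x \setminus \bigvee \tilde{x}$ is defined; for a minimal $x$ we have $\tilde{x} = \varnothing$, so $\bigvee \tilde{x} = 0$ and $x \setminus \bigvee \tilde{x} = x$. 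Finally each $x \setminus \bigvee \tilde{x}$ lies below $x \leq a$, so the join on the right-hand side of the claim also exists.

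Next I would fix a linear extension $x_{1}, \ldots, x_{N}$ of $X$ (with $N = m+1$), so that $x_{i} < x_{j}$ forces $i < j$; the whole point of this ordering is that $\tilde{x_{k}} \subseteq \{x_{1}, \ldots, x_{k-1}\}$ for every $k$. I would then prove by induction on $k$ the slightly stronger statement
$$\bigvee_{i=1}^{k} \left( x_{i} \setminus \bigvee \tilde{x_{i}} \right) = \bigvee_{i=1}^{k} x_{i}.$$
The base case $k = 1$ is immediate, since $x_{1}$ is minimal and so $x_{1} \setminus \bigvee \tilde{x_{1}} = x_{1}$.

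For the inductive step, write $w = \bigvee \tilde{x_{k}}$. By Lemma~\ref{lem:chicken}(2), since $w \leq x_{k}$, we have $x_{k} = w \vee (x_{k} \setminus w)$. Because $\tilde{x_{k}} \subseteq \{x_{1}, \ldots, x_{k-1}\}$, we have $w \leq \bigvee_{i=1}^{k-1} x_{i}$, so this term is absorbed. Combining this with the induction hypothesis gives
$$\bigvee_{i=1}^{k} \left( x_{i} \setminus \bigvee \tilde{x_{i}} \right) = \left( \bigvee_{i=1}^{k-1} x_{i} \right) \vee (x_{k} \setminus w) = \left( \bigvee_{i=1}^{k-1} x_{i} \right) \vee w \vee (x_{k} \setminus w) = \bigvee_{i=1}^{k} x_{i},$$
which closes the induction. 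Taking $k = N$ and using that $a$ is the greatest element of $X$, so that $\bigvee_{i=1}^{N} x_{i} = a$, yields the result.

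I expect no serious obstacle: the argument is essentially orthogonal-decomposition bookkeeping, and the only points needing care are the empty-join convention for minimal elements, the verification that every join and relative complement in sight is defined, and the absorption step $w \leq \bigvee_{i<k} x_{i}$, which is exactly where the linear-extension property $\tilde{x_{k}} \subseteq \{x_{1}, \ldots, x_{k-1}\}$ is used. If one prefers to avoid choosing a linear extension, the same induction can be run over down-closed subsets $Y \subseteq X$, establishing $\bigvee_{y \in Y} (y \setminus \bigvee \tilde{y}) = \bigvee Y$ by deleting a maximal element of $Y$; there the crucial inclusion $\tilde{y} \subseteq Y \setminus \{y\}$ comes from down-closedness rather than from the ordering.
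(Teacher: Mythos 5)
Your proof is correct and takes essentially the same route as the paper: both rest on the decomposition $x = \bigvee \tilde{x} \vee \bigl( x \setminus \bigvee \tilde{x} \bigr)$ from Lemma~\ref{lem:chicken} together with an induction over the finite poset $X$, the paper organizing the induction by height where you organize it along a linear extension. Your telescoping statement $\bigvee_{i=1}^{k} \bigl( x_{i} \setminus \bigvee \tilde{x_{i}} \bigr) = \bigvee_{i=1}^{k} x_{i}$ simply makes explicit (together with the compatibility and existence checks) the bookkeeping that the paper compresses into the phrase ``we now use induction''.
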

\begin{proof} Define the {\em height} of an element of $X$ as follows:
the minimal elements of $X$ have height $0$.
When $x$ is minimal $(x \setminus \bigvee \tilde{x}) = x$.
The elements of $X$ that cover elements of height $n$ have height $n+1$.
Consider the elements of $X$ with height $1$.
Then for such an element $x$ we will have elements of the form $x \setminus (x_{1} \vee \ldots \vee x_{r})$
where $x_{1},\ldots, x_{r}$ are minimal.
But 
$$(x \setminus (x_{1} \vee \ldots \vee x_{r})) \vee x_{1} \vee \ldots \vee x_{r} = x$$
by part (3) of Lemma~\ref{lem:chicken}.
We now use induction.
\end{proof}

\begin{theorem}\label{them:booleanization-finite}
Let $S$ be a finite inverse semigroup.
Then the Booleanization of $S$ is $\mathsf{K}(\mathcal{G}(S))$,
where $\mathcal{G}(S)$ is the restricted groupoid of $S$.
\end{theorem}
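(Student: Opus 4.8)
The plan is to establish an isomorphism between the Booleanization $\mathsf{B}(S)$ and $\mathsf{K}(\mathcal{G}(S))$ by exploiting the universal property of the Booleanization. Recall that $\mathsf{K}(\mathcal{G}(S))$ is a finite Boolean inverse monoid by Proposition~\ref{prop:groupoids}, since $\mathcal{G}(S)$ is a finite discrete groupoid. There is a natural candidate map $\iota \colon S \to \mathsf{K}(\mathcal{G}(S))$ sending each nonzero $a$ to the principal order-ideal $a^{\downarrow}$ intersected with the restricted groupoid (concretely, $\iota(a) = \{s \in S^{\ast} \colon s \leq a\}$), and $0 \mapsto \varnothing$. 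First I would verify that $\iota$ is a semigroup homomorphism: this amounts to showing $\iota(ab) = \iota(a)\iota(b)$, which follows from part~(4) of Lemma~\ref{lem:restricted-product}, namely $(a^{\downarrow})(b^{\downarrow}) = (ab)^{\downarrow}$, once one checks the nonzero elements match up correctly under the restricted product. Thus $\iota$ is a homomorphism to a Boolean inverse monoid, and by the universal property it factors as $\iota = \bar\iota \circ \beta$ for a unique morphism $\bar\iota \colon \mathsf{B}(S) \to \mathsf{K}(\mathcal{G}(S))$.

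The substance of the argument is to prove that $\bar\iota$ is an isomorphism. Surjectivity should be the easier direction: every element of $\mathsf{K}(\mathcal{G}(S))$ is a finite set of pairwise orthogonal elements of $S^{\ast}$, hence an orthogonal join of atoms, and each singleton $\{s\}$ (an atom of $\mathsf{K}(\mathcal{G}(S))$) should lie in the image. The key point is to realize $\{s\}$ as $\iota(s) \setminus (\text{joins of }\iota(s') \text{ for } s' < s)$, which is exactly the construction appearing in Lemma~\ref{lem:homo} and Lemma~\ref{lem:important}: these lemmas are evidently staged precisely to manufacture, inside any Boolean target, the ``atomic'' part of $\alpha(s)$ stripped of its strictly-smaller pieces. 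Using Lemma~\ref{lem:important} one writes $a = \bigvee_{x \in \hat a \cup \{a\}}(x \setminus \bigvee \tilde x)$, exhibiting every element as an orthogonal join of the pieces $x \setminus \bigvee \tilde x$, and Lemma~\ref{lem:homo} guarantees these pieces are orthogonal across incomparable elements. This shows the image of $\beta$ generates everything, giving surjectivity of $\bar\iota$.

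Injectivity is where I expect the main obstacle to lie. The clean way is to characterize $\mathsf{B}(S)$ by its universal property and show $\mathsf{K}(\mathcal{G}(S))$ together with $\iota$ satisfies it: that is, given \emph{any} homomorphism $\alpha \colon S \to T$ to a Boolean inverse semigroup $T$, there is a unique morphism $\psi \colon \mathsf{K}(\mathcal{G}(S)) \to T$ with $\psi \circ \iota = \alpha$. Existence of $\psi$ is forced by the generating description above: one must set $\psi(\{s\}) = \alpha(s) \setminus \bigvee_{s' < s}\alpha(s')$ and extend additively, and Lemma~\ref{lem:homo} is exactly what certifies that these proposed images are pairwise orthogonal so that the additive extension is well-defined; Lemma~\ref{lem:important} certifies that $\psi \circ \iota = \alpha$ by reassembling $\alpha(a)$ from its pieces. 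The genuinely delicate verifications are that $\psi$ respects multiplication and that it is additive (preserves orthogonal joins) --- here one needs the distributivity of multiplication over orthogonal joins together with part~(4) of Lemma~\ref{lem:restricted-product} to match products of atoms in $\mathsf{K}(\mathcal{G}(S))$ against products in $T$.

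The reason this route proves injectivity is that once $(\mathsf{K}(\mathcal{G}(S)), \iota)$ is shown to enjoy the same universal property as $(\mathsf{B}(S), \beta)$, the two are isomorphic by the usual uniqueness of universal objects, and the comparison morphism is precisely $\bar\iota$. The finiteness of $S$ is used in two essential places: it guarantees $\mathcal{G}(S)$ is a finite discrete groupoid (so $\mathsf{K}(\mathcal{G}(S))$ is a Boolean inverse \emph{monoid} and the joins in Lemma~\ref{lem:important} are finite), and it ensures every $\hat a$ is finite so that the stripping construction terminates. I would therefore organize the proof as: (i) define $\iota$ and check it is a homomorphism via Lemma~\ref{lem:restricted-product}(4); (ii) given arbitrary $\alpha \colon S \to T$, define $\psi$ on atoms by the $\setminus$-construction and invoke Lemma~\ref{lem:homo} for orthogonality; (iii) verify $\psi$ is a well-defined morphism and $\psi \iota = \alpha$ using Lemma~\ref{lem:important}; (iv) conclude uniqueness and hence the universal property, yielding $\mathsf{B}(S) \cong \mathsf{K}(\mathcal{G}(S))$.
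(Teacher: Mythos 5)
Your proposal is correct and follows essentially the same route as the paper: you define the same map $a \mapsto a^{\downarrow} \cap \mathcal{G}(S)$, verify it is a homomorphism via Lemma~\ref{lem:restricted-product}, and establish the universal property directly by defining the extension on singletons as $\alpha(a) \setminus \bigvee_{a' < a} \alpha(a')$, with Lemma~\ref{lem:homo} supplying orthogonality and Lemma~\ref{lem:important} supplying the reassembly $\psi\iota = \alpha$, exactly as in the paper's proof. Your initial framing via the comparison morphism $\bar\iota$ and surjectivity/injectivity is redundant scaffolding --- once the universal property of $(\mathsf{K}(\mathcal{G}(S)), \iota)$ is verified, uniqueness of universal objects finishes the argument, which is precisely what the paper does.
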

\begin{proof}  Put $G = \mathcal{G}(S)$ to ease notation.
Define $\beta \colon S \rightarrow \mathsf{K}(G)$ by
$\beta (a) = a^{\downarrow} \cap G$.
Observe that if $a = 0$ then $\beta (0) = \varnothing$.
It is immediate that $\beta$ is an injective homomorphism by Lemma~\ref{lem:restricted-product}. 
We now prove that this map is universal to Boolean inverse semigroups.
Let $\alpha \colon S \rightarrow T$ be any homomorphism to a Boolean inverse semigroup $T$.
We shall define a map $\gamma \colon \mathsf{K}(G) \rightarrow T$.
Define $\gamma (\varnothing) = 0$.
Let $a \in S$ be nonzero and let $\hat{a} = \{a_{1},\ldots , a_{m}\}$.
Observe that $\{a \} \in  \mathsf{K}(G)$.
Define 
$$\gamma (\{a\}) = \alpha(a) \setminus (\alpha (a_{1}) \vee \ldots \vee \alpha (a_{m})),$$
which is clearly well-defined.
We now extend $\gamma$.
Let $a, b \in A$ where $A$ is a local bisection in $G$.
Then $\gamma (\{a\})$ and $\gamma (\{b\})$ are orthogonal by Lemma~\ref{lem:homo}.
We may therefore define $\gamma (A) = \bigvee_{a \in A} \gamma (\{a\})$.

We have to prove that $\gamma \beta = \alpha$, 
that $\gamma$ is a morphism of Boolean inverse semigroups
and that $\gamma$ is unique with these properties.
We first prove that $\gamma (\beta (a)) = \alpha (a)$.
We therefore have to compute $\gamma (a^{\downarrow})$.
By definition $\gamma (a^{\downarrow}) = \bigvee_{x \leq a} \gamma (\{x\})$. 
Let $\hat{a} = \{a_{1},\ldots, a_{m}\}$.
Then $\gamma (a^{\downarrow}) = \gamma (\{a\}) \vee \gamma (\{a_{1}\}) \vee \ldots \vee \gamma (\{a_{m}\})$.
Now, this equals $\alpha (a)$ by Lemma~\ref{lem:important}.
We prove that $\gamma$ is a homomorphism.
This follows from Lemma~\ref{lem:restricted-product}
and Lemma~\ref{lem:properties-of-setminus}.
It clearly maps binary joins to binary joins.

We need to prove $\gamma$ is unique with the given properties.
Let $\gamma' \colon \mathsf{K}(G) \rightarrow T$ be a morphism that satisfies the properties.
Let $a$ be a non-zero element.
By assumption, $\gamma' (a^{\downarrow}) = \alpha (a)$.
Let $a_{1},\ldots, a_{m} < a$ be all the elements of $S$ below $a$.
Then $\{a\} = \beta (a) \setminus \{a_{1},\ldots, a_{m}\}$ 
where both $\beta (a)$ and $\{a_{1},\ldots,a_{m}\}$ are local
bisections.
Then $\gamma' (\{a\}) = \gamma' (\beta (a)) \setminus \gamma' (\{a_{1}, \ldots, a_{m} \}) =  \alpha (a) \setminus \gamma' (\{a_{1}, \ldots, a_{m} \})$
But $\{a_{1},\ldots a_{m}\} = \beta (a_{1}) \cup \ldots \cup \beta (a_{m})$,
and the result now follows.
\end{proof}

\begin{theorem}\label{them:groupoids-Booleanizations} Let $S$ and $T$ be finite inverse semigroups.
\begin{enumerate}
\item If $S$ and $T$ do not have zeros then their Booleanizations are isomorphic if and only if the groupoids $S^{\cdot}$
and $T^{\cdot}$ are isomorphic.
\item If $S$ and $T$ both have zeros then their Booleanizations are isomorphic if and only if 
their restricted groupoids are isomorphic.
\end{enumerate}
\end{theorem}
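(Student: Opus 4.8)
The plan is to reduce both parts to a single soft fact about the construction $\mathsf{K}$: for finite discrete groupoids $G_{1}$ and $G_{2}$, one has $\mathsf{K}(G_{1}) \cong \mathsf{K}(G_{2})$ as Boolean inverse monoids if and only if $G_{1} \cong G_{2}$ as groupoids. Granting this, the theorem is immediate from Theorem~\ref{them:booleanization-finite}, which identifies the Booleanization of a finite inverse semigroup $S$ with $\mathsf{K}(\mathcal{G}(S))$. Indeed, $\mathsf{B}(S) \cong \mathsf{B}(T)$ if and only if $\mathsf{K}(\mathcal{G}(S)) \cong \mathsf{K}(\mathcal{G}(T))$, which by the key fact holds if and only if $\mathcal{G}(S) \cong \mathcal{G}(T)$. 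In case (1) the semigroups have no zero, so $\mathcal{G}(S) = S^{\cdot}$ and $\mathcal{G}(T) = T^{\cdot}$, and the chain of equivalences is exactly the first assertion; in case (2) both have a zero, so $\mathcal{G}(S)$ and $\mathcal{G}(T)$ are the restricted groupoids on $S^{\ast}$ and $T^{\ast}$, and the same chain gives the second assertion. Thus the two parts are handled uniformly once the key fact is established.

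For the ``if'' direction of the key fact, I would take a groupoid isomorphism $\phi \colon G_{1} \rightarrow G_{2}$ and form the induced map $A \mapsto \{\phi(g) : g \in A\}$ on subsets. Since $\phi$ preserves $\mathbf{d}$ and $\mathbf{r}$, it sends local bisections to local bisections; and since multiplication, inversion, and the Boolean operations in $\mathsf{K}(G_{i})$ are all computed set-theoretically (subset product, $A \mapsto A^{-1}$, unions and intersections), the induced map is a bijection respecting all of them. Hence it is an isomorphism of Boolean inverse monoids.

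For the ``only if'' direction, I would start from an isomorphism $\Psi \colon \mathsf{K}(G_{1}) \rightarrow \mathsf{K}(G_{2})$. Being an isomorphism of the underlying inverse semigroups, $\Psi$ preserves the natural partial order (since $a \leq b$ is expressible as $a = b a^{-1} a$), and therefore maps atoms bijectively to atoms. As a semigroup homomorphism it also commutes with the unary operations $a \mapsto a^{-1}a$ and $a \mapsto aa^{-1}$ and with multiplication, so it restricts to an isomorphism between the groupoids of atoms $\mathsf{A}(\mathsf{K}(G_{1}))$ and $\mathsf{A}(\mathsf{K}(G_{2}))$ (these are groupoids under the restricted product by Lemma~\ref{lem:atoms-groupoid}). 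By Proposition~\ref{prop:groupoids} these atom groupoids are isomorphic to $G_{1}$ and $G_{2}$ respectively, so $G_{1} \cong G_{2}$.

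The argument is essentially formal, and the only point deserving care is verifying that $\Psi$ intertwines the \emph{restricted} products on atoms and not merely the semigroup products: if $x,y$ are atoms with $\mathbf{d}(x) = \mathbf{r}(y)$, one must check $\mathbf{d}(\Psi x) = \mathbf{r}(\Psi y)$ so that $\Psi(x \cdot y) = \Psi(x) \cdot \Psi(y)$ is genuinely a restricted product in $\mathsf{A}(\mathsf{K}(G_{2}))$. This follows at once because $\Psi$ commutes with $a \mapsto a^{-1}a$ and $a \mapsto aa^{-1}$. No other step presents a real obstacle, the substantive work having already been carried out in Theorem~\ref{them:booleanization-finite} and Proposition~\ref{prop:groupoids}.
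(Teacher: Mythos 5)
Your proposal is correct and follows essentially the same route as the paper: both reduce the theorem to Theorem~\ref{them:booleanization-finite} together with the fact that $\mathsf{K}(G_{1}) \cong \mathsf{K}(G_{2})$ forces $G_{1} \cong G_{2}$, recovered via the atom groupoid of Proposition~\ref{prop:groupoids}. You merely make explicit two steps the paper leaves implicit (that an isomorphism of Boolean inverse monoids restricts to an isomorphism of the atom groupoids under the restricted product, and the converse direction that the paper calls ``immediate''), which is a harmless and indeed welcome elaboration.
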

\begin{proof} We prove (1) since the proof of (2) is similar.
Suppose that $\mathsf{B}(S) \cong \mathsf{B}(T)$.
Then by Theorem~\ref{them:booleanization-finite}, we have that 
$\mathsf{B}(S) \cong \mathsf{K}(\mathcal{G}(S))$ 
and  
 $\mathsf{B}(T) \cong \mathsf{K}(\mathcal{G}(T))$. 
Thus $\mathsf{K}(\mathcal{G}(S)) \cong \mathsf{K}(\mathcal{G(T)})$.
By Proposition~\ref{prop:groupoids}, 
the groupoids $\mathcal{G}(S)$ and $\mathcal{G}(T)$ are isomorphic
and so, in both cases,
the groupoids $S^{\cdot}$ and $T^{\cdot}$ are isomorphic, as claimed.
The proof of the converse is immediate.
\end{proof}

\subsection{The representation theory of finite inverse monoids}

We shall now consider the representation theory of finite inverse monoids in the light of the above results.
We shall assume that our inverse semigroups do not have a zero.
One of the interesting features of the representation theory of finite inverse semigroups $S$
is that it is only the groupoid $S^{\cdot}$ that is important;
see \cite[Theorem 9.3]{Steinberg}.
We can now explain why this must be true.
Let $\theta \colon S \rightarrow M_{n}(k)$ be a representation of the inverse semigroup $S$
where $k$ is a field.
We know that $M_{n}(k)$ is a ring.
Thus by \cite[Proposition 3.2]{Lawson2020}, the image of $\theta$ is contained in a Boolean inverse submonoid of the multiplicative monoid
of $M_{n}(k)$.
It follows that $\theta$ is essentially a homomorphism from an inverse semigroup to a Boolean inverse monoid.
Thus there is a homomorphism $\theta^{\ast} \colon \mathsf{B}(S) \rightarrow M_{n}(k)$
which extends $\theta$;
in addition, $\theta^{\ast}$ is an {\em additive representation} meaning that if $a \perp b$ then $\theta^{\ast}(a) + \theta^{\ast}(b) = \theta^{\ast}(a \oplus b)$.
But by Theorem~\ref{them:booleanization-finite}, we have seen that $\mathsf{B}(S) \cong \mathsf{K}(S^{\cdot})$.
Thus the representation theory of finite inverse semigroups is determined by their restricted groupoids.



\section{Semisimple  Boolean inverse semigroups}

We say that an inverse semigroup $S$ is {\em semisimple} if for each $a \in S$ the set $a^{\downarrow}$ is finite.
Since $a^{\downarrow}$ and $\mathbf{d}(a)^{\downarrow}$ (respectively, $\mathbf{r}(a)^{\downarrow}$) are order-isomorphic,
it follows that the set $a^{\downarrow}$ is finite precisely when the set $\mathbf{d}(a)^{\downarrow}$ is finite (respectively, the set 
$\mathbf{r}(a)^{\downarrow}$ is finite).
We therefore have the following.

\begin{lemma}\label{lem:when-semisimple} 
An inverse semigroup $S$ is semisimple if and only if the sets $e^{\downarrow}$ are finite for
all idempotents $e$ of $S$.
\end{lemma}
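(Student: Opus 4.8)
The plan is to prove both implications, but only the forward direction requires any argument since the converse is a special case. Suppose first that $S$ is semisimple, so that $a^{\downarrow}$ is finite for \emph{every} $a \in S$. In particular this holds for every idempotent $e$, since idempotents are themselves elements of $S$, and hence each $e^{\downarrow}$ is finite. This direction is immediate and requires nothing beyond unwinding the definition.

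For the converse, I would assume that $e^{\downarrow}$ is finite for all idempotents $e$ and deduce that $a^{\downarrow}$ is finite for an arbitrary $a \in S$. The key observation is the one already recorded in the paragraph preceding the statement: the sets $a^{\downarrow}$ and $\mathbf{d}(a)^{\downarrow}$ are order-isomorphic. This order-isomorphism is supplied by the standard fact about the natural partial order, namely that the map $x \mapsto \mathbf{d}(x) = x^{-1}x$ restricts to a bijection between $a^{\downarrow}$ and $\mathbf{d}(a)^{\downarrow}$, with inverse $f \mapsto af$. The injectivity of $x \mapsto \mathbf{d}(x)$ on $a^{\downarrow}$ is precisely Lemma~\ref{lem:l-and-r-order}: if $x, y \leq a$ and $\mathbf{d}(x) = \mathbf{d}(y)$ then $x = y$.

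Granting this bijection, I would argue as follows. Let $a \in S$ be arbitrary and set $e = \mathbf{d}(a)$, which is an idempotent. By hypothesis $e^{\downarrow}$ is finite. Since $a^{\downarrow}$ is in bijection with $e^{\downarrow}$ via $x \mapsto \mathbf{d}(x)$, the set $a^{\downarrow}$ is also finite. As $a$ was arbitrary, $S$ is semisimple, completing the converse.

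The only point requiring care is the justification that $x \mapsto \mathbf{d}(x)$ is a genuine bijection $a^{\downarrow} \to \mathbf{d}(a)^{\downarrow}$, and this is exactly what the sentence preceding the lemma asserts (``$a^{\downarrow}$ and $\mathbf{d}(a)^{\downarrow}$ are order-isomorphic''). One checks that the map lands in $\mathbf{d}(a)^{\downarrow}$ because $x \leq a$ forces $\mathbf{d}(x) \leq \mathbf{d}(a)$, that it is surjective because for any $f \leq \mathbf{d}(a)$ the element $af$ satisfies $af \leq a$ and $\mathbf{d}(af) = f$, and that it is injective by Lemma~\ref{lem:l-and-r-order}. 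I do not anticipate any genuine obstacle here: the entire content of the lemma is the transfer of finiteness across this order-isomorphism, which the authors have essentially already stated. The proof is therefore a short unwinding rather than a substantial argument.
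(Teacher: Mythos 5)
Your proof is correct and follows exactly the route the paper takes: the paper offers no separate proof of the lemma, instead deducing it from the observation in the preceding paragraph that $a^{\downarrow}$ and $\mathbf{d}(a)^{\downarrow}$ are order-isomorphic, which is precisely the bijection $x \mapsto \mathbf{d}(x)$ (with inverse $f \mapsto af$, injectivity via Lemma~\ref{lem:l-and-r-order}) that you spell out. Your write-up merely makes explicit the verification the paper leaves to the reader, so there is nothing to add.
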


\begin{remark}{\em Lemma~\ref{lem:when-semisimple} reassures us that to determine whether a Boolean inverse semigroup
is semisimple or not we need only look at the idempotents.}
\end{remark}

\begin{proposition}\label{prop:atoms-semisimple} In a Boolean inverse semigroup $S$, the following assertions are equivalent:
\begin{enumerate}
\item $S$ is semisimple.
\item Each non-zero element is above an atom and every element is a finite join of atoms.
\end{enumerate}
\end{proposition}
\begin{proof} (1)$\Rightarrow$(2).
Let $a$ be any non-zero element.
Then the set $a^{\downarrow}$ is finite.
Either $a$ is an atom or is strictly above a non-zero element $b < a$.
Now repeat the above argument with $b$.
Since the poset is finite we shall eventually end at an atom that is below $a$.
The set $a^{\downarrow}$ is finite and so there are atoms below $a$.
Let $a_{1},\ldots, a_{m}$ be all the atoms below $a$.
These elements are pairwise compatible because they are all below the same element.
We prove that $a = \bigvee_{i=1}^{m} a_{i}$.
Put $b = \bigvee_{i=1}^{m} a_{i}$.
Clearly, $b \leq a$.
Suppose that $b < a$.
Then $a \setminus b$ is nonzero.
Let $a' \leq a \setminus b$ be an atom.
Then $a' = a_{i}$ for some $i$
implying that $a' \leq b$.
But $b \perp (a \setminus b)$ and so we get a contradiction.
It follows that $b = a$, and we have proved the result.

(1)$\Rightarrow$(2). Let $a$ be a non-zero element.
Suppose that $a = \bigvee_{i=1}^{m} a_{i}$ is a finite join of atoms.
We prove that the set $a^{\downarrow}$ is finite.
Let $b \leq a$.
Then $b = a\mathbf{d}(b) = \bigvee_{i=1}^{m} a_{i}\mathbf{d}(b)$.
But $a_{i}\mathbf{d}(b) = 0$ or $a_{i}\mathbf{d}(b) = a_{i}$ since $a_{i}$ is an atom.
It we omit all products where $a_{i}\mathbf{d}(b) = 0$
then we have proved that $b$ is a join of some of the atoms in the set $\{a_{1},\ldots, a_{m}\}$.
It follows that there are only a finite number of elements below $a$ and so $S$ is semisimple.
\end{proof}

The following is a special case of \cite[Proposition 3.7.8]{Wehrung}.

\begin{lemma}\label{lem:meets-semisimple} 
Semisimple Boolean inverse semigroups are meet semigroups.
\end{lemma}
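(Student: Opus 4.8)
The plan is to show that for any two elements $a,b$ of a semisimple Boolean inverse semigroup $S$, the meet $a \wedge b$ exists, and to construct it explicitly as the join of the atoms lying below both. Semisimplicity is what makes this feasible: by Proposition~\ref{prop:atoms-semisimple}, every non-zero element of $S$ is a finite join of atoms, and conversely the set of atoms below any fixed element is finite. This finiteness is the engine of the whole argument.

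Concretely, I would set $C = \{x \in \mathsf{A}(S) \colon x \leq a \text{ and } x \leq b\}$, the set of atoms below both $a$ and $b$. Since $C$ is contained in the (finite) set of atoms below $a$, it is finite. The elements of $C$ are pairwise below the common element $a$, hence pairwise compatible (a standard fact: if $x,y \leq a$ then $x^{-1}y = \mathbf{d}(x)\mathbf{d}(y)$ is idempotent, and dually for $xy^{-1}$); so by the remark following Lemma~\ref{lem:pork} the finite join $c = \bigvee C$ exists, with the convention that $c = 0$ when $C$ is empty. I would then verify that $c$ is a common lower bound of $a$ and $b$: each atom in $C$ lies below $a$ and below $b$, and $a,b$ are upper bounds, so $c \leq a$ and $c \leq b$.

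The core step is to check that $c$ is the \emph{greatest} lower bound. Let $d$ be any common lower bound of $a$ and $b$. If $d = 0$ then $d \leq c$ trivially. If $d \neq 0$, then by Proposition~\ref{prop:atoms-semisimple} we may write $d = \bigvee_{k} d_{k}$ as a finite join of atoms. Each atom $d_{k}$ satisfies $d_{k} \leq d \leq a$ and $d_{k} \leq d \leq b$, so $d_{k} \in C$ and hence $d_{k} \leq c$; therefore $d = \bigvee_{k} d_{k} \leq c$. This shows $c = a \wedge b$, establishing that $S$ is a meet-semigroup.

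I do not expect a serious obstacle here; the argument is forced once semisimplicity is translated into the atom description of Proposition~\ref{prop:atoms-semisimple}. The only points requiring a little care are the pairwise-compatibility of the atoms in $C$ (so that the join defining $c$ is guaranteed to exist) and the bookkeeping for the degenerate cases $d = 0$ and $C = \varnothing$, where the relevant joins reduce to the bottom element $0$.
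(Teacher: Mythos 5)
Your proof is correct, and it is close in spirit to the paper's, but the paper takes a shortcut that makes it even simpler: instead of joining only the atoms below both $a$ and $b$, it joins the \emph{entire} set $a^{\downarrow} \cap b^{\downarrow}$ of common lower bounds. Semisimplicity makes this set finite, and (exactly as in your compatibility check) any two elements below the common upper bound $a$ are compatible, so the join $c = \bigvee \left( a^{\downarrow} \cap b^{\downarrow} \right)$ exists in a Boolean inverse semigroup. The advantage is that maximality is then automatic: $c$ is by construction an upper bound of every common lower bound of $a$ and $b$, and it is itself below $a$ and $b$, so it is tautologically the greatest lower bound --- no appeal to the atom decomposition of Proposition~\ref{prop:atoms-semisimple} and no case analysis on $d$ is needed (note also that $0$ always lies in $a^{\downarrow} \cap b^{\downarrow}$, so the empty-set bookkeeping disappears). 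Your detour through atoms costs you Proposition~\ref{prop:atoms-semisimple} plus a separate verification that $c$ dominates every lower bound, but it is perfectly sound --- your standard-fact justification that $x,y \leq a$ forces $x^{-1}y = \mathbf{d}(x)\mathbf{d}(y)$ is correct --- and it buys a slightly more explicit description of the meet, namely that $a \wedge b$ is the join of the finitely many atoms lying below both $a$ and $b$.
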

\begin{proof} Let $a$ and $b$ be any two elements of the semisimple Boolean inverse semigroup $S$.
Then $a^{\downarrow} \cap b^{\downarrow}$ is a finite set of compatible elements and so its join exists.
This join is precisely $a \wedge b$.
\end{proof}

\begin{remark}{\em In the light of Lemma~\ref{lem:meets-semisimple},
observe that the binary meets in semisimple Boolean inverse semigroups are compatible meets and so automatically preserved by homomorphisms.
We deduce from Proposition~\ref{prop:anja} that all morphisms with domains semisimple Boolean inverse semigroups
are ideal-induced.
This observation strengthens part of \cite[Proposition 3.7.6]{Wehrung}.}
\end{remark}

In this section, we shall describe the structure of semisimple Boolean inverse semigroups.
Of course, all finite Boolean inverse monoids are semisimple but we shall use more sophisticated methods in this section than in the previous one.
The terminology `semisimple' is due to \cite{Wehrung}.
We now recall the elements of the theory of non-commutative Stone duality we shall need.

A {\em Boolean space} is a $0$-dimensional locally compact Hausdorff space.
For \'etale groupoids see \cite{Resende}.
An \'etale groupoid is said to be {\em Boolean} if its space of identities is a Boolean space.
Let $G$ be a Boolean groupoid.
Denote by $\mathsf{KB}(G)$ the set of all compact-open local bisections of $G$.
Then $\mathsf{KB}(G)$ is a Boolean inverse semigroup.
In this section, we shall use filters of various complexions.
Let $S$ be an inverse semigroup.
A subset $F \subseteq S$ is called a {\em filter} if $a,b \in F$ implies that there exists $c \in F$ such that $c \leq a,b$
and $F^{\uparrow} = F$.
A filter $F$ is {\em proper} if $0 \notin F$.
Let $F$ be a filter in $S$.
Define $\mathbf{d}(F) = (A^{-1}A)^{\uparrow}$ and $\mathbf{r}(A) = (AA^{-1})^{\uparrow}$;
both of these sets are filters.
A maximal proper filter is called an {\em ultrafilter}.
Let $S$ be a distributive inverse semigroup.
A proper filter $F$ is said to be {\em prime} if $a \vee b \in F$ implies that $a \in F$ or $b \in F$.
The set of prime filters containing the element $a$ is denoted by $V_{a}$. 
The set of prime filters $\mathsf{G}(S)$ is a Boolean groupoid when $S$ is a Boolean inverse semigroup.
Non-commutative Stone duality states that when $S$ is a Boolean inverse semigroup then $S \cong \mathsf{KB}(\mathsf{G}(S))$
and when $G$ is a Boolean group then $G \cong \mathsf{G}(\mathsf{KB}(G))$.
See \cite{LL}.
The following is \cite[Lemma 3.20]{LL}.

\begin{proposition}\label{prop:semigroup-filters} Let $S$ be a distributive inverse semigroup.
The semigroup $S$ is Boolean if and only if every prime filter is an ultrafilter.
\end{proposition}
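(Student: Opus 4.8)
The plan is to reduce the whole statement to the semilattice of idempotents $\mathsf{E}(S)$, which under the hypothesis is a distributive lattice with bottom, and then to lean on the classical lattice-theoretic characterization of Boolean algebras. Since, by definition, $S$ is Boolean exactly when $\mathsf{E}(S)$ is a Boolean algebra, and since a distributive lattice with $0$ is a Boolean algebra (in the generalized sense of this paper) if and only if every one of its prime filters is maximal --- i.e. an ultrafilter --- it will be enough to show that the map $F \mapsto \mathbf{d}(F)$ transports both primeness and maximality faithfully between filters of $S$ and filters of $\mathsf{E}(S)$. The classical lattice fact I would simply cite; the real work is the transfer.

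The first ingredient I would establish is a structural description of proper filters. For a proper filter $F$ of $S$ one checks that $\mathbf{d}(F) = \{\mathbf{d}(a) \colon a \in F\}^{\uparrow}$, using that if $d \leq b,c$ then $\mathbf{d}(d) \leq b^{-1}c$, so that $F^{-1}F$ and $\{\mathbf{d}(a)\}$ are mutually cofinal from below. One then shows that $F = (a\,\mathbf{d}(F))^{\uparrow}$ for every $a \in F$: the inclusion $a\,\mathbf{d}(F) \subseteq F$ comes from choosing a common lower bound $d \leq a$ with $\mathbf{d}(d) \leq e$ so that $d \leq ae$, and the reverse inclusion from writing $s \geq d = a\,\mathbf{d}(d)$. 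The upshot is that a proper filter is recovered from the idempotent filter $\mathbf{d}(F)$ together with any single ``germ'' $a \in F$; all of this is routine manipulation with the natural partial order.

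Using this description, together with the identity $\mathbf{d}(x \vee y) = \mathbf{d}(x) \vee \mathbf{d}(y)$ for compatible $x,y$ (which follows from Lemma~\ref{lem:wedge}, inversion preserving joins, and distributivity), I would then prove the two transfer equivalences. For primeness: if $F$ is prime and $e \vee f \in \mathbf{d}(F)$, pick $a \in F$ with $\mathbf{d}(a) \leq e \vee f$, so $a = ae \vee af$ and primeness of $F$ forces, say, $ae \in F$, whence $e \in \mathbf{d}(F)$; conversely, if $\mathbf{d}(F)$ is prime and $x \vee y \in F$, then $\mathbf{d}(x) \vee \mathbf{d}(y) \in \mathbf{d}(F)$ gives say $\mathbf{d}(x) \in \mathbf{d}(F)$, and $x = (x \vee y)\mathbf{d}(x) \in (x \vee y)\mathbf{d}(F) \subseteq F$. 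For maximality: if $\mathbf{d}(F)$ is an ultrafilter and $F \subseteq F'$ is proper, then $\mathbf{d}(F) = \mathbf{d}(F')$, and the structural lemma applied with a common $a \in F$ gives $F = (a\,\mathbf{d}(F))^{\uparrow} = (a\,\mathbf{d}(F'))^{\uparrow} = F'$; conversely, if $F$ is an ultrafilter and $\mathbf{d}(F) \subseteq H$ is proper, the filter $F' = (aH)^{\uparrow}$ extends $F$, is proper, and has $\mathbf{d}(F') = H$, so $F' = F$ and $H = \mathbf{d}(F)$. Finally I would record surjectivity onto prime filters: a prime filter $G$ of $\mathsf{E}(S)$ is $\mathbf{d}(F)$ for $F = G^{\uparrow}$ (upward closure in $S$), which is a proper filter with $\mathbf{d}(F) = G$ and hence prime by the equivalence just proved. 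Assembling these, ``all prime filters of $S$ are ultrafilters'' and ``all prime filters of $\mathsf{E}(S)$ are ultrafilters'' become equivalent, and the latter is equivalent to $\mathsf{E}(S)$ being Boolean, i.e. to $S$ being Boolean.

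The step I expect to be the main obstacle is the forward ultrafilter transfer, namely showing that enlarging $\mathbf{d}(F)$ to a proper idempotent filter $H$ yields a \emph{proper} filter $(aH)^{\uparrow}$ of $S$ extending $F$: one must verify that $0 \notin (aH)^{\uparrow}$, which reduces to seeing that $ae = 0$ forces $\mathbf{d}(a)\,e = 0$ and hence $0 \in H$, a contradiction. This is where the interplay between the germ $a$ and the idempotent filter is most delicate. By contrast, the underlying lattice characterization of (generalized) Boolean algebras via maximality of prime filters I would treat as classical and cite rather than reprove.
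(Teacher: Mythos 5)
Your proof is correct, but note that the paper never proves this statement: Proposition~\ref{prop:semigroup-filters} is quoted directly from \cite[Lemma~3.20]{LL}, so there is no internal proof to compare against. Your argument is essentially a reconstruction of the cited source's: reduce to the idempotent lattice via $F \mapsto \mathbf{d}(F)$, using the structural facts that $\mathbf{d}(F) = \{\mathbf{d}(a) \colon a \in F\}^{\uparrow}$ and $F = (a\,\mathbf{d}(F))^{\uparrow}$ for any $a \in F$, and all of your transfer steps check out --- indeed the identity $x = (x \vee y)\mathbf{d}(x)$ needs no appeal to Lemma~\ref{lem:wedge} at all, being immediate from the definition $a \leq b \Leftrightarrow a = b\,a^{-1}a$ of the natural partial order, and your properness verification $ae = 0 \Rightarrow \mathbf{d}(a)e = a^{-1}(ae) = 0 \in H$ is exactly right. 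The one point you should pin down before citing ``the classical lattice fact'' is that the textbook statement (Nachbin's theorem) is usually given for \emph{bounded} distributive lattices, whereas $\mathsf{E}(S)$ need not have a top; the generalized version you need follows by relativizing to the bounded intervals $e^{\downarrow}$, using that $Q \mapsto Q^{\uparrow}$ is an inclusion-preserving bijection between proper (respectively, prime) filters of $e^{\downarrow}$ and proper (respectively, prime) filters of $\mathsf{E}(S)$ containing $e$ --- an argument of precisely the same flavour as your own transfer lemmas, or alternatively it can be cited from \cite{LL}, where the non-unital case is the one actually treated.
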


\subsection{Structure theorems}

Our first result is a topological criterion for a Boolean inverse semigroup to be semisimple.

\begin{theorem}\label{them:discrete-topology}
Let $S$ be a Boolean inverse semigroup.
\begin{enumerate}
\item When $S$ is a semisimple,
the groupoid $\mathsf{G}(S)$ is isomorphic with the groupoid of atoms of $S$
under the restricted product.
\item $S$ is semisimple if and only if the topology on $\mathsf{G}(S)$ is discrete.
\end{enumerate}
\end{theorem}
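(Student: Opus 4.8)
The plan is to pin down the points of $\mathsf{G}(S)$ concretely as the atoms of $S$ via the assignment $a \mapsto a^{\uparrow}$, and then read off both the groupoid isomorphism of (1) and the topological criterion of (2) from this description. For part (1), assuming $S$ semisimple, I would first check that $a \mapsto a^{\uparrow}$ is a bijection from $\mathsf{A}(S)$ onto the prime filters. That $a^{\uparrow}$ is a proper filter is immediate, since $a$ itself witnesses downward-directedness and $a \neq 0$ forces $0 \notin a^{\uparrow}$. To see it is prime, suppose $u \vee v \in a^{\uparrow}$, so $a \leq u \vee v$; then $a = a \wedge (u \vee v) = (a \wedge u) \vee (a \wedge v)$ by Lemma~\ref{lem:eggs}, and since $a$ is an atom each of $a \wedge u$, $a \wedge v$ equals $0$ or $a$ and they cannot both vanish, so $a \leq u$ or $a \leq v$. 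Conversely, given a prime filter $F$, I would take any $x \in F$, write it as a finite join of atoms using Proposition~\ref{prop:atoms-semisimple}, and apply primeness repeatedly to locate an atom $a \in F$; then $F = a^{\uparrow}$, because any $y \in F$ admits some $c \in F$ with $c \leq a, y$, and $c \neq 0$ together with $a$ an atom forces $c = a \leq y$. The same argument gives uniqueness of the atom, so the map is a bijection.

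Next I would verify it is a groupoid isomorphism. Since $\mathbf{d}(a)$ and $\mathbf{r}(a)$ are again atoms by Lemma~\ref{lem:atom-idempotent}, and since prime filters coincide with ultrafilters by Proposition~\ref{prop:semigroup-filters}, the domain filter $\mathbf{d}(a^{\uparrow})$ contains the ultrafilter $\mathbf{d}(a)^{\uparrow}$ and hence equals it by maximality; the range is symmetric. For the product, if $\mathbf{d}(a) = \mathbf{r}(b)$ then $a \cdot b = ab$ is a nonzero atom, so $(a^{\uparrow}b^{\uparrow})^{\uparrow}$ is a proper filter containing the ultrafilter $(a \cdot b)^{\uparrow}$ and therefore equals it; inverses are handled by the equivalence $x \geq a \Leftrightarrow x^{-1} \geq a^{-1}$. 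Thus the bijection transports the restricted product on $\mathsf{A}(S)$ to the groupoid product on $\mathsf{G}(S)$, which establishes (1).

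For part (2), in the semisimple case part (1) shows that for an atom $a$ the basic open set $V_a$ consists of the prime filters $b^{\uparrow}$ with $b \leq a$; as $b$ is also an atom this forces $b = a$, so $V_a = \{a^{\uparrow}\}$. Hence every point is an open singleton and the topology is discrete. For the converse I would invoke non-commutative Stone duality, $S \cong \mathsf{KB}(\mathsf{G}(S))$: if $\mathsf{G}(S)$ is discrete then compactness means finiteness, so $S$ is isomorphic to $\mathsf{K}_{\rm \tiny fin}$ of the discrete groupoid $\mathsf{G}(S)$, in which every element is the finite orthogonal join of its singleton atoms; Proposition~\ref{prop:atoms-semisimple} then yields semisimplicity. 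Equivalently, a discrete unit space forces every idempotent to correspond to a finite set, so every $e^{\downarrow}$ is finite and Lemma~\ref{lem:when-semisimple} applies.

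The step I expect to be the main obstacle is the surjectivity half of the bijection in part (1) — showing that every prime filter is the principal filter of an atom — since this is precisely where semisimplicity must be combined with the primeness hypothesis, via the finite-join-of-atoms decomposition. Once the points of $\mathsf{G}(S)$ are identified as principal filters of atoms, both the groupoid identification and the discreteness criterion follow comparatively mechanically.
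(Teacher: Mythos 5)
Your proposal is correct and follows essentially the same route as the paper: identifying the points of $\mathsf{G}(S)$ as the principal filters $a^{\uparrow}$ of atoms (using Proposition~\ref{prop:atoms-semisimple} together with primeness for surjectivity), then deducing discreteness from $V_{a} = \{a^{\uparrow}\}$ and the converse from non-commutative Stone duality with compact $=$ finite. The only cosmetic differences are that you verify primeness of $a^{\uparrow}$ directly via Lemma~\ref{lem:eggs} where the paper argues maximality and invokes Proposition~\ref{prop:semigroup-filters}, and that you spell out the groupoid-isomorphism check the paper dismisses as routine.
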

\begin{proof} (1) Let $S$ be a semisimple Boolean inverse semigroup.
We use Proposition~\ref{prop:semigroup-filters} which tells us that in Boolean inverse semigroups
prime filters are the same as ultrafilters. 
We prove first  that the ultrafilters of $S$ are of the form $a^{\uparrow}$ where $a$ is an atom.
Let $a$ be an atom and suppose that $a^{\uparrow} \subseteq A$ where $A$ is any proper filter.
Let $b \in A$.
But $A$ is a filter and so there exists $c \in A$ such that $c \leq a,b$.
But $a$ is an atom.
Thus $c = a \leq b$.
It follows that $a^{\uparrow} = A$.
We have therefore proved that $a^{\uparrow}$ is an ultrafilter.
Since any filter containing $a$ must contain $a^{\uparrow}$ as a subset, it follows
that $a^{\uparrow}$ is the only ultrafilter containing the atom $a$.
Now, let $A$ be an ultrafilter.
Let $b \in A$ be arbitrary.
The element $b$ is a join of atoms by Proposition~\ref{prop:atoms-semisimple}
and so, since $A$ is also a prime filter, $A$ contains at least one atom, say $a$.
Observe that $A$ contains exactly one atom since if $a'$ is any atom in $A$ then there is $c \leq a,a'$ where $c \in A$.
It follows that $c = a = a'$.
Clearly $a^{\uparrow} \subseteq A$, but we proved above that $a^{\uparrow}$ is an ultrafilter so $a^{\uparrow} = A$.
We have proved that there is a bijective correspondence between atoms of $S$ and ultrafilters of $S$.
Denote the set of atoms of $S$ by $\mathsf{A}(S)$.
This is a groupoid under the restricted product.
It is routine to check that the restricted product $a \cdot b$ exists in $\mathsf{A}(S)$
if and only if the product $a^{\uparrow} \cdot b^{\uparrow}$ exists in  $\mathsf{G}(S)$;
in addition, $a^{\uparrow} \cdot b^{\uparrow} = (a \cdot b)^{\uparrow}$.
Thus $\mathsf{A}(S)$ and $\mathsf{G}(S)$ are isomorphic as groupoids.

(2) Let $S$ be a semisimple Boolean inverse semigroup.
By (1) above, observe that if $a$ is an atom then $V_{a} = \{a^{\uparrow}\}$.
Thus the topology on $\mathsf{G}(S)$ is discrete.
Conversely, let $S$ be a Boolean inverse semigroup
where the groupoid $\mathsf{G}(S)$ is equipped with the discrete topology. 
We prove that $S$ is semisimple.
Let $A$ be any compact-open local bisection.
In a discrete space, the only compact sets are finite.
Thus $A$ is finite.
By non-commutative Stone duality, 
$\mathsf{KB}(\mathsf{G}(S)) \cong S$.
In $\mathsf{KB}(\mathsf{G}(S))$ the natural partial order is subset inclusion.
Thus there can only be a finite number of elements below $A$.
It follows that $S$ is semisimple.
\end{proof}

By Theorem~\ref{them:discrete-topology} and non-commutative Stone duality,
we now have the following description of semisimple Boolean inverse semigroups in terms of discrete groupoids.

\begin{corollary}\label{cor:semisimple} \mbox{}
The semisimple Boolean inverse semigroups are isomorphic to those of the form
$\mathsf{K}_{\rm \tiny fin}(G)$ 
where $G$ is a discrete groupoid.
\end{corollary}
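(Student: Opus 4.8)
The plan is to prove both inclusions by combining Theorem~\ref{them:discrete-topology} with non-commutative Stone duality, the bridge between the two being the elementary observation that on a discrete topological space the compact-open subsets are precisely the finite subsets.

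For the forward direction, suppose $S$ is a semisimple Boolean inverse semigroup. By Theorem~\ref{them:discrete-topology}(2), the groupoid $\mathsf{G}(S)$ of prime filters carries the discrete topology, and non-commutative Stone duality gives $S \cong \mathsf{KB}(\mathsf{G}(S))$. I would then argue that $\mathsf{KB}(G) = \mathsf{K}_{\rm \tiny fin}(G)$ whenever the underlying space of $G$ is discrete: in a discrete space every subset is open, and a subset is compact exactly when it is finite, so the compact-open local bisections coincide with the finite local bisections. Setting $G = \mathsf{G}(S)$ and regarding it as a discrete groupoid then yields $S \cong \mathsf{K}_{\rm \tiny fin}(G)$, as required. (Alternatively, Theorem~\ref{them:discrete-topology}(1) permits taking $G$ to be the groupoid of atoms $\mathsf{A}(S)$.)

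For the reverse direction, let $G$ be a discrete groupoid. By Proposition~\ref{prop:boolean-finite-local-bisections}, $\mathsf{K}_{\rm \tiny fin}(G)$ is a Boolean inverse semigroup, so it remains only to verify semisimplicity. By Lemma~\ref{lem:when-semisimple} it suffices to check that $e^{\downarrow}$ is finite for every idempotent $e$. The idempotents of $\mathsf{K}_{\rm \tiny fin}(G)$ are the finite subsets of the identity set $G_{o}$, and below such a finite subset there are only finitely many subsets; hence $e^{\downarrow}$ is finite and the semigroup is semisimple. (One could instead note that a discrete space is a Boolean space, so $G$ equipped with the discrete topology is a Boolean groupoid with $\mathsf{KB}(G) = \mathsf{K}_{\rm \tiny fin}(G)$; Stone duality then gives $\mathsf{G}(\mathsf{K}_{\rm \tiny fin}(G)) \cong G$ discrete, and Theorem~\ref{them:discrete-topology}(2) applies.)

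The main obstacle I anticipate is bookkeeping rather than depth: one must confirm that the identification $\mathsf{KB}(G) = \mathsf{K}_{\rm \tiny fin}(G)$ is an equality of Boolean inverse semigroups and not merely of underlying sets, that the discrete topology on $\mathsf{G}(S)$ supplied by Theorem~\ref{them:discrete-topology} is genuinely the topology making $\mathsf{G}(S)$ the discrete groupoid intended in the statement, and that discrete spaces qualify as Boolean spaces so that Stone duality is available in the converse direction. None of these steps requires a new idea, but each must be articulated so that the two halves of the argument lock together cleanly.
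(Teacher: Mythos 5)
Your proposal is correct and follows exactly the route the paper intends: the paper derives this corollary directly from Theorem~\ref{them:discrete-topology} together with non-commutative Stone duality, which is precisely your argument, with the routine identifications (compact-open $=$ finite on a discrete space, semisimplicity of $\mathsf{K}_{\rm \tiny fin}(G)$ via Lemma~\ref{lem:when-semisimple}) filled in. The bookkeeping points you flag are all benign and your handling of them is sound.
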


Let $S$ be a semsimple Boolean inverse semigroup.
Then the groupoid of atoms of $S$, denoted by $\mathsf{A}(S)$, is isomorphic with the usual 
groupoid $\mathsf{G}(S)$ of prime filters of $S$.

The following result will enable us to describe the $0$-simplifying and fundamental Boolean inverse semigroups.
The proof is similar to the proof of Lemma~\ref{lem:bordeaux1}.

\begin{lemma}\label{lem:semisimple-bordeaux} \mbox{}
\begin{enumerate} 
\item A semisimple inverse semigroup is fundamental if and only if its groupoid of atoms is principal.
\item A semisimple Boolean inverse semigroup is $0$-simplifying if and only if its groupoid of atoms is connected.
\end{enumerate}
\end{lemma}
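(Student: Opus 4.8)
The plan is to follow the proof of Lemma~\ref{lem:bordeaux1} almost verbatim, replacing each appeal to finiteness by an appeal to Proposition~\ref{prop:atoms-semisimple}: in a semisimple Boolean inverse semigroup every non-zero element lies above an atom and is a finite orthogonal join of atoms. Throughout, $\mathsf{A}(S)$ is a groupoid by Lemma~\ref{lem:atoms-groupoid}, and binary meets are available by Lemma~\ref{lem:meets-semisimple}, so all the orthogonal-join and relative-complement manipulations of Section~3 apply.

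For part (1), the forward implication needs neither joins nor semisimplicity. Suppose $S$ is fundamental and $e \stackrel{a}{\longrightarrow} e$ with $e$ an atomic idempotent; then $a$ is an atom by Lemma~\ref{lem:atom-idempotent}. For any idempotent $f$ we have $fa \leq a$, so $fa \in \{0,a\}$, and in each case the computation of Lemma~\ref{lem:bordeaux1}(1) (using $\mathbf{d}(a) = \mathbf{r}(a) = e$) gives $fa = af$. Fundamentality then forces $a = e$, so every local group of $\mathsf{A}(S)$ is trivial, i.e. $\mathsf{A}(S)$ is principal. For the converse, let $a$ commute with all idempotents and write $a = \bigoplus_{i=1}^{m} a_{i}$ as a finite orthogonal join of atoms via Proposition~\ref{prop:atoms-semisimple}. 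Exactly as in Lemma~\ref{lem:bordeaux1}(1), the identity $\mathbf{r}(a_{j})a = a\mathbf{r}(a_{j})$ together with the atomicity of the $a_{i}$ yields $\mathbf{d}(a_{j}) = \mathbf{r}(a_{j})$ for each $j$; principality of $\mathsf{A}(S)$ then makes each $a_{j}$ idempotent, whence $a$ is idempotent and $S$ is fundamental.

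For part (2) I would argue through Lemma~\ref{lem:toby}, which identifies $0$-simplifying with the relation $\equiv$ being universal on the non-zero idempotents. If $S$ is $0$-simplifying and $e,f$ are atomic idempotents, then $f \preceq e$ gives a pencil $X$ from $f$ to $e$ with $f = \bigvee_{x \in X}\mathbf{d}(x)$ and $\mathbf{r}(x) \leq e$; since $f$ is an atom some $x \in X$ has $\mathbf{d}(x) = f$, and since $e$ is an atom and $\mathbf{r}(x) \leq e$ is non-zero we get $\mathbf{r}(x) = e$. By Lemma~\ref{lem:atom-idempotent} this $x$ is an atom, so $f \, \mathscr{D} \, e$ and $\mathsf{A}(S)$ is connected. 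Conversely, if $\mathsf{A}(S)$ is connected, let $e,f$ be non-zero idempotents and write $e = \bigvee_{i} e_{i}$ as a finite join of atomic idempotents below $e$ using Proposition~\ref{prop:atoms-semisimple}. Connectedness supplies, for each $i$, an atom $x_{i}$ with $\mathbf{d}(x_{i}) = e_{i}$ and $\mathbf{r}(x_{i}) \leq f$ (any atomic idempotent beneath $f$ being $\mathscr{D}$-related to $e_{i}$), so $\{x_{i}\}$ is a pencil from $e$ to $f$ and $e \preceq f$; by symmetry $e \equiv f$, and Lemma~\ref{lem:toby} gives that $S$ is $0$-simplifying.

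The one step that does not transfer formally from the finite case is the decomposition of an element, or of an idempotent, as a finite join of atoms: in Lemma~\ref{lem:bordeaux1} this is handed to us by finiteness, whereas here it is precisely the force of Proposition~\ref{prop:atoms-semisimple}, and hence of the semisimple Boolean hypotheses. I expect this to be the main point to secure, particularly in the converse of part (1), where the conclusion that $a$ is idempotent genuinely depends on recovering $a$ as the orthogonal join of the atoms beneath it; once that decomposition is in hand, the remaining calculations are the same orthogonal-join and pencil computations already performed for Lemma~\ref{lem:bordeaux1}.
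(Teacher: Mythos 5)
Your proposal is correct and is precisely the argument the paper intends: the paper gives no independent proof of Lemma~\ref{lem:semisimple-bordeaux}, stating only that ``the proof is similar to the proof of Lemma~\ref{lem:bordeaux1}'', and your adaptation --- replacing each appeal to finiteness by Proposition~\ref{prop:atoms-semisimple} to decompose elements and idempotents as finite (orthogonal) joins of atoms, and running the same commuting-idempotent and pencil computations via Lemma~\ref{lem:toby} --- is exactly that intended substitution. You have also correctly isolated the only point where semisimplicity does real work, namely recovering an element as the join of the atoms beneath it.
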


The proof of the following is immediate by Proposition~\ref{prop:local-bisections-rook} 
and Lemma~\ref{lem:semisimple-bordeaux}.

\begin{proposition}\label{prop:semisimple-0-simplifying} 
A Boolean inverse semigroup is $0$-simplifying and semisimple if and only if it is isomorphic to
a Boolean inverse semigroup of the form $M_{|X|}(G^{0})$ for some non-empty set $X$ and group $G$.
\end{proposition}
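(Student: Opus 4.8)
The plan is to prove both implications by passing to the groupoid of atoms and then invoking the rook-matrix description of the finite local bisections of a connected groupoid. Since the statement is flagged as immediate from Proposition~\ref{prop:local-bisections-rook} and Lemma~\ref{lem:semisimple-bordeaux}, the work is entirely a matter of chaining earlier results together correctly.

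For the forward implication, suppose $S$ is $0$-simplifying and semisimple. First I would use Corollary~\ref{cor:semisimple} (equivalently, part (1) of Theorem~\ref{them:discrete-topology} together with non-commutative Stone duality) to realize $S$ as $\mathsf{K}_{\rm \tiny fin}(\mathsf{A}(S))$, where $\mathsf{A}(S)$ is the discrete groupoid of atoms of $S$. Next, since $S$ is $0$-simplifying, Lemma~\ref{lem:semisimple-bordeaux}(2) shows that $\mathsf{A}(S)$ is connected. By the structure theorem for connected groupoids, Lemma~\ref{lem:connected-groupoids}, I may then write $\mathsf{A}(S) = X \times G \times X$ for some non-empty set $X$ and group $G$. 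Finally, Proposition~\ref{prop:local-bisections-rook} identifies $\mathsf{K}_{\rm \tiny fin}(X \times G \times X)$ with $M_{|X|}(G^{0})$, yielding $S \cong M_{|X|}(G^{0})$.

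For the converse, suppose $S \cong M_{|X|}(G^{0})$. By Proposition~\ref{prop:local-bisections-rook} the right-hand side is isomorphic to $\mathsf{K}_{\rm \tiny fin}(X \times G \times X)$, the finite local bisections of a discrete groupoid, so $S$ is semisimple by Corollary~\ref{cor:semisimple}. Moreover the groupoid of atoms of $S$ is $X \times G \times X$, which is connected by Lemma~\ref{lem:connected-groupoids}, and hence Lemma~\ref{lem:semisimple-bordeaux}(2) gives that $S$ is $0$-simplifying.

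Because every step is a direct citation of a result established earlier in the paper, there is no genuine obstacle. The only point demanding care is the bookkeeping: I must confirm that the discrete groupoid $G$ produced by Corollary~\ref{cor:semisimple} is precisely the groupoid of atoms $\mathsf{A}(S)$, so that Lemma~\ref{lem:semisimple-bordeaux}(2) applies to it, and that discreteness is what makes the compact-open local bisections $\mathsf{KB}$ coincide with the finite local bisections $\mathsf{K}_{\rm \tiny fin}$.
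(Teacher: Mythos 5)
Your proposal is correct and is essentially the paper's own argument: the paper declares the proposition ``immediate by Proposition~\ref{prop:local-bisections-rook} and Lemma~\ref{lem:semisimple-bordeaux},'' and your write-up simply makes that chain explicit via Corollary~\ref{cor:semisimple} and Lemma~\ref{lem:connected-groupoids}. Your closing bookkeeping points (that the discrete groupoid is $\mathsf{A}(S)$ by Theorem~\ref{them:discrete-topology}(1), and that compactness equals finiteness in a discrete space) are exactly the right details to check.
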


The following is an immediate corollary.

\begin{corollary}\label{cor:watch} 
The simple, semisimple Boolean inverse semigroups are isomorphic to the semigroups $\mathscr{I}_{\rm {\tiny fin}}(X)$
where $X$ is a non-empty set.
\end{corollary}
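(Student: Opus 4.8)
The plan is to assemble the classification directly from the structure theorems already established, proving both directions of the stated isomorphism. The key reduction is that by Proposition~\ref{prop:boolean-congruence-free} simplicity decomposes into ``$0$-simplifying and fundamental,'' so a simple semisimple Boolean inverse semigroup is precisely one that is $0$-simplifying, fundamental, and semisimple. I would treat these three hypotheses as constraints on the groupoid of atoms and read off the answer.

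First I would apply Proposition~\ref{prop:semisimple-0-simplifying} to write any $0$-simplifying semisimple $S$ in the form $M_{|X|}(G^{0})$ for some non-empty set $X$ and group $G$. The remaining task is to pin down $G$ using fundamentality. By Lemma~\ref{lem:semisimple-bordeaux}(1), $S$ being fundamental is equivalent to its groupoid of atoms being principal, i.e.\ having only trivial local groups. Since the groupoid of atoms of $M_{|X|}(G^{0})$ is the connected groupoid $X \times G \times X$ whose local group is $G$ (Lemma~\ref{lem:connected-groupoids} together with Proposition~\ref{prop:local-bisections-rook}), fundamentality forces $G$ to be trivial. Hence $G^{0} = \mathbf{2}$, the two-element unital Boolean algebra, and $S \cong M_{|X|}(\mathbf{2})$.

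The last identification is of $M_{|X|}(\mathbf{2})$ with $\mathscr{I}_{\rm {\tiny fin}}(X)$. A rook matrix over $\mathbf{2}$ is a partial permutation matrix, so it encodes a partial bijection of $X$, and the finite-support condition (RM3) says exactly that only finitely many entries are non-zero, i.e.\ the partial bijection has finite domain. When $X$ is finite this recovers $M_{n}(\mathbf{2}) \cong \mathscr{I}_{n}$ from Example~\ref{ex:solomon}, and when $X$ is infinite it yields $\mathscr{I}_{\rm {\tiny fin}}(X)$. For the converse direction I would run the argument backwards: $\mathscr{I}_{\rm {\tiny fin}}(X) \cong M_{|X|}(\mathbf{2})$ is $0$-simplifying and semisimple by Proposition~\ref{prop:semisimple-0-simplifying} (with $G$ trivial), its groupoid of atoms $X \times \{1\} \times X$ is principal and hence it is fundamental by Lemma~\ref{lem:semisimple-bordeaux}(1), so it is simple by Proposition~\ref{prop:boolean-congruence-free}; this closes the equivalence.

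I expect the only point requiring genuine care to be the identification $M_{|X|}(\mathbf{2}) \cong \mathscr{I}_{\rm {\tiny fin}}(X)$ in the infinite case, where one must verify that the finite-support condition on rook matrices corresponds precisely to the finite-domain condition defining $\mathscr{I}_{\rm {\tiny fin}}(X)$, and that the rook-matrix product recovers composition of partial bijections. Everything else is routine bookkeeping matching each structural hypothesis on $S$ to the corresponding feature of its groupoid of atoms.
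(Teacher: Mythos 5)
Your proposal is correct and takes essentially the same route as the paper, which presents the statement as an immediate consequence of Proposition~\ref{prop:semisimple-0-simplifying} combined (implicitly) with Proposition~\ref{prop:boolean-congruence-free} and Lemma~\ref{lem:semisimple-bordeaux}, fundamentality forcing the group $G$ in $M_{|X|}(G^{0})$ to be trivial. Your explicit check that $M_{|X|}(\mathbf{2}) \cong \mathscr{I}_{\rm \tiny fin}(X)$, including the matching of the finite-support condition on rook matrices with the finite-domain condition on partial bijections, merely fills in detail the paper leaves unstated.
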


We now turn to the structure of arbitrary semisimple Boolean inverse semigroups.

We denote the {\em (unrestricted) direct product} of the Boolean inverse semigroups $S_{i}$, where $I \in I$, by
$$\prod_{i \in I} S_{i}.$$
This is also a Boolean inverse semigroup.

\begin{example}\label{ex:choco} {\em The product of any finite number of semisimple semigroups is clearly semisimple, but this is not true of arbitrary direct products.
To see why, let $S = \mathscr{I}_{2} \times \mathscr{I}_{3} \times \mathscr{I}_{4} \times \ldots$.
This is a Boolean inverse monoid but clearly the number of elements below the identity element is infinite and so $S$ is not semisimple.}
\end{example}

Define the {\em restricted direct product} of  the Boolean inverse semigroups $S_{i}$, where $I \in I$, 
to be that  subsemigroup of the direct product consisting of those elements
all of whose components are zero except in a finite number of positions;
that is, those elements with {\em finite support}.
This is denoted by
$$\bigoplus_{i \in I} S_{i}.$$
The proof of the following is immediate. The second part should be read in tandem with Example~\ref{ex:choco}.

\begin{lemma}\label{lem:restricted} \mbox{}
\begin{enumerate}
\item The restricted direct product of Boolean inverse semigroups is a Boolean inverse semigroup.
\item The restricted direct product of semisimple semigroups is semisimple.
\end{enumerate}
\end{lemma}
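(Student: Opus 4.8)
The plan is to verify the two claims directly against the definitions, using the characterizations already available in the excerpt. For part (1), I would argue that $\bigoplus_{i \in I} S_{i}$ is closed under the relevant operations inside the full direct product $\prod_{i \in I} S_{i}$, which we are told is already a Boolean inverse semigroup. First I would observe that the subset of finite-support elements is closed under multiplication (the product of two elements with finite support again has finite support, since a coordinate of the product can be nonzero only where both factors are nonzero) and under inversion (inversion is computed coordinatewise and preserves the support). Thus $\bigoplus_{i \in I} S_{i}$ is an inverse subsemigroup, and it contains the zero, namely the all-zero tuple. Next I would check closure under compatible joins: if $a \sim b$ in the direct product and both have finite support, then $a \vee b$ is computed coordinatewise by Proposition~\ref{prop:ale}-style reasoning, and its support is contained in the union of the two supports, hence finite. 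Finally, for the Boolean-algebra condition on idempotents, I would invoke Proposition~\ref{prop:definition}: it suffices to check that the idempotents of $\bigoplus_{i \in I} S_{i}$ form a Boolean algebra under the natural partial order. An idempotent of finite support is a finite-support tuple of idempotents, and the relative complements $(f \setminus e)$ are again computed coordinatewise and preserve finite support, so $\mathsf{E}\!\left(\bigoplus_{i \in I} S_{i}\right)$ is the restricted product of the Boolean algebras $\mathsf{E}(S_{i})$, which is again a Boolean algebra (the finite-support tuples over a family of Boolean algebras form a Boolean algebra). This establishes (1).

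For part (2), I would use the atom-theoretic characterization of semisimplicity in Proposition~\ref{prop:atoms-semisimple}, or equivalently Lemma~\ref{lem:when-semisimple}. Suppose each $S_{i}$ is semisimple, and let $a = (a_{i})_{i \in I}$ be an element of $\bigoplus_{i \in I} S_{i}$; by definition only finitely many $a_{i}$ are nonzero, say those with $i$ ranging over a finite set $F \subseteq I$. For each $i \in F$, semisimplicity of $S_{i}$ gives that $a_{i}^{\downarrow}$ is finite. An element $b \le a$ in the restricted product is precisely a tuple $(b_{i})$ with $b_{i} \le a_{i}$ for all $i$, and $b_{i} = 0$ automatically whenever $a_{i} = 0$; hence $b$ is determined by a choice of $b_{i} \in a_{i}^{\downarrow}$ for each $i \in F$. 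Therefore $a^{\downarrow}$ is in bijection with the finite product $\prod_{i \in F} a_{i}^{\downarrow}$, which is finite. Thus $a^{\downarrow}$ is finite for every $a$, and $\bigoplus_{i \in I} S_{i}$ is semisimple.

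The one point requiring genuine care, rather than routine verification, is the contrast with Example~\ref{ex:choco}: the finiteness of $a^{\downarrow}$ hinges entirely on finite support, since it is the finiteness of the index set $F$ that lets the product $\prod_{i \in F} a_{i}^{\downarrow}$ remain finite. Over the \emph{unrestricted} product this fails precisely because the identity has infinite support, so its down-set factors through an infinite product of nontrivial posets. I therefore expect the main substance of the argument to lie not in any hard computation but in being explicit that passing to finite support is exactly what rescues the finiteness of each $a^{\downarrow}$; everything else is a coordinatewise check. For this reason, as the statement notes, the lemma is ``immediate,'' and I would keep the written proof short, citing Proposition~\ref{prop:definition} for (1) and Proposition~\ref{prop:atoms-semisimple} (or Lemma~\ref{lem:when-semisimple}) for (2).
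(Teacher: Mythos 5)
Your proof is correct and is exactly the routine coordinatewise verification the paper has in mind when it declares the lemma immediate (the paper itself supplies no proof). Both parts check out: closure of finite-support tuples under the operations of Proposition~\ref{prop:definition} gives (1), and your bijection $a^{\downarrow} \cong \prod_{i \in F} a_{i}^{\downarrow}$ over the finite support $F$ gives (2), correctly isolating finite support as the point of contrast with Example~\ref{ex:choco}.
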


\begin{theorem}\label{them:struct-semisimple}
Every semisimple Boolean inverse semigroup is isomorphic to a restricted direct product
of semisimple Boolean inverse semigroups of the form $M_{|X|}(G^{0})$ where $G$ is a group
and $X$ is a non-empty set.
\end{theorem}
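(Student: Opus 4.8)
The plan is to combine the description of semisimple Boolean inverse semigroups as finite-local-bisection semigroups with the decomposition of a groupoid into its connected components. By Corollary~\ref{cor:semisimple}, a semisimple Boolean inverse semigroup $S$ is isomorphic to $\mathsf{K}_{\rm \tiny fin}(G)$, where $G$ may be taken to be the discrete groupoid of atoms of $S$ identified in Theorem~\ref{them:discrete-topology}. I would then write $G = \bigsqcup_{i \in I} G_{i}$ as the disjoint union of its connected components.

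The heart of the argument is to establish the isomorphism
$$\mathsf{K}_{\rm \tiny fin}(G) \cong \bigoplus_{i \in I} \mathsf{K}_{\rm \tiny fin}(G_{i}).$$
First I would observe that every arrow of $G$ lies in a unique connected component, so any local bisection $A$ splits as a disjoint union $A = \bigsqcup_{i}(A \cap G_{i})$ in which each $A \cap G_{i}$ is a local bisection of $G_{i}$. If $A$ is finite, then only finitely many of the sets $A \cap G_{i}$ are non-empty, so the assignment $A \mapsto (A \cap G_{i})_{i \in I}$ lands in the restricted direct product. Conversely, given a family $(A_{i})_{i \in I}$ of finite local bisections with finite support, its disjoint union is a finite local bisection of $G$: finiteness is clear, and the local bisection conditions hold componentwise since arrows in distinct components can share neither a domain nor a range. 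This shows the map is a bijection.

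It remains to check that this bijection is a morphism of Boolean inverse semigroups. I would verify this componentwise, using the fact that the restricted product of an arrow in $G_{i}$ with one in $G_{j}$ for $i \neq j$ is never defined, so that the product $AB$ in $\mathsf{K}_{\rm \tiny fin}(G)$ satisfies $(AB) \cap G_{i} = (A \cap G_{i})(B \cap G_{i})$; the same componentwise behaviour holds for inverses and for orthogonal joins, which here are simply disjoint unions. Hence the map respects all the relevant structure.

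Finally, each $G_{i}$ is connected, so by Lemma~\ref{lem:connected-groupoids} we may write $G_{i} = X_{i} \times H_{i} \times X_{i}$, and Proposition~\ref{prop:local-bisections-rook} gives $\mathsf{K}_{\rm \tiny fin}(G_{i}) \cong M_{|X_{i}|}(H_{i}^{0})$. Substituting these isomorphisms into the restricted direct product yields the claimed decomposition, while Lemma~\ref{lem:restricted} confirms the result is again a semisimple Boolean inverse semigroup. I expect the main obstacle to be the careful bookkeeping in verifying that $A \mapsto (A \cap G_{i})_{i}$ is a morphism---in particular that finite support is preserved under multiplication and that joins correspond---but this becomes routine once the key observation about non-composability across distinct components is in hand.
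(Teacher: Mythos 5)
Your proposal is correct and takes essentially the same route as the paper's own proof: invoke Corollary~\ref{cor:semisimple} to write $S \cong \mathsf{K}_{{\rm \tiny fin}}(G)$, decompose $G$ into connected components, use the isomorphism $\mathsf{K}_{{\rm \tiny fin}}(\bigsqcup_{i \in I} G_{i}) \cong \bigoplus_{i \in I} \mathsf{K}_{{\rm \tiny fin}}(G_{i})$, and finish with Proposition~\ref{prop:local-bisections-rook}. The only difference is that you spell out the componentwise verification (finite support, non-composability across components, behaviour of products and joins) that the paper asserts without proof, and your details are accurate.
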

\begin{proof} Let $S$ be a semisimple Boolean inverse semigroup.
By Corollary~\ref{cor:semisimple},
we have that $S \cong \mathsf{K}_{\rm {\tiny fin}}(G)$ where $G$ is a discrete groupoid.
Write $G = \bigsqcup_{i \in I} G_{i}$ where the $G_{i}$ are the connected components of $G$.
Then $S \cong \mathsf{K}_{\rm {\tiny fin}}(\bigsqcup_{i \in I} G_{i})$.
But
$\mathsf{K}_{\rm {\tiny fin}}(\bigsqcup_{i \in I} G_{i}) \cong \bigoplus_{i \in I} \mathsf{K}_{\rm {\tiny fin}}(G_{i})$. 
The result now follows by Proposition~\ref{prop:local-bisections-rook}.
\end{proof}

The following theorem tells us that semisimple Boolean inverse semigroups are basic building blocks
of all Boolean inverse semigroups.

\begin{theorem}[Dichotomy theorem]\label{them:dichotomy} Let $S$ be a $0$-simplifying Boolean inverse semigroup.
Then exactly one of the following is true:
\begin{enumerate}
\item $S$ is atomless.
\item $S$ is semisimple.
\end{enumerate}
\end{theorem}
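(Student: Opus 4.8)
The plan is to establish the two alternatives are mutually exclusive and then to show that failure of the first forces the second, so that exactly one always holds. Mutual exclusivity is immediate: since $S$ is not the zero semigroup, a semisimple $S$ contains a non-zero element, which by Proposition~\ref{prop:atoms-semisimple} lies above an atom; hence a semisimple Boolean inverse semigroup is never atomless. It therefore suffices to prove that if $S$ is not atomless then it is semisimple, for then at least one alternative always holds while both cannot.

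So I would assume $S$ possesses an atom $a$ and set $g = \mathbf{d}(a)$. Since $a \, \mathscr{D} \, g$, Lemma~\ref{lem:atom-idempotent} shows $g$ is itself an atom, i.e. an atomic idempotent. This is exactly where the hypothesis enters: by Lemma~\ref{lem:toby} the relation $\equiv$ is universal on the non-zero idempotents of a $0$-simplifying $S$, so every non-zero idempotent $e$ satisfies $e \preceq g$. Unwinding the definition of $\preceq$, there is a pencil $X = \{x_{1}, \ldots, x_{m}\}$ from $e$ to $g$; that is, $e = \bigvee_{i=1}^{m} \mathbf{d}(x_{i})$ with $\mathbf{r}(x_{i}) \leq g$ for each $i$, and after discarding any zero entries I may assume each $x_{i} \neq 0$.

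The crux is then to see that each domain $\mathbf{d}(x_{i})$ is itself an atom. Because $g$ is an atom and $0 \neq \mathbf{r}(x_{i}) \leq g$, I must have $\mathbf{r}(x_{i}) = g$; consequently $x_{i} \, \mathscr{D} \, g$, and since $g$ is an atom, Lemma~\ref{lem:atom-idempotent} forces $x_{i}$ to be an atom, whence (applying the same lemma to $x_{i} \, \mathscr{D} \, \mathbf{d}(x_{i})$) its domain $\mathbf{d}(x_{i})$ is an atom too. Thus every non-zero idempotent $e = \bigvee_{i=1}^{m} \mathbf{d}(x_{i})$ is a finite join of atoms. Exactly as in the implication (2)$\Rightarrow$(1) of Proposition~\ref{prop:atoms-semisimple}, a finite join of atoms has a finite order-ideal below it, so $e^{\downarrow}$ is finite for every idempotent $e$; by Lemma~\ref{lem:when-semisimple} the semigroup $S$ is then semisimple, completing the dichotomy.

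The one step demanding care is the passage from the pencil to a genuine finite join of atoms: I must verify that a non-zero entry $x_{i}$ of a pencil into the atomic idempotent $g$ has range \emph{exactly} $g$, so that atomicity of $x_{i}$, and hence of $\mathbf{d}(x_{i})$, is delivered by Lemma~\ref{lem:atom-idempotent}. Everything else is routine bookkeeping with the natural partial order and the $\mathscr{D}$-relation, and I expect no obstacle beyond correctly using $0$-simplifying, through Lemma~\ref{lem:toby}, to propagate atomicity from the single idempotent $g$ to all idempotents of $S$.
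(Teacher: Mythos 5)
Your proof is correct and follows essentially the same route as the paper's: a pencil from an arbitrary non-zero idempotent to the atomic idempotent $g$ forces every entry's range to equal $g$, so by Lemma~\ref{lem:atom-idempotent} each $\mathbf{d}(x_{i})$ is an atom, every non-zero idempotent is a finite join of atoms, and finiteness of $e^{\downarrow}$ (hence semisimplicity via Lemma~\ref{lem:when-semisimple}) follows by the same computation the paper carries out inline. The only differences are cosmetic and in your favour: you make explicit the mutual exclusivity of the two alternatives and the reduction from ``$S$ has an atom'' to ``$S$ has an atomic idempotent,'' both of which the paper leaves implicit.
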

\begin{proof} Suppose that $S$ contains at least one atomic idempotent $e$.
We prove that every non-zero idempotent is a finite join of atoms from which it will follow that $S$ is semisimple.
Let $f$ be any non-zero idempotent.
Since $S$ is $0$-simplifying, we know that $f \equiv e$.
In particular, $f \preceq e$.
Thus there is a pencil $X = \{x_{1}, \ldots, x_{n} \}$ from $f$ to $e$.
By definition, $f = \bigvee_{i=1}^{m} \mathbf{d}(x_{i})$ and $\mathbf{r}(x_{i}) \leq e$.
But $e$ is an atom and so $\mathbf{r}(x_{i}) = e$.
It follows that $x_{i}$ is an atom and so $\mathbf{d}(x_{i})$ is an atom.
We have therefore proved that $f$ is a join of (some of the) atoms;
we write $f = \bigvee_{i=1}^{m} e_{i}$ where the $e_{i}$ are atoms.
Let $e \leq f$ be any atom.
Then $e = \bigvee_{i=1}^{m} (e_{i}e)$.
Omit all the $i$ such that $e_{i}e = 0$.
Then $e$ is a join of some of the atoms $e_{1},\ldots, e_{m}$ and so must equal one of these atoms.
It follows that there are a finite number of atoms below $f$.
But each element of $f^{\downarrow}$ is a join of a finite number of these atoms.
It follows that the set $f^{\downarrow}$ is finite.
\end{proof}

\subsection{Booleanizations}

In this section, we shall use the {\em universal groupoid} $\mathsf{G}_{u}(S)$ associated with the inverse semigroup $S$.
This was introduced by \cite{Paterson} but we shall adopt the approach to this groupoid introduced by \cite{Lenz} as mediated through \cite{LMS}.
We refer to \cite[Section 5.1]{LL} for the details.
Denote by $\mathsf{G}_{u}(S)$ the groupoid of all (proper) filters of $S$.
The basis of the topology on  $\mathsf{G}_{u}(S)$ is given by sets of the form $U_{a;a_{1},\ldots, a_{m}}$ where $a_{1},\ldots, a_{m} \leq a$ in $S$
which consists of all proper filters that contain $a$ and omit $a_{1},\ldots,a_{m}$.

\begin{theorem}\label{them:universal-groupoid} Let $S$ be an inverse semigroup with zero.
Then $S$ is semisimple if and only if its universal groupoid is discrete.
\end{theorem}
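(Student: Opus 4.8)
The plan is to prove both implications directly from the description of the topology on $\mathsf{G}_u(S)$ in terms of the basic open sets $U_{a;a_1,\ldots,a_m}$, the real content being to identify which proper filters are isolated points. Throughout I would use Lemma~\ref{lem:when-semisimple} to reduce all questions about semisimplicity to the finiteness of the down-sets $e^{\downarrow}$ of idempotents.

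For the forward implication, suppose $S$ is semisimple. First I would show that every proper filter $F$ is principal: choosing any $a \in F$, the set $F \cap a^{\downarrow}$ is finite by semisimplicity and downward directed because $F$ is a filter, so it has a least element $m$; a short argument using directedness shows $m$ is in fact the least element of all of $F$, whence $F = m^{\uparrow}$. Since $m^{\downarrow}$ is finite, write $m^{\downarrow} \setminus \{m\} = \{a_1,\ldots,a_k\}$. I then claim $U_{m;a_1,\ldots,a_k} = \{m^{\uparrow}\}$: any proper filter $G$ in this set contains $m$ and omits every element strictly below $m$, and directedness forces each element of $G$ to lie above $m$, so $G = m^{\uparrow}$. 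Thus every point is isolated and $\mathsf{G}_u(S)$ is discrete.

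For the converse, suppose $\mathsf{G}_u(S)$ is discrete. The cheap first step is that \emph{every proper filter is principal}: if $U_{b;b_1,\ldots,b_m}$ is any basic neighbourhood of a proper filter $F$, then $b \in F$ and each $b_i \notin F$, so $b \not\leq b_i$ by upward closure; hence $b^{\uparrow} \in U_{b;b_1,\ldots,b_m}$, and isolation forces $b^{\uparrow} = F$. In particular $S$ can contain no infinite strictly descending chain of nonzero elements, since the upward closure of such a chain would be a non-principal proper filter. To finish I would argue by contradiction, calling an idempotent $e$ \emph{bad} if $e^{\downarrow}$ is infinite and showing that a bad idempotent always lies strictly above another bad idempotent; iterating then builds an infinite strictly descending chain of idempotents, contradicting the previous sentence. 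Given a bad $e$, isolation of the unit $e^{\uparrow}$, read off on the idempotent semilattice $\mathsf{E}(S)$, gives $\{e^{\uparrow}\} = U_{f;f_1,\ldots,f_m}$ with $f, f_i$ idempotent, $e \leq f$ and $e \not\leq f_i$. For every idempotent $d$ with $0 \neq d < e$ the unit $d^{\uparrow}$ must be excluded, forcing $d \leq f_i$ for some $i$ and hence $d \leq e f_i =: h_i$, where $h_i < e$ precisely because $e \not\leq f_i$. Thus $e^{\downarrow} \setminus \{0,e\} \subseteq \bigcup_{i=1}^{m} h_i^{\downarrow}$ with each $h_i < e$; as the left side is infinite, some $h_i^{\downarrow}$ is infinite, i.e.\ some $h_i < e$ is bad, as required.

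The main obstacle, and the step I expect to require the most care, is this last reduction: extracting idempotents $h_i$ lying \emph{strictly} below $e$ from the isolation of $e^{\uparrow}$. This is exactly where one must pass from arbitrary elements of $S$ to the idempotent semilattice — the inequality $e \not\leq f_i$ survives intersection with $e$ as the \emph{strict} inequality $e f_i < e$ only because $f_i$ is idempotent, whereas a general neighbourhood generator $b_i$ with $\mathbf{d}(b_i) \geq e$ but $e \not\leq b_i$ could a priori cover all of $e^{\downarrow}$. Consequently the crux is to verify that the unit space of $\mathsf{G}_u(S)$ is the filter space of the semilattice $\mathsf{E}(S)$ and carries the idempotent-indexed topology; once that is in hand, discreteness of $\mathsf{G}_u(S)$ passes to its (open) unit space, the descent above produces the forbidden infinite descending chain, and Lemma~\ref{lem:when-semisimple} converts the resulting finiteness of all $e^{\downarrow}$ into semisimplicity of $S$.
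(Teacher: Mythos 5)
Your proof is correct, but the converse takes a genuinely different route from the paper's. The paper, after establishing (exactly as you do) that discreteness forces every proper filter to be principal, finishes in two lines by topology: the basic set $U_{a}$ is compact in $\mathsf{G}_{u}(S)$, compact subsets of a discrete space are finite, and $b \mapsto b^{\uparrow}$ injects $a^{\downarrow}\setminus\{0\}$ into $U_{a}$, so every $a^{\downarrow}$ is finite. You avoid compactness altogether and trade it for an order-theoretic descent: principality of all proper filters forbids infinite strictly descending chains of nonzero elements (via the upward closure of such a chain), and a ``bad'' idempotent always lies strictly above a bad one, since $e^{\downarrow}\setminus\{0,e\}$ --- whose members are automatically idempotents --- is covered by finitely many $h_{i}^{\downarrow}$ with $h_{i} < e$ read off from an isolating neighbourhood of $e^{\uparrow}$. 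Both arguments work; the paper's is shorter but leans on the nontrivial fact that $U_{a}$ is compact, while yours is more elementary but, as stated, needs your flagged crux that the unit space of $\mathsf{G}_{u}(S)$ carries the idempotent-indexed topology. That crux is true (it is the standard identification of the unit space of Paterson's groupoid with the filter space of $\mathsf{E}(S)$, available from the sources the paper cites), but you can bypass it entirely, and your stated worry about non-idempotent generators is actually unfounded: in the paper's basis the excluded elements satisfy $a_{1},\ldots,a_{m} \leq a$, so if $\{e^{\uparrow}\} = U_{a;a_{1},\ldots,a_{m}}$ then $e \leq a$ and each $a_{i} \leq a$, whence $e$ and $a_{i}$ are compatible and by Lemma~\ref{lem:wedge} the meet $h_{i} := e \wedge a_{i} = ea_{i}$ exists, is an idempotent (being below $e$), and is strictly below $e$ precisely because $e \not\leq a_{i}$; any idempotent $0 \neq d < e$ has $d^{\uparrow} \in U_{a}\setminus\{e^{\uparrow}\}$, hence $d \leq a_{i}$ for some $i$ and so $d \leq h_{i}$. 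With that substitution your descent runs directly in $\mathsf{G}_{u}(S)$, and Lemma~\ref{lem:when-semisimple} closes the argument as you intended. Your forward direction is essentially identical to the paper's (the paper isolates $a^{\uparrow}$ by excluding the elements covered by $a$; you exclude all of $m^{\downarrow}\setminus\{m\}$, which is equally available by finiteness).
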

\begin{proof} Assume first that $S$ is a semisimple inverse semigroup.
If $0 \in A$ then $A = 0^{\uparrow}$.
We may therefore assume in what follows that $A$ is a proper filter.
Let $a \in A$.
The set $a^{\downarrow} \cap A$ is non-empty and finite.
This set must have a smallest element $x_{a}$ since $A$ is a filter.
Clearly, $x_{a}^{\uparrow} \subseteq A$.
Let $b \in A$.
Then $a \wedge b \in A$.
But $a \wedge b \in a^{\downarrow} \cap A$.
Thus $x_{a} \leq a \wedge b \leq b$.
We have therefore shown that $A = x_{a}^{\uparrow}$.
Thus every filter is principal.
 Suppose that $a$ covers $a_{1},\ldots, a_{m}$.
 Then the only element of $U_{a;a_{1},\ldots,a_{m}}$ is $a^{\uparrow}$.
 Thus the topology is discrete.
 We now prove the converse.
 Suppose that $\mathsf{G}_{u}(S)$ is a discrete groupoid.
We shall prove that $S$ is semisimple.
Let $A$ be a proper filter.
Then there exists $a_{1},\ldots, a_{m} \leq a$ in $S$ such that
$\{A \} = U_{a;a_{1},\ldots,a_{m}}$.
But this set also contains $a^{\downarrow}$.
Thus $A = a^{\downarrow}$.
We have proved that every filter is principal.
Let $a \in S$.
We need to prove that the set $a^{\downarrow}$ is finite.
Observe that $U_{a}$ is a compact set.
But the topology on $ \mathsf{G}_{u}(S)$ is discrete.
It follows that $U_{a}$ is finite.
But for each $b \leq a$ we have that $b^{\uparrow} \in U_{a}$.
Thus $a^{\downarrow}$ is finite.
\end{proof}

The proof of the following is now straightforward;
recall that the universal groupoid consists of proper filters.

\begin{corollary}\label{cor:carre}
If $S$ is semisimple then the universal groupoid of $S$ is isomorphic to the discrete restricted groupoid $\mathcal{G}(S)$.  
\end{corollary}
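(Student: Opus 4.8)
The plan is to construct the isomorphism explicitly and to read off everything we need from the proof of Theorem~\ref{them:universal-groupoid}. Recall that $\mathcal{G}(S)$ has underlying set $S^{\ast}$ equipped with the restricted product, whereas $\mathsf{G}_{u}(S)$ is the groupoid of all proper filters of $S$. Since $0 \in a^{\uparrow}$ precisely when $a = 0$, the assignment $\Phi (a) = a^{\uparrow}$ sends each nonzero $a$ to a proper (principal) filter. The proof of Theorem~\ref{them:universal-groupoid} already shows that, because $S$ is semisimple, every proper filter $A$ is principal with $A = (\min A)^{\uparrow}$; hence $\Phi$ is a bijection from $S^{\ast}$ onto the proper filters constituting $\mathsf{G}_{u}(S)$, with inverse $A \mapsto \min A$. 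Here we also use that a principal filter determines its generator: $a^{\uparrow} = b^{\uparrow}$ forces $a = \min a^{\uparrow} = \min b^{\uparrow} = b$. This surjectivity of $\Phi$ is exactly what semisimplicity buys us.

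Next I would check that $\Phi$ is a homomorphism of groupoids. The only computations required are that $\mathbf{d}(a^{\uparrow}) = (a^{-1}a)^{\uparrow} = \Phi (\mathbf{d}(a))$ and, dually, $\mathbf{r}(a^{\uparrow}) = (aa^{-1})^{\uparrow} = \Phi (\mathbf{r}(a))$. For the first, $\mathbf{d}(a^{\uparrow}) = ((a^{\uparrow})^{-1}a^{\uparrow})^{\uparrow}$; taking $x = y = a$ shows that $a^{-1}a$ lies in $(a^{\uparrow})^{-1}a^{\uparrow}$, while for any $x,y \geq a$ the order-preservation of inversion and of multiplication gives $a^{-1}a \leq x^{-1}y$, so $(a^{\uparrow})^{-1}a^{\uparrow} \subseteq (a^{-1}a)^{\uparrow}$, and upward closure yields equality. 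These are precisely the computations already flagged as routine in the proof of Theorem~\ref{them:discrete-topology}(1), only now carried out for arbitrary nonzero elements rather than for atoms.

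I would then match up the partial products. Using the previous paragraph, $\mathbf{d}(a^{\uparrow}) = \mathbf{r}(b^{\uparrow})$ if and only if $(a^{-1}a)^{\uparrow} = (bb^{-1})^{\uparrow}$, if and only if $a^{-1}a = bb^{-1}$ (generators of principal filters are unique), if and only if $\mathbf{d}(a) = \mathbf{r}(b)$; so the restricted product $a \cdot b$ is defined precisely when $a^{\uparrow} \cdot b^{\uparrow}$ is defined. When it is, the filter product $a^{\uparrow} \cdot b^{\uparrow} = (a^{\uparrow}b^{\uparrow})^{\uparrow}$ equals $(ab)^{\uparrow}$ by the same generator-plus-order-preservation argument, and $ab = a \cdot b$ under the restricted product; hence $\Phi (a \cdot b) = \Phi (a) \cdot \Phi (b)$. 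Thus $\Phi$ is an algebraic groupoid isomorphism.

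Finally I would address the topology. By Theorem~\ref{them:universal-groupoid} the space $\mathsf{G}_{u}(S)$ is discrete, and $\mathcal{G}(S)$ is given the discrete topology, so the bijection $\Phi$ is automatically a homeomorphism; an algebraic groupoid isomorphism between discrete groupoids is an isomorphism of (\'etale) topological groupoids, completing the argument. I expect no genuine obstacle here: the content is entirely in the filter computations of the second and third paragraphs, and these replicate the calculations used for atoms in Theorem~\ref{them:discrete-topology}, so the only real care is to confirm that they go through verbatim for all nonzero elements.
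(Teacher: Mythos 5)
Your proof is correct and is precisely the argument the paper intends: the paper declares the corollary ``straightforward'' from Theorem~\ref{them:universal-groupoid} (every proper filter is principal and the topology is discrete), and your map $a \mapsto a^{\uparrow}$ with the routine computations $\mathbf{d}(a^{\uparrow}) = (a^{-1}a)^{\uparrow}$ and $a^{\uparrow} \cdot b^{\uparrow} = (ab)^{\uparrow}$ is exactly the canonical fleshing-out of that remark.
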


By Theorem~\ref{them:universal-groupoid}, Corollary~\ref{cor:carre} and \cite{Lawson2020}, the Booleanization
of the semisimple inverse semigroup $S$ is the Boolean inverse semigroup $\mathsf{K}_{\rm \tiny fin}(\mathscr{G}(S))$.
The map $\beta \colon S \rightarrow \mathsf{K}_{\rm \tiny fin}(\mathscr{G}(S))$ is given by
$\beta (a) = a^{\downarrow} \setminus \{0\}$;
we could of course use Lemma~\ref{lem:homo} and Lemma~\ref{lem:important} but we appeal to general theory.

\section{Type monoids}

We refer the reader to \cite[Chapter 2]{GW} and \cite[Chapter 3]{Wehrung}
for background on commutative refinement monoids.
Let $(S,+,0)$ be a commutative monoid.
It is called a {\em refinement monoid}
if it satisfies the following property:
if $a_{1} + a_{2} = b_{1} + b_{2}$ then
there exist elements $c_{11},c_{12},c_{21},c_{22}$ such that
$a_{1} = c_{11} + c_{12}$ and $a_{2} = c_{21} + c_{22}$,
and
$b_{1} = c_{11} + c_{21}$ and $b_{2} = c_{12} + c_{22}$.
It is said to be {\em conical}
if $a + b = 0$ then $a = 0$ and $b = 0$.
Define $a \leq b$ if and only if $a + c = b$ for some $c$; this is a preorder called the {\em algebraic preordering}.
If $A$ is a preordered set with preorder $\leq$ then a subset $X$ is a {\em lower subset} of $A$ if $x \in X$
and $a \leq x$ implies that $a \in X$.
A submonoid of $S$ which is also a lower subset is called a {\em $o$-ideal}.
A commutative monoid is said to be {\em simple} if it has no non-trivial $o$-ideals.

Let $S$ be a Boolean inverse semigroup
and
let $M$ be a commutative monoid (whose binary operation we write as addition).
A function $\beta \colon \mathsf{E}(S) \rightarrow M$ is called a {\em monoid valuation} if the following conditions hold:
\begin{description}
\item[{\rm (V1)}]  $\beta (0) = 0$.
\item[{\rm (V2)}] $\beta (e \oplus f) = \beta (e) + \beta (f)$ whenever $e \perp f$.
\item[{\rm (V3)}] If $e \mathscr{D} f$ then $\beta (e) = \beta (f)$. 
\end{description}

A Boolean inverse semigroup $S$ is said to be {\em orthogonally separating} if for any idempotents $e,f \in S$
there exist idempotents $e' \, \mathscr{D} \, e$ and $f' \, \mathscr{D} \, f$ such that $e' \perp f'$.
The proof of Lemma~\ref{lem:type-monoid-basics} can be found in \cite[Proposition 2.12]{KLLR2016}.

\begin{lemma}\label{lem:type-monoid-basics} Let $S$ be an orthogonally separating Boolean inverse semigroup.
Denote the elements of $\mathsf{M}(S) = \frac{\mathsf{E}(S)}{\mathscr{D}}$ by $[e]$.
Define $[e] + [f] = [e' \vee f']$ where $e' \mathscr{D} e$, $f' \mathscr{D} f$ and $e' \perp f'$.
Then $\left(\mathsf{M}(S), + , [0] \right)$ is a commutative, conical refinement monoid.
The function $\beta \colon \mathsf{E}(S) \rightarrow \mathsf{T}(S)$ defined by $\beta (e) = [e]$
is a monoid valuation.
\end{lemma}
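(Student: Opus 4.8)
The plan is to verify each clause of the statement in turn, with the well-definedness of $+$ as the crux. Throughout I would lean on the following standard facts about the $\mathscr{D}$-relation on $\mathsf{E}(S)$: if $e \mathscr{D} f$ there is $s \in S$ with $\mathbf{d}(s) = e$ and $\mathbf{r}(s) = f$; the conjugation $x \mapsto sxs^{-1}$ carries idempotents below $\mathbf{d}(s)$ to idempotents below $\mathbf{r}(s)$ and preserves orthogonality (by Lemma~\ref{lem:oj} and Lemma~\ref{lem:buffs}); and multiplication distributes over orthogonal joins (Proposition~\ref{prop:definition}). The hypothesis that $S$ is orthogonally separating is exactly what guarantees that for any $e,f$ there are orthogonal representatives $e' \mathscr{D} e$, $f' \mathscr{D} f$, so $[e]+[f]$ is at least defined. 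I would also record that the only idempotent $\mathscr{D}$-related to $0$ is $0$ itself (from $\mathbf{r}(s)=0 \Rightarrow s = \mathbf{r}(s)s = 0$), for use in the identity and conicality clauses.

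For well-definedness, suppose $e_i' \mathscr{D} e$, $f_i' \mathscr{D} f$, $e_i' \perp f_i'$ for $i = 1,2$; I must show $e_1' \vee f_1' \mathscr{D} e_2' \vee f_2'$. Pick $s$ with $\mathbf{d}(s)=e_1'$, $\mathbf{r}(s)=e_2'$ and $t$ with $\mathbf{d}(t)=f_1'$, $\mathbf{r}(t)=f_2'$. A short computation using $e_1'f_1' = 0 = e_2'f_2'$ gives $s^{-1}t = 0 = st^{-1}$, so $s \perp t$ and the orthogonal join $s \oplus t$ exists; distributing products over this join yields $\mathbf{d}(s \oplus t) = e_1' \vee f_1'$ and $\mathbf{r}(s \oplus t) = e_2' \vee f_2'$, as required. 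Commutativity is immediate from the symmetry of $\perp$ and $\vee$, and $[0]$ is the identity via $e'=e$, $0'=0$. For associativity I would first produce \emph{pairwise} orthogonal representatives $e',f',g'$ of $e,f,g$: use orthogonal separation to get $e_1 \perp f_1$, apply it again to $e_1 \vee f_1$ and $g$ to get $h \mathscr{D} (e_1 \vee f_1)$ and $g' \mathscr{D} g$ with $h \perp g'$, and split $h$ along the witness $u \colon e_1 \vee f_1 \to h$ into $ue_1u^{-1} \perp uf_1u^{-1}$ (both still orthogonal to $g'$). Then both bracketings of $[e]+[f]+[g]$ evaluate to $[e' \vee f' \vee g']$.

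Conicality follows since $[e]+[f]=[0]$ forces $e'\vee f' = 0$, hence $e'=f'=0$ in the Boolean algebra $\mathsf{E}(S)$, so $[e]=[f]=[0]$. For refinement, suppose $[a_1]+[a_2]=[b_1]+[b_2]$; with orthogonal representatives this reads $a_1'\vee a_2' \mathscr{D} b_1' \vee b_2'$ via some $s$. I would transport the second decomposition back along $s$, setting $\tilde b_j = s^{-1}b_j's$, so that $\tilde b_1 \perp \tilde b_2$, $\tilde b_1 \vee \tilde b_2 = a_1' \vee a_2'$, and each $\tilde b_j \mathscr{D} b_j'$. Both decompositions now live in the single Boolean algebra $(a_1'\vee a_2')^{\downarrow}$, so I set $c_{ij} = [a_i' \wedge \tilde b_j]$; distributivity of meets over joins in $\mathsf{E}(S)$ gives $a_i' = (a_i'\wedge \tilde b_1) \oplus (a_i' \wedge \tilde b_2)$ and $\tilde b_j = (a_1' \wedge \tilde b_j)\oplus (a_2'\wedge\tilde b_j)$, whence $[a_i] = c_{i1}+c_{i2}$ and $[b_j] = c_{1j}+c_{2j}$. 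Finally $\beta$ is a monoid valuation: (V1) and (V3) are immediate from $\beta(e)=[e]$ and the definition of $\mathsf{M}(S)$, while (V2) holds because when $e \perp f$ one may use $e,f$ themselves as orthogonal representatives, so $\beta(e \oplus f) = [e \vee f] = [e]+[f]$.

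The main obstacle I anticipate is the refinement property, where the argument is genuinely two-layered: one must first move one pair of idempotents across the $\mathscr{D}$-witness so that both orthogonal decompositions sit inside a common Boolean algebra, and only then exploit distributivity to build the $c_{ij}$. Keeping careful track of which elements are $\mathscr{D}$-related to which — so that the rows and columns recover $[a_i]$ and $[b_j]$ rather than their transported copies — is the delicate point. The construction of pairwise orthogonal representatives needed for associativity is a milder instance of the same phenomenon.
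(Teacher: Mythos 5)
Your proof is correct, and it is essentially the standard argument: the paper itself contains no local proof of this lemma, deferring entirely to \cite[Proposition 2.12]{KLLR2016}, and the proof there proceeds exactly as yours does --- well-definedness via the orthogonal join $s \oplus t$ of the two $\mathscr{D}$-witnesses (using $s^{-1}t = s^{-1}e_2'f_2't = 0$ and its dual), conjugated pairwise-orthogonal representatives for associativity, the transported decomposition $\tilde{b}_j = s^{-1}b_j's$ together with meets in the Boolean algebra $\mathsf{E}(S)$ for refinement, and the direct verification of (V1)--(V3). One cosmetic point: the codomain $\mathsf{T}(S)$ for $\beta$ in the statement is a misprint for $\mathsf{M}(S)$, and you correctly read it that way.
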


We now define the type monoid of an arbitrary Boolean inverse semigroup.
We use the following proved as \cite[Lemma 3.9]{KLLR2016}.

\begin{lemma}\label{lem:butterfly} Let $S$ be a Boolean inverse semigroup.
Then $M_{\omega}(S)$ is orthogonally separating.
\end{lemma}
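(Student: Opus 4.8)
The plan is to exploit the countable infinity of coordinates available in $M_{\omega}(S)$ to translate the support of one of the two given idempotents onto a fresh block of coordinates, thereby forcing orthogonality. First I would recall the concrete description of the relevant data: by Proposition~\ref{prop:ale}(5), every idempotent of $M_{\omega}(S)$ is a diagonal matrix whose diagonal entries lie in $\mathsf{E}(S)$, and by (RM3) only finitely many of these entries are non-zero. By Proposition~\ref{prop:ale}(7), two diagonal matrices are orthogonal precisely when their diagonal entries are orthogonal coordinatewise; in particular, two diagonal idempotents with disjoint sets of non-zero positions are automatically orthogonal. Thus it suffices, given idempotents $E$ and $F$, to produce $E' \, \mathscr{D} \, E$ and $F' \, \mathscr{D} \, F$ whose supports (the sets of non-zero diagonal positions) are disjoint.

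Since $E \, \mathscr{D} \, E$ always holds, I would simply take $E' = E$ and move $F$. Let $F$ have non-zero diagonal entries $f_{1}, \ldots, f_{q}$ in distinct positions $j_{1}, \ldots, j_{q}$, and let $I_{E}$ be the finite support of $E$. Because the index set has cardinality $\omega$, the complement of the finite set $I_{E} \cup \{j_{1}, \ldots, j_{q}\}$ is infinite, so I may choose distinct fresh positions $j_{1}', \ldots, j_{q}'$ lying outside $I_{E}$.

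The central construction is a weighted partial-permutation matrix $W$ realizing $F \, \mathscr{D} \, F'$: let $W$ be the matrix whose $(j_{k}', j_{k})$-entry equals $f_{k}$ for $k = 1, \ldots, q$, with all other entries zero. Since the $j_{k}$ are distinct and the $j_{k}'$ are distinct, every row and every column of $W$ contains at most one non-zero entry, so (RM1) and (RM2) hold vacuously and (RM3) holds as only finitely many entries are non-zero; hence $W \in M_{\omega}(S)$. A direct matrix computation using $f_{k}^{-1} = f_{k}$ and $f_{k}f_{k} = f_{k}$ then shows that $\mathbf{d}(W) = W^{\ast}W$ is the diagonal idempotent carrying $f_{k}$ in position $j_{k}$, namely $F$, while $\mathbf{r}(W) = WW^{\ast}$ is the diagonal idempotent $F'$ carrying $f_{k}$ in position $j_{k}'$. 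Thus $F' \, \mathscr{D} \, F$, and by construction $F'$ is supported on $\{j_{1}', \ldots, j_{q}'\}$, which is disjoint from $I_{E}$.

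Finally, since $E$ and $F'$ are diagonal idempotents with disjoint supports, Proposition~\ref{prop:ale}(7) gives $E \perp F'$, completing the verification that $M_{\omega}(S)$ is orthogonally separating. The only genuinely computational step is checking that the shift matrix $W$ is a rook matrix and that its domain and range idempotents are $F$ and $F'$; everything else is bookkeeping about supports. I expect the main (though still routine) obstacle to be getting the index arithmetic in the products $W^{\ast}W$ and $WW^{\ast}$ right, in particular noting that the surviving non-zero contributions occur only along matching indices precisely because the two position sets each consist of distinct entries.
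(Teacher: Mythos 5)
Your proof is correct, and it is essentially the standard argument: the paper itself gives no proof, citing \cite[Lemma 3.9]{KLLR2016}, where the same idea is used --- exploit the infinitely many coordinates of $M_{\omega}(S)$ to shift the (finite) support of one diagonal idempotent onto fresh positions via a partial-permutation rook matrix whose entries are the diagonal idempotents themselves, so that the resulting $\mathscr{D}$-equivalent copy has support disjoint from the other idempotent and is therefore orthogonal to it. Your verification that $W$ is a rook matrix and that $W^{\ast}W = F$ and $WW^{\ast} = F'$ is accurate, so nothing is missing.
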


Define $\Delta_{\omega}(a)$ to be the $\omega \times \omega$ matrix the only non-zero entry of which
is an $a$ in the first row and first column.

The type monoid will be a {\em universal} monoid valuation
and is a conical, refinement monoid.
We describe how it is constructed following \cite{KLLR2016}.
Let $S$ be a Boolean inverse semigroup.
Then $M_{\omega}(S)$ is orthogonally separated by Lemma~\ref{lem:butterfly}.
Define $\mathsf{T}(S)$ to be the set $\frac{\mathsf{E}(M_{\omega}(S))}{\mathscr{D}}$
equipped with addition defined by $[e] + [f] = [e' \vee f']$ where $e' \mathscr{D} e$ and $f' \mathscr{D} f$ and $e' \perp f'$.
There is a monoid valuation $\tau \colon \mathsf{E}(S) \rightarrow \mathsf{T}(S)$
called the {\em type function} defined by $\tau (e) = [\Delta_{\omega}(e)]$.
It is the universal monoid valuation.
We shall prove in a later paper that our definition of the type monoid and the one given in \cite{Wehrung}
are really the same.

\begin{remark}{\em 
The type monoid of a Boolean inverse semigroup was introduced in \cite{KLLR2016, Wallis} and its theory further developed in \cite{Wehrung}.
It can be regarded as being the carrier of a notion of `dimension' for idempotents in the sense of von Neumann \cite{VN}
or as a carrier of a generalized notion of cardinality as in Tarski \cite{Tarski}.
}
\end{remark}

The type monoid is an {\em invariant} of a Boolean inverse semigroup but, of course, not necessarily a complete invariant.
In fact, the type monoid is really an invariant of {\em fundamental} Boolean inverse semigroups.
For a proof of the following, see \cite[Theorem 4.4.19]{Wehrung}. 

\begin{proposition}\label{prop:type-fundamental} 
The Boolean inverse semigroups $S$ and $S/\mu$ have isomorphic partial type monoids and isomorphic type monoids.
\end{proposition}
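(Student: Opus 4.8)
The plan is to exploit the fact that the natural map $\pi \colon S \rightarrow S/\mu$ is a surjective, \emph{idempotent-separating} morphism of Boolean inverse semigroups (by the definition of $\mu$ and Proposition~\ref{lem:idem-sep}), and hence, by Lemma~\ref{lem:idept-sep-kernel}, has trivial kernel. Since both the partial type monoid and the type monoid are built entirely out of idempotents, their $\mathscr{D}$-classes, and the orthogonal-join structure, the whole proof reduces to checking that $\pi$ matches all of this data between $S$ and $S/\mu$.

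First I would settle the idempotent-level facts. As $\pi$ is idempotent-separating it is injective on $\mathsf{E}(S)$, and as $\pi$ is surjective and idempotents of a quotient inverse semigroup lift to idempotents (if $\pi(a)$ is idempotent then $\pi(\mathbf{d}(a)) = \pi(a)$), $\pi$ restricts to a bijection $\mathsf{E}(S) \rightarrow \mathsf{E}(S/\mu)$; being a morphism, this bijection preserves meets, joins and relative complements, so it is an isomorphism of Boolean algebras and in particular respects $\perp$ and $\vee$. Next I would verify that $\pi$ both preserves \emph{and reflects} $\mathscr{D}$ on idempotents: preservation is immediate by applying $\pi$ to a connecting element, while reflection uses surjectivity to lift any $\bar{a}$ with $\mathbf{d}(\bar{a}) = \pi(e)$ and $\mathbf{r}(\bar{a}) = \pi(f)$ to some $a \in S$, after which idempotent-separation forces $\mathbf{d}(a) = e$ and $\mathbf{r}(a) = f$. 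Consequently the assignment $[e] \mapsto [\pi(e)]$ is a well-defined bijection on $\mathscr{D}$-classes of idempotents that respects the addition $[e] + [f] = [e' \vee f']$ of Lemma~\ref{lem:type-monoid-basics}, giving the isomorphism of partial type monoids.

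For the type monoid I would transport the same argument up to $M_{\omega}$. Applying $\pi$ entrywise yields a morphism $M_{\omega}(\pi) \colon M_{\omega}(S) \rightarrow M_{\omega}(S/\mu)$, and the crucial points are that it is surjective and idempotent-separating. Surjectivity is where triviality of the kernel really bites: given a rook matrix over $S/\mu$ I lift its entries along $\pi$, and the orthogonality conditions (RM1), (RM2) needed to make the lift a genuine rook matrix over $S$ hold because each relation $\pi(\mathbf{r}(b)\mathbf{r}(b')) = 0$ forces $\mathbf{r}(b)\mathbf{r}(b') = 0$ by trivial kernel. Idempotent-separation is clear since idempotent rook matrices are diagonal with idempotent entries by Proposition~\ref{prop:ale}, and these are separated entrywise. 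With these two properties in hand, the idempotent-level analysis of the previous paragraph applies verbatim to $M_{\omega}(\pi)$, and since $\mathsf{T}(S) = \frac{\mathsf{E}(M_{\omega}(S))}{\mathscr{D}}$, I obtain $\mathsf{T}(S) \cong \mathsf{T}(S/\mu)$.

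The main obstacle is making the $\mathscr{D}$-relation behave correctly in both directions and, relatedly, lifting rook matrices so that $M_{\omega}(\pi)$ is surjective; both reduce to the single observation that an idempotent-separating morphism has trivial kernel, so that zero products, and hence orthogonality relations, are detected faithfully in the domain. Everything else is routine bookkeeping with the definitions of $\perp$, $\vee$, and the additive structure on $\mathscr{D}$-classes.
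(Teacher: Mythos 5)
Your argument is correct, and it is worth noting that the paper does not actually prove this proposition: it defers entirely to \cite[Theorem 4.4.19]{Wehrung}. Your proof is therefore a genuinely independent, self-contained route built from the paper's own toolkit: Proposition~\ref{lem:idem-sep} makes the natural map $\pi \colon S \rightarrow S/\mu$ a surjective morphism of Boolean inverse semigroups, Lemma~\ref{lem:idept-sep-kernel} converts idempotent-separation into triviality of the kernel, and everything else --- the bijection $\mathsf{E}(S) \rightarrow \mathsf{E}(S/\mu)$, the preservation \emph{and} reflection of $\mathscr{D}$ and of orthogonality on idempotents, and the entrywise lifting of rook matrices making $M_{\omega}(\pi)$ surjective (your observation that any entrywise lift automatically satisfies (RM1) and (RM2), because zero products are reflected by the trivial kernel, is exactly the right point) --- flows from that single observation. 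In effect you prove the sharper statement that \emph{any} surjective idempotent-separating morphism of Boolean inverse semigroups induces isomorphisms of the partial type monoids and of the type monoids, with the proposition as the special case of $\pi$; Wehrung's treatment instead embeds the result in his general refinement-monoid machinery, which buys broader context, while yours buys a proof a reader of this paper can check using only Lemmas~\ref{lem:idept-sep-kernel}, \ref{lem:type-monoid-basics}, \ref{lem:butterfly} and Proposition~\ref{prop:ale}. One small precision worth adding: for a general Boolean inverse semigroup the addition on $\mathsf{E}(S)/\mathscr{D}$ is only partially defined (the total addition in Lemma~\ref{lem:type-monoid-basics} assumes orthogonal separation), so for the partial type monoids you should remark explicitly that your bijection matches the domains of definition of the partial sums in both directions --- which it does, precisely because orthogonality is both preserved and reflected along $\pi$; at the level of $M_{\omega}$ no such care is needed, since $M_{\omega}(S)$ is orthogonally separating by Lemma~\ref{lem:butterfly}.
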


\begin{proposition}\label{prop:type-monoids} 
Let $S$ be a Boolean inverse semigroup.
\begin{enumerate}
\item There is an order-iso\-morphism between the set of additive ideals of $S$ and the additive ideals of  $M_{\omega}(S)$.
In particular, each additive ideal of $M_{\omega}(S)$ is of the form $M_{\omega}(I)$ where $I$ is an additive ideal of $S$.
It follows that $S$ is $0$-simplifying if and only if $M_{\omega}(S)$ is $0$-simplifying.
\item $S$ is fundamental if and only if $M_{\omega}(S)$ is fundamental.
\item $S$ is semisimple if and only if $M_{\omega}(S)$ is semisimple.
\end{enumerate}
\end{proposition}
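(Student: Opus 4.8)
The plan is to reduce all three equivalences to two structural facts about $M_{\omega}(S)$. First, by Proposition~\ref{prop:ale}(5) the idempotents of $M_{\omega}(S)$ are precisely the diagonal matrices whose diagonal entries are idempotents of $S$, with only finitely many non-zero by (RM3). Second, a matrix whose only non-zero entry is $a$ in position $(i,j)$ is $\mathscr{D}$-related to $\Delta_{\omega}(a)$: one transports the entry between positions by multiplying on the left and right by the single-entry matrices carrying $\mathbf{r}(a)$ and $\mathbf{d}(a)$ respectively, a one-line computation I would record at the outset since it drives part (1). In particular $\Delta_{\omega}$ is an embedding with $\Delta_{\omega}(s)\Delta_{\omega}(a)=\Delta_{\omega}(sa)$ and $\Delta_{\omega}(a)\vee\Delta_{\omega}(b)=\Delta_{\omega}(a\vee b)$ whenever $a \sim b$.

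For part (1), I would set $\Phi(I)=M_{\omega}(I)$ for an additive ideal $I$ of $S$ and $\Psi(J)=\{a\in S \colon \Delta_{\omega}(a)\in J\}$ for an additive ideal $J$ of $M_{\omega}(S)$. That $\Phi(I)$ is an additive ideal is routine from the entrywise description of products and joins in Proposition~\ref{prop:ale}, and that $\Psi(J)$ is an additive ideal follows from the embedding properties of $\Delta_{\omega}$ just noted. The substance is that $\Phi$ and $\Psi$ are mutually inverse. Using the transport fact, $a\in\Psi(J)$ if and only if the single-entry matrix with $a$ in \emph{any} position lies in $J$; hence every $A\in J$ has all its entries in $\Psi(J)$ (move each entry to $(1,1)$), giving $J\subseteq M_{\omega}(\Psi(J))$, while conversely any $A\in M_{\omega}(\Psi(J))$ is the orthogonal join of finitely many single-entry matrices each lying in $J$, giving the reverse inclusion since $J$ is closed under compatible joins. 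Both maps evidently preserve inclusion, so this is an order-isomorphism, and since it sends $\{0\}$ and $S$ to $\{0\}$ and $M_{\omega}(S)$, the $0$-simplifying equivalence is immediate.

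For part (2), if $M_{\omega}(S)$ is fundamental and $a$ commutes with every idempotent of $S$, then for a diagonal idempotent with first diagonal entry $e_{1}$ one has $\Delta_{\omega}(a)E=\Delta_{\omega}(ae_{1})=\Delta_{\omega}(e_{1}a)=E\Delta_{\omega}(a)$, so $\Delta_{\omega}(a)$ commutes with every idempotent of $M_{\omega}(S)$, hence is idempotent, hence $a$ is idempotent; thus $S$ is fundamental. Conversely, if $S$ is fundamental and $A=(a_{ij})$ commutes with every idempotent, then testing against the idempotent whose single entry is $e$ in position $(i,i)$ and comparing supports forces $a_{ki}e=0$ and $ea_{ij}=0$ for $k,j\neq i$ and $a_{ii}e=ea_{ii}$, for every idempotent $e$; taking $e=\mathbf{d}(a_{ki})$ yields $a_{ki}=0$ for $k\neq i$, so $A$ is diagonal, and the diagonal condition says each $a_{ii}$ commutes with every idempotent, hence is idempotent, so $A$ is idempotent and $M_{\omega}(S)$ is fundamental. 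For part (3), I would invoke Lemma~\ref{lem:when-semisimple} to reduce semisimplicity to finiteness of down-sets of idempotents; by Proposition~\ref{prop:ale}(5),(6) an element below $\mathrm{diag}(e_{1},\ldots,e_{n},0,\ldots)$ is itself diagonal with $i$th entry below $e_{i}$, so its down-set is order-isomorphic to $e_{1}^{\downarrow}\times\cdots\times e_{n}^{\downarrow}$, finite precisely when each $e_{i}^{\downarrow}$ is. Specializing to $\Delta_{\omega}(e)$ gives $\Delta_{\omega}(e)^{\downarrow}\cong e^{\downarrow}$, so every idempotent of $M_{\omega}(S)$ has finite down-set if and only if every idempotent of $S$ does, which is the claim.

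The main obstacle is part (1): parts (2) and (3) are essentially bookkeeping once the diagonal description of idempotents is available, but proving $\Psi\Phi=\mathrm{id}$ and $\Phi\Psi=\mathrm{id}$ requires both the transport of single entries and the closure of additive ideals under the matrix operations, and it is there that one must argue carefully that an additive ideal of $M_{\omega}(S)$ is completely determined by, and reconstructible from, its diagonal content.
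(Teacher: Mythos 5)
Your proposal is correct and follows essentially the same route as the paper: the paper also builds the order-isomorphism in (1) by passing between an additive ideal $\mathcal{I}$ of $M_{\omega}(S)$ and the set of entries occurring in its matrices (which coincides with your $\Psi(J)$ via exactly the transport trick $P_{pk}(\mathbf{r}(s))AP_{lq}(\mathbf{d}(s))$ that you record at the outset), proves (2) by the same two commuting-with-single-entry-idempotent computations, and disposes of (3) by the same reduction to finiteness of $e^{\downarrow}$ together with the diagonal description of idempotents. Your write-up is if anything slightly more explicit than the paper's (mutually inverse maps $\Phi,\Psi$ spelled out, and the entrywise down-set isomorphism $e_{1}^{\downarrow}\times\cdots\times e_{n}^{\downarrow}$ in (3)), but the underlying ideas are the same.
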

\begin{proof} (1) We set up an order-isomorphism between the additive ideals of $S$ and the additive ideals of  $M_{\omega}(S)$.

Let $I$ be an additive ideal of $S$.
Then $I$ is an inverse subsemigroup of $S$ and a Boolean inverse semigroup in its own right.
We therefore have that $M_{\omega}(I) \subseteq M_{\omega}(S)$.
Using Proposition~\ref{prop:ale}, it is easy to check that  $M_{\omega}(I)$ is an additive ideal of $M_{\omega}(S)$.

To prove the other direction, we shall need the following.
If $A$ is a finite square matrix then $A \oplus \mathbf{0}$ is the $\omega \times \omega$ 
rook matrix with entries from $A$ in the top left-hand side.
Let $P_{ij}(e)$ be the $\omega \times \omega$ matrix which is zero everywhere except in the $i$th row and $j$th column which contains the idempotent $e$;
this is a well-defined rook matrix.
Let $A$ be any $\omega \times \omega$ rook matrix with $s$ in the $k$th row and $l$th column.
Then the $\omega \times \omega$ rook matrix $P_{pk}(\mathbf{r}(s))AP_{lq}(\mathbf{d}(s))$
contains $s$ in the $p$th row and $q$th column and zeros elsewhere.

Let $\mathcal{I} \subseteq M_{\omega}(S)$ be an additive ideal of $M_{\omega}(S)$.
Let $J$ be the set of all elements of $S$ that appear in  some element of $\mathcal{I}$.
We prove first that $J$ is an additive ideal of $S$.
Let $a \in \mathcal{I}$ and let $s \in S$.
Then $a$ occurs as the $(i,j)$-entry of some rook matrix $A \in \mathcal{I}$.
Let $B$ be the rook matrix with zeros everywhere except that in the first row there is an $s$ in the $i$th-column.
Then $BA \in \mathcal{I}$ by assumption, but then $sa \in J$.
Similarly, we may show that $as \in J$.
It follows that $J$ is a semigroup ideal of $S$.
Let $a, b \in J$ such that $a \sim b$.
Let $a$ occur in the $\omega \times \omega$ matrix $A$.
By pre- and post-multiplying by suitable elements of $M_{\omega}(S)$
we can assume that $A$ contains exactly one non-zero entry.
Now using matrices of the form $P \oplus \mathbf{0}$,
where $P$ is a `permutation matrix', and by pre- and post-multiplying  
we may assume that $A$ has exactly one non-zero entry in the top left-hand corner.
We now see that $a \vee b \in J$.
We have proved that $J$ is an additive ideal of $S$.
We claim that $\mathcal{I} = M_{\omega}(J)$.
Let $A \in M_{\omega} (J)$.
Then we can write $A$ as a finite join of elements each of which has exactly one non-zero entry.
However, such matrices belong to  $\mathcal{I}$.
Thus $A \in \mathcal{I}$.
We have therefore proved that $M_{\omega}(J) \subseteq \mathcal{I}$.
The proof of the reverse inclusion is straightforward.
The above two processes are order-preserving and mutually inverse.
The result now follows.

(2) Let $S$ be fundamental.
We prove that $M_{\omega}(S)$ is fundamental.
Let $A \in M_{\omega}(S)$ commute with all idempotents.
We prove that $A$ is an idempotent.
We show first that all off-diagonal entries of $A$ are zero.
Suppose that $s \neq t$.
We prove that $a_{st} = 0$.
Let $E$ be the idempotent with $\mathbf{d}(a_{st})$ in the $t$th-row and $t$th-coloumn and zeros elsewhere.
Then by assumption $AE = EA$.
It follows that $a_{st} = 0$.
It follows that $A$ must be a diagonal matrix.
Consider the element $a_{ss}$.
We prove that it must be an idempotent.
Let $e$ be an arbitrary idempotent of $S$.
Let $E$ be the idempotent which has $e_{ss} = e$ and zeros elsewhere.
Then from $AE = EA$ we deduce that $a_{ss} e = e a_{ss}$.
But $S$ is fundamental and so $a_{ss}$ is an idempotent.
We have therefore proved that $M_{\omega}(S)$ is fundamental.
We now prove the converse.
Suppose that $M_{\omega}(S)$ is fundamental.
Let $a \in S$ be an element that commutes with all idempotents.
Let $A$ be the matrix with $a$ in the first row and first column.
Then $A$ commutes with all idempotents.
This implies $a$ is an idempotent and so $S$ is fundamental.

(3) The proof is immediate from the fact that a Boolean inverse semigroup is semisimple
if and only if $e^{\downarrow}$ is finite for each idempotent $e$ (by Lemma~\ref{lem:when-semisimple})
and the fact that the idempotents of $M_{\omega}(S)$ are the diagonal matrices with a finite number of non-zero idempotents on the diagonal (by Proposition~\ref{prop:ale}).
\end{proof}

A subset $F \subseteq \mathsf{E}(S)$ is said to be {\em self-conjugate} if $a^{-1}ea \in F$ for all $e \in F$ and $a \in S$.

\begin{proposition}\label{prop:order-isomorphisms} 
Let $S$ be a Boolean inverse semigroup.
There are order-iso\-morphisms between:
\begin{enumerate}

\item The poset of self-conjugate $\vee$-closed order ideals of $\mathsf{E}(S)$. 

\item The poset of additive ideals of $S$ 

\item The poset of all $o$-ideals of the type monoid of $S$.

\end{enumerate}
\end{proposition}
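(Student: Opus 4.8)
The plan is to prove the two order-iso\-morphisms (1)$\cong$(2) and (2)$\cong$(3) separately, so that all three posets are identified. For the first, I would send an additive ideal $I$ of $S$ to its idempotent set $\mathsf{E}(I)=I\cap\mathsf{E}(S)$, and a self-conjugate $\vee$-closed order ideal $F$ of $\mathsf{E}(S)$ to $I_{F}=\{a\in S\colon \mathbf{d}(a)\in F\}$. The role of self-conjugacy is to make this symmetric in $\mathbf{d}$ and $\mathbf{r}$: applying the self-conjugacy condition to $a^{-1}$ and $\mathbf{d}(a)$ gives $\mathbf{r}(a)=a\,\mathbf{d}(a)\,a^{-1}\in F$, so $\mathbf{d}(a)\in F\iff\mathbf{r}(a)\in F$. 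The order-ideal property of $F$ then yields the two-sided semigroup-ideal property (since $\mathbf{d}(sa)\leq\mathbf{d}(a)$ and $\mathbf{r}(as)\leq\mathbf{r}(a)$), and $\vee$-closure yields additivity via $\mathbf{d}(a\vee b)=\mathbf{d}(a)\vee\mathbf{d}(b)$ for compatible $a,b$. Conversely $\mathsf{E}(I)$ is an order ideal ($e\leq f\in I$ gives $e=ef\in SI\subseteq I$), is $\vee$-closed (idempotents are always compatible, so additivity of $I$ applies), and is self-conjugate ($a^{-1}ea\in SIS\subseteq I$). The two assignments are inclusion-preserving and, since $\mathbf{d}(e)=e$ for idempotents, mutually inverse.

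For (2)$\cong$(3) I would first pass to the matrix semigroup. By Proposition~\ref{prop:type-monoids}(1) the additive ideals of $S$ correspond order-isomorphically to those of $R:=M_{\omega}(S)$, each of the form $M_{\omega}(I)$; so it suffices to match additive ideals of $R$ with $o$-ideals of $\mathsf{T}(S)$. By Lemma~\ref{lem:butterfly}, $R$ is orthogonally separating, and comparing the definition of $\mathsf{T}(S)$ with Lemma~\ref{lem:type-monoid-basics} identifies $\mathsf{T}(S)=\mathsf{E}(R)/\mathscr{D}$ with valuation $\beta(E)=[E]$ a conical refinement monoid. I would then send an additive ideal $\mathcal{J}$ of $R$ to $\mathcal{O}_{\mathcal{J}}=\{[E]\colon E\in\mathsf{E}(\mathcal{J})\}$, and an $o$-ideal $\mathcal{O}$ of $\mathsf{T}(S)$ to the additive ideal $\mathcal{J}_{\mathcal{O}}$ determined, via the first part applied to $R$, by the idempotent set $\beta^{-1}(\mathcal{O})$.

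The decisive input is that additive ideals are $\mathscr{D}$-closed on idempotents, which is immediate from Lemma~\ref{lem:additive-ideals} since $e\,\mathscr{D}\,f$ forces $e\equiv f$. This gives at once that $\mathcal{O}_{\mathcal{J}}$ is well defined and a submonoid (using additivity and orthogonal separation for closure under $+$), and that it is a lower set: if $[F]\leq[E]\in\mathcal{O}_{\mathcal{J}}$ then $[F]+[G]=[E]$ is realized as $F'\oplus G'\,\mathscr{D}\,E$ with $F\,\mathscr{D}\,F'\leq F'\oplus G'$, whereupon $\mathscr{D}$-closure places $F'\oplus G'$ in $\mathcal{J}$, the order-ideal property places $F'$ in $\mathcal{J}$, and $\mathscr{D}$-closure places $F$ in $\mathcal{J}$. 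Dually, $\beta^{-1}(\mathcal{O})$ is a self-conjugate $\vee$-closed order ideal of $\mathsf{E}(R)$ by the valuation axioms (V1)--(V3) exactly as in the first part, the order-ideal step using $\beta(e)\leq\beta(f)$ for $e\leq f$ together with $\mathcal{O}$ being a lower set. Mutual inversion is then clean, because every element of $\mathsf{E}(R)/\mathscr{D}$ is literally a single class $[E]$: both $\mathcal{O}_{\mathcal{J}_{\mathcal{O}}}=\mathcal{O}$ and $\mathsf{E}(\mathcal{J}_{\mathcal{O}_{\mathcal{J}}})=\mathsf{E}(\mathcal{J})$ reduce to the equivalence $[E]\in\mathcal{O}_{\mathcal{J}}\iff E\in\mathcal{J}$ for idempotents $E$.

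The \emph{main obstacle} is precisely the lower-set condition, i.e.\ translating the algebraic preorder $[F]\leq[E]$ on $\mathsf{T}(S)$ back into a statement about idempotents of $R$ that an additive ideal must respect. This is exactly why one passes to $R=M_{\omega}(S)$: orthogonal separation allows the witnessing decomposition of $[F]\leq[E]$ to be realized as a genuine orthogonal join $F'\oplus G'\,\mathscr{D}\,E$ inside $R$ (equivalently, a Lenz pencil in $S$ becomes a single element over $M_{\omega}(S)$), after which $\mathscr{D}$-closure and the order-ideal property of additive ideals close the argument. Everything else is a routine bookkeeping of inclusions.
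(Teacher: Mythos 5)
Your proof is correct. For the correspondence (1)$\leftrightarrow$(2) you are on the paper's own route: the paper sends an additive ideal $A$ to $\mathsf{E}(A)$ and an ideal $F$ of idempotents to $SFS$, declaring the verifications routine, and your $I_{F}=\{a\in S\colon \mathbf{d}(a)\in F\}$ is literally the same object as $SFS$ (if $a=xey$ with $e\in F$ then $\mathbf{d}(a)\leq y^{-1}ey\in F$ by self-conjugacy and the order-ideal property, and conversely $a=a\,\mathbf{d}(a)\in SFS$); you merely spell out the routine checks, and your derivation $\mathbf{d}(a)\in F\iff\mathbf{r}(a)\in F$ from self-conjugacy is exactly the right mechanism. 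Where you genuinely diverge is (2)$\leftrightarrow$(3): the paper disposes of this by citing Wehrung's Proposition 4.2.4, whereas you prove it internally, passing to $R=M_{\omega}(S)$ via Proposition~\ref{prop:type-monoids}(1), using orthogonal separation (Lemma~\ref{lem:butterfly}) to realize a witnessing decomposition $[F]+[G]=[E]$ as a genuine orthogonal join $F'\oplus G'\,\mathscr{D}\,E$ in $\mathsf{E}(R)$, and extracting from Lemma~\ref{lem:additive-ideals} the key fact that additive ideals are $\mathscr{D}$-closed on idempotents (since $e\,\mathscr{D}\,f$ forces $e\equiv f$, a singleton pencil in each direction). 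This buys something concrete: Wehrung's proposition is stated for \emph{his} construction of the type monoid, while the present paper defines $\mathsf{T}(S)$ as $\mathsf{E}(M_{\omega}(S))/\mathscr{D}$ and explicitly defers the proof that the two definitions coincide to a later paper, so the citation carries a small unacknowledged debt; your argument works directly with the paper's definition and is therefore self-contained within its machinery, at the cost of a page of bookkeeping where the paper spends one line. All the delicate points --- well-definedness of $\mathcal{O}_{\mathcal{J}}$, the lower-set property via $\mathscr{D}$-closure plus the order-ideal property, self-conjugacy of $\beta^{-1}(\mathcal{O})$ via (V3) applied to $a^{-1}ea\,\mathscr{D}\,e(aa^{-1})e\leq e$, and mutual inversion through the equivalence $[E]\in\mathcal{O}_{\mathcal{J}}\iff E\in\mathcal{J}$ --- check out as you sketch them.
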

\begin{proof}  (1)$\Leftrightarrow$(2).
Let $A$ be an additive ideal of $S$.
Then $\mathsf{E}(A)$ is a self-conjugate $\vee$-closed order ideal of $\mathsf{E}(S)$.
Now, let $F$ be a self-conjugate $\vee$-closed order ideal of $\mathsf{E}(S)$.
Then $SFS$ is an additive ideal.
It is routine to check that $A = S \mathsf{E}(A)S$ and $F = \mathsf{E}(SFS)$.
Thus the functions $A \mapsto \mathsf{E}(A)$ and $F \mapsto SFS$ are mutually inverse.
The fact that both these functions are order-preserving is immediate from the definitions.

(2)$\Leftrightarrow$(3). This was proved in \cite[Proposition 4.2.4]{Wehrung}.
\end{proof}

The proof of the following is immediate from Proposition~\ref{prop:order-isomorphisms}.

\begin{corollary}\label{cor:rain} 
A Boolean inverse semigroup is $0$-simplifying if and only if its type monoid is simple.
\end{corollary}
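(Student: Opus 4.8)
The plan is to read off the equivalence directly from the order-isomorphisms of Proposition~\ref{prop:order-isomorphisms}, so the whole argument is a matter of unwinding two definitions and transporting them across an isomorphism of posets. Recall that $S$ is $0$-simplifying precisely when its only additive ideals are $\{0\}$ and $S$, with $S \neq \{0\}$; equivalently, the poset of additive ideals of $S$ is a two-element chain. On the other side, the type monoid $\mathsf{T}(S)$ is simple precisely when its only $o$-ideals are the trivial ones, namely $\{0\}$ and $\mathsf{T}(S)$ itself; equivalently, the poset of $o$-ideals of $\mathsf{T}(S)$ is a two-element chain.

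I would then invoke Proposition~\ref{prop:order-isomorphisms}, whose items (2) and (3) furnish an order-isomorphism between the poset of additive ideals of $S$ and the poset of $o$-ideals of $\mathsf{T}(S)$. Since an order-isomorphism preserves the cardinality of a poset and, in particular, carries the least element to the least and the greatest to the greatest, the additive ideals of $S$ form a two-element chain if and only if the $o$-ideals of $\mathsf{T}(S)$ do. Combining this with the two reformulations above yields exactly the claimed equivalence.

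The only point requiring a moment's care --- and the nearest thing to an obstacle in an otherwise immediate argument --- is the matching of the endpoints. One should check that under the isomorphism the bottom additive ideal $\{0\}$ corresponds to the zero $o$-ideal and the top additive ideal $S$ corresponds to all of $\mathsf{T}(S)$, so that \emph{having exactly two ideals} really translates into \emph{being simple}. This is guaranteed because the correspondence is an order-isomorphism, hence bijective and monotone in both directions, and because our standing assumption $S \neq \{0\}$ ensures $\mathsf{T}(S) \neq \{0\}$, so that the two extreme $o$-ideals are genuinely distinct and the notion of simplicity is non-vacuous.
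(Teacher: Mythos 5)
Your proof is correct and matches the paper's argument exactly: the paper derives this corollary as an immediate consequence of the order-isomorphism between additive ideals of $S$ and $o$-ideals of $\mathsf{T}(S)$ in Proposition~\ref{prop:order-isomorphisms}, just as you do. Your additional care about matching endpoints and the non-triviality of $\mathsf{T}(S)$ is a sound, if brief, elaboration of what the paper leaves implicit.
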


\begin{example}
{\em $(\mathbb{N},+,)$ is a simple commutative idempotent conical refinement monoid.}
\end{example}

\begin{theorem}\label{them:type-semisimple} Let $S$ be a Boolean inverse semigroup.
Then the type monoid of $S$ is $\mathbb{N}$ if and only if $S$ is a $0$-simplifying semisimple Boolean inverse semigroup.
\end{theorem}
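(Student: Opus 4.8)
The plan is to leverage two equivalences already in place. First, $\mathbb{N}$ is a simple monoid, so by Corollary~\ref{cor:rain} any $S$ with type monoid $\mathsf{T}(S)\cong\mathbb{N}$ is forced to be $0$-simplifying, while $0$-simplicity is also part of the target condition; hence both sides of the desired equivalence presuppose that $S$ is $0$-simplifying. Working under that standing assumption, it remains to prove that $\mathsf{T}(S)\cong\mathbb{N}$ if and only if $S$ is semisimple. The Dichotomy Theorem (Theorem~\ref{them:dichotomy}) tells us that a $0$-simplifying $S$ is either atomless or semisimple, and exactly one holds, so I would argue the two cases separately: that the semisimple case yields $\mathbb{N}$, and that the atomless case cannot.

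For the semisimple direction I would compute $\mathsf{T}(S)$ via atoms rather than matrices. Since the type monoid is built from $M_{\omega}(S)$, and $M_{\omega}(S)$ is again $0$-simplifying and semisimple by Proposition~\ref{prop:type-monoids}, it suffices to analyse $M_{\omega}(S)$. There, by Proposition~\ref{prop:atoms-semisimple}, every idempotent is a finite orthogonal join of atomic idempotents, so it carries a well-defined rank, namely the number of atoms beneath it. Because $M_{\omega}(S)$ is $0$-simplifying, all atomic idempotents are $\mathscr{D}$-related, whence two idempotents are $\mathscr{D}$-related if and only if they have the same rank (matching atoms up by an orthogonal join of connecting atoms). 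As the orthogonal join used to add types adds ranks, rank induces a monoid isomorphism $\mathsf{T}(S)\cong\mathbb{N}$. (Alternatively one could invoke Proposition~\ref{prop:semisimple-0-simplifying} to write $S\cong M_{|X|}(G^{0})$, pass to $S/\mu$, which has the same type monoid by Proposition~\ref{prop:type-fundamental} and equals $\mathscr{I}_{\rm \tiny fin}(X)$ by Corollary~\ref{cor:watch}, and read off $\mathbb{N}$ from cardinalities.)

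For the atomless direction I would derive a contradiction from $\mathsf{T}(S)\cong\mathbb{N}$. The type monoid is generated as a commutative monoid by the classes $\tau(e)$ with $e\in\mathsf{E}(S)$, and it is conical, so $\tau(e)=0$ if and only if $e=0$. Since $1$ is an indecomposable element of $\mathbb{N}$ and must lie in every generating set, an isomorphism $\mathsf{T}(S)\cong\mathbb{N}$ would provide an idempotent $e$ whose type corresponds to $1$; in particular $e\neq 0$. As $S$ is atomless, $e$ is not an atom, so there is a nonzero idempotent $f<e$ (take $f=\mathbf{d}(y)$ for any $0\neq y<e$, noting $f\neq e$ by Lemma~\ref{lem:l-and-r-order}). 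Then $e=f\oplus(e\setminus f)$ with both summands nonzero, so $\tau(e)=\tau(f)+\tau(e\setminus f)$ is a sum of two nonzero elements, contradicting that $1$ admits no such decomposition in $\mathbb{N}$. Thus the atomless case never gives $\mathbb{N}$, and combining the two cases yields the equivalence.

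I expect the main obstacle to be the positive computation in the semisimple case: one must genuinely work inside $M_{\omega}(S)$, where the type monoid lives and where orthogonal separation (Lemma~\ref{lem:butterfly}) is available, verify that rank is a complete $\mathscr{D}$-invariant for idempotents using connectedness of the groupoid of atoms, and confirm that the addition of types is exactly addition of ranks (including surjectivity onto every value of $\mathbb{N}$, which uses that $M_{\omega}(S)$ has infinitely many orthogonal atomic idempotents). By comparison, the reduction to the $0$-simplifying case and the atomless argument are routine once the conicity and the generation of $\mathsf{T}(S)$ by idempotent types are recorded.
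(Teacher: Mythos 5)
Your proposal is correct, and while the overall skeleton (Corollary~\ref{cor:rain} for $0$-simplicity, the Dichotomy Theorem~\ref{them:dichotomy}, and the decomposition $\tau(e)=\tau(f)+\tau(e\setminus f)$ against the indecomposability of $1$) coincides with the paper's forward direction, you diverge from the paper in two genuine respects. First, for the existence of an idempotent of type $1$ the paper invokes \cite[Corollary 4.1.4]{Wehrung} (the image of $\tau$ is a lower subset of the type monoid), whereas you use instead that $\mathsf{T}(S)$ is generated by the idempotent types $\tau(e)$ together with the fact that any generating set of $(\mathbb{N},+)$ contains $1$; your route avoids the external citation at the cost of recording the generation fact, which is routine from Proposition~\ref{prop:ale}(5) since every idempotent of $M_{\omega}(S)$ is a diagonal matrix whose single-entry pieces are $\mathscr{D}$-related to matrices of the form $\Delta_{\omega}(e)$. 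Second, and more substantially, for the converse the paper passes to $S/\mu$ via Proposition~\ref{prop:type-fundamental}, reduces to a simple semisimple semigroup, and reads off $\mathbb{N}$ from the classification (Corollary~\ref{cor:finite-stuff}(3) and Corollary~\ref{cor:watch}: $S\cong\mathscr{I}_{n}$ or $S\cong\mathscr{I}_{{\rm fin}}(X)$) --- your parenthetical alternative is exactly this --- whereas your primary route computes $\mathsf{T}(S)$ directly inside $M_{\omega}(S)$ via the rank function. Your rank argument is sound: $M_{\omega}(S)$ inherits $0$-simplicity and semisimplicity by Proposition~\ref{prop:type-monoids}, distinct atomic idempotents beneath a common idempotent are automatically orthogonal, rank is preserved under conjugation $g\mapsto aga^{-1}$, equal ranks force $\mathscr{D}$-equivalence because the connecting elements $x_{i}$ between matched atoms are pairwise orthogonal (their domains and ranges are) so their join implements the $\mathscr{D}$-relation, and surjectivity follows by placing an atomic idempotent of $S$ in $n$ diagonal positions. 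What each approach buys: the paper's is shorter given the structure theory already in hand, but it silently uses that $S/\mu$ remains $0$-simplifying and semisimple; yours is self-contained within $M_{\omega}(S)$, needs neither the $\mu$-quotient nor the classification, and you correctly identify the real labor --- verifying that rank is a complete additive $\mathscr{D}$-invariant --- which your sketch, supplemented by Lemma~\ref{lem:semisimple-bordeaux} or a direct pencil argument for the connectedness of the groupoid of atoms, does carry through.
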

\begin{proof} Let $S$ be a Boolean inverse semigroup (which is not the zero semigroup)
and suppose that the type monoid of $S$ is $\mathbb{N}$.
Then $M_{\omega}(S)$ is $0$-simplifying by Corollary~\ref{cor:rain}.
Thus $S$ is $0$-simplifying by Proposition~\ref{prop:type-monoids}.
To show that $S$ is semisimple we shall use
the Dichotomy theorem, Theorem~\ref{them:dichotomy}. 
Thus we need to show that $S$ has an atom.
We have that $\tau (e) = 0$ if and only if $e = 0$.
We prove that if $\tau (e) = 1$ then $e$ is an atom.
Let $f \leq e$.
Then $e = f \oplus (e \setminus f)$.
Thus $\tau (e) = \tau (f) + \tau (e \setminus f)$.
It follows that $\tau (f) = 1$ or $\tau (f) = 0$.
If $\tau (f) = 0$ then $f = 0$.
If $\tau (f) = 1$ then $\tau (e \setminus f) = 0$ and so $ e \setminus f = 0$.
But $f \leq e$ and so $f = e$.
It remains to show that there are idempotents $e$ such that $\tau (e) = 1$.
By assumption, there is an idempotent $e$ such that $\tau (e) \neq 0$.
Suppose that $\tau (e) = m > 0$.
Then $\tau (e) = 1 + (m-1)$.
But the image of $\tau$ is a lower subset of the type monoid by \cite[Corollary 4.1.4]{Wehrung}.
Thus there is an idempotent $e'$ such that $\tau (e') = 1$.

Conversely, let $S$ be a $0$-simplifying semisimple Boolean inverse semigroup.
By Proposition~\ref{prop:type-fundamental}, we can assume that $S$ is fundamental.
Thus we may assume that $S$ is a simple semisimple Boolean inverse semigroup.
By part (3) of Corollary~\ref{cor:finite-stuff} and Corollary~\ref{cor:watch},
we have that $S \cong \mathscr{I}_{n}$ or $S \cong \mathscr{I}_{\rm \tiny fin}(X)$ where $X$ is an infinite set.
It is easy to compute the type monoids in both cases as $\mathbb{N}$.
\end{proof}

The following generalizes \cite[Proposition 4.1.9(1)]{Wehrung}.

\begin{lemma}\label{lem:cookies} Let $S_{i}$, where $i \in I$, be Boolean inverse semigroups.
Then the type monoid of the restricted direct product of the $S_{i}$ is equal to the restricted direct product of the type monoids of the $S_{i}$.
\end{lemma}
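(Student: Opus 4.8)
The plan is to exhibit an explicit isomorphism using the universal property of the type monoid as the \emph{universal} monoid valuation. Write $S = \bigoplus_{i \in I} S_{i}$ for the restricted direct product, and let $\pi_{i} \colon S \rightarrow S_{i}$ and $\iota_{i} \colon S_{i} \rightarrow S$ be the projection and inclusion morphisms; both are additive homomorphisms, since the product, the natural partial order, orthogonality and compatible joins in $S$ are all computed componentwise. Writing $\tau_{i} \colon \mathsf{E}(S_{i}) \rightarrow \mathsf{T}(S_{i})$ for the type function of $S_{i}$, I would first define $\beta \colon \mathsf{E}(S) \rightarrow \bigoplus_{i \in I}\mathsf{T}(S_{i})$ by $\beta(e) = (\tau_{i}(\pi_{i}(e)))_{i \in I}$. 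An idempotent $e \in \mathsf{E}(S)$ has finite support, so $\pi_{i}(e) = 0$ for all but finitely many $i$ and $\beta(e)$ really does lie in the restricted direct product. Checking (V1)--(V3) for $\beta$ is routine: each reduces to the componentwise statement via the facts that $\pi_{i}$ preserves $0$, preserves orthogonal joins, and carries $\mathscr{D}$ to $\mathscr{D}$. Hence $\beta$ is a monoid valuation, and by universality of $\tau_{S}$ there is a unique monoid homomorphism $\Phi \colon \mathsf{T}(S) \rightarrow \bigoplus_{i \in I}\mathsf{T}(S_{i})$ with $\Phi\tau_{S} = \beta$.

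For the inverse I would use functoriality of $\mathsf{T}$ together with the fact that the restricted direct product is the coproduct in the category of commutative monoids. Each inclusion $\iota_{i}$ induces a monoid homomorphism $\mathsf{T}(\iota_{i}) \colon \mathsf{T}(S_{i}) \rightarrow \mathsf{T}(S)$, again by universality, since $\tau_{S}\iota_{i}$ is a valuation on $\mathsf{E}(S_{i})$ and so factors uniquely through $\tau_{i}$. Because the $\mathsf{T}(S_{i})$ are commutative, the universal property of the coproduct yields a single homomorphism $\Psi \colon \bigoplus_{i \in I}\mathsf{T}(S_{i}) \rightarrow \mathsf{T}(S)$ satisfying $\Psi((x_{i})_{i \in I}) = \sum_{i \in I}\mathsf{T}(\iota_{i})(x_{i})$, the sum being finite by the finite-support condition. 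Here $\kappa_{j}$ will denote the coproduct inclusion of $\mathsf{T}(S_{j})$, so that $\Psi\kappa_{j} = \mathsf{T}(\iota_{j})$.

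It then remains to check that $\Phi$ and $\Psi$ are mutually inverse; since both are monoid homomorphisms it suffices to verify this on generating sets, and two facts drive the argument. First, the image of each $\tau_{i}$ generates $\mathsf{T}(S_{i})$: any class in $\mathsf{T}(S_{i}) = \mathsf{E}(M_{\omega}(S_{i}))/\mathscr{D}$ is represented by a diagonal idempotent with finitely many idempotent entries $e_{1},\ldots,e_{n}$, and conjugating by permutation matrices (via Proposition~\ref{prop:ale}) shows this class equals $\sum_{k}\tau_{i}(e_{k})$; consequently $\bigoplus_{i}\mathsf{T}(S_{i})$ is generated by the elements $\kappa_{j}(\tau_{j}(e))$. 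Second, every $e \in \mathsf{E}(S)$ is the orthogonal join $e = \bigoplus_{i}\iota_{i}(\pi_{i}(e))$ over its finite support, so (V2) gives $\tau_{S}(e) = \sum_{i}\tau_{S}(\iota_{i}(\pi_{i}(e))) = \sum_{i}\mathsf{T}(\iota_{i})(\tau_{i}(\pi_{i}(e)))$. Tracing the coproduct generator $\kappa_{j}(\tau_{j}(e))$ with $e \in \mathsf{E}(S_{j})$ through $\Psi$ produces $\tau_{S}(\iota_{j}(e))$, and applying $\Phi$ returns the original generator, because $\pi_{i}\iota_{j}(e) = e$ when $i = j$ and $0$ otherwise; the second fact gives $\Psi\Phi\tau_{S}(e) = \tau_{S}(e)$. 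The main obstacle is purely bookkeeping: ensuring the finite-support constraint is respected at every stage, and deriving the generation claim for $\mathsf{T}(S_{i})$ correctly from the structure of idempotents in $M_{\omega}(S_{i})$. No genuinely new idea is required beyond the universal property of the type monoid and the coproduct description of the restricted direct product of commutative monoids.
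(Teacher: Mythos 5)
Your proof is correct, but it takes a genuinely different route from the paper's. The paper works directly with the construction $\mathsf{T}(S)=\mathsf{E}(M_{\omega}(S))/\mathscr{D}$ and proceeds in four concrete steps: (i) the $\mathscr{D}$-relation on idempotents of $\bigoplus_{i\in I}S_{i}$ is computed componentwise on the common support; (ii) a restricted direct product of orthogonally separating Boolean inverse semigroups is orthogonally separating; (iii) for orthogonally separating factors, $\mathsf{T}(\bigoplus_{i}S_{i})\cong\bigoplus_{i}\mathsf{T}(S_{i})$ via an explicit mutually inverse pair of maps on $\mathscr{D}$-classes of idempotents; and (iv) $M_{\omega}(\bigoplus_{i}S_{i})\cong\bigoplus_{i}M_{\omega}(S_{i})$, after which (iv) followed by (iii) yields the lemma. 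You instead invoke the universal property of the type function as the universal monoid valuation, together with the coproduct description of the restricted direct product of commutative monoids, and build $\Phi$ and $\Psi$ abstractly, checking invertibility on generators. This buys a shorter, more conceptual argument that bypasses the matrix-level isomorphism (iv) entirely; the price is reliance on the universal property of $\tau$, which the paper asserts (following the construction of \cite{KLLR2016}) but does not reprove, and on your generation claim that the image of $\tau_{i}$ generates $\mathsf{T}(S_{i})$, which you correctly extract from the form of idempotents in $M_{\omega}(S_{i})$ and conjugation by the matrices $P_{ij}(e)$. Two small remarks: for $\Psi\Phi=\mathrm{id}$ you do not even need generation, since $(\Psi\Phi)\tau_{S}=\tau_{S}$ and uniqueness in the universal property forces $\Psi\Phi=\mathrm{id}_{\mathsf{T}(S)}$ (for $\Phi\Psi=\mathrm{id}$ the generation claim is genuinely needed); and your componentwise verification of (V3) for $\beta$ is exactly the paper's step (i), so that piece of bookkeeping is common to both arguments, while the paper's route has the side benefit of establishing (ii) and (iv), which are of independent interest.
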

\begin{proof} We break this result down into a number of steps.\\

\noindent
(1) {\em Two nonzero idempotents $\mathbf{e}$ and $\mathbf{f}$ of $\bigoplus_{i \in I} S_{i}$
are $\mathscr{D}$-related if and only if they have the same support $J$ and $\mathbf{e}(i) \, \mathscr{D} \, \mathbf{f}(i)$
for all $i \in J$.} This result follows from the observation that if $a \mathscr{D} b$ in any inverse semigroup
then $a = 0$ if and only if $b = 0$.
In addition, $\mathbf{e} \, \mathscr{D} \, \mathbf{f}$ if and only if there exists an element $\mathbf{a}$ in 
$\bigoplus_{i \in I} S_{i}$ with support $J$ such that $\mathbf{e}(i) \stackrel{a_{i}}{\rightarrow} \mathbf{f}(i)$ for all
$i \in J$ where $\mathbf{a}(i) = a_{i}$.\\

\noindent
(2) {\em If $S_{i}$, where $i \in I$, are orthogonally separating Boolean inverse semigroups,
then $\bigoplus_{i \in I} S_{i}$ is orthogonally separating.}
Let $\mathbf{e}$ and $\mathbf{f}$ be two nonzero idempotents in $\bigoplus_{i \in I} S_{i}$.
Denote by $J \subseteq I$ the finite set on which both $\mathbf{e}$ and $\mathbf{f}$ are nonzero.
If $J$ is empty then $\mathbf{e}$ and $\mathbf{f}$ are already orthogonal.
Thus in what follows we may assume that $J$ is non-empty.
For each $j \in J$, let $\mathbf{e}(j)\stackrel{a_{j}}{\rightarrow} e_{j}'$
and $\mathbf{f}(j) \stackrel{b_{j}}{\rightarrow} f_{j}'$ be such that $e_{j}' \perp f_{j}'$;
this is possible since $S_{i}$ is orthogonally separating.
Define $\mathbf{e}'$  to be zero everywhere except that $\mathbf{e}'(j) = e_{j}'$, for $j \in J$,
and define $\mathbf{f}'$ to be zero everywhere except that  $\mathbf{f}'(j) = f_{j}'$, for $j \in J$.
Define $\mathbf{a}$ and $\mathbf{b}$ in the obvious ways.
We have that $\mathbf{e} \stackrel{\mathbf{a}}{\rightarrow} \mathbf{e}'$ and $\mathbf{f} \stackrel{\mathbf{b}}{\rightarrow} \mathbf{f}'$,
by part (1), above.
But $\mathbf{e}' \perp \mathbf{f}'$.
We have therefore shown that $\bigoplus_{i \in I} S_{i}$ is orthogonally separating.\\

\noindent
(3) {\em Suppose that $S_{i}$, where $i \in I$, are all orthogonally separating. 
Then 
$$\mathsf{T}(\bigoplus_{i \in I} S_{i}) \cong \bigoplus_{i \in I} \mathsf{T}(S_{i}).$$}
By part (2) above, we know that $\bigoplus_{i \in I} S_{i}$ is orthogonally separating.
Let $[\mathbf{e}] \in \mathsf{T}(\bigoplus_{i \in I} S_{i})$ be a nonzero idempotent with support $J$.
Define $F[\mathbf{e}]$ to be a function on $I$ which is $[0]$ everywhere except for $j \in J$
when it is $F[\mathbf{e}](j) = [\mathbf{e}(j)]$.
Observe that $J$ is also the support of $F[\mathbf{e}]$.
This is induces a well-defined map from $\mathsf{T}(\bigoplus_{i \in I} S_{i})$ to $\bigoplus_{i \in I} \mathsf{T}(S_{i})$
given by $[\mathbf{e}] \mapsto F[\mathbf{e}]$.
On the other hand, let $F$ be an element of $\bigoplus_{i \in I} \mathsf{T}(S_{i})$ with support $J'$.
Let $F(j) = [e_{j}]$ for $j \in J'$ and $[0]$ otherwise;
observe that there is choice for $e_{j}$
upto the $\mathscr{D}$-relation.
Define an idempotent $\mathbf{e}(j) = e_{j}$.
Observe that the support of $\mathbf{e}$ is also $J'$.
The idempotent $\mathbf{e}$ is determined upto the $\mathscr{D}$-relation.
It follows that the map $F \mapsto [\mathbf{e}]$, defined as above,  is in fact well-defined where we use part (1) above.
The above two maps are mutually inverse.
The map $[\mathbf{e}] \mapsto F[\mathbf{e}]$ induces an isomorphism of semigroups.\\

\noindent
(4) {\em We have that $M_{\omega} (\bigoplus_{i \in I} S_{i}) \cong \bigoplus_{i \in I} M_{\omega}(S_{i})$.}
Let $A \in M_{\omega} (\bigoplus_{i \in I} S_{i})$. 
Then for $p,q \in \omega$ we have that $A_{p,q}$ is an element of $\bigoplus_{i \in I} S_{i}$
and is a rook matrix.
Thus $A_{p,q}(i) \in S_{i}$.
If we fix $i$ and then let $p,q \in \omega$ vary, we get an $\omega \times \omega$ matrix $A^{(i)}$
such that $A^{(i)}_{p,q} = A_{p,q}(i)$.
It is immediate that $A^{(i)}$ is a rook matrix.
It follows that we have defined an element of $\bigoplus_{i \in I} M_{\omega}(S_{i})$.
We have therefore defined a function $A \mapsto (i \mapsto A^{(i)})$.
We now go in the other direction.
Let $F$ be an element of $\bigoplus_{i \in I} M_{\omega}(S_{i})$.
Then $F(i)$ is an element of $M_{\omega}(S_{i})$ and so is a rook matrix.
For $p,q \in \omega$, it follows that $F(i)_{p,q}$ is an element of $S_{i}$.
For fixed $p,q$, we now let $i$ vary. 
This gives us an element of 
$M_{\omega} (\bigoplus_{i \in I} S_{i})$.
The above two maps are mutually inverse.
The map $A \mapsto (i \mapsto A^{(i)})$ induces a semigroup isomorphism; recall that operations are pointwise.\\

We can now conclude our proof.
By definition, the type monoid of $\bigoplus_{i \in I} S_{i}$ is equal to
$\mathsf{T}(M_{\omega}(\bigoplus_{i \in I}S_{i}))$.
By step (4) above, this is equal to
$\mathsf{T}(\oplus_{i \in I} M_{\omega}(S_{i}))$.
By step (3) above this is isomorphic to $\bigoplus_{i \in I} \mathsf{T}(M_{\omega}(S_{i}))$.
By definition, this is the restricted direct product of the type monoids of the $S_{i}$.
\end{proof}

Recall that the free abelian monoid on the non-empty set $X$ is the free monoid on $X$ factored out by the congruence generated by 
$xy = yx$ for all $x,y \in X$.
We may regard the elements of the free abelian monoid on the set $X$ as being monomials of the form
$x_{1}^{m_{1}} \ldots x_{r}^{m_{r}}$ where $\{x_{1}, \ldots, a_{m}\} \subseteq X$.
It can also be regarded as the restricted direct product of copies of the natural numbers.
The elements $x \in X$ are atoms.

The following lemma is easy but worth stating explicitly.

\begin{lemma}\label{lem:ortho-join} We work in a Boolean inverse semigroup.
Let $e \, \mathscr{D} \,e_{1} \oplus e_{2}$.
Then there exist orthogonal idempotents $f_{1}$ and $f_{2}$ such that $e = f_{1} \oplus f_{2}$
where $e_{1} \, \mathscr{D} \, f_{1}$ and $e_{2} \, \mathscr{D} \, f_{2}$.
\end{lemma}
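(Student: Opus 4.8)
The plan is to realize the $\mathscr{D}$-equivalence by a single element and then split it along the given orthogonal decomposition of its domain. Since $\mathscr{D}$ is symmetric and $e \,\mathscr{D}\, (e_{1} \oplus e_{2})$, I would first choose an element $a \in S$ with $\mathbf{d}(a) = e_{1} \oplus e_{2}$ and $\mathbf{r}(a) = e$.

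Next I would restrict $a$ to each summand by setting $a_{1} = ae_{1}$ and $a_{2} = ae_{2}$. Using $e_{i} \leq e_{1} \oplus e_{2} = \mathbf{d}(a)$, a direct computation gives $\mathbf{d}(a_{i}) = a_{i}^{-1}a_{i} = e_{i}(e_{1} \oplus e_{2})e_{i} = e_{i}$. Defining $f_{i} = \mathbf{r}(a_{i})$, the element $a_{i}$ then witnesses $e_{i} \,\mathscr{D}\, f_{i}$, which secures the two $\mathscr{D}$-relations demanded in the conclusion.

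The orthogonality and join conditions form the core of the argument. Since $a_{1}, a_{2} \leq a$, they are compatible, and by hypothesis $\mathbf{d}(a_{1}) = e_{1} \perp e_{2} = \mathbf{d}(a_{2})$; Lemma~\ref{lem:buffs} then yields both $a_{1} \perp a_{2}$ and $\mathbf{r}(a_{1}) \perp \mathbf{r}(a_{2})$, that is, $f_{1} \perp f_{2}$. For the join, distributivity of multiplication over the orthogonal join $e_{1} \oplus e_{2}$ gives $a_{1} \oplus a_{2} = ae_{1} \oplus ae_{2} = a(e_{1} \oplus e_{2}) = a\,\mathbf{d}(a) = a$. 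Taking ranges, and using $(a_{1} \oplus a_{2})^{-1} = a_{1}^{-1} \oplus a_{2}^{-1}$ together with the vanishing cross terms $a_{1}a_{2}^{-1} = a_{2}a_{1}^{-1} = 0$ coming from $a_{1} \perp a_{2}$, I obtain $e = \mathbf{r}(a) = (a_{1} \oplus a_{2})(a_{1} \oplus a_{2})^{-1} = f_{1} \oplus f_{2}$, completing the proof.

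I expect the only point requiring care to be this last step, namely confirming that the range of the orthogonal join $a_{1} \oplus a_{2}$ splits as $f_{1} \oplus f_{2}$; this rests on the cross terms $a_{1}a_{2}^{-1}$ and $a_{2}a_{1}^{-1}$ being zero, which is exactly the orthogonality $a_{1} \perp a_{2}$ furnished by Lemma~\ref{lem:buffs}. Everything else is routine manipulation of the natural partial order and of orthogonal joins.
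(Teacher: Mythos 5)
Your proof is correct and is essentially the paper's own argument, just applied to $a^{-1}$: the paper picks $a$ with $\mathbf{d}(a)=e$ and $\mathbf{r}(a)=e_{1}\oplus e_{2}$, splits it as $a = e_{1}a \oplus e_{2}a$ via Lemma~\ref{lem:buffs}, and takes $f_{i} = \mathbf{d}(e_{i}a)$, which is the mirror image of your decomposition $a = ae_{1} \oplus ae_{2}$ with $f_{i} = \mathbf{r}(ae_{i})$. The extra verifications you spell out (the computation $\mathbf{d}(a_{i}) = e_{i}$, the vanishing cross terms in $\mathbf{r}(a_{1}\oplus a_{2})$) are exactly what the paper leaves implicit, and they all check out.
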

\begin{proof} Let $a \stackrel{a}{\rightarrow} e_{1} \oplus e_{2}$.
Then $\mathbf{r}(a) = e_{1} \oplus e_{2}$.
Put $a_{1} = e_{1}a$ and $a_{2} = e_{2}a$.
Then $a = a_{1} \oplus a_{2}$ where we have used Lemma~\ref{lem:buffs}.
Thus $e = \mathbf{d}(a) = \mathbf{d}(a_{1}) \oplus \mathbf{d}(a_{2})$ where 
$\mathbf{d}(a_{1}) \, \mathscr{D} \, e_{1}$
and 
$\mathbf{d}(a_{2}) \, \mathscr{D} \, e_{2}$, as required.
\end{proof}

\begin{theorem}\label{them:type-semisimple=general} The type monoid of a Boolean inverse semigroup $S$ is the free abelian monoid on
a non-empty set $X$ if and only if $S$ is semisimple.
\end{theorem}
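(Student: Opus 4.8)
The plan is to prove the two implications separately, treating the forward direction as an assembly of results already available and reserving the real work for the converse, which I approach through a grading on idempotents.

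\emph{Forward direction} (semisimple $\Rightarrow$ free abelian). Suppose $S$ is semisimple. By Theorem~\ref{them:struct-semisimple}, $S \cong \bigoplus_{i \in I} M_{|X_i|}(G_i^{0})$ for suitable non-empty sets $X_i$ and groups $G_i$, where $I$ is non-empty since $S$ is not the zero semigroup. Each factor $M_{|X_i|}(G_i^{0})$ is $0$-simplifying and semisimple by Proposition~\ref{prop:semisimple-0-simplifying}, so by Theorem~\ref{them:type-semisimple} its type monoid is $\mathbb{N}$. Applying Lemma~\ref{lem:cookies}, we obtain $\mathsf{T}(S) \cong \bigoplus_{i \in I} \mathbb{N}$, which is precisely the free abelian monoid on the non-empty index set $I$.

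\emph{Converse direction} (free abelian $\Rightarrow$ semisimple). Here I would fix an isomorphism $\mathsf{T}(S) \cong \bigoplus_{x \in X} \mathbb{N}$ and compose the type function $\tau$ with the total-degree homomorphism $\deg \colon \bigoplus_{x \in X}\mathbb{N} \to \mathbb{N}$ sending a monomial to the sum of its exponents, producing $\delta = \deg \circ \tau \colon \mathsf{E}(S) \to \mathbb{N}$. Two facts drive the argument: $\delta$ is additive on orthogonal joins (because $\tau$ is a monoid valuation, condition (V2)), and $\delta(e) = 0$ if and only if $e = 0$ (because $\tau$ reflects zero in general: $\tau(e) = [\Delta_{\omega}(e)]$ is the zero class precisely when $\Delta_{\omega}(e)\,\mathscr{D}\,0$, which forces $\Delta_{\omega}(e) = 0$ and hence $e = 0$, while $\deg$ vanishes only at the identity of the free abelian monoid). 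The central claim is that every non-zero idempotent $e$ is a finite orthogonal join of atomic idempotents, proved by strong induction on $\delta(e) \geq 1$: if $e$ is an atom we are done; otherwise choose $f$ with $0 \neq f < e$ and write $e = f \oplus (e \setminus f)$ via Lemma~\ref{lem:chicken}, noting $e \setminus f \neq 0$, so that $\delta(f)$ and $\delta(e \setminus f)$ are positive integers summing to $\delta(e)$ and therefore both strictly smaller; the induction hypothesis applied to each summand, together with orthogonality, completes the claim.

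With the claim established, finiteness is immediate. Writing an arbitrary idempotent as $e = a_1 \oplus \cdots \oplus a_n$ with the $a_i$ pairwise orthogonal atoms, any $f \leq e$ is itself an idempotent and satisfies $f = \bigoplus_i f a_i$ by distributivity, where each $f a_i$ (orthogonal to the others by Lemma~\ref{lem:oj}) equals $0$ or $a_i$ since $a_i$ is an atom and idempotents commute; hence $f$ is the orthogonal join of a subset of $\{a_1, \dots, a_n\}$, so $e^{\downarrow}$ has at most $2^n$ elements. By Lemma~\ref{lem:when-semisimple}, $S$ is semisimple. I expect the converse to be the main obstacle: the forward direction merely combines Theorem~\ref{them:struct-semisimple}, Lemma~\ref{lem:cookies} and Theorem~\ref{them:type-semisimple}, whereas the converse must extract from the purely monoid-theoretic grading a uniform finite bound on the atoms below each idempotent. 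The one delicate point is the general fact that $\tau$ reflects zero, which needs its own justification rather than being inherited from the $\mathbb{N}$-valued case.
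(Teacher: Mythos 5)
Your proof is correct. The direction ``semisimple $\Rightarrow$ free abelian'' coincides with the paper's: both assemble Theorem~\ref{them:struct-semisimple}, Proposition~\ref{prop:semisimple-0-simplifying}, Theorem~\ref{them:type-semisimple} and Lemma~\ref{lem:cookies} in the same order. Your converse, however, is genuinely different and more elementary than the paper's. The paper stays inside $T = M_{\omega}(S)$, where orthogonal separation holds, and uses a refinement argument (via Lemma~\ref{lem:ortho-join}) to lift each decomposition $[e] = u + v$ in the free abelian monoid to an orthogonal decomposition $e = f_{1} \oplus f_{2}$ with $[f_{1}] = u$, $[f_{2}] = v$; it then shows that idempotents of $T$ mapping to atoms of the free abelian monoid are atoms, deduces that every non-zero idempotent of $T$ is a finite join of atoms via the monomial form $[e] = x_{1}^{m_{1}} \cdots x_{r}^{m_{r}}$ together with $\mathscr{D}$-invariance of atoms (Lemma~\ref{lem:atom-idempotent}), and finally transfers semisimplicity from $M_{\omega}(S)$ back to $S$ by Proposition~\ref{prop:type-monoids}(3). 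You bypass all of this: composing $\tau$ with the total-degree map gives $\delta \colon \mathsf{E}(S) \to \mathbb{N}$, orthogonally additive by (V2) and reflecting zero (your justification via $\Delta_{\omega}(e)\,\mathscr{D}\,0 \Rightarrow \Delta_{\omega}(e) = 0$ is exactly right, and the paper asserts the same fact inline in Theorem~\ref{them:type-semisimple}), and strong induction on $\delta(e)$ using $e = f \oplus (e \setminus f)$ from Lemma~\ref{lem:chicken} yields that every non-zero idempotent is a finite orthogonal join of atoms; the $2^{n}$ bound on $e^{\downarrow}$ then closes the argument through Lemma~\ref{lem:when-semisimple}. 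What your route buys is twofold: you never need orthogonal separation or $M_{\omega}(S)$ at all (which is precisely why the paper is forced into the matrix semigroup, since $S$ itself need not be orthogonally separating), and you in fact prove a formally stronger statement --- any Boolean inverse semigroup whose idempotents admit an $\mathbb{N}$-valued, orthogonally additive, zero-reflecting valuation is semisimple, with freeness of the type monoid used only to manufacture such a valuation. What the paper's heavier route buys is the finer structural information extracted along the way, namely that orthogonal decompositions of $[e]$ are realized by orthogonal decompositions of $e$ itself, which identifies exactly which idempotents sit over the free generators.
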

\begin{proof} Suppose first that the type monoid of the Boolean inverse semigroup $S$ is the free abelian monoid on
a non-empty set $X$.
We prove that $T = M_{\omega}(S)$ is semisimple and use Proposition~\ref{prop:type-monoids} to deduce that $S$ is semisimple.
To prove that $T$ is semisimple,
we shall use Proposition~\ref{prop:atoms-semisimple}.
By assumption $\frac{\mathsf{E}(T)}{\mathscr{D}}$ is isomorphic to the free abelian monoid on the set $X$.
We prove a general result first.
Let $e$ be any non-zero idempotent of $T$.
Suppose that  $[e] = u + v$.
Since the type monoid is given by an isomorphism, 
there are idempotents $e_{1}$ and $e_{2}$ in $T$ such that $[e_{1}] = u$ and $[e_{2}] = v$. 
Thus $[e] = [e_{1}] + [e_{2}]$.
By assumption, $T$ is orthogonally separating and so 
$e \, \mathscr{D} \, e_{1}' \oplus e_{2}'$ where $e_{1} \, \mathscr{D} \, e_{1}'$ and $e_{2} \, \mathscr{D} \, e_{2}'$and $e_{1}' \perp e_{2}'$.
It follows by Lemma~\ref{lem:ortho-join}, that $e= f_{1} \oplus f_{2}$ where $[f_{1}] = u$ and $[f_{2}] = v$.
By induction, we deduce that if $[e] = u_{1} + \ldots + u_{s}$
then there are elements $e_{1},\ldots, e_{s}$ of $T$ which are pairwise orthogonal such that
$e = e_{1} \oplus \ldots \oplus e_{s}$ and $[e_{i}] = u_{i}$.
Observe that $[e] = 0$, the identity element of the free abelian monoid, if and only if $e = 0$.
The atoms in the free abelian monoid on $X$ are those elements $\mathbf{f}$ which assume the value zero for all $x \in X$
except that at $y$ we have that $\mathbf{f}(y) = 1$.
Let $e$ be an idempotent of $T$ such that $[e] = \mathbf{f}$, an atom as above.
Suppose that $f \leq e$.
Then $e = f \oplus (e \setminus f)$.
It follows that $[e] = [f] + [e \setminus f]$.
But $[e]$ is an atom of the free abelian monoid.
Thus either $[f] = 0$ and so $f = 0$ or $[e] = [f]$ and so $[e \setminus f] = 0$
from which it follows that $e \setminus f = 0$ and so $e = f$.
We have proved that the idempotents of $T$ that map to atoms of the free abelian monoid are themselves atoms.
We can now prove  that every non-zero element is above an atom.
Let $e$ be any non-zero idempotent.
If $e$ is an atom then we are done.
If $e$ is not an atom then there exists $f < e$.
But then $e = f \oplus (e \setminus f)$ and so $[e] = [f] + [e \setminus f]$.
If $[e \setminus f] = 0$ then $e \setminus f = 0$ and so $e = f$, which is a contradiction.
It follows that $[f] < [e]$.
Each non-zero element in the free abelian monoid is above an atom.
It follows that each non-zero element of $S$ is above an atom.
Next, we prove that each non-zero element is a finite join of atoms.
Let $e$ be a non-zero idempotent.
If $e$ is an atom we are done.
Otherwise $[e] = x_{1}^{m_{1}} \ldots x_{r}^{m_{r}}$ an element of the free abelian monoid.
The elements $x_{1},\ldots, x_{r}$ are atoms.
Let $[e_{i}] = x_{i}$ where $1 \leq i \leq r$ and $e_{i}$ is an atom.
Then $[e] = m_{1} [e_{1}] + \ldots + m_{r}[e_{r}]$.
We now apply our result above and deduce that $e$ is an orthogonal join of idempotents
each of which is $\mathscr{D}$-related to an atom.
By Lemma~\ref{lem:atom-idempotent}, we deduce that $e$ is an orthogonal join of a finite number of atoms.

To prove the converse, we need to show that the type monoid of a semisimple Boolean inverse semigroup $S$
is a free abelian monoid.
By Theorem~\ref{them:struct-semisimple} and Proposition~\ref{prop:semisimple-0-simplifying}
the semigroup $S$ is a restricted direct product of $0$-simplifying semisimple Boolean inverse semigroups.
By Theorem~\ref{them:type-semisimple}, the type monoid of a $0$-simplifying semisimple Boolean inverse semigroup
is $\mathbb{N}$. Thus the result will follow if we prove that the type monoid of a restricted direct product of Boolean inverse semigroups
is the restricted direct product of their type monoids.
This is proved as Lemma~\ref{lem:cookies}.
\end{proof}


\end{document}